\newcommand{\betaHat}{\widehat{\beta}}
\newcommand{\betaRidge}{\betaHat_{\text{ridge}}}
\newcommand{\wHat}{\widehat{w}}
\newcommand{\SigmaTilde}{\widetilde{\Sigma}}
\newcommand{\Proba}[1]{\mathbf{P}\left(#1\right)}
\newcommand{\limitScaling}{\mathfrak{s}}
\newcommand{\median}[1]{\textrm{median}\left(#1\right)}
\newcommand{\ellipticalData}{\mathfrak{X}}
\newtheorem{heuristic}{Heuristic}[section]
\title{Geometric sensitivity of random matrix results: consequences for shrinkage estimators of covariance and related statistical methods}
\date{\today}
\begin{document}

\author{Noureddine El Karoui\thanks{Support from an Alfred P. Sloan research Fellowship and NSF grants DMS-0605169 and DMS-0847647 (CAREER) is gratefully acknowledged. N. El Karoui is very grateful to Professor Friedrich Goetze for his hospitality in Bielefeld (and CRC 701) in the summer of 2008.
\textbf{Contact~:} \texttt{nkaroui@stat.berkeley.edu}} \and Holger K\"osters\thanks{Research partially supported by CRC 701,
``Spectral Structures and Topological Methods in Mathematics''. This work was initiated while the second author
was visiting the Department of Statistics at UC Berkeley (in September 2009)
which he would like to thank for its hospitality.
\textbf{Contact~:} \texttt{hkoesters@math.uni-bielefeld.de}
\textbf{Key words and Phrases~: } Random matrices, shrinkage estimators, Linderberg method, concentration inequalities, Burkholder inequality, Efron-Stein inequality, regularized discriminant analysis, linear discriminant analysis, Markowitz problem, quadratic programming.
}
}
\maketitle
\begin{abstract}
Shrinkage estimators of covariance are an important tool in modern applied and theoretical statistics. They play a key role in regularized estimation problems, such as ridge regression (aka Tykhonov regularization), regularized discriminant analysis and a variety of optimization problems

In this paper, we bring to bear the tools of random matrix theory to understand their behavior, and in particular, that of quadratic forms involving inverses of those estimators, which are important in practice.

We use very mild assumptions compared to the usual assumptions made in random matrix theory, requiring only mild conditions on the moments of linear and quadratic forms in our random vectors. In particular, we show that our results apply for instance to log-normal data, which are of interest in financial applications.

Our study highlights the relative sensitivity of random matrix results (and their practical consequences) to geometric assumptions which are often implicitly made by random matrix theorists and may not be relevant in data analytic practice.
\end{abstract}
\section{Introduction}
Modern multivariate statistics is increasingly high-dimensional. It is now easy to collect many samples ($n$) with a large number of covariates ($p$) for each sample. In this paper, we will therefore study multivariate statistical problems in the ``large $n$, large $p$" setting that is increasingly popular in theoretical statistics. By this we mean that we will study certain statistics in the asymptotic setting where $n$, the number of observations, is going to infinity, and $p$, the number of predictors, is also going to infinity. Our focus will be on the situation where $p/n$ remains bounded.

The paper is mostly concerned with forms involving the inverse of a shrunken covariance matrix, or powers of this inverse as they play a key role in several important statistical problems that we review later in this introduction. As a matter of fact, these objects, in one form or another, are central in many aspects of classical regularized methods in statistics and other fields of applied mathematics. The purpose of this paper is to explain how these regularized estimators behave in the ``large $p$, large $n$" setting and derive some understanding and insights about the behavior of widely used methods that rely on them.

In classical statistics, when $p\ll n$, one can get a good estimate of the spectral properties of $\Sigma$, the population covariance matrix, by using its ``naive" counterpart, the sample covariance matrix $\SigmaHat$, with, if $\muHat$ is the sample mean of our vectors,
$$
\SigmaHat=\frac{1}{n-1}\sum_{i=1}^n (X_i-\muHat)(X_i-\muHat)\trsp\;.
$$
As is now well-known, this is not the case when $p$ is comparable to $n$, which we denote by $p \asymp n$. In that setting, even though the central limit theorem and a little bit of concentration of measure guarantee under broad assumptions that
$$
\max_{i,j} |\SigmaHat(i,j)-\Sigma(i,j)|\tendsto 0\;,
$$
(even when $p\gg n$), the eigenvalues of $\SigmaHat$ tend to be very different from those of $\Sigma$ (see \cite{imj} or the reviews \cite{JohnstoneReview07}, \cite{NekReviewHandbookChap28}). Hence, it is important to understand the performance of our standard techniques in this new asymptotic setting.

Recent papers concerned with these types of problems and their implications for concrete applications are for instance \cite{nekMarkoRisk} and \cite{nekMarkoRealizedRisk}, where the author showed that somewhat surprisingly for a broad class of covariance matrices, means and distributions for the data, one could characterize the performance of estimators as a function of the ratio $p/n$, and hence get consistent estimators for parameters, such as the efficient frontier in classical portfolio theory, that appear difficult to estimate without structural assumptions on the population parameters. In these papers, the regularization came under the form of linear constraints on the vector of interest.

As mentioned before, shrinkage estimators of covariance are fundamental objects in modern statistics, partly because of James-Stein type phenomena (\cite{haff80}) and they are very widely used.
Here are a few examples.
\begin{enumerate}
\item Classification (LDA, RDA):  when we observe data coming from two Gaussian populations, with different means $\mu_1$ and $\mu_2$, priors $\pi_1$ and $\pi_2$ but same covariance matrix $\Sigma$, the optimal classification rule is known to be Fisher's linear discriminant analysis rule: classify an observation $x$ to class 2, if
$$
x\trsp \Sigma^{-1}(\mu_1-\mu_2)> T(\mu_1,\mu_2,\Sigma,\pi_1,\pi_2)\;,
$$
where $T(\mu_1,\mu_2,\Sigma,\pi_1,\pi_2)$ is a known threshold. Naturally, we do now know $\Sigma$ in practice, so a natural method is to replace it by $\SigmaHat$. This is what is usually done in LDA (see \cite{htf09}). In \cite{FriedmanRDA89}, concerned by, among other things variance issues in LDA, Friedman proposed to use RDA, regularized discriminant analysis, where instead of using $\SigmaHat$ as an estimate of $\Sigma$, one uses $\SigmaHat+A$ or $(1-\theta)\SigmaHat+\theta A$, i.e a shrinkage estimator of covariance. This estimator has also been proposed by \cite{LedoitWolf04} in another context. It is natural to ask what happens when using these estimators in high-dimension.
\item Shrinkage estimators of covariance: a classic paper on the topic is \cite{haff80}; we also refer to \cite{anderson03}, for explanations concerning the benefit of skrinkage. In portfolio optimization, at least in the traditional mean-variance framework, similar issues arise. Hence partly motivated by this problem, \cite{LedoitWolf04} proposed to use a shrinkage estimator to solve the portfolio optimization problem and get regularized solutions. In the finance literature, there are ``finance-driven" shrinkage estimators, like the one arising in the Black-Litterman model (see \cite{MeucciBook}).
\item Regression problems: in ridge regression, where one seeks $\beta$ to optimize $\norm{Y-X\beta}+\lambda \beta\trsp\Gamma \beta$, one also encounters matrices of the form $\SigmaHat+\lambda \Gamma$, which is a shrunken version of $\SigmaHat$. The $\Gamma$ that is usually taken is $\id$, this regularization amounts to modifying the eigenvalues of $\SigmaHat$.
\end{enumerate}

In the analysis of all these methods, one needs to understand the behavior of the matrix $(\SigmaHat+A)^{-1}$ (entrywise and/or globally) as well as similar quantities involving $(\SigmaHat+A)^{-1}\Sigma_\eps (\SigmaHat+A)^{-1}$ (where $\Sigma_\eps$ is positive semidefinite) and this will be one of the focuses of the paper. It is tantalizing to use random matrix theory to do so, a program we got started on in \cite{nekMarkoRisk} and \cite{nekMarkoRealizedRisk}. However, as documented in these papers, random matrix theory has several potential pitfalls: standard random matrix models, though in appearance general, put implicitly very strong geometric constraints on the datasets they are supposed to model. In light of this, one might be wary that the remarkable results that come out of it are just consequences of this geometry, which may or may not be similar to the one a practitioner encounters in practice. Hence we feel that any analysis that is not doing a meaningful robustness analysis is sorely lacking.

As we have documented before, the geometric constraints put by classical random matrix theory on the datasets modeled by it are due to manifestations of the concentration of measure phenomenon. Hence, it seems to us that a good starting point for the analysis of shrunken covariance matrices and their applications is that of generalized elliptical distributions, where the data is modeled as
$$
\ellipticalData_i=\mu+R_i X_i\;,
$$
where $R_i$ is a random variable independent of $X_i$ and $X_i$ has some (mild) concentration properties. (This will be made clear and precise later.)

The advantage of this class of models is that it contains the Gaussian model that is popular with many researchers, though now understood to be lacking in many fundamental ways. When $\Exp{R_i^2}=1$, then $\scov{X_i}=\scov{\ellipticalData_i}$, so we can study robustness of our results in this class, since all the population parameters (which will depend on covariance and mean) will be the same.

However, by studying the model at this level of generality, we will not be able to rely on various invariance properties of the Gaussian distribution, and hence will really use only the geometric/concentration properties of the random variables of interest. One advantage of such an approach is that these properties are somewhat checkable in practice, through simple histograms for e.g norms and scalar products of points in the dataset, as has been explained before in some of the works cited above. Crucially, by showing that the results depend on the properties of $\{R_i\}_{i=1}^n$, we will able to show that even in our simple setting the geometry is key (change in $R_i$'s may mean change in the geometry) and a major contributing factor in the robustness of the results. Finally, it should be noted (see \cite{nekMarkoRisk}) that one can sometimes study the bootstrap properties of various estimators by studying the class of elliptical distributions. Hence our analysis could be used to gain insight into bootstrap properties of various estimators.

The focus of our paper will mostly be on entrywise properties of $(\SigmaHat+A)^{-1}$ or $(\SigmaHat+A)^{-1}\Sigma_\eps (\SigmaHat+A)^{-1}$ in the class of models we consider, which naturally appear in the study of the risk of certain procedures. Quadratic forms involving the sample mean are also important in practice and will be studied. Random matrix theory already handles well things like $\trace{(\SigmaHat+A)^{-1}}$, and other questions concerning only eigenvalues, so we will not spend too much time on this, though they are potentially important in the study of the risk of various estimators.

Beside shedding light on central statistical questions in multivariate analysis, our analysis also proposes what we think is a good and generic technical framework for carrying them out: namely we will do our work through invariance principles and mild concentration work. We will show that the statistics we are considering are asymptotically non-random, by showing that they are concentrated around their mean. And then we will show that the mean is the ``same" in a broad class of models by using techniques akin to the Lindeberg method. A main difficulty is then to compute the mean (in many problems it is much harder to compute the mean of a statistic than to show that e.g its variance goes to zero), but our analysis will show that it can be done for favorable distributions in the class considered, and the Gaussian distribution will then be heavily used. Importantly, our analysis is very general and shows robustness even in classes where we have not or cannot at this point compute a limit for the quantity of interest.

We should also point out that our concentration requirements on $X_i$ have purposely been kept to a minimum and hence our results extend way beyond the traditional ``linear combination of i.i.d" framework which has been popular in random matrix theory following the nice work of Bai and Silverstein (see e.g \cite{silverstein95}, \cite{silversteinbai95}). In particular, we will be able to handle (multivariate) log-normal distributions and other non-linear deformations of Gaussian random variables. Also, conditions on i.i.d-ness are essentially replaced by conditions on the mean and covariance of the random variables we deal with, as well as a little bit of concentration for linear and quadratic forms involving them. Our aim was also to show that these ``universality" results could get obtained rather simply so an effort has been made to make the proofs as simple as possible. The paper is a bit long because we treat many cases in details and at what we think is the right level of generality.

Finally, it will be noted by researchers interested in probability that some of our results can be seen as strong versions of classic random matrix results: where classic results gave results about normalized traces of certain random matrices, we will be able to have statements valid for each element of the diagonal of the matrix of interest.

In section 2, we present some of our main technical results and heuristic justification for some of the main results, which should be helpful for statisticians wanting to get a sense of where the results come from. Section 3 contains most proofs and the core technical work. Section 4 discusses some potential applications to statistics, where at this point our main results shed light on existing procedures and ``what they really do". We conclude in Section 5 and present a result of independent interest on Stieltjes transforms in the Appendix.

\section{Strategy and exposition of some results}
Our strategy is to make use of invariance principles and concentration inequalities throughout the paper. Practically, this translates into showing that the statistics we care about are concentrated around their means, that is the concentration part. In a second step, we show that this mean does not depend of the distribution of the data, as long as certain moment conditions are satisfied. To do so, we employ techniques very similar to the Lindeberg method (\cite{StroockProbaBook} and let us note that it has been perhaps ``re-popularized" by the nice work of Chatterjee in this direction, e.g \cite{ChatterjeeSimpleInvariance05}).

Throughout the paper, we will focus on model of an elliptical type, namely we observe i.i.d observations
$$
\ellipticalData_i=\mu+R_i X_i\;,
$$
where the $X_i$'s are independent and independent of $R_i$. The $R_i$'s are allowed to be dependent. Our efforts will go into relaxing distributional assumptions on $X_i$, while assuming only two moments on $R_i$ - the justification for these choices coming from applications discussed at the end of the paper. In particular, this means that we will be able to handle data with relatively heavy tails.

A main tool in our work will be a simple extension of the Efron-Stein inequality - which will allow us to characterize higher moments of the statistics we care about. This extension is likely known in martingale theory but we present a proof in the appendix for the convenience of the reader. We delay it statement and presentation to the proof section and start by highlighting some of our main results.

\subsection{A generalized version of the Efron-Stein inequality}
We will make repeated use of the following lemma, which follows from Burkholder's inequality (see \cite{BurkholderWaldLectures73}).
\begin{lemma*}
Suppose $W=h(X_1,\ldots,X_n)$, where the $X_i$'s are independent. We call ${\cal F}_j=\sigma(X_1,\ldots,X_j)$.  We also denote by $Z_m$ a (measurable) function of $(X_1,\ldots,X_{m-1},X_{m+1},\ldots,X_n)$.

Then, we have, for a constant $c$ that depends only on $k$, and for $k\geq 2$,
\begin{equation}
\Exp{\left|W-\Exp{W}\right|^k}\leq c \left(\Exp{\left[\sum_{m=1}^n \Exp{(W-W_m)^2|{\cal F}_{m-1}}\right]^{k/2}}+\sum_{m=1}^n \Exp{|W-W_m|^k}\right)\;.
\end{equation}
\end{lemma*}
The classic Efron-Stein inequality corresponds to the case where $k=2$. The advantage of using higher $k$'s is that it will for instance allow us to control $\max_{j \in J} |W_j-\Exp{W_j}|$ for $J$'s of higher cardinalities. For instance, if we can show that $\Exp{\left|W_j-\Exp{W_j}\right|^k}\leq C n^{-k/2}$ for a certain $k$, a simple union bound gives us
$$
P(\max_{j \in J} |W_j-\Exp{W_j}|>t)\leq \frac{C |J|}{(n^{1/2} t)^k}\;.
$$
Hence a bound valid of $k>2$ will allow us to handle greater $J$'s. A number of applications (involving for instance thresholding) also require control of higher moments, which will be provided by our methods.

We also note that we purposely tried to avoid deriving central limit theorems. While those are definitely interesting, we wanted to have finite sample bounds and have them be relatively robust with respect to distributional assumptions, in keeping with what we view as their potential practical usefulness.

\subsection{Quadratic forms in inverse of shrunken sample covariance matrices are essentially deterministic}
We now state an application of the previous Lemma  to forms which are at the center of our study.

\begin{theorem*}
Suppose $X_1,\ldots,X_n\in \mathbb{R}^p$ are independent. Suppose further that $\Exp{X_i}=0$ and, if $v$ is such that $\norm{v}=1$, $\Exp{|X_i\trsp v|^k}\leq b_L(k;X_i)$, where $b_L(k;X_i)$ is a deterministic function depending only on the distribution of $X_i$ and $k$. Call
$$
{\cal S}=\frac{1}{n}\sum_{i=1}^m R_i^2 X_iX_i\trsp\;,
$$
where $R_i$ are deterministic.

Call $M(t)={\cal S}+A$, and assume that for some $t>0$, $A$ is positive definite, with $A\succeq t \id_p$. Then, if $\norm{x}=1$,
$$
\Exp{|x\trsp [{\cal S}]^{-1} x-\Exp{x\trsp [{\cal S}]^{-1} x}|^k}\leq \frac{c_k}{t^{2k}}\left[\left(\sum_{i=1}^n \left[\frac{R_i^4}{n^2} b(4;X_i)\wedge t^2 \right]\right)^{k/2}+\left(\sum_{i=1}^n \left[\frac{R_i^{2k}}{n^{k}} b(2k;X_i) \wedge t^{k}\right]\right)\right]\;.
$$
\end{theorem*}

It is perhaps instructive to give an example at this point. Here are two.
\begin{itemize}
\item Suppose that $X_i$ satisfies $P(|X_i\trsp v|>t)\leq C \exp(-c t^b)$, and $X_i$ has mean 0. Then
$$
b_L(k;X_i)\leq \frac{C}{c^{k/b}} \frac{k}{b} \Gamma\left(\frac{k}{b}\right)\;.
$$
\item Suppose that $X_i$ satisfies $P(|X_i\trsp v|>t)\leq C t^{-b}$. Then if $b>(k+1)$,
$$
b_L(k;X_i)\leq C\left(1+\frac{1}{b-(k+1)}\right)\;.
$$
\end{itemize}

We note that the condition on the $X_i$'s is rather minimal: all we need is some concentration of linear forms in $X_i$, something that might seem surprising at first.

The exponential deviation inequality in our first example might look like a strong assumption. However, it is satisfied by many distributions, with quite non-linear structures which would be difficult to analyze if one did not resort to concentration of measure statements. The (centered) Gaussian copula is a good example. We give specific examples in Subsubsection \ref{subsubsec:ApplicabilityOfAssumptions}.

The result also gives us a reasonable understanding of the size of the fluctuations behavior of the quadratic forms we are interested in. Note that using the Gaussian case (at $t=0$) as a comparison, the fluctuation size of $n^{-1/2}$ seems to be the right one.

\paragraph{General strategy} The general strategy is now clear. In light of the previous theorem, if we can get a good deterministic approximation to $\Exp{S(t)^{-1}}$, we will be able to get an approximation of $x\trsp  S(t)^{-1} x$. Note that the considerable simplification here is that we are not dealing with random variables anymore. Fortunately, we can approximate this expectation using variant of methods that have been developed in the random matrix literature (specifically the part of the theory concerned with understanding limiting spectral distributions). Also, it will be possible to show that these expectations do not vary much when we change some details of the distributions - this is the essence of Lindeberg-style ideas. Hence, all we will have to do is show that the expectations in question do not change much when we replace $X_i$'s by $Y_i$'s with a different distribution (but the same covariance and mean). And then compute the expectation in a favorable case, for instance when $X_i$'s are Gaussian.

\subsection{Heuristics}\label{subsec:Heuristics}
To help readers unfamiliar with random matrix theory understand better the results, we now present heuristics that help us guess the results. Formal proofs essentially start from these conjectures and proceed to verify that they are indeed correct.

We will focus on two types of quantities:
$$
v\trsp ({\cal S}+A)^{-1} v \text{ and } v\trsp (\SigmaHat+A)^{-1} B (\SigmaHat+A)^{-1} v\;,
$$
where $A$ and $B$ are positive definite matrices.

Also, ${\cal S}=\frac{1}{n}\sum_{i=1}^n R_i^2 X_iX_i\trsp$, where $X_i=\Sigma^{1/2} Y_i$, where $Y_i$ has covariance $\id_p$, and $X_i$ (or $Y_i$) satisfies mild concentration inequalities - the details are given when we undertake a rigorous proof. At this point, the reader can safely assume that $X_i$ is ${\cal N}(0,\Sigma)$ (so $Y_i$ is ${\cal N}(0,\id_p)$. In other words, ${\cal S}$ is the ``sample" covariance matrix we would use if we knew the mean of the data.

We have the following heuristic result:
\begin{heuristic}\label{heuristic:regularizedInverse}
Under regularity conditions, we have
$$
v\trsp ({\cal S}+A)^{-1} v \simeq v\trsp (\gamma(A)\Sigma+A)^{-1} v\;,
$$
where if
$$
\alpha(A)=\frac{1}{n}\trace{\Sigma ({\cal S}+A)^{-1}}\;,
$$
$\alpha(A)$ has an asymptotically deterministic equivalent and
$$
\gamma(A)\simeq \frac{1}{n}\sum_{i=1}^n \frac{R_i^2}{1+R_i^2 \alpha(A)}\;.
$$
\end{heuristic}

\begin{proof}[Argument: ]
The key element of this argument is really the concentration of quadratic forms in $Y_i$, which allow us to replace quantities of the type $Y_i\trsp M Y_i/p$ by $\trace{M}/p=\Exp{Y_i\trsp M Y_i}/p$.

The fact that $\frac{1}{n}\trace{\Sigma ({\cal S}+A)^{-1}}$ has an asymptotically deterministic equivalent comes from standard arguments in random matrix theory (for some that rely on concentration and are just a few lines, see \cite{nekCorrEllipD}; see also Subsection \ref{subsec:CheckHeuristics}).
Let us write ${\cal S}=\sum_{i=1}^n r_i r_i\trsp$, where $r_i$ are independent. Now, we have (using an idea akin to some in \cite{silverstein95} and now classic in random matrix theory)
$$
{\cal S}({\cal S}+A)^{-1}=\id-A({\cal S}+A)^{-1}\;,
$$
and hence, using the fact that $(r_i r_i\trsp +M_i)^{-1}=M_i^{-1}-\frac{M_i^{-1} r_i r_i\trsp M_i^{-1}}{1+r_i\trsp M_i^{-1} r_i}$,
$$
A({\cal S}+A)^{-1}=\id-\sum_{i=1}^n \frac{r_ir_i\trsp M_i^{-1}}{1+r_i\trsp M_i^{-1} r_i}\;,
$$
where $M_i={\cal S}+A-r_ir_i\trsp$.

Therefore, if $v$ and $u$ are two vectors,
$$
v\trsp A({\cal S}+A)^{-1} u=v\trsp u-\sum_{i=1}^n \frac{v\trsp r_i r_i\trsp M_i^{-1} u}{1+r_i\trsp M_i^{-1} r_i}\;.
$$
Now because $Y_i$ satisfies a dimension-free concentration inequality, we have, if $M$ is a matrix independent of $Y_i$, $Y_i\trsp M Y_i/p\simeq \trace{M}/p$. Applying this heuristic in each term of the previous sum, we get,
$$
v\trsp A({\cal S}+A)^{-1} u=v\trsp u-\frac{1}{n}\sum_{i=1}^n  \frac{R_i^2 v\trsp \Sigma M_i^{-1}u}{1+R_i^2 \frac{1}{n}\trace{\Sigma M_i^{-1}}}\;.
$$
Now not much is lost by replacing $M_i$ by ${\cal S}+A$ everywhere in the previous expression. Hence, we have heuristically,
\begin{align*}
v\trsp A({\cal S}+A)^{-1} u&=v\trsp u-\left[\frac{1}{n} \sum_{i=1}^n \frac{R_i^2}{1+R_i^2 \frac{1}{n}\trace{\Sigma ({\cal S}+A)^{-1}}}\right] v\trsp \Sigma ({\cal S}+A)^{-1}u\;,\\
&=v\trsp u-\gamma(A) v\trsp \Sigma ({\cal S}+A)^{-1}u\;.
\end{align*}
Another way of rewriting this equation is simply
$$
v\trsp ({\cal S}+A)^{-1} u = v\trsp A^{-1} u - \gamma(A) v\trsp A^{-1} \Sigma ({\cal S}+A)^{-1}u\;.
$$
Now, let us call $v_k=(A^{-1}\Sigma)^k v$. Applying the previous heuristic to $v=v_k$ and $u=v$, we have if $\beta_k=v_k\trsp ({\cal S}+A)^{-1} v$, and $\alpha_k=v_k\trsp A^{-1} v$,
$$
\beta_k\simeq \alpha_k-\gamma(A) \beta_{k+1}\;.
$$
Assuming that we can use the previous approximation many times, we get
$$
\beta_0\simeq \sum_{j=0}^n (-\gamma(A))^j \alpha_j + (-\gamma(A))^{n+1} \beta_{n+1}\;.
$$
Now assuming that we can sum the series and that $(\gamma(A))^{n+1} \beta_{n+1}\tendsto 0$, we get
\begin{align*}
\beta_0\simeq \sum_{j=0}^\infty (-\gamma(A))^j \alpha_j&=v\trsp\left[\sum_{j=0}^{\infty} (-\gamma(A))^j (A^{-1}\Sigma)^j \right] A^{-1} v\\
&=v\trsp (\id+\gamma(A)A^{-1}\Sigma)^{-1}A^{-1} v=v\trsp (A+\gamma(A)\Sigma)^{-1} v\;.
\end{align*}
Note that $\beta_0=v\trsp ({\cal S}+A)^{-1} v$. Hence, it is perhaps reasonable to conjecture that
$$
v\trsp ({\cal S}+A)^{-1} v\simeq v\trsp (A+\gamma(A)\Sigma)^{-1} v\;.
$$
Note that the heuristic also gives us conjectures for approximating the value of $v\trsp ({\cal S}+A)^{-1} (A^{-1}\Sigma)^k v$, for any given $k$, as this is what we called earlier $\beta_k$.
\end{proof}

For dealing with higher powers of $({\cal S}+A)^{-1}$, we also need the following heuristic.
\begin{heuristic}\label{heuristic:squareRegularizedInverse}
Under regularity assumptions, we have
$$
v\trsp ({\cal S}+A)^{-1} B ({\cal S}+A)^{-1} v\simeq v\trsp (A+\gamma(A)\Sigma)^{-1} (B+\xi(A,B) \Sigma) (A+\gamma(A)\Sigma)^{-1} v\;,
$$
where $\gamma(A)$ is defined in Heuristic \ref{heuristic:squareRegularizedInverse} and
$$
\xi(A,B)=\left[\frac{1}{n}\sum_{i=1}^n \frac{R_i^4}{(1+R_i^2\alpha(A))^2}\right]\frac{1}{n} \trace{\Sigma({\cal S}+A)^{-1}B ({\cal S}+A)^{-1}}\;.
$$
Furthermore, $\xi(A,B)$ has an asymptotically deterministic equivalent.
\end{heuristic}

\begin{proof}[Argument : ]
Let us call $f(t)=v\trsp ({\cal S}+A(t))^{-1}v$. Then, since $([M(t)]^{-1})'=[M(t)]^{-1} M'(t) [M(t)]^{-1}$, we have
$$
f'(t)=-v\trsp ({\cal S}+A(t))^{-1} A'(t) ({\cal S}+A(t))^{-1} v\;.
$$
Now, if we consider $A(t)=A+t B$, we see that $A'(t)=B$, and therefore,
$$
f'(0)=-v\trsp ({\cal S}+A)^{-1} B ({\cal S}+A)^{-1} v\;,
$$
which is the quantity we seek to approximate.

Now recall that from Heuristic \ref{heuristic:regularizedInverse}, we gathered that
$$
v\trsp ({\cal S}+A)^{-1}v \simeq v\trsp(A+\gamma(A)\Sigma)^{-1} v\;.
$$
We might be tempted to look at this approximate equality as valid for any $A(t)$ and take the derivative with respect to $t$. Doing so, we would get, if $g(t)=v\trsp ({\cal S}+A(t))^{-1}v$,
\begin{align*}
g'(0)=-v\trsp ({\cal S}+A)^{-1} (B+\gamma(A(t))'(0)\Sigma)({\cal S}+A)^{-1} v\;.
\end{align*}
Now,
$$
\gamma(A(t))=\frac{1}{n}\sum_{i=1}^n \frac{R_i^2}{1+R_i^2 \alpha(A(t))}\;.
$$
Hence, if $h(t)=\gamma(A(t))$ and $k(t)=\alpha(A(t))=\frac{1}{n}\trace{\Sigma({\cal S}+A(t))^{-1}}$, we have
$$
h'(0)=-k'(0)\frac{1}{n}\sum_{i=1}^n \frac{R_i^4}{(1+R_i^2 \alpha(A))^2}\;.
$$
Now, $k'(t)=-\frac{1}{n}\trace{\Sigma({\cal S}+A(t))^{-1}B({\cal S}+A(t))^{-1}}$. Hence,
$$
-k'(0)=\frac{1}{n}\trace{\Sigma({\cal S}+A)^{-1}B({\cal S}+A)^{-1}}\;,
$$
and we conclude that
$$
h'(0)=\left[\frac{1}{n}\trace{\Sigma({\cal S}+A)^{-1}B({\cal S}+A)^{-1}}\right]\left[\frac{1}{n}\sum_{i=1}^n \frac{R_i^4}{(1+R_i^2 \alpha(A))^2}\right]=\xi(A,B)\;.
$$
The fact that $\xi(A,B)$ is asymptotically non-random comes from the same ideas as described in Heuristic \ref{heuristic:regularizedInverse}.
\end{proof}

In our applications, we will also need to understand quantities of the type $\muHat\trsp (\SigmaHat+ A)^{-1} \muHat$ (where $\SigmaHat={\cal S}-\muHat\muHat\trsp$) and $\muHat\trsp (\SigmaHat+ A)^{-1} v$. We naturally treat those cases below and refer the reader to that part of the paper for information about these forms. The main issue is that when dealing with $\SigmaHat$ and $\muHat$, a non-negligible interaction term between the two occurs (it is related to $\muHat\trsp ({\cal S}+A)^{-1}\muHat$) and one needs to be a bit careful to treat it.

\section{Results and proofs}
This section contains the main technical aspects of the paper. In subsection \ref{subsec:Ef-Stein}, we discuss a simple extension of the Efron-Stein inequality. The rest of this section is devoted to showing concentration and invariance of the forms we care about. The method of proof is systematic: we first show concentration (i.e control of the variance or higher moments), and then show that the mean value to which we can reduce the problem does not depend on ``details" of the distribution of the data through a Lindeberg-like argument.

\paragraph{Notations} Before we proceed, let us set some notations. We denote by $\opnorm{M}$ the operator norm (i.e largest singular value) of a matrix $M$. When dealing with several independent random variables $(X_1,\ldots,X_n)$, we use $\Expj{i}{}$ to denote expectation with respect to $X_i$ only. We often use the abbreviation psd for positive semi-definite.

\subsection{A simple extension of the Efron-Stein inequality}\label{subsec:Ef-Stein}
The strategy for our approach is to first show that the quadratic forms we care about, namely
$$
v\trsp ({\cal S}+ A)^{-1}v\;, A\succeq t \id_p\;,
$$
(and variants) are essentially deterministic asymptotically. Modern techniques can be adapted to then get (in simple cases compared to the generality level at which we will work) deterministic approximations of $v\trsp({\cal S}+ A)^{-1} v$ and we can then use those to actually compute the limit of the aforementioned quadratic form. But it is important to get a systematic way of showing that for a certain class of random matrices ${\cal S}$,
$$
v\trsp ({\cal S}+A)^{-1}v\simeq v\trsp \Exp{({\cal S}+ A)^{-1}}v\;.
$$

To do so, we propose to use (essentially) a martingale difference argument, which is not unknown in random matrix theory (\cite{bai99}, \cite{GirkoRandomDets90}, and several others), but whose role may not have been as emphasized as it perhaps should have. However, at the level of generality at which we are working, our proofs become easier if we quickly branch away from standard methods. The following lemma is essentially an $L^p$ variant of the Efron-Stein inequality (see \cite{EfronStein81}, Theorem 2, and also \cite{LugosiConcentration}, Theorem 9). It is surely known in martingale theory but we give a simple proof here for the convenience of the reader.

\begin{lemma}\label{lemma:GeneralizationEfronStein}
Suppose $W=h(X_1,\ldots,X_n)$, where the $X_i$'s are independent. We call ${\cal F}_j=\sigma(X_1,\ldots,X_j)$.  We also denote by $W_m$ a (measurable) function of $(X_1,\ldots,X_{m-1},X_{m+1},\ldots,X_n)$.

Then, we have, for a constant $c$ that depends only on $k$, and for $k\geq 2$,
\begin{equation}\label{eq:GeneralizationEfronStein}
\Exp{\left|W-\Exp{W}\right|^k}\leq c \left(\Exp{\left[\sum_{m=1}^n \Exp{(W-W_m)^2|{\cal F}_{m-1}}\right]^{k/2}}+\sum_{m=1}^n \Exp{|W-W_m|^k}\right)\;.
\end{equation}
\end{lemma}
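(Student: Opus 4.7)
The plan is to realize $W-\Exp{W}$ as a sum of martingale differences with respect to the filtration $\{{\cal F}_m\}$ and then invoke a Rosenthal-type strengthening of Burkholder's square-function inequality. First I would set
$$
D_m := \Exp{W\,|\,{\cal F}_m} - \Exp{W\,|\,{\cal F}_{m-1}},
$$
so that $W - \Exp{W} = \sum_{m=1}^n D_m$ is a sum of martingale differences. The Burkholder--Rosenthal inequality for martingales then gives, for a constant depending only on $k \geq 2$,
$$
\Exp{|W-\Exp{W}|^k} \leq c_k \left( \Exp{\left[\sum_{m=1}^n \Exp{D_m^2\,|\,{\cal F}_{m-1}}\right]^{k/2}} + \sum_{m=1}^n \Exp{|D_m|^k} \right).
$$

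Next I would rewrite each $D_m$ in a form involving $W_m$. Because $W_m$ is a measurable function of $(X_1,\ldots,X_{m-1},X_{m+1},\ldots,X_n)$ and the $X_i$ are independent, the additional information $X_m$ in ${\cal F}_m$ is irrelevant for $W_m$, so $\Exp{W_m\,|\,{\cal F}_m} = \Exp{W_m\,|\,{\cal F}_{m-1}}$. Subtracting this identity inside the definition of $D_m$ yields
$$
D_m = \Exp{W - W_m\,|\,{\cal F}_m} - \Exp{W - W_m\,|\,{\cal F}_{m-1}},
$$
which expresses each martingale difference in terms of the stable centered quantity $W - W_m$.

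From here the bound reduces to two applications of conditional Jensen. For the predictable quadratic variation, the conditional variance formula applied to $\Exp{W-W_m\,|\,{\cal F}_m}$ gives
$$
\Exp{D_m^2\,|\,{\cal F}_{m-1}} = \Exp{(\Exp{W-W_m\,|\,{\cal F}_m})^2\,|\,{\cal F}_{m-1}} - (\Exp{W-W_m\,|\,{\cal F}_{m-1}})^2 \leq \Exp{(W-W_m)^2\,|\,{\cal F}_{m-1}}
$$
after using conditional Jensen on the inner square together with the tower property. For the jump term, the triangle inequality followed by conditional Jensen applied to $|\cdot|^k$ gives
$$
|D_m|^k \leq 2^{k-1}\left( \Exp{|W-W_m|^k\,|\,{\cal F}_m} + \Exp{|W-W_m|^k\,|\,{\cal F}_{m-1}} \right),
$$
whence $\Exp{|D_m|^k} \leq 2^k \Exp{|W-W_m|^k}$. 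Plugging both bounds into the Burkholder--Rosenthal inequality and absorbing constants into $c_k$ gives the lemma.

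The main obstacle, and the only non-elementary input, is the Burkholder--Rosenthal inequality itself: the fact that Burkholder's $L^k$ square-function bound $\Exp{(\sum_m D_m^2)^{k/2}}$ can be split into a predictable conditional-variance piece plus a sum of $k$th moments of the jumps. This is the step that forces $k\geq 2$ and requires a careful deduction from Burkholder's inequality (for example via a Davis decomposition or direct truncation argument); the authors indicate they prove this in the appendix, so in the body I would simply cite it. Everything else --- choosing the right martingale, exploiting independence of $W_m$ from $X_m$, and the conditional Jensen bounds --- is routine.
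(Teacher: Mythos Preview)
Your proposal is correct and follows essentially the same route as the paper: the paper also writes $W-\Exp{W}$ as a sum of martingale differences $V_m=\Exp{W\mid\mathcal{F}_m}-\Exp{W\mid\mathcal{F}_{m-1}}$, invokes the same Burkholder inequality (citing Equation 21.5 of Burkholder's Wald Lectures), uses the identity $V_m=\Exp{W-W_m\mid\mathcal{F}_m}-\Exp{W-W_m\mid\mathcal{F}_{m-1}}$, and bounds the predictable quadratic variation and the $k$th moments of $V_m$ via conditional Jensen exactly as you do. The only cosmetic difference is that the paper routes the bound on $\Exp{V_m^2\mid\mathcal{F}_{m-1}}$ through the intermediate quantity $\Exp{(W-\Expj{m}{W})^2\mid\mathcal{F}_m}$ rather than the conditional-variance formula you use, but the two arguments are equivalent.
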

Note that in the case $k=2$, we recover the Efron-Stein inequality
$$
\var{W}\leq \sum_{m=1}^n \Exp{(W-W_m)^2}\;,
$$
with a possibly worse constant.

In the applications we have in mind, through rank-1 update of inverses of matrices, we will easily get an approximation of $Z$ by a function that does not involve the $m$-th variable and these results will come in particularly handy.

\begin{proof}[Proof of Lemma \ref{lemma:GeneralizationEfronStein}]
We can clearly write $Z-\Exp{Z}$ as a sum of martingale differences: if
\begin{align*}
V_m&=\Exp{Z|{\cal F}_m}-\Exp{Z|{\cal F}_{m-1}},\\
Z-\Exp{Z}&=\sum_{m=1}^n V_m\;.
\end{align*}
Note also that if $Z_m$ is a (measurable) function of all the $X_i$'s except $X_m$,
$$
V_m=\Exp{Z-Z_m|{\cal F}_m}-\Exp{Z-Z_m|{\cal F}_{m-1}}\;,
$$
since $\Exp{Z_m|{\cal F}_m}=\Exp{Z_m|{\cal F}_{m-1}}$.

Now let us call $s(Z)=[\sum_{m=1}^n \Exp{V_m^2|{\cal F}_{m-1}}]^{1/2}$. Recall that Burkholder's inequality implies (see Equation 21.5 in \cite{BurkholderWaldLectures73}) that, if $\Phi$ is a non-decreasing function on $[0,\infty]$ with $\Phi(0)=0$ and $\Phi(2\lambda)\leq c_1 \Phi(\lambda)$, then
$$
\Exp{\Phi(Z)}\leq c \left(\Exp{\Phi(s(Z))}+\sum_{k=1}^n \Exp{\Phi(|V_m|)}\right)\;.
$$
As noted in \cite{BurkholderWaldLectures73}, $\Phi(x)=x^k$ satisfies the conditions needed for the inequality to hold.
Let us remind the reader that it is well known (see \cite{LugosiConcentration}, p.16) that
$$
V_m^2\leq \Exp{(Z-\Expj{m}{Z})^2|{\cal F}_m}\;,
$$
where $\Expj{m}{\cdots}$ is expectation with respect to $X_m$ only, i.e $\Expj{m}{Z}=\Exp{Z|X_1,\ldots,X_{m-1},X_{m+1},\ldots,X_n}$. Also, as noted for instance in \citet{LugosiConcentration},
$$
\Expj{m}{(Z-\Expj{m}{Z})^2}\leq \Expj{m}{(Z-Z_m)^2}\;,
$$
where $Z_m$ is any measurable function of $X_1,\ldots,X_{m-1},X_{m+1},\ldots,X_n$.  We note that
$$
\Exp{\cdot|{\cal F}_{m-1}}=\Exp{\Expj{m}{\cdot}|{\cal F}_{m-1}}\;.
$$
Therefore,
$$
\Exp{V_m^2|{\cal F}_{m-1}}\leq \Exp{[Z-\Expj{m}{Z}]^2|{\cal F}_{m-1}}\leq \Exp{\Expj{m}{[Z-\Expj{m}{Z}]^2}|{\cal F}_{m-1}}\leq \Exp{(Z-Z_m)^2|{\cal F}_{m-1}}\;,
$$
and we have
$$
s(Z)\leq \sqrt{\sum_{m=1}^n \Exp{(Z-Z_m)^2|{\cal F}_{m-1}}}\;.
$$

Hence, because $\Phi$ is non decreasing,
$$
\Exp{\Phi(s(Z))}\leq \Exp{\Phi\left[\sqrt{\sum_{m=1}^n \Exp{(Z-Z_m)^2|{\cal F}_{m-1}}}\right]}
$$

Now let us turn our attention to $\Exp{\Phi(|V_m|)}$, specifically when $\Phi(x)=x^k$. Since $V_m=\Exp{Z-Z_m|{\cal F}_m}-\Exp{Z-Z_m|{\cal F}_{m-1}}$,
$$
|V_m|^k\leq 2^{k-1} \left(\left|\Exp{Z-Z_m|{\cal F}_m}\right|^k+\left|\Exp{Z-Z_m|{\cal F}_{m-1}}\right|^k\right)\;.
$$
Also, when $k\geq 1$, $|x|^k$ is convex, so Jensen's inequality implies that
$$
\left|\Exp{Z-Z_m|{\cal F}_m}\right|^k\leq \Exp{|Z-Z_m|^k|{\cal F}_m}\;.
$$
Therefore,
$$
\Exp{|V_m|^k}\leq 2^k \Exp{|Z-Z_m|^k}
$$
Equation \eqref{eq:GeneralizationEfronStein} now follows easily.
\end{proof}

We note that if we were willing to make stronger assumptions on the data that the ones we will make, we could rely on other concentration inequalities to obtain for instance Gaussian concentration for some of the statistics we are interested in. However, since our study is a robustness study, we made the choice of making weaker assumptions and consequently to have possibly worse concentration inequalities - though of course this allows us to show that our first order results hold for a wider class of distributions.

\subsection{Setup of our study}\label{subsec:SetupOfStudy}
In all that follows we make the following assumptions, which we will casually call ``our usual assumptions".
\begin{itemize}
\item We assume that $p/n$ remains bounded away from 0 and $\infty$, i.e $p\sim n$.
\item the random variables $X_j$ and $Y_j$ which will appear below have the same covariance matrix, $\Sigma_j$, and same mean, 0.
\item $Y_j$'s are independent and so are $X_j$'s.
\item $Y_j$'s are independent of $X_j$'s
\item If $v$ is any fixed vector with norm 1, we have, for $k\geq 1$,
\begin{align}
\label{B-L}
\Exp{|X_i\trsp v|^k}\leq b_L(k;X_i)
\end{align}
\item If $M$ is any deterministic and positive semidefinite matrix with $\opnorm{M}\leq 1$,
\begin{align}
\label{B-Q}
\Exp{|X_j\trsp M X_j-\Exp{X_j\trsp M X_j}|^k}\leq b_{Q_2}(k;X_j)\;.
\end{align}
\item The matrix towards which we shrink, $A$, is such that $A\succeq t\id_p$.
\end{itemize}
Let us note that by Jensen's inequality, there is no loss in generality in assuming that $b_L(k,X_i) \leq \sqrt{b_L(2k;X_i)}$. We will assume this throughout this paper, as this will occasionally be needed
to merge certains bounds arising in our estimates, and thus to shorten our formulas.

Also we note that if $A \succeq t\id$ and $\Sigma_0\succeq 0$, for any $x\in \mathbb{R}^p$, we have
$$
x\trsp (A+\Sigma_0)^{-2} x\leq \frac{1}{t} x\trsp A^{-1} x\;,
$$
which is easily seen since $M \mapsto M^{-1}$ is monotone (and decreasing with respect to the Loewner's order), so $(A+\Sigma_0)^{-1}\preceq t^{-1} \id$; now multiplying on both sides by $(A+\Sigma_0)^{-1/2}$, the inequality (and its order) is preserved and we conclude that $(A+\Sigma_0)^{-2}\preceq t^{-1} (A+\Sigma_0)^{-1}\preceq t^{-1} A^{-1}$.

Finally, let us give some order of magnitude bounds. $b_L$ will generally be very easy to control, as it is a linear form in $X_i$. For instance, if $X_i\sim {\cal N}(0,\id_p)$, we have $X_i\trsp v\sim {\cal N}(0,\norm{v})$, so $b_L(X_i;k)$ is of order 1 for all (finite) $k$. When $X_i$ is ${\cal N}(0,\id_p)$, $X_i\trsp M X_i$ is a weighted $\chi^2$, since $X_i\trsp M X_i\equalInLaw \sum_{k=1}^p \xi_k^2 \lambda_k(M)$ where $\xi_k$ are ${\cal N}(0,1)$ and independent. Hence, we conclude that $b_{Q_2}(k;X_i)$ is of order at most $p^{k/2}$ in this case.\\
The informal bounds we will have in mind are therefore
\begin{align*}
b_{L}(k;X_i)&=\gO(1)\;,\\
\frac{b_{Q_2}(k;X_i)}{p^{k/2}}&=\gO(1)\,\left(=\frac{b_{Q_2}(k;X_i)}{n^{k/2}}\right)\;,
\end{align*}
where the last statement comes from the fact that $p\sim n$.\\
We further note that if $\Sigma$ is a covariance matrix,
\begin{align*}
b_{L}(k;\Sigma^{1/2} X_i)&\leq \opnorm{\Sigma}^{k/2} \, b_{L}(k;X_i)\;,\\
b_{Q_2}(k;\Sigma^{1/2}X_i)&\leq \opnorm{\Sigma}^{k} \, b_{Q_2}(k;X_i)\;.
\end{align*}

To bound $b_{Q_2}$ in certain situations, it will be simpler to work through an auxiliary quantity, $b_{Q_1}$. Let us define it as, if $M$ is any deterministic (psd) matrix with $\opnorm{M}\leq 1$,
$$
\Exp{|\sqrt{Y_j\trsp M Y_j}-\Exp{\sqrt{Y_j\trsp M Y_j}}|^k}\leq b_{Q_1}(k;Y_j)\;.
$$

\paragraph{Connection between $b_{Q_1}$ and $b_{Q_2}$.} $b_{Q_1}$ and $b_{Q_2}$ are of course very closely related. Also, in a concentration context, because $y\mapsto \sqrt{y\trsp M y}$ is Lipschitz with respect to Euclidian norm and convex, it is possible to derive $b_{Q_1}$ for many distributions for which it would be otherwise difficult. For instance Gaussian concentration immediately implies deviation bounds and hence bounds on $b_{Q_1}$ for e.g.\@ centered Gaussian copulas.

Let us now elaborate on the relationship between $b_{Q_1}$ and $b_{Q_2}$. Let us call $Q_M(Y)=Y\trsp M Y$, $q_M(Y)=\sqrt{Q_M(Y)}$, $\Delta_M(Y)=Q_M(Y)-\Exp{Q_M(Y)}$ and $\delta_M(Y)=\sqrt{Q_M(Y)}-\Exp{\sqrt{Q_M(Y)}}$, i.e $\delta_M(Y)=q_M(Y)-\Exp{q_M(Y)}$. Clearly,
\begin{align*}
\Delta_M(Y)&=(q^2_M(Y)-[\Exp{q_M(Y)}]^2)+[\Exp{q_M(Y)}]^2 -\Exp{Q_M(Y)}\\
&=\delta_M(Y)\left[\delta_M(Y)+2\Exp{q_M(Y)}\right] +[\Exp{q_M(Y)}]^2 -\Exp{Q_M(Y)}\\
&=\delta_M(Y)\left[\delta_M(Y)+2\Exp{q_M(Y)}\right]-\var{q_M(Y)}\;.
\end{align*}

Using convexity of $x\mapsto |x|^k$, we conclude that
\begin{align*}
\left|\Delta_M(Y)\right|^k &\leq 3^{k-1} \left[|\delta_M(Y)|^{2k}+2^k |\delta_M(Y)|^k[\Exp{q_M(Y)}]^k +[\var{q_M(y)}]^k\right]\\
&\leq 3^{k-1} \left[|\delta_M(Y)|^{2k}+2^k |\delta_M(Y)|^k[\Exp{Q_M(Y)}]^{k/2} +[\var{q_M(y)}]^k\right]
\end{align*}
Now note that $\Exp{Q_M(Y)}=\trace{M\Sigma}$ and that $\var{q_M(y)}=b_{Q_1}(2;Y)$. So after taking expectations, we have shown that
$$
b_{Q_2}(k;Y)\leq 3^{k-1} \left[b_{Q_1}(2k;Y)+2^k b_{Q_1}(k;Y)\left[\trace{M\Sigma}\right]^{k/2}+[b_{Q_1}(2;Y)]^k\right]\;.
$$

Also, it is instructive to have a sense of the parameters that impact these bounds and how they grow. In the case of normality distributed random variables, $Q_M(Y)$ is a weighted $\chi^2$ with $p$ degrees of freedom, the weights being the eigenvalues of $\Sigma^{1/2}M\Sigma^{1/2}$. In this case, we have $b_{Q_2}(2;Y)=\sup_{M:\opnorm{M}=1} 2 \trace{(\Sigma M)^2}$. When $\opnorm{M}=1$, it is easy to see that $\trace{(\Sigma M)^2}\leq \trace{\Sigma^2}$, since if $A\preceq B$, and both are positive semi-definite, then $\trace{A^2}\geq \trace{B^2}$. Hence,
$b_{Q_2}=2\trace{\Sigma^2}$.

At this point, one might be concerned about the fact that these quantities will be dependent on extreme eigenvalues of $\Sigma$. However, in some situations, we can mitigate this problem. For instance, in the case where we assume that the data are i.i.d with the same covariance $\Sigma$, it will sometime be possible to work with $Y$ having covariance $\id$, by simply replacing the shrinkage factor $A$ by $\Sigma^{-1/2}A \Sigma^{-1/2}$, and the vector $x$ at which we evaluate the shrunken matrix by $\Sigma^{-1/2} x$. This is the case for instance when considering $x\trsp (\SigmaHat+A)^{-1}x$.

\subsubsection{Meaningfulness of the assumptions and applicability}\label{subsubsec:ApplicabilityOfAssumptions}
It is of course important to check that the assumptions we make can be applied to a wide variety of situations. It is therefore instructive to give examples at this point. Here are two.
\begin{itemize}
\item Suppose that $X_i$ satisfies $P(|X_i\trsp v|>t)\leq C \exp(-c t^b)$, and $X_i$ has mean 0. Then
$$
b_L(k;X_i)\leq \frac{C}{c^{k/b}} \frac{k}{b} \Gamma\left(\frac{k}{b}\right)\;.
$$
\item Suppose that $X_i$ satisfies $P(|X_i\trsp v|>t)\leq C t^{-b}$. Then if $b>(k+1)$,
$$
b_L(k;X_i)\leq C\left(1+\frac{1}{b-(k+1)}\right)\;.
$$
\end{itemize}

We note that the condition on the $b_L(k;X_i)$'s is rather minimal: all we need is some concentration of linear forms in $X_i$. 

The exponential deviation inequality might look like a strong assumption. However, it is satisfied by many distributions, with quite non-linear structures  which would be difficult to analyze if one did not resort to concentration of measure arguments (see \cite{ledoux2001} for a very thorough reference, and see for instance \cite{nekCorrEllipD} for spelled-out examples).  For the convenience of the reader, here are some examples taken from this last reference (justifications can be found there):
\begin{itemize}
\item Gaussian random variables, with $\opnorm{\Sigma}$ bounded for instance. (Note that this can be relaxed considerably.)
\item Vectors of the type $\sqrt{p} r$ where $r$ is uniformly
distributed on the unit ($\ell_2$-) sphere is dimension $p$.
\item Vectors $X=\Gamma \sqrt{p} r$, with $r$ uniformly distributed on the unit ($\ell_2$-)sphere in $\mathbb{R}^p$
and with $\Gamma\Gamma'=\Sigma$ with e.g.\@ $\opnorm{\Sigma}$ bounded.
\item Vectors of the type $X=p^{1/b} r$, $1\leq b \leq 2$, where $r$ is ``uniformly" sampled in the 1-$\ell^b$ ball or sphere in $\mathbb{R}^p$. (See
\cite{ledoux2001}, Theorem 4.21, which refers to
\cite{SchechtmanZinn00} as the source of the theorem and explains the details of the sampling.)
\item Vectors $X$ with log-concave density of the type $\me^{-U(x)}$, with the Hessian of $U$
satisfying, for all $x$, $\textrm{Hess}(U)\geq c\id_p$ (see
\cite{ledoux2001}, Theorem 2.7.) For simplicity, though it may not be needed, one can assume that $\opnorm{\Sigma}$ remains bounded.
\item Vectors ($X$) distributed according to a (centered) Gaussian copula, with corresponding correlation matrix, $\Sigma$, having $\opnorm{\Sigma}$ bounded. In other words, if $Z\sim {\cal N}(0,R)$, $X=\Phi(Z)-1/2$, where $\Phi$ is the cdf of the standard Gaussian random variables.
\item Vectors $X=\Sigma^{1/2} Y$, where $Y$ has i.i.d bounded entries . See Corollary 4.10 in
\cite{ledoux2001} for the concentration part.
Here we crucially need the fact that the concentration of measure results we rely on are valid for convex 1-Lipschitz function (and we do not need them for all Lipschitz functions).
\item More ``exotic" examples
involving vectors sampled uniformly from certain Riemannian submanifolds of $\mathbb{R}^p$. We refer to \cite{ledoux2001} Theorems 2.4 and 3.1 for the concentration aspects for these questions.
\end{itemize}

Bounding of $b_{Q_2}$ can either be done directly or using the connection (and bound) between $b_{Q_2}$ and $b_{Q_1}$ we just made explicit. If $X_i$ satisfies a concentration inequality for convex Lipschitz functions, then bounding $b_{Q_1}$ is rather simple and this gives us a bound on $b_{Q_2}$. We now work out the details of this problem. The analysis is standard and follows along the lines of work done in e.g.\@ \cite{ledoux2001}, Chapter 1.

\paragraph{An important example: case of concentrated random variables} As a matter of fact, suppose that $X_i$ is such that for any convex and 1-Lipschitz function $f$, if $X\equalInLaw X_i$,
$$
P(|f(X)-\Exp{f(X)}|>t)\leq C \exp(-c t^b)\; \text{ or } P(|f(X)-\median{f(X)}|>t)\leq C \exp(-c t^b)
$$
Since $f_v(X)=X\trsp v$ is trivially convex and $\norm{v}$-Lipschitz, we see that if the concentration inequality is around the mean, we immediately have
$$
b_L(k;X_i)\leq \frac{C}{c^{k/b}} \frac{k}{b} \Gamma\left(\frac{k}{b}\right)\;.
$$
If we ``only" have a concentration bound around the median, then we can simply use
$$
\Exp{|X\trsp v|^k}\leq 2^{k-1}\left(\Exp{|X\trsp v-\median{X\trsp v}|^k}+|\median{X\trsp v}|^k\right)\;.
$$
The concentration inequality gives us control of the first term, while $|\median{X\trsp v}|=|\median{X\trsp v}-\Exp{X\trsp v}|$ which is also controlled (see Proposition 1.9 in \cite{ledoux2001}) or simply
$$
|\median{X\trsp v}-\Exp{X\trsp v}|\leq \Exp{|X\trsp v-\median{X\trsp v}|}=\int_{0}^{\infty} P(|X\trsp v-\median{X\trsp v}|>t) dt\leq C \int_0^{\infty} \exp(-ct^b dt)\;.
$$
This is of course nothing else than $C\Gamma(1/b)/(bc^{1/b})$, and so we have a uniform bound.

Similarly, when $M$ is a positive definite matrix with $\opnorm{M}\leq 1$, $\sqrt{X_i\trsp M X_i}$ is a convex 1-Lipschitz function (with respect to Euclidian norm for $X_i$). Using the fact that for a non-negative random variable $Z$, $\Exp{Z^k}=\int_0^{\infty} k x^{k-1} P(Z\geq x) dx$, we see that, if our concentration result is around the mean,
$$
b_{Q_1}(X_i;k)=\Exp{\left|\sqrt{X_i\trsp M X_i}-\Exp{\sqrt{X_i\trsp M X_i}}\right|^k}\leq C \int_0^{\infty} k x^{k-1} \exp(-cx^b) dx=\frac{C}{c^{k/b}} \frac{k}{b} \Gamma\left(\frac{k}{b}\right)\;.
$$
Hence, when $X_i$ satisfy a dimension-free concentration inequality, $b_{Q_1}(k;X_i)$ remains bounded uniformly in $p$ and $n$. Therefore, when $\trace{\Sigma}/n$ remains bounded as $n$ grows, so does $b_{Q_2}(k;X_i)/n^{k/2}$, thanks to the relationship between $b_{Q_1}$ and $b_{Q_2}$ we have highlighted above.

The conclusion of this short discussion is that random variables satisfying a dimension free concentration inequality and having covariance such that $\{\trace{\Sigma_i}/n\}_{i=1}^n$ remains uniformly bounded in $n$ and $p$ will have $b_{Q_2}(2;X_i)/n$ and $b_L(4;X_i)$ uniformly bounded (in $n$). Because we will express later our various bounds in terms of these quantities, this observation is very important from the point of view of the applicability of our results.

An important distribution in practice (in particular in financial applications) is the log-normal distribution. Getting bounds for $b_L$ and $b_{Q_2}$ here requires work which we now perform.

\subsubsection{The case of the log-normal distribution}
\def\ee{\mathbf{E}}
\def\pp{\mathbf{P}}

\newcommand{\diag}{\textrm{diag}}

\def\ee{\mathbf{E}}
\def\pp{\mathbf{P}}

Let $Z = (Z_1,\hdots,Z_p)$ be a random vector with a normal distribution
with parameters $\tilde\mu = (\tilde\mu_i)$ and $\tilde\Sigma = (\tilde\sigma_{ij})$.
Then the random vector $Y := (Y_1,\hdots,Y_p)$ with $Y_i := \exp(Z_i)$, $i=1,\hdots,p$,
is said to have a log-normal distribution with parameters $\tilde\mu$ and $\tilde\Sigma$
(see e.g.\@ Mardia, Kent and Bibby (1979), Chapter 2.6).
Note that the moments of the log-normal distribution are all finite, and
can be obtained from the moment generating function of the normal distribution.
Indeed, for any $t = (t_1,\hdots,t_p) \in \mathbb{N}_{0}^p$, we have
\begin{align}
\label{eq:lognormal-moment}
\ee(Y_1^{t_1} \dots Y_p^{t_p}) = \ee(\exp(t\trsp Z)) = \exp(t\trsp \tilde\mu + \tfrac12 t\trsp \tilde\Sigma t) \,.
\end{align}
Set $\tilde\mu_* := \norm{\tilde\mu}_2$ and $\tilde\sigma_*^2 := \opnorm{\tilde\Sigma}$.
Then, for any $t = (t_1,\hdots,t_p) \in \mathbb{N}_{0}^p$, we~have the estimate
\begin{align}
\label{eq:lognormal-momentbound}
\ee(Y_1^{t_1} \dots Y_p^{t_p}) \leq \exp(\|t\|_2 \tilde\mu_* + \tfrac12 \|t\|_2^2 \tilde\sigma_*^2) \,.
\end{align}
Put $X := Y - \ee(Y)$ (where the expectation is taken componentwise, of course).
In this section we will derive bounds for the constants $b_L(2r,X)$ and $b_{Q_2}(2,X)$
associated with the (centered) log-normal distribution.

In the sequel we always assume that $Z = \tilde\mu + \tilde\Sigma^{1/2} \bar{Z}$, where
$\bar{Z}$ is a $p$-dimensional Gaussian random vector with zero~mean and identity covariance.
Our derivation will be based on the following result for the Gaussian distribution
\cite[Chapter~2]{Pisier86ProbaMethodsGeomBanach}:
If $F$ is a continuously differentiable function and $\nabla F$ is the gradient of $F$
(which we always regard as a column vector), then, for~any $r \geq 1$,
$$
     \ee|F(\bar{Z}) - \ee(F(\bar{Z}))|^r
\leq K_r (\tfrac{\pi}{2})^r \, \ee\|\nabla F(\bar{Z})\|_2^{r} \,,
$$
where $K_r$ is the $r$th moment of the standard Gaussian distribution.

For any $z = (z_i) \in \mathbb{R}^p$,
let $\exp(z) := (\exp(z_i)) \in \mathbb{R}^p$ (by slight abuse of notation),
and note that this vector-valued version of the exponential function
is continuously differentiable and its Jacobian matrix $D(z)$
is diagonal with the elements $\exp(z_i)$ on the main diagonal.
With this notation, $Y = \exp(Z) = \exp(\tilde\mu + \tilde\Sigma^{1/2} \bar{Z})$,
and we get, for any $r \geq 1$,
$$
\ee|F(Y) - \ee(F(Y))|^{r} \leq K_r (\tfrac{\pi}{2})^r \, \ee\|\nabla F(Y)' D(Z) \tilde\Sigma^{1/2}\|^r \,.
$$
We now specialize this result to linear and quadratic forms.

\medskip

\textbf{Linear Forms.}
Consider the linear form $F(y) := v'y$,
where $v = (v_i)$ is a~deterministic vector with Euclidean norm $1$.
Then $\nabla F(y) = v$, and we~get, for any integer $r \geq 1$,
$$
\ee|F(Y) - \ee(F(Y))|^{2r} \leq K_{2r} (\tfrac{\pi}{2})^{2r} \opnorm{\tilde\Sigma}^{r} \, \ee( v' D(Z) D(Z) v )^{r} \,.
$$
Now, using the special structure of the diagonal matrix $D(Z)$
and the bound \eqref{eq:lognormal-momentbound}, we find that
\begin{multline*}
  \ee\left( v' D(Z) D(Z) v \right)^{r}
= \sum_{i_1} \dots \sum_{i_r} v_{i_1}^2 \dots v_{i_r}^2 \ee\left( Y^2_{i_1} \dots Y^2_{i_r} \right) \\
\leq \exp(2r \tilde\mu_* + \tfrac12 (2r)^2 \tilde\sigma_*^2) \left( \sum_i v_i^2 \right)^r
= \exp(2r \tilde\mu_* + \tfrac12 (2r)^2 \tilde\sigma_*^2) \,.
\end{multline*}
Combining these estimates, we conclude that
$$
\ee|F(Y) - \ee(F(Y))|^{2r} \leq K_{2r} (\tfrac{\pi}{2})^{2r} \tilde\sigma_*^{2r} \exp(2r \tilde\mu_* + \tfrac12 (2r \tilde\sigma_*)^2) \,.
$$
Since $v'X - \ee(v'X) = v'Y - \ee(v'Y)$, it follows that
$$
b_L(2r,X) \leq K_{2r} (\tfrac{\pi}{2})^{2r} \tilde\sigma_*^{2r} \exp(2r \tilde\mu_* + \tfrac12 (2r \tilde\sigma_*)^2) \,.
$$
In particular, if $\tilde\mu_*$ and $\tilde\sigma_*^2$ are uniformly bounded,
this is of the order $\gO(1)$.

\medskip

\textbf{Quadratic Forms.}
Consider the quadratic form $F(y) := y'My$, where $M$
is a deterministic \emph{symmetric} matrix with operator norm $1$.
Then $\nabla F(y) = 2My$, and we~get, for any integer $r \geq 1$,
$$
\ee|F(Y) - \ee(F(Y))|^{2r} \leq K_{2r} \pi^{2r} \opnorm{\tilde\Sigma}^{r} \, \ee( Y' M D(Z) D(Z) M Y )^{r} \,.
$$
Observing that $Y = D(Z) 1$, where $1$ is the vector consisting of $1$'s,
and setting $N := D(Z) M D(Z)$, it~follows that
$$
\ee|F(Y) - \ee(F(Y))|^{2r} \leq K_{2r} \pi^{2r} \opnorm{\tilde\Sigma}^{r} \, \ee( 1' N^{2r} 1 )\,.
$$
Because most of our bounds depend on $b_{Q_2}(2;X_i)$ only, let us now consider the case $r=1$.
Note that
$$
N_{i,j}=M_{i,j}e^{Z_i+Z_j}\;.
$$
So
$$
N^2_{k,l}=\sum_{j} M_{k,j}M_{j,l}e^{Z_k+2Z_j+Z_l}\;.
$$
Now $Z_k+2Z_j+Z_l=(e_k+2e_j+e_l)\trsp Z$,
so, by \eqref{eq:lognormal-moment},
\begin{align*}
\Exp{\exp^{Z_k+2Z_j+Z_l}}&=\exp((2e_j+e_k+e_l)\trsp \tilde\mu)\exp(\tfrac{1}{2}(2e_j+e_k+e_l)\trsp \tilde\Sigma(2e_j+e_k+e_l))\\
&=\exp(2\tilde\mu_j+\tilde\mu_k+\tilde\mu_l)\exp(2\tilde\Sigma_{j,j}+\tilde\Sigma_{k,k}/2+\tilde\Sigma_{l,l}/2+2\tilde\Sigma_{j,k}+2\tilde\Sigma_{j,l}+\tilde\Sigma_{k,l}) \,.
\end{align*}
Therefore,
\begin{multline*}
\Exp{N^2_{k,l}}=e^{\tilde\Sigma_{k,l}}e^{-\tilde\Sigma_{k,k}/2}e^{-\tilde\Sigma_{l,l}/2} \\
\times \sum_{j}\left(M_{k,j}\exp(\tilde\mu_j+\tilde\mu_k+\tilde\Sigma_{j,j}+\tilde\Sigma_{k,k}+2\tilde\Sigma_{j,k})\right)\left(M_{j,l}\exp(\tilde\mu_j+\tilde\mu_l+\tilde\Sigma_{j,j}+\tilde\Sigma_{l,l}+2\tilde\Sigma_{j,l})\right)\;.
\end{multline*}
Let us now write $A \circ B$ for the Hadamard product of two matrices $A$ and $B$
and $e^{\circ A}$ for the Hadamard exponential of a matrix $A$,
i.e.\@ the matrix with entries $e^{A_{i,j}}$.
Let us call $\Delta$ and $\tilde\Delta$ the diagonal matrices with entries
$e^{\tilde\Sigma_{j,j}}$ and $e^{\tilde\mu_j+\tilde\Sigma_{j,j}}$,
respectively.
Note that $M_{k,j}\exp(\tilde\mu_j+\tilde\mu_k+\tilde\Sigma_{j,j}+\tilde\Sigma_{k,k}+2\tilde\Sigma_{j,k})$
is the $k,j$ entry of the matrix
$
\tilde\Delta (M\circ e^{\circ 2\tilde\Sigma}) \tilde\Delta\;.
$
So
$$
\Exp{N^2}=\left[\Delta^{-1/2}e^{\circ \tilde\Sigma}\Delta^{-1/2}\right]\circ (\tilde\Delta (M \circ e^{\circ 2\tilde\Sigma}) \tilde\Delta)^2\;.
$$
Now recall that for any vector $x$, if $D_x$ is the diagonal matrix with $x$ on its diagonal, (see \cite{hornjohnson94}, Lemma 5.1.5),
$$
x\trsp (A\circ B) x=\trace{D_x A D_x B\trsp}\;.
$$
Hence,
\begin{align*}
  1\trsp \Exp{N^2} 1
&=\trace{\id_n \left[\Delta^{-1/2}e^{\circ \tilde\Sigma}\Delta^{-1/2}\right] \id_n (\tilde\Delta (M \circ e^{\circ 2\tilde\Sigma}) \tilde\Delta)^2} \\
&=\trace{\left[\Delta^{-1/2}e^{\circ \tilde\Sigma}\Delta^{-1/2}\right] (\tilde\Delta (M \circ e^{\circ 2\tilde\Sigma}) \tilde\Delta)^2}\;.
\end{align*}
Now the Hadamard exponential of a psd matrix is psd (see \cite{hornjohnson94}, p. 450). Recall also that for $A$ and $B$ psd matrices,
$A\circ B$ is psd (\cite{hornjohnson94}, p. 309) and
$$
\opnorm{A\circ B}= \lambda_{\max}(A\circ B)\leq \max_i a_{ii} \lambda_{\max}(B)\;,
$$
by theorem 5.3.4 in \cite{hornjohnson94}. Therefore, since $M$ is psd and $\opnorm{M}\leq 1$,
$$
\opnorm{M\circ e^{\circ 2\tilde\Sigma}}\leq \exp(2\max_j \tilde\Sigma_{j,j})\;.
$$
So
$$
\opnorm{\Delta (M\circ e^{\circ 2\tilde\Sigma}) \Delta}\leq \exp(2\max_j \tilde\mu_j + 4\max_j \tilde\Sigma_{j,j})\;.
$$
So we have, using the fact that when $A$ and $B$ are psd, $\trace{AB}\leq \lambda_{max}(B)\trace{A}$, because $A^{1/2}B A^{1/2}\preceq \lambda_{\max}(B) A$,
$$
\trace{\left[\Delta^{-1/2}e^{\circ \tilde\Sigma}\Delta^{-1/2}\right] (\Delta (M\circ e^{\circ V}\circ e^{\circ 2\tilde\Sigma}) \Delta)^2}
\leq p \exp(4 \max_j \tilde\mu_j + 8 \max_j \tilde\Sigma_{j,j}) \;.
$$
Combining the preceding estimates, we~conclude that
$$
\ee|F(Y) - \ee(F(Y))|^{2} \leq K_{2} \pi^{2} \tilde\sigma_*^{2} p \exp(4 \tilde\mu_* + 8 \tilde\sigma_*^2) \;.
$$
Now set $v := 2M\ee(Y)$ and note that $\|v\|_2^2 \leq 4\ee\| Y \|_2^2 \leq 4p \exp(2\tilde\mu_* + 2\tilde\sigma_*^2)$.
Since $X'MX - \ee(X'MX) = (Y'MY - \ee(Y'MY)) - (v'Y - \ee(v'Y))$,
it follows that
$$
b_{Q_2}(2,X) \leq K_{2} \, 4\pi^2 \tilde\sigma_*^{2} p \exp(4 \tilde\mu_* + 8 \tilde\sigma_*^2) \,.
$$
In particular, if $\tilde\mu_*$ and $\tilde\sigma_*^2$ are uniformly bounded,
this is of the order $\gO(p)$.

\subsection{On quadratic forms involving $\bm{(X\trsp D^2 X/n+A)^{-1}}$}\label{subsec:quadFormsInverse}

\subsubsection{On forms of the type $x\trsp \left(X\trsp D^2 X/n+A\right)^{-1}x$}
Throughout the proofs, we will make heavy use of the following notation: call, consistently with the notations used above,
$$
{\cal S}=\frac{1}{n}\sum_{i=1}^n R_i^2 X_iX_i\trsp\triangleq X\trsp D^2 X/n\;,
$$
where $D$ is a diagonal matrix with positive entries containing the $R_i$'s (on its $d_{i,i}$ entry) and $X$ is the $n\times p$ matrix whose $i$-th line is $X_i\trsp$.
We will use the notations
\begin{align*}
M&\triangleq {\cal S}+A\;, A\succeq t \id_p\;,\\
f(X)&\triangleq x\trsp M^{-1} x\;.
\end{align*}
To alleviate the notation, we do not show explicitly in the notations the dependence of $M$ on $A$ (and therefore, implicitly on $t$). However, our bounds will involve them, to allow us to show the impact of having a small $t$ (a small regularization), and also to show clearly how $x\trsp A^{-1}x$ affects our bounds. Similarly, because we are mostly interested in the impact of the randomness in $X_i$'s on the form $f(X)$ we keep track only of this random variable.

$\bullet$ \textbf{Concentration aspects}

\begin{theorem}\label{thm:FirstFormIsDeterministic}
Suppose $X_1,\ldots,X_n\in \mathbb{R}^p$ are independent. Suppose further that $\Exp{X_i}=0$ and, if $v$ is such that $\norm{v}=1$, $\Exp{|X_i\trsp v|^k}\leq b_L(k;X_i)$, where $b_L(k;X_i)$ is a deterministic function depending only on the distribution of $X_i$ and $k$. Call
$$
{\cal S}=\frac{1}{n}\sum_{i=1}^n R_i^2 X_iX_i\trsp\;,
$$
where $R_i$ are deterministic.

Call $M = {\cal S}+A$, and assume that $A$ is positive definite, with $A\succeq t\id_p$. We also call $f(X)=x\trsp M^{-1} x$. Then, if $\norm{x}=1$,
$$
\Exp{|f(X)-\Exp{f(X)}|^k}\leq \frac{c_k}{t^{2k}}\left[\left(\sum_{i=1}^n \left[\frac{R_i^4}{n^2} b_L(4;X_i)\wedge t^2 \right]\right)^{k/2}+\left(\sum_{i=1}^n \left[\frac{R_i^{2k}}{n^{k}} b_L(2k;X_i) \wedge t^{k}\right]\right)\right]\;.
$$
\end{theorem}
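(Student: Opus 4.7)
The plan is to apply the generalized Efron--Stein bound of Lemma~\ref{lemma:GeneralizationEfronStein} to $W = f(X) = x\trsp M^{-1} x$ using the natural leave-one-out approximants
\[
W_m \;=\; x\trsp M_m^{-1} x, \qquad M_m \;:=\; M - \frac{R_m^2}{n} X_m X_m\trsp \;=\; A + \frac{1}{n}\sum_{i \neq m} R_i^2\, X_i X_i\trsp,
\]
which are independent of $X_m$. Dropping a positive semidefinite rank-one term leaves $M_m \succeq A \succeq t\,\id_p$, so $\opnorm{M_m^{-1}} \leq 1/t$, and likewise $\opnorm{M^{-1}} \leq 1/t$. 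Everything else in the proof is driven by this operator-norm control together with a single rank-one identity.

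The one algebraic ingredient is the Sherman--Morrison formula (already recalled in the heuristic subsection): setting $c_m := R_m^2/n$ and $v_m := M_m^{-1} x$, which is measurable with respect to $(X_j)_{j \neq m}$ and satisfies $\norm{v_m} \leq \norm{x}/t = 1/t$, I obtain the exact identity
\[
W_m - W \;=\; \frac{c_m\, (v_m\trsp X_m)^2}{1 + c_m\, X_m\trsp M_m^{-1} X_m} \;\geq\; 0.
\]

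The heart of the argument is then a two-pronged bound on $|W - W_m| = W_m - W$. On one hand, since $0 \leq W, W_m \leq 1/t$ deterministically, $|W - W_m| \leq 1/t$. On the other hand, since the denominator of the Sherman--Morrison expression is at least $1$, $|W - W_m| \leq c_m (v_m\trsp X_m)^2$. Taking the $k$-th power of the pointwise minimum of these two bounds and then the conditional expectation $\Expj{m}{\,\cdot\,}$, which freezes $v_m$, the elementary inequality $\Expj{m}{\min(c, Z)} \leq \min\bigl(c, \Expj{m}{Z}\bigr)$ for deterministic $c$, the linear-form moment assumption $\Expj{m}{|(v_m/\norm{v_m})\trsp X_m|^{2k}} \leq b_L(2k; X_m)$, and $\norm{v_m}^{2k} \leq t^{-2k}$ combine to give
\[
\Expj{m}{|W - W_m|^k} \;\leq\; \frac{1}{t^{2k}}\left[\, \frac{R_m^{2k}}{n^k}\, b_L(2k; X_m) \,\wedge\, t^k\,\right].
\]
The specialization $k = 2$ produces a deterministic upper bound on $\Expj{m}{(W - W_m)^2}$ involving $b_L(4; X_m) \wedge t^2$; being deterministic, that bound automatically controls the smaller conditional expectation $\Exp{(W - W_m)^2 \mid {\cal F}_{m-1}}$ appearing in \eqref{eq:GeneralizationEfronStein}.

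Inserting these two estimates into \eqref{eq:GeneralizationEfronStein} and pulling the factor $t^{-2k}$ outside each sum yields exactly the stated inequality. There is no deep obstacle: once Sherman--Morrison, the operator-norm bound $M_m^{-1} \preceq t^{-1} \id_p$, and the min-of-two-bounds bookkeeping are in place the calculation is mechanical. The one point worth care is resisting the temptation to bound $(v_m\trsp X_m)^2$ by $\norm{v_m}^2 \norm{X_m}^2$, which would force us to control $b_{Q_2}$; treating $v_m\trsp X_m$ as a linear form in $X_m$ and invoking only $b_L$ is precisely what keeps the proof inside the weak moment hypothesis actually assumed, and is what makes the truncation ``$\wedge\, t^k$'' appear naturally as the regime in which regularization by $A$, rather than the data, caps the size of each summand.
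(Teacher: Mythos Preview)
Your proposal is correct and follows essentially the same route as the paper: apply Lemma~\ref{lemma:GeneralizationEfronStein} with the leave-one-out $W_m=x\trsp M_m^{-1}x$, use the Sherman--Morrison identity to get $|W-W_m|=\dfrac{(R_m^2/n)(x\trsp M_m^{-1}X_m)^2}{1+(R_m^2/n)\,X_m\trsp M_m^{-1}X_m}$, bound this pointwise by the minimum of a crude deterministic cap and $(R_m^2/n)(x\trsp M_m^{-1}X_m)^2$, invoke the linear-form moment assumption on $X_m\trsp(M_m^{-1}x)/\norm{M_m^{-1}x}$, and use $M_m\succeq t\id_p$. The only cosmetic difference is that the paper obtains the cap via Cauchy--Schwarz as $x\trsp M_m^{-1}x$ (and tracks $x\trsp A^{-1}x$ through the intermediate bounds before finally using $x\trsp A^{-1}x\le 1/t$), whereas you go straight to $1/t$; both yield the statement as written.
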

We note that the bound given in the proof below shows the actual dependence of this upper bound on $x\trsp A^{-1} x$. Also, it would be easy to handle the situation where $R_i$'s are random but independent on $X_i$'s.
\begin{proof}
We naturally apply Lemma \ref{lemma:GeneralizationEfronStein} to tackle this problem. Let us call $M_i=M-\frac{1}{n}R_i^2 X_iX_i\trsp$.

Using the classic rank-1 update formula,
$$
M^{-1}=M_i^{-1}-\frac{R_i^2}{n}\frac{M_i^{-1}X_iX_i\trsp M_i^{-1}}{1+R_i^2 X_i\trsp M_i^{-1} X_i/n}\;.
$$
Therefore, if $Z=x\trsp M^{-1} x$ and $Z_i=x\trsp M_i^{-1} x$,
$$
Z-Z_i=-\frac{R_i^2}{n}\frac{(x\trsp M_i^{-1}X_i)^2}{1+R_i^2 X_i\trsp M_i^{-1} X_i/n}\;.
$$
Hence,
$$
|Z-Z_i|\leq \left[\frac{R_i^2}{n}(x\trsp M_i^{-1}X_i)^2\right]\wedge (x\trsp M_i^{-1} x)\;,
$$
because $M_i$ is positive definite and $(x\trsp M_i^{-1}X_i)^2\leq (x\trsp M_i^{-1}x)(X_i\trsp M_i^{-1}X_i)$ by the Cauchy-Schwarz inequality.

Let us call $\Expj{i}{}$ expectation with respect to $X_i$ only.
Clearly, using our assumption on $X_i$, we have
$$
\Expj{i}{|X_i\trsp M_i^{-1} x|^k}\leq \norm{M_i^{-1} x}^k b_L(k;X_i)\;.
$$
Hence,
$$
\Expj{i}{|Z-Z_i|^k}\leq \left(\frac{R_i^2}{n}\right)^k (x\trsp M_i^{-2}x)^k b_L(2k;X_i)\wedge (x\trsp M_i^{-1} x)^k\;.
$$
Now, $M_i\succeq A\succeq t \id_p$, so $(x\trsp M_i^{-2}x)\leq t^{-1}x\trsp A^{-1} x$ and $(x\trsp M_i^{-1}x)\leq x\trsp A^{-1} x $, using the fact that $B\longmapsto -B^{-1}$ is operator monotone on Hermitian matrices (\cite{bhatia97}, p. 114).
So we finally have the bounds
\begin{align*}
\Exp{|Z-Z_i|^2|{\cal F}_{i-1}}&\leq \left(\frac{R_i^2}{n}\right)^2 t^{-2}(x\trsp A^{-1} x)^2 b_L(4;X_i)\wedge (x\trsp A^{-1} x)^2\;,\\
\Exp{|Z-Z_i|^k}&\leq \left(\frac{R_i^2}{n}\right)^{k} t^{-k} (x\trsp A^{-1} x)^k b_L(2k;X_i)\wedge (x\trsp A^{-1} x)^k\;.
\end{align*}
Now recalling Equation \eqref{eq:GeneralizationEfronStein}, we have
\begin{align*}
\Exp{|Z-\Exp{Z}|^k}&\leq c_k\left\{ \left[\sum_{i=1}^n \left(\frac{R_i^2}{n}\right)^2 \frac{(x\trsp A^{-1} x)^2}{t^2} b_L(4;X_i)\wedge (x\trsp A^{-1} x)^2\right]^{k/2}\right.
\\&+\left.\sum_{i=1}^n \left[\left(\frac{R_i^2}{n}\right)^{k} \frac{(x\trsp A^{-1} x)^k}{t^{k}} b_L(2k;X_i)\wedge (x\trsp A^{-1} x)^k\right]\right\}\;.
\end{align*}
Using the fact that $A \succeq t\id_p$ and $\norm{x}=1$, we have $x\trsp A^{-1} x\leq t^{-1}$, and this gives the result announced in the theorem.
\end{proof}

$\bullet$ \textbf{Lindeberg approach and why the limit does not depend on the distribution of $\bm{X_i}$} We are now interested in showing that for a broad class of distribution for $X_i$, the limit of
$$
x\trsp (X\trsp D^2X/n+A)^{-1} x
$$
or more precisely
$$
\Exp{x\trsp (X\trsp D^2X/n+A)^{-1} x}
$$
does not depend on the distribution of $X_i$. We have already seen that we can control the fluctuation of $x\trsp (X\trsp D^2X/n+A)^{-1} x$ around its mean for a broad class of distributions, so all we need to show is that they all have the same means.

We have the following theorem.
\begin{theorem}\label{thm:FirstLimitIsSameForEveryone}
Suppose $X_i$ are i.i.d and $Y_i$ are i.i.d and follow the assumptions mentioned above (at the beginning of Subsection \ref{subsec:SetupOfStudy}). Assume that $D$ is a deterministic diagonal matrix, whose diagonal entries are positive and denoted by $R_j$. We~assume that $A$ is a positive definite matrix with $A\succeq t\id_p$, for some $t>0$.

Then, for any given vector $x$, if $f(X)=x\trsp (X\trsp D^2 X/n+A)^{-1} x\;,$

\begin{align}
\left|\Exp{f(X)-f(Y)}\right|&\leq  \sum_{j=1}^n U_j(X_j)+U_j(Y_j) \; \; \text{where } \notag \\
U_j(X_j)&\leq \frac{R_j^4}{n^{3/2}} \left(\frac{x\trsp A^{-1}x}{t^2} \sqrt{b_L(4;X_j)}\sqrt{b_{Q_2}(2;X_j)/n}\right) \wedge \frac{R_j^2}{n} \frac{x\trsp A^{-1}x}{t} b_L(2;X_j) \;.
\label{eq:boundDifffXAndfY}
\end{align}

\end{theorem}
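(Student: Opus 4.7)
The argument combines a Lindeberg-style one-at-a-time swap with a Sherman-Morrison rank-$1$ decomposition. Introduce the hybrid matrices $Z^{(j)}$ with rows $X_1,\ldots,X_j,Y_{j+1},\ldots,Y_n$ for $j=0,\ldots,n$, so that $Z^{(0)}=Y$, $Z^{(n)}=X$, and
$$
\Exp{f(X)-f(Y)}=\sum_{j=1}^n\Exp{f(Z^{(j)})-f(Z^{(j-1)})}.
$$
It suffices to control each single-row swap by $U_j(X_j)+U_j(Y_j)$. For fixed $j$, condition on all rows other than the $j$-th, set $\alpha\triangleq R_j^2/n$, and let $N$ denote the shrunken matrix built from those rows (plus $A$), so $N\succeq A\succeq t\id_p$ and $N$ is independent of $X_j,Y_j$. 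The Sherman-Morrison formula applied to $(N+\alpha vv\trsp)^{-1}$ for $v\in\{X_j,Y_j\}$ gives
$$
f(Z^{(j)})-f(Z^{(j-1)})=g(Y_j)-g(X_j),\qquad g(v)\triangleq \frac{\alpha (x\trsp N^{-1}v)^2}{1+\alpha v\trsp N^{-1}v}.
$$

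The crux is to stabilize the denominator of $g(v)$ against a deterministic counterpart. Let $b(v)\triangleq \alpha v\trsp N^{-1}v$ and $\bar b\triangleq\alpha\trace{N^{-1}\Sigma_j}=\Exp{b(v)\,|\,N}$, and write
$$
g(v)=\frac{\alpha(x\trsp N^{-1}v)^2}{1+\bar b}-\mathrm{err}(v),\qquad\mathrm{err}(v)\triangleq\frac{\alpha(x\trsp N^{-1}v)^2\,(b(v)-\bar b)}{(1+\bar b)(1+b(v))}.
$$
The $N$-conditional mean of the leading term equals $\alpha\,x\trsp N^{-1}\Sigma_j N^{-1}x/(1+\bar b)$, which depends only on the shared covariance $\Sigma_j$ and therefore cancels in $\Exp{g(X_j)-g(Y_j)\,|\,N}$. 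Consequently
$$
|\Exp{f(Z^{(j)})-f(Z^{(j-1)})}|\leq \Exp{|\mathrm{err}(X_j)|}+\Exp{|\mathrm{err}(Y_j)|},
$$
and it remains to show $\Exp{|\mathrm{err}(v)|}\leq U_j(v)$ for $v\in\{X_j,Y_j\}$.

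I plan to obtain the minimum appearing in $U_j$ from two separate pointwise estimates on $|\mathrm{err}(v)|$. For the first branch, since $(1+\bar b)(1+b(v))\geq 1$, Cauchy-Schwarz conditionally on $N$ gives
$$
\Exp{|\mathrm{err}(v)|\,|\,N}\leq \alpha\sqrt{\Exp{(x\trsp N^{-1}v)^4\,|\,N}}\cdot\sqrt{\Exp{(b(v)-\bar b)^2\,|\,N}}.
$$
The first factor is bounded by $t^{-1}(x\trsp A^{-1}x)\sqrt{b_L(4;v)}$ via \eqref{B-L} combined with the Loewner inequality $x\trsp N^{-2}x\leq t^{-1}x\trsp A^{-1}x$ recalled in Subsection~\ref{subsec:SetupOfStudy}; the second is bounded by $(\alpha/t)\sqrt{b_{Q_2}(2;v)}$ after rescaling $tN^{-1}$ to have operator norm at most $1$ and invoking \eqref{B-Q}. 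Their product matches the first branch of the claimed $U_j$. For the second branch, the elementary inequality $|b(v)-\bar b|\leq b(v)+\bar b\leq(1+\bar b)(1+b(v))$ (valid since $b(v),\bar b\geq 0$) yields $|\mathrm{err}(v)|\leq\alpha(x\trsp N^{-1}v)^2$, whence \eqref{B-L} gives $\Exp{|\mathrm{err}(v)|\,|\,N}\leq \alpha\cdot t^{-1}(x\trsp A^{-1}x)\,b_L(2;v)$. Taking the smaller of the two estimates produces $U_j(v)$, and summing over $j$ concludes.

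The main obstacle is really the middle decomposition: because $b(v)=\alpha v\trsp N^{-1}v$ is of order $\trace{\Sigma_j}/n$ and hence not negligible, one cannot Taylor-expand the denominator of $g(v)$ around $0$; replacing it first by the deterministic $\bar b$ is precisely what unlocks the covariance-matching cancellation between $X_j$ and $Y_j$. The remaining steps are standard manipulations combining Loewner monotonicity of $M\mapsto M^{-1}$ with the moment hypotheses \eqref{B-L} and \eqref{B-Q}.
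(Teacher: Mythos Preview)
Your proposal is correct and follows essentially the same approach as the paper: a Lindeberg one-row swap, Sherman--Morrison expansion around the leave-one-out matrix, replacement of the random denominator by its $N$-conditional mean $\bar b$ so that the leading term cancels via the matching covariances, and the same two bounds on the remainder (Cauchy--Schwarz for the first branch, a pointwise bound on the denominator ratio for the second). The only cosmetic difference is that the paper obtains the second branch from $|1/(1+b(v))-1/(1+\bar b)|\leq 1$, whereas you use the equivalent algebraic inequality $|b(v)-\bar b|\leq (1+\bar b)(1+b(v))$; both yield $|\mathrm{err}(v)|\leq \alpha(x\trsp N^{-1}v)^2$.
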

Let us discuss briefly this result. We see that assuming $\max_j \opnorm{\Sigma_j}$ is bounded, and making assumptions on $b_L$ and $b_{Q_2}$ that match the Gaussian situation (i.e $b_L$ and $b_{Q_2}/n$ uniformly bounded in $n$), the upper bound on the error is of the form (up to constants)
$$
\sum_{i=1}^n \frac{R_i^4}{n^{3/2}}\wedge \frac{R_i^2}{n}\;.
$$

If the $R_i$'s are given
by square-integrable i.i.d.\@ random variables (the same for each $n$),
we have
$$
\Exp{\frac{R_i^4}{n^{3/2}}\wedge \frac{R_i^2}{n}} = o(n^{-1}) \;.
$$

Hence, when this is the case, and the assumptions of our discussion are met, we have
$$
\Exp{f(X)-f(Y)}\tendsto 0\;,
$$
where $\Exp{\cdot}$ is here expectations with respect to all sources of random variables (i.e $R_i$'s, $X_i$'s and $Y_i$'s.)
Simple computations also show that if $R_i$'s are random and have $2+\eps$ moments, with $\eps\leq 2$,
$$
\Exp{\frac{R_i^4}{n^{3/2}}\wedge \frac{R_i^2}{n}}\leq \frac{K}{n^{1+\eps/4}}\;.
$$
Hence, when this is the case, we have
$$
\Exp{f(X)-f(Y)}\tendsto 0
$$
provided that $b_L$ and $b_{Q_2}$ do not grow too fast to infinity. If we are in a situation where $Y_j=\Sigma_j^{1/2} Y_0$ where $Y_0$ is such that $b_L(k;Y_0)=\gO(1)$ and $b_{Q_2}(k;Y_0)=\gO(1)$, the theorem can handle the case where $\opnorm{\Sigma_j} \ll n^{\eps/8}$ (which allows $\opnorm{\Sigma_j}$ go to infinity). Note that because we are interested in covariance matrices, we will always require $R_i$ to have at least 2 moments and so this theorem essentially covers all the cases of interests to us.

The meaning of the theorem is therefore that under these assumptions, i.e when the upper bound goes to 0 for $Y_j$ and say $X_j$ are gaussians, all we have to do is simply to understand $\Exp{f(X)}$ when $X$ is Gaussian. For this task, we can use many of the nice and well-known properties of the Gaussian distribution (which include strong concentration properties).

\begin{proof}

It is clear that $\Exp{f(X)}$ exists since $A$ is positive definite. We employ the Lindeberg approach (\cite{Lindeberg22}, and e.g.\@ \cite{StroockProbaBook}) to show that the limit does not depend on the distribution of $X_i$ (note that this technique has been used in other random matrix theoretic questions, e.g.\@ \cite{ChatterjeeSimpleInvariance05}, though the results of this paper do not seem directly applicable; note also that here all our expansions are exact whereas often in the Lindeberg method Taylor approximation arguments are used. That is why we choose to present such an approach.). Let us call
$$
Z_j=\left(Y_1,Y_2,\ldots,Y_{j-1},X_j,\ldots,X_n\right)\;,
$$
with the convention that $Z_1=(X_1,\ldots,X_n)$ and $Z_{n+1}=(Y_1,\ldots,Y_n)$. Clearly,
$$
\Exp{f(X)-f(Y)}=\sum_{j=1}^n \Exp{f(Z_j)-f(Z_{j+1})}\;.
$$
Now let us call $M_j=A+Z_j\trsp D^2 Z_j/n-R_j^2 X_j X_j\trsp/n$. Note that
$$
f(Z_j)=x\trsp (M_j+R_j^2 X_j X_j\trsp)^{-1} x\;, \; f(Z_{j+1})=x\trsp (M_j+R_j^2 Y_j Y_j\trsp)^{-1} x\;,
$$
and $M_j$ is independent of both $X_j$ and $Y_j$.
Therefore, using the fact that $(M+uu\trsp)^{-1}=M^{-1}-M^{-1}uu\trsp M^{-1}/(1+u\trsp M^{-1} u)$ (see \cite{hj}, Chapter 0), we have
$$
f(Z_j)-f(Z_{j+1})=\frac{R_j^2}{n}\left[\frac{(x\trsp M_j^{-1}Y_j)^2}{1+\frac{R_j^2}{n}Y_j\trsp M_j^{-1} Y_j}-
\frac{(x\trsp M_j^{-1}X_j)^2}{1+\frac{R_j^2}{n}X_j\trsp M_j^{-1} X_j}\right]\;.
$$
Since $Y_j$ and $X_j$ have the same covariance matrix, $\Sigma_j$, if we call $d_j=\trace{M_j^{-1}\Sigma_j}$, and $q_j(Y_j)=Y_j\trsp M_j^{-1} Y_j$, we see that
\begin{align}
\label{eq:fromQtoD}
\frac{1}{1+R_j^2 q_j(Y_j)/n}=\frac{1}{1+R_j^2d_j/n}+\tfrac{1}{n}R_j^2\delta_j(Y_j)\;,
\end{align}
where
\begin{align}
\label{eq:defDelta}
\delta_j(Y_j) := \frac{\left(d_j-q_j(Y_j)\right)}{(1+R_j^2 q_j(Y_j)/n)(1+R_j^2 d_j/n)} \,.
\end{align}
Hence, we see that
$$
\frac{(x\trsp M_j^{-1}Y_j)^2}{1+\frac{R_j^2}{n}q_j(Y_j)}=\frac{(x\trsp M_j^{-1}Y_j)^2}{1+\frac{R_j^2}{n}d_j}+
\tfrac{1}{n}R_j^2(x\trsp M_j^{-1}Y_j)^2\delta_j(Y_j)\;.
$$
Therefore,
\begin{align*}
f(Z_j)-f(Z_{j+1})&=\frac{R_j^2}{n}\left[\frac{(x\trsp M_j^{-1}Y_j)^2}{1+\frac{R_j^2}{n}d_j}-\frac{(x\trsp M_j^{-1}X_j)^2}{1+\frac{R_j^2}{n}d_j}\right]\\
&+\frac{R_j^4}{n^2}\left[(x\trsp M_j^{-1}Y_j)^2 \delta_j(Y_j)-(x\trsp M_j^{-1}X_j)^2\delta_j(X_j)\right]\\
&={\cal R}_j(1)+{\cal R}_j(2)\;.
\end{align*}
Interestingly, the first term in the above expansion, ${\cal R}_j(1)$ has mean 0, since our assumption of independence (on $X_j$'s and $Y_j$'s) guarantees that $M_j$ is independent of both $Y_j$ and $X_j$.
So we have shown that
$$
\Exp{f(X)-f(Y)}=\sum_{j=1}^n \Exp{f(Z_j)-f(Z_{j+1})-{\cal R}_j(1)} = \sum_{j=1}^n \Exp{{\cal R}_j(2)} \,.
$$

On the one hand, using the Cauchy-Schwarz inequality, we get
$$
\Expj{j}{(Y_j\trsp M_j^{-1} x)^2 |\delta_j(Y_j)|}\leq \sqrt{\Expj{j}{(Y_j\trsp M_j^{-1}x)^4}}\sqrt{\Expj{j}{\delta_j(Y_j)}^2}\;.
$$
By our assumptions \eqref{B-L} and \eqref{B-Q}, we have
$$
\Expj{j}{(Y_j\trsp M_j^{-1}x)^4}\leq (x\trsp M_j^{-2}x)^2 b_L(4;Y_j)\leq \left(\frac{x\trsp A^{-1} x}{t}\right)^2 b_L(4;Y_j)
$$
and
\begin{align}
\label{eq:deltaBound2}
\Expj{j}{\delta_j(Y_j)}^2\leq b_{Q_2}(2;Y_j)\opnorm{M_j^{-1}}^2\leq b_{Q_2}(2;Y_j) \frac{1}{t^2}\;,
\end{align}
since $M_j^{-1}\preceq A^{-1} \preceq t^{-1} \id$.
Putting everything together, and taking expectations over the other variables, we finally obtain
\begin{equation}\label{eq:firstBoundRhojYJ}
\Exp{(Y_j\trsp M_j^{-1} x)^2 |\delta_j(Y_j)|} \leq \frac{x\trsp A^{-1}x}{t^2} \sqrt{b_L(4;Y_j)}\sqrt{b_{Q_2}(2;Y_j)} \;.
\end{equation}

On the other hand, by construction, we have
\begin{align}
\label{eq:deltaBound}
\left|\tfrac{1}{n} R_j^2 \delta_j(Y_j)\right|=\left|\frac{1}{1+R_j^2d_j/n}-\frac{1}{1+R_j^2 q_j(Y_j)/n}\right|\leq 1\;,
\end{align}
because both $d_j$ and $q_j(Y_j)$ are non-negative.
Thus, we see that
$$
\tfrac{1}{n} R_j^2 \Expj{j}{(Y_j\trsp M_j^{-1} x)^2 |\delta_j(Y_j)|}\leq b_L(2;Y_j) x\trsp M_j^{-2}x \leq b_L(2;Y_j) \frac{x\trsp A^{-1}x}{t}
$$
and therefore,
\begin{equation}\label{eq:secondBoundRhojYJ}
\tfrac{1}{n} R_j^2 \Exp{(Y_j\trsp M_j^{-1} x)^2 |\delta_j(Y_j)|}\leq b_L(2;Y_j) \frac{x\trsp A^{-1}x}{t}\;.
\end{equation}

Naturally, the same bounds hold for $\Exp{(X_j\trsp M_j^{-1} x)^2 \delta_j(X_j)}$. We conclude that
\begin{align*}
\left|\Exp{f(X)-f(Y)}\right|&\leq  \sum_{j=1}^n \left[\frac{R_j^4}{n^{3/2}} \left(\frac{x\trsp A^{-1}x}{t^2} \sqrt{b_L(4;Y_j)}\sqrt{b_{Q_2}(2;Y_j)/n}\right) \wedge \frac{R_j^2}{n} \frac{x\trsp A^{-1}x}{t} b_L(2;Y_j)\right] \notag \\
&+\sum_{j=1}^n \left[\frac{R_j^4}{n^{3/2}} \left(\frac{x\trsp A^{-1}x}{t^2} \sqrt{b_L(4;X_j)}\sqrt{b_{Q_2}(2;X_j)/n}\right) \wedge \frac{R_j^2}{n} \frac{x\trsp A^{-1}x}{t} b_L(2;X_j)\right] \;,
\end{align*}
as announced in the theorem.
\end{proof}

\subsubsection{On quadratic forms involving $\bm{DX(X\trsp D^2X/n+A)^{-1}X\trsp D}$}\label{subsec:quadFormsInverseAndMean}

We are now interested in quadratic forms of the type
$$
\alpha\trsp \frac{DX}{\sqrt{n}}(X\trsp D^2 X/n+A)^{-1}\frac{X\trsp D}{\sqrt{n}}\alpha\;,
$$
which are very useful when working with both sample means and sample covariance matrices. $\alpha$ here will be a vector with norm bounded away from zero and from infinity in most cases.
Hence, we will focus without loss of generality on the case $\norm{\alpha}=1$.

Our strategy is once again to use the Lindeberg method in connection with Efron-Stein type variance bounds.

Before we turn to the technical aspects of the questions, let us make a bit more explicit our motivation. Let us call, if $\ellipticalData_i=\mu+R_i X_i$, $D$ a diagonal matrix containing the $R_i$'s, and $1$ is an $n$-dimensional vectors having 1 in all its entries,
$$
\SigmaHat=\frac{1}{n} \ellipticalData\trsp \ellipticalData-\muHat_\ellipticalData\muHat_\ellipticalData\trsp=\frac{1}{n}X\trsp D^2 X-\frac{1}{n^2}X\trsp D\trsp 11\trsp DX\;.
$$
$\SigmaHat$ is naturally the covariance matrix of our data (we assume that we observe the $\ellipticalData_i$'s). Without loss of generality, we can assume that $\mu=0$ and do so from now on in this discussion.
Let us call $\muHat=X\trsp D\trsp 1/n$, the mean of the vectors $R_i X_i$'s.
Suppose we are interested in
$$
\muHat_\ellipticalData\trsp (\SigmaHat+A)^{-1} \muHat_\ellipticalData=(\mu+\muHat)\trsp (\SigmaHat+A)^{-1} (\mu+\muHat)\;.
$$
These quantities occur naturally in various optimization problems, as well as in theoretical investigations of classification problems. Calling as before
$$
M=X\trsp D^2 X/n+A\;, \text{ we see that } \SigmaHat+A=M-\muHat\muHat\trsp\;,
$$
and hence, using the rank-1 update formula,
$$
\muHat\trsp (\SigmaHat+A)^{-1} \muHat=1-\frac{1}{1-\muHat\trsp M^{-1}\muHat}\;.
$$
Spelling out $M$ and $\muHat$, we see that
$$
\muHat\trsp M^{-1}\muHat=\alpha\trsp \frac{DX}{\sqrt{n}}(X\trsp D^2 X/n+A)^{-1}\frac{X\trsp D}{\sqrt{n}}\alpha\;,
$$
with $\alpha=1/\sqrt{n}$. Hence our motivation for understanding these problems.\\
Naturally, we will also be interested in
$$
\mu\trsp (\SigmaHat+A)^{-1}\mu=\mu\trsp M^{-1} \mu-\frac{(\mu\trsp M^{-1}\muHat)^2}{1-\muHat\trsp M^{-1} \muHat}\;.
$$
and
$$
\mu\trsp (\SigmaHat+A)^{-1}\muHat=\frac{\muHat\trsp M^{-1}\mu}{1-\muHat\trsp M^{-1} \muHat}\;.
$$

$\bullet$ \textbf{Lindeberg Approach}\\
We are now interested in
$$
g(\alpha;X)=\alpha\trsp \frac{D X}{\sqrt{n}}(X\trsp D^2 X/n+A)^{-1}\frac{X\trsp D}{\sqrt{n}}\alpha\;.
$$
The entries of $D$ are assumed to be deterministic and non-negative at this point. It is clear that this can be done without loss of generality, since $(D\alpha)_i=d_{i,i}\alpha_i$ (so negative signs in $D$ could be handled by changing the corresponding signs in $\alpha$, which would not affect $\norm{\alpha}$).

Let us observe that
\begin{align}
\label{eq:alphaMalphaBound}
|g(\alpha;X)| \leq \|\alpha\|^2 \,.
\end{align}

Indeed, setting
\begin{equation}\label{eq:DefM}
M\triangleq (X\trsp D^2 X/n+A) \succcurlyeq 0\;,
\end{equation}
we have, since $M \succeq (X\trsp D^2 X/n)$,
$$
DX M^{-1} X\trsp D\preceq \id_n\;,
$$
since $\id_n$ is greater in the Loewner order than any projection matrix.

\begin{theorem}
\label{thm:ThirdLimitIsSameForEveryone}
Suppose $X_i$ are i.i.d and $Y_i$ are i.i.d and follow the assumptions mentioned above (see Subsection \ref{subsec:SetupOfStudy}). Assume that $D$ is a deterministic diagonal matrix, whose diagonal entries are positive and denoted by $R_j$. We assume that $A$ is a positive definite matrix with $A\succeq t\id_p$, for some $t>0$.
Let us call, for a deterministic vector $\alpha$ with $\norm{\alpha}=1$ (without loss of generality),
$$
g(\alpha;X)=\alpha\trsp \frac{D X}{\sqrt{n}}(X\trsp D^2 X/n+A)^{-1}\frac{X\trsp D}{\sqrt{n}}\alpha\;.
$$
Then
\begin{align}
\left|\Exp{g(\alpha;X)-g(\alpha;Y)}\right|&\leq \sum_{i=1}^n U(X_i;R_i;\alpha_i)+U(Y_i;R_i;\alpha_i)\label{eq:LindebergForQuadFormsOfTheMeanType}\;,
\end{align}
where $U_i(X_i;R_i;\alpha_i)$ are deterministic quantities depending only on the distribution of $X_i$. We have, for a numerical constant $K$ that does not depend on the distribution of $X_i$ and $Y_i$, and not on $n$ or $p$ either,
\begin{align*}
\sum_{i=1}^n U(X_i;R_i;\alpha_i) \leq K \sum_{i=1}^n \left( \frac{R_i^2}{\sqrt{n} t}\sqrt{b_{Q_2}(2;X_i)/n} \wedge 1 \right)\left(\alpha_i^2+\frac{R_i^2}{nt} \sqrt{b_L(4;X_i)}\right)\;.
\end{align*}
\end{theorem}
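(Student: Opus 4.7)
I would follow the Lindeberg telescoping strategy already used for Theorem \ref{thm:FirstLimitIsSameForEveryone}. Setting $Z_j := (Y_1,\ldots,Y_{j-1},X_j,\ldots,X_n)$, decompose
$$
\Exp{g(\alpha;X)-g(\alpha;Y)}=\sum_{j=1}^n \Exp{g(\alpha;Z_j)-g(\alpha;Z_{j+1})}\;,
$$
and analyze, for each fixed $j$, the effect of swapping the row at position $j$ from $X_j$ to $Y_j$. Let $M_j$ denote the leave-one-out matrix built from the rows of $Z_j$ other than the $j$-th, and let $w := \tfrac{1}{\sqrt n}\sum_{k\neq j}\alpha_k R_k (\mathrm{row}_k Z_j)$; both are independent of the variable at position~$j$.

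Combining the splitting $\tfrac{X\trsp D\alpha}{\sqrt n}=w+\tfrac{\alpha_j R_j u}{\sqrt n}$ (with $u$ the row at position $j$) with the Sherman--Morrison formula for $M^{-1} = (M_j + \tfrac{R_j^2}{n}uu\trsp)^{-1}$, a short computation yields the clean identity
$$
g(\alpha;Z_j) \;=\; w\trsp M_j^{-1}w + \alpha_j^2 \;-\; \frac{\bigl(\alpha_j - \tfrac{R_j}{\sqrt n}\, w\trsp M_j^{-1}u\bigr)^{2}}{1+R_j^2 q_j(u)/n}\;,\qquad q_j(u) := u\trsp M_j^{-1}u\;.
$$
Substituting the decomposition \eqref{eq:fromQtoD}--\eqref{eq:defDelta} for $(1+R_j^2 q_j(u)/n)^{-1}$ splits $g(\alpha;Z_j)$ into a surrogate piece, whose conditional expectation given $(M_j,w)$ depends only on $\trace(M_j^{-1}\Sigma_j)$ and $w\trsp M_j^{-1}\Sigma_j M_j^{-1}w$ (hence is identical for $u=X_j$ and $u=Y_j$), plus an error
$$
\mathcal{E}(u) \;:=\; -\tfrac{R_j^2}{n}\,\delta_j(u)\,\bigl(\alpha_j-\tfrac{R_j}{\sqrt n}w\trsp M_j^{-1}u\bigr)^{2}\;.
$$
The contribution of index $j$ therefore collapses to $\Exp{\mathcal{E}(X_j)-\mathcal{E}(Y_j)}$, which I would bound by $\Exp{|\mathcal{E}(X_j)|}+\Exp{|\mathcal{E}(Y_j)|}$.

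Using $(a-b)^2\leq 2a^2+2b^2$, estimating $\Exp{|\mathcal{E}(u)|}$ reduces to controlling $\alpha_j^2 \tfrac{R_j^2}{n}\Exp{|\delta_j(u)|}$ and $\tfrac{R_j^4}{n^2}\Exp{(w\trsp M_j^{-1}u)^{2}|\delta_j(u)|}$. The first is at most $\alpha_j^2$ by \eqref{eq:deltaBound}, and at most $\alpha_j^2 \tfrac{R_j^2}{\sqrt n\, t}\sqrt{b_{Q_2}(2;u)/n}$ by Cauchy--Schwarz together with \eqref{eq:deltaBound2}; taking the minimum produces the $\alpha_j^2$ piece of the announced bound. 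For the second, the key dimension-free input is $w\trsp M_j^{-1}w\leq 1$, which is an instance of \eqref{eq:alphaMalphaBound} applied to the $(n-1)$-sample reduced system: $w$ is exactly $\tfrac{1}{\sqrt n}\tilde X\trsp\tilde D\tilde\alpha$, with $\|\tilde\alpha\|^2\leq 1$, and $M_j$ is the corresponding $A$-shifted reduced covariance. From this, $\|M_j^{-1}w\|^2\leq t^{-1}\,w\trsp M_j^{-1}w\leq t^{-1}$, so that $\Exp{(w\trsp M_j^{-1}u)^{2}}\leq b_L(2;u)/t$ and $\Exp{(w\trsp M_j^{-1}u)^{4}}\leq b_L(4;u)/t^2$. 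Combining either estimate with the trivial bound $|R_j^2\delta_j(u)/n|\leq 1$ or with a further Cauchy--Schwarz application of \eqref{eq:deltaBound2} on $\delta_j$, and using the convention $b_L(2;u)\leq\sqrt{b_L(4;u)}$ to merge exponents, yields the $\tfrac{R_j^2}{nt}\sqrt{b_L(4;u)}$ factor. Summing over $j$ produces the claimed inequality.

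The main obstacle is really the uniform control $w\trsp M_j^{-1}w\leq 1$: without it, the Cauchy--Schwarz steps would pay a factor of $\|w\|^2$, which for heavy-tailed data can be of order $p$ and would destroy the rate. The structural observation that rescues the argument is that $w$ itself has precisely the form covered by \eqref{eq:alphaMalphaBound} once one restricts to the reduced $(n-1)$-sample design -- so the quadratic form under analysis is a priori bounded by the squared norm of the sub-vector $\tilde\alpha$, and hence by $1$. This is what keeps every estimate dimension-free and allows the bound to depend only on the moment quantities $b_L(4;\cdot)$ and $b_{Q_2}(2;\cdot)$ of each individual row.
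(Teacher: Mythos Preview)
Your proof is correct and follows essentially the same route as the paper: the same Lindeberg telescoping, the same rank-one identity $g(\alpha;Z_j)=w\trsp M_j^{-1}w+\alpha_j^2-(1+\tfrac{R_j^2}{n}q_j)^{-1}(\alpha_j-\tfrac{R_j}{\sqrt n}\zeta_j)^2$, the same replacement of $q_j$ by $d_j$ via \eqref{eq:fromQtoD}--\eqref{eq:defDelta}, and the same two-sided bound on the residual $\tfrac{R_j^2}{n}\psi_j\delta_j$ coming from \eqref{eq:deltaBound} versus Cauchy--Schwarz with \eqref{eq:deltaBound2}. Your emphasis on the structural bound $w\trsp M_j^{-1}w\le 1$ (the paper phrases it as $\|M_j^{-1}m_j\|\le t^{-1/2}$, an immediate consequence) is exactly the point that keeps the estimates dimension-free.
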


Once again when the $R_i$'s are random (but independent of $\{X_i\}_{i=1}^n$ and $\{Y_i\}_{i=1}^n$, it is clear that under minimal assumptions on the existence of moments for $R_i$, the right hand side will converge to 0.
Suppose for the moment that $b_{Q_2}(2;X_i)/n$ and $b_L(4;X_i)$ are uniformly bounded
and that the $R_i$ are random and uniformly square-integrable.
Then we have
$$
\Exp{\sum_{i=1}^n \left( \frac{R_i^2}{\sqrt{n}} \wedge 1 \right) \alpha_i^2}
\leq
\tfrac{1}{\sqrt{n}} \sum_{i=1}^{n} \alpha_i^2 \Exp{R_i^2}
=
\gO(n^{-1/2})
$$
and
$$
\Exp{\sum_{i=1}^n \left( \frac{R_i^2}{\sqrt{n}} \wedge 1 \right) \frac{R_i^2}{n}}
=
o(1) \,,
$$
so that the upper bound converges to zero in $R_i$-probability (and also in expectation when the expectation is taken over $R_i$'s, $X_i$'s and $Y_i$'s). Let us now prove this theorem.

\begin{proof}

Let
$$
M := X\trsp D^2 X / n + A\; \text{ and }\; m := X\trsp D \alpha / \sqrt{n}\;.
$$

Also, let $M_i$ and $m_i$ be the corresponding functionals for $X_{(i)}$,
where $X_{(i)} := \sum_{j \ne i} e_j X_j\trsp$. In other words,
$X_{(i)}$ is obtained from $X$ by setting the $i$th row to zero.
Clearly, we have
$$
M = M_i + \tfrac{1}{n} R_i^2 X_iX_i\trsp\; \text{ and } m = m_i + \tfrac{1}{\sqrt{n}} \alpha_i R_i X_i\;.
$$
Note that $X_{(i)}$ is independent of $X_i$ and so are $M_i$ and $m_i$.
After computing the rank-1 perturbation for
$(X\trsp D^2 X/n+A)^{-1}$, we get that
$$
g(\alpha;X)=\tfrac{1}{n}\alpha\trsp (D X_{(i)}+R_i  e_i X_i\trsp)\left[M_i^{-1}-\frac{R_i^2}{n}\frac{M_i^{-1}X_iX_i\trsp M_i^{-1}}{1+R_i^2 \frac{X_i\trsp M_i^{-1} X_i}{n}}\right](X_{(i)}\trsp D+R_i X_i e_i\trsp)\alpha\;.
$$
A straightforward calculation shows that,
if $g_i(\alpha;X)=\tfrac{1}{n}\alpha\trsp D X_{(i)} M_i^{-1} X_{(i)}\trsp D\alpha$, we have the key estimate
\begin{equation}
\label{eq:alphaMalphaDecomposition}
g(\alpha;X)=g_i(\alpha;X)+\alpha_i^2-\frac{1}{1+\frac{R_i^2}{n}q_i(X_i)}(\alpha_i-\frac{R_i}{\sqrt{n}}\zeta_i)^2\;.
\end{equation}
where
$$
\zeta_i(X_i)=X_i\trsp M_i^{-1} m_i\; \text{ and  }q_i(X_i)=X_i\trsp M_i^{-1} X_i\;.
$$

We are now interested in $g(\alpha;X)-g(\alpha;Y)$. Calling $Z_j=\left(Y_1,Y_2,\ldots,Y_{j-1},X_j,\ldots,X_n\right)$, we write as before
$$
\Exp{g(\alpha;X)-g(\alpha;Y)}=\sum_{j=1}^n \Exp{g(\alpha;Z_j)-g(\alpha;Z_{j+1})}\;.
$$
It should be noted that the expansion we just got for $g(\alpha;X)$ as a function of $X_j$ also holds if we replace $X$ by $Z_j$.

With our decomposition \eqref{eq:alphaMalphaDecomposition} above, we immediately see that
$$
g(\alpha;Z_i)-g(\alpha;Z_{i+1})=\frac{1}{1+\frac{R_i^2}{n}q_i(X_i)}(\alpha_i-\frac{R_i}{\sqrt{n}}\zeta_i(X_i))^2
-\frac{1}{1+\frac{R_i^2}{n}q_i(Y_i)}(\alpha_i-\frac{R_i}{\sqrt{n}}\zeta_i(Y_i))^2\;,
$$
where now $M_i$ and $m_i$ are computed from $Z_i$ instead of $X$.
Note that $\Expj{i}{\zeta_i(X_i)}=0=\Expj{i}{\zeta_i(Y_i)}$ and $\Expj{i}{\zeta^2_i(X_i)}=\Expj{i}{\zeta_i^2(Y_i)}$ because the two have the same covariance. \\
Now let us call $$\psi_i(X_i)=(\alpha_i-\frac{R_i}{\sqrt{n}}\zeta_i(X_i))^2\;,$$
and let us define $q_i(X_i)$, $d_i$ and $\delta_i(X_i)$
as in the proof of Theorem \ref{thm:FirstFormIsDeterministic}.
Then, using Equation \eqref{eq:fromQtoD}, we have
$$
\frac{\psi_i(X_i)}{1+\frac{R_i^2}{n}q_i(X_i)}=\frac{\psi_i(X_i)}{1+\frac{R_i^2}{n}d_i}+\frac{R_i^2}{n} \psi_i(X_i) \delta_i(X_i)
$$
and therefore
$$
g(\alpha;Z_i)-g(\alpha;Z_{i+1})
=\frac{\psi_i(X_i)-\psi_i(Y_i)}{1+\frac{R_i^2}{n}d_i}
+\frac{R_i^2}{n} \left( \psi_i(X_i) \delta_i(X_i) - \psi_i(Y_i) \delta_i(Y_i) \right)
$$
So we clearly see that
$$
\Expj{i}{g(\alpha;Z_i)-g(\alpha;Z_{i+1})}=\frac{R_i^2}{n}\Expj{i}{\psi_i(X_i)\delta_i(X_i)-\psi_i(Y_i)\delta_i(Y_i)}\;.
$$
Recall that we have shown earlier that
$$
\Expj{i}{\delta_i(X_i)^2}\leq \frac{b_{Q_2}(2;X_i)}{t^2} \text{ and } \frac{R_i^2}{n}|\delta_i(X_i)|\leq 1\;.
$$
Recall also that $\zeta_i=X_i\trsp M_i^{-1} m_i$. It is clear from \eqref{eq:alphaMalphaBound} that
$$
\norm{M_i^{-1} m_i}\leq \norm{\alpha}/\sqrt{t} = 1/\sqrt{t}
$$
Hence,
$$
\Expj{i}{|\zeta_i(X_i)|^k}\leq t^{-k/2} b_L(k;X_i)\;.
$$
Using H\"older's inequality, we therefore see that
$$
\Expj{i}{\left|\psi_i(X_i)\delta_i(X_i)\right|}\leq \frac{K}{t} \sqrt{b_{Q_2}(2;X_i)} \sqrt{\alpha_i^4 +\frac{R_i^4}{n^2} b_L(4;X_i)/t^2}\leq \frac{K}{t} \sqrt{b_{Q_2}(2;X_i)} \left(\alpha_i^2+\frac{R_i^2}{n} \sqrt{b_L(4;X_i)/t^2}\right)\;.
$$
By Equation \eqref{eq:deltaBound}, we also have
$$
\tfrac{1}{n} R_i^2 |\psi_i(X_i) \delta_i(X_i)| \leq |\psi_i(X_i)| \,,
$$
whence
$$
\Expj{i}{\tfrac1n R_i^2 |\psi_i(X_i) \delta_i(X_i)|} \leq 2\left( \alpha_i^2 + \frac{R_i^2}{n}\Expj{i}{\zeta_i(X_i)}^2 \right) \leq 2\left( \alpha_i^2 + \frac{R_i^2}{n} b_L(2;X_i) / t \right) \leq 2 \left( \alpha_i^2 + \frac{R_i^2}{n} \sqrt{b_L(4;X_i) / t^2} \right) \,,
$$
since $b_L(2;X_i) \leq \sqrt{b_L(4;X_i)}$ by the Cauchy-Schwarz inequality.
Since similar estimates hold for $\psi_i(Y_i) \delta_i(Y_i)$,
it finally follows that
\begin{multline*}
\left|\Exp{g(\alpha;X)-g(\alpha;Y)}\right|\leq K \sum_{i=1}^n \bigg[ \left( \frac{R_i^2}{n^{1/2} t} \sqrt{b_{Q_2}(2;X_i)/n} \wedge 1 \right) \left(\alpha_i^2+\frac{R_i^2}{nt} \sqrt{b_L(4;X_i)}\right) \\+ \left( \frac{R_i^2}{n^{1/2}t} \sqrt{b_{Q_2}(2;Y_i)/n} \wedge 1 \right) \left(\alpha_i^2+\frac{R_i^2}{nt} \sqrt{b_L(4;Y_i)}\right) \bigg] \;.
\end{multline*}

\end{proof}

$\bullet$ \textbf{Efron-Stein aspects} We now turn to the Efron-Stein aspects of the problem, namely we show that our statistic has small variance.
\begin{theorem}\label{thm:ThirdFormIsDeterministic}
Suppose $X_i$ are i.i.d and $Y_i$ are i.i.d and follow the assumptions mentioned above. Assume that $D$ is a deterministic diagonal matrix, whose diagonal entries are positive and denoted by $R_j$. We assume that $A$ is a positive definite matrix with $A\succeq t\id_p$, for some $t>0$.
Let us call, for a deterministic vector $\alpha$ with $\norm{\alpha}=1$ (without loss of generality),
$$
g(\alpha;X)=\alpha\trsp \frac{D X}{\sqrt{n}}(X\trsp D^2 X/n+A)^{-1}\frac{X\trsp D}{\sqrt{n}}\alpha\;.
$$

Then we have, for a certain constant $K$,
$$
\var{g(\alpha;X)}\leq K \sum_{i=1}^n \left[\left(\alpha_i^4 \frac{R_i^4}{n}\frac{b_{Q_2}(2;X_i)}{nt^2}+\frac{R_i^4}{n^2} b_L(4;X_i)\frac{1}{t^2}+\alpha_i^2 \frac{R_i^2}{n} \frac{b_L(2;X_i)}{t}\right)\wedge 1\right]\label{eq:varQuadFormsOfTheMeanType}\;.
$$
\end{theorem}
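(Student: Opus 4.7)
The plan is to apply Lemma \ref{lemma:GeneralizationEfronStein} with $k=2$, i.e.\ the classical Efron-Stein inequality: $\var{g(\alpha;X)} \leq c \sum_{i=1}^n \Exp{(g(\alpha;X) - Z_i)^2}$ for any choice of $Z_i$ measurable with respect to $X_{(i)} := (X_j)_{j \ne i}$. The starting point is the exact identity \eqref{eq:alphaMalphaDecomposition} already established in the proof of Theorem \ref{thm:ThirdLimitIsSameForEveryone}, which expresses
$$g(\alpha;X) = g_i(\alpha;X) + \alpha_i^2 - \frac{(\alpha_i - R_i \zeta_i(X_i)/\sqrt{n})^2}{1 + R_i^2 q_i(X_i)/n},$$
where $g_i(\alpha;X)$ depends only on $X_{(i)}$.

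The naive choice $Z_i = g_i(\alpha;X)$ is not sufficient, because expanding the numerator produces a term $\alpha_i^2 R_i^2 q_i(X_i)/n$ whose second moment inherits the sizeable mean $\alpha_i^2 R_i^2 d_i/n$ with $d_i := \trace{M_i^{-1}\Sigma_i}$, and this contribution does not decay as $n \to \infty$. To cure this, I would center the offending denominator at the $X_i$-conditional mean and set
$$\widetilde Z_i := g_i(\alpha;X) + \alpha_i^2 - \frac{\alpha_i^2}{1 + R_i^2 d_i/n},$$
which is still $\sigma(X_{(i)})$-measurable. Using the identity \eqref{eq:fromQtoD} that introduced $\delta_i(X_i)$, a short algebraic manipulation then yields the clean decomposition
$$g(\alpha;X) - \widetilde Z_i = -\frac{\alpha_i^2 R_i^2}{n}\,\delta_i(X_i) + \frac{1}{1+R_i^2 q_i(X_i)/n}\left( \frac{2\alpha_i R_i \zeta_i(X_i)}{\sqrt{n}} - \frac{R_i^2 \zeta_i^2(X_i)}{n} \right).$$

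Squaring, dropping the denominator (which is $\geq 1$), and using $(a+b+c)^2 \leq 3(a^2+b^2+c^2)$ produces three terms whose $\Expj{i}{}$ is bounded by moment estimates already at hand: $\Expj{i}{\delta_i(X_i)^2} \leq b_{Q_2}(2;X_i)/t^2$ by \eqref{eq:deltaBound2}, and $\Expj{i}{\zeta_i(X_i)^{2k}} \leq \|M_i^{-1} m_i\|^{2k}\,b_L(2k;X_i) \leq b_L(2k;X_i)/t^k$ for $k=1,2$, where $\|M_i^{-1} m_i\|^2 \leq \|\alpha\|^2/t = 1/t$ is a direct consequence of \eqref{eq:alphaMalphaBound}. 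Summing over $i$ yields exactly the three terms of the stated ``hard'' bound, with the correct powers of $n$, $t$, $R_i$, and $\alpha_i$.

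Finally, for the ``$\wedge 1$'' part, one invokes the uniform bound $|g(\alpha;X)| \leq \|\alpha\|^2 = 1$ from \eqref{eq:alphaMalphaBound} (and the analogous bound $|g_i(\alpha;X)| \leq 1$), which gives $|\widetilde Z_i| \leq 1 + \alpha_i^2 \leq 2$ and hence $\Exp{(g(\alpha;X) - \widetilde Z_i)^2} \leq 9$ unconditionally; each summand may thus be replaced by the minimum of the two bounds, with the numerical constants absorbed into $K$. I expect the main obstacle to be the identification of the right $\widetilde Z_i$: without the $d_i$-centering in the denominator, the $\alpha_i^2 R_i^2 q_i/n$ contribution refuses to decay, and the variance bound is off by a multiplicative factor of order $d_i^2$, which can be of constant size.
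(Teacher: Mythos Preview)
Your proposal is correct and follows essentially the same approach as the paper's proof: the paper makes the identical choice of comparison variable (there denoted $T_i$, which after correcting an apparent sign typo is exactly your $\widetilde Z_i$), obtains the same three-term decomposition of $g(\alpha;X)-\widetilde Z_i$, and bounds the pieces using the same moment estimates on $\delta_i$ and $\zeta_i$ together with the uniform bound $|T-T_i|\leq 1+\alpha_i^2$ for the $\wedge 1$ part.
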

Before we turn to the proof, let us show that when $R_i$'s are independent and have two moments, the upper bound converges to 0 in ($R_i$-) probability, when $b_L(4;X_i)$ and $b_{Q_2}(2;X_i)/n$ remain bounded as $n$ grows. Using the Marcienkiewicz-Zygmund strong law of large numbers, we know that
$$
\sum_{i=1}^n \frac{R_i^4}{n^2}\tendsto 0 \text{ in probability}.
$$
Now, suppose for the moment that $b_{Q_2}(2;X_i)/n$ and $b_L(4;X_i)$ are uniformly bounded
and that the $R_i$ are random. Since
\begin{align*}
  \Exp{\tfrac1n \alpha_i^4 R_i^4 \wedge 1}
&= \Exp{(\tfrac1n \alpha_i^4 R_i^4 \wedge 1)\bm{1}_{\{ \alpha_i^4R_i^4/n\geq1 \}}}
+ \Exp{(\tfrac1n \alpha_i^4 R_i^4 \wedge 1)\bm{1}_{\{ \alpha_i^4R_i^4/n  < 1 \}}} \\
&\leq \Proba{\alpha_i^2R_i^2/\sqrt{n}\geq 1} + \Exp{\tfrac{1}{\sqrt{n}} \alpha_i^2 R_i^2}
\leq 2\alpha_i^2\Exp{R_i^2}/\sqrt{n}
\end{align*}
and $\sum_{i=1}^{n} \alpha_i^2 = 1$, we see that
$$
\sum \alpha_i^4 \frac{R_i^4}{n} \tendsto 0 \text{ in } R_i-\text{probability}\;.
$$

\begin{proof}
A little bit of care is needed to handle the situation where $\norm{\alpha}_4^4$ is not small - otherwise the result could be obtained in a slightly easier fashion with slightly coarser bounds.
Recall that
$$
g(\alpha;X)-g_i(\alpha;X)=\alpha_i^2 -\frac{(\alpha_i-R_i\zeta_i(X_i)/\sqrt{n})^2}{1+R_i^2q_i(X_i)/n}\;,
$$
and therefore
$$
g(\alpha;X)-g_i(\alpha;X)=\alpha_i^2 (1-\frac{1}{1+\frac{R_i^2}{n}q_i})+\frac{1}{1+\frac{R_i^2}{n}q_i}(R_i^2/n\zeta_i^2-2\alpha_iR_i/\sqrt{n} \zeta_i)\;.
$$
Thus, if we set $T := g(\alpha;X)$ and $T_i := g_i(\alpha;X) - \alpha_i^2 (1-\frac{1}{1+\frac{R_i^2}{n}d_i})$,
which does not depend on $X_i$, we have
$$
T-T_i=\alpha_i^2 \frac{R_i^2}{n} \frac{\delta_i(X_i)}{(1+\frac{R_i^2}{n}q_i)(1+\frac{R_i^2}{n}d_i)}+\frac{1}{1+\frac{R_i^2}{n}q_i}(R_i^2/n\zeta_i^2-2\alpha_iR_i/\sqrt{n} \zeta_i)\;.
$$
So, using the bounds used in the proof of the previous theorem,
$$
\Expj{i}{|T-T_i|^2}\leq K \left(\alpha_i^4 \frac{R_i^4}{n^2}b_{Q_2}(2;X_i) \frac{1}{t^2}+\frac{R_i^4}{n^2} b_L(4;X_i)\frac{1}{t^2}+\alpha_i^2 \frac{R_i^2}{n} \frac{b_L(2;X_i)}{t}\right)\;.
$$
Using the fact that $0\leq T=g(\alpha;X)\leq 1$ and $0\leq g_i(\alpha;X)\leq 1$, we also have $|T-T_i|\leq 1+\alpha_i^2$, and hence
$$
\Expj{i}{|T - T_i|^2} \leq 2 \Expj{i}{|g(\alpha;X) - g_i(\alpha;X)|^2 + \alpha_i^4} \leq 4 \,.
$$
Thus, the Efron-Stein inequality gives us
$$
\var{g(\alpha;X)}\leq K \sum_{i=1}^n \left( \left(\alpha_i^4 \frac{R_i^4}{n^2}b_{Q_2}(2;X_i) \frac{1}{t^2}+\frac{R_i^4}{n^2} b_L(4;X_i)\frac{1}{t^2}+\alpha_i^2 \frac{R_i^2}{n} \frac{b_L(2;X_i)}{t} \right) \wedge 1 \right) \;.
$$
\end{proof}

$\bullet$ \textbf{Gaussian computations}
To understand the form we care about, it is now sufficient to compute its mean in a simple case. We naturally turn to the Gaussian case for this final task.

We now compute $\Exp{g(\alpha;X)}$ when the $X_i$'s are independent with (mean 0) normal distribution and possibly different covariance. Let us call
$$
P_R = \frac{1}{n} DXM^{-1}X\trsp D\trsp\;,
$$
with $M=X\trsp D^2 X/n +A $. $P_R$ is a $n\times n$ matrix.
We have the following result.
\begin{lemma}
Suppose that $X_i$ are independent normally distributed random variables, with mean 0 and covariance $\Sigma_i$. Then $\Exp{P_R}$ is diagonal and
$$
\Exp{g(\alpha;X)}=\sum_{i=1}^n \alpha_i^2 \Exp{P_R(i,i)}\;,
$$
where
$$
P_R(i,i)=1-\frac{1}{1+\frac{R_i^2}{n}X_i\trsp M_i^{-1} X_i}\;,
$$
and $M_i=\frac{1}{n}\sum_{j\neq i} R_j^2 X_j X_j\trsp +A$.
\end{lemma}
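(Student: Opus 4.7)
The plan is to recognize that $g(\alpha;X) = \alpha\trsp P_R \alpha$ where $P_R = \frac{1}{n}DXM^{-1}X\trsp D$, so once we establish that $\Exp{P_R}$ is diagonal, the identity $\Exp{g(\alpha;X)} = \alpha\trsp \Exp{P_R} \alpha = \sum_i \alpha_i^2 \Exp{P_R(i,i)}$ follows at once by linearity.

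The off-diagonal entries vanish by a sign-flip symmetry argument. For $i \neq j$, the $(i,j)$ entry of $P_R$ is
$$
P_R(i,j) = \frac{R_iR_j}{n} X_i\trsp M^{-1} X_j\;.
$$
Because $M = \frac{1}{n}\sum_k R_k^2 X_kX_k\trsp + A$ depends on $X_i$ only through $X_iX_i\trsp$, flipping the sign of $X_i$ leaves $M$ (and hence $M^{-1}$) unchanged, while sending $P_R(i,j)$ to $-P_R(i,j)$. Since $X_i$ is centered Gaussian, it has the same distribution as $-X_i$ and is independent of the other $X_k$'s; so $\Exp{P_R(i,j)} = -\Exp{P_R(i,j)}$, forcing it to zero.

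For the diagonal entries, I would apply the Sherman--Morrison rank-one update formula to peel $X_i$ off of $M$. Writing $M = M_i + \frac{R_i^2}{n} X_iX_i\trsp$, one obtains
$$
X_i\trsp M^{-1} X_i = X_i\trsp M_i^{-1} X_i - \frac{(R_i^2/n)(X_i\trsp M_i^{-1} X_i)^2}{1 + (R_i^2/n) X_i\trsp M_i^{-1} X_i} = \frac{X_i\trsp M_i^{-1} X_i}{1 + (R_i^2/n) X_i\trsp M_i^{-1} X_i}\;.
$$
Multiplying through by $R_i^2/n$ and rearranging gives exactly
$$
P_R(i,i) = \frac{R_i^2}{n} X_i\trsp M^{-1} X_i = 1 - \frac{1}{1 + (R_i^2/n) X_i\trsp M_i^{-1} X_i}\;,
$$
which is the claimed expression.

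There is no real obstacle here: the only step requiring any care is confirming the sign-flip symmetry argument for the off-diagonal terms, which needs the joint Gaussianity (independence plus centered symmetry) so that one can replace $X_i$ by $-X_i$ without changing the joint distribution of $(X_1,\dots,X_n)$. Everything else is a direct algebraic manipulation via the rank-one inverse identity.
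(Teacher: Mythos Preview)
Your proof is correct and follows essentially the same approach as the paper: the sign-flip symmetry argument for the off-diagonal entries and the rank-one update for the diagonal are exactly what the paper does. The only minor addition in the paper is an explicit remark that the entries of $P_R$ are bounded (since $P_R^2 \preceq P_R$ implies $\opnorm{P_R}\leq 1$), which guarantees the expectations exist before concluding $\Exp{P_R(i,j)}=-\Exp{P_R(i,j)}$; you might add a sentence to that effect, but otherwise your argument is complete.
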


A particularly interesting case is that where $X_j$ are exchangeable (so for instance, we now allow the covariance $\Sigma_j$ to be random with a certain prior, and conditional on $\Sigma_j$, $X_j$'s are ${\cal N}(0,\Sigma_j)$ - the resulting random variables being exchangeable), and so are $R_i^2$ (which are assumed independent of $X_i$'s). Then we have (if $\Exp{\cdot}$ is expectation with respect to all sources of randomness)  $\Exp{P_R(i,i)}=\Exp{P_R(j,j)}$, for all $(i,j)$. In this case, we also have
$$
\Exp{\alpha\trsp P_R \beta}=\alpha\trsp \beta \left(1-\Exp{\frac{1}{1+\frac{R_1^2}{n}X_1\trsp M_1^{-1} X_1}}\right)\;.
$$
Therefore, if $\alpha\trsp \beta=0$, we have
$$
\Exp{\alpha\trsp DX(X\trsp D^2X /n+A)^{-1} X\trsp D \beta}=0\;.
$$

Another very interesting case is the situation where $R_i$'s are non-random (or random but independent of $X_i$'s) and $X_i$'s are i.i.d. Then,
$$
\Exp{g(\alpha;X)}=\norm{\alpha}_2^2-\sum_{i=1}^n \Exp{\frac{\alpha_i^2}{1+\frac{R_i^2}{n}X_i\trsp M_i(A)^{-1} X_i}}\;.
$$
Now, when $\norm{\Sigma_i}$ is not too large (i.e $\lo(p^{1/2-\eta})$, $\eta>0$, it is easy to see (by concentration of Gaussian random variables, see \cite{ledoux2001} and \cite{nekCorrEllipD} for details of the application) that $X_i\trsp M_i(A)^{-1} X_i/p$ is concentrated around its mean, which is $\trace{M_i(A)^{-1}\Sigma_i/p}$. When $\Sigma_j=\Sigma$, this quantity has a limit as $n$ and $p$ tend to $\infty$ with $p/n\tendsto \rho$, and this limit is known (see e.g.\@ \cite{mp67,silversteinbai95}). As a matter of fact, then
$$
\trace{M_i(A)^{-1}\Sigma} = \trace{X_0\trsp D_i^2 X_0/n+\Sigma^{-1/2}A \Sigma^{-1/2}}\;,
$$
where $X_0$ are i.i.d ${\cal N}(0,\id_p)$ and $D_i=D-R_ie_ie_i\trsp$. Calling $L$ this limit (which naturally depends on the distribution of $R$'s), we have
$$
g(\alpha;X)\simeq 1-\sum_{i=1}^n \frac{\alpha_i^2}{1+\frac{p}{n} R_i^2  L}\;.
$$

\begin{proof}
Notice that
$$
P_R(i,j)=\frac{1}{n} R_iR_j  X_i\trsp (M\trsp M/n+A)^{-1} X_j\;.
$$
Now changing $X_i$ into $-X_i$ does not affect the term $M\trsp M/n+A=\frac{1}{n}\sum_{i=1}^n R_i^2 X_iX_i\trsp +A$, but changes the sign of $P_R(i,j)$. On the other hand, $\{X_1,\ldots,X_{i-1},X_i,X_{i+1},\ldots,X_n\}\equalInLaw \{X_1,\ldots,X_{i-1},-X_i,X_{i+1},\ldots,X_n\}$. So we conclude that
$$
P_R(i,j)\equalInLaw -P_R(i,j) \text{ when } i\neq j\;.
$$
Now it is easy to check that in the positive semi-definite ordering, $P_R^2 \preceq P_R$. So $\opnorm{P_R}\leq 1$. So in particular, all of its entries are less than 1 in absolute value and therefore have moments.

So we have shown that when $X_i$ are independent mean 0 Gaussian variables,
$$
\Exp{P_R(i,j)}=0 \text{ if } i\neq j\;.
$$
And we therefore have the proved the lemma. (The description of the diagonal comes from using rank-1 update formulas.)
\end{proof}

\subsubsection{On $\bm{n^{-1/2} \alpha\trsp D X (X\trsp D^2 X/n+A)^{-1} x}$} \label{subsubsec:quadFormsInverseMeanAndPopMean}
These forms naturally occur in the study of quadratic forms involving both the sample mean and the sample covariance matrix as we explained at the beginning of Subsubsection \ref{subsec:quadFormsInverseAndMean}, hence our interest in them.

Therefore, for our applications, we also need results about the quantity
$$
h(\alpha;X) := n^{-1/2} \alpha\trsp D X (X\trsp D^2 X/n+A)^{-1} x\;.
$$
where $\alpha$ and $x$ are deterministic vectors, whose norm we will generally assume (without loss of generality) to be 1.

Note that if $M=X\trsp D^2X/n+A$, $\opnorm{M^{-1/2} (X\trsp D^2 X/n) M^{-1/2}}\leq 1$, and hence,
\begin{align}
\label{eq:alphaMxBound}
|h(\alpha;X)| \leq \|\alpha\| \, \sqrt{x\trsp M^{-1} x} \leq \|\alpha\| \, \sqrt{x\trsp A^{-1} x} \leq 1 / \sqrt{t} \,,
\end{align}
which follows from the Cauchy-Schwarz inequality, and \eqref{eq:alphaMalphaBound}.

\paragraph{Concentration}

Our first aim is to show that $h(\alpha;X)$ is also essentially deterministic.

\begin{theorem}
Under our usual assumptions (stated in Subsection \ref{subsec:SetupOfStudy}), we have
\label{thm:SecondFormIsDeterministic}
$$
\Exp{h(\alpha;X)-\Exp{h(\alpha;X)}}^2
\leq
K \sum_{j=1}^{n} \left[ \left( \tfrac{1}{n} \alpha_i^2 R_i^2 b_L(2,X_i) \frac{x\trsp A^{-1} x}{t} + \tfrac{1}{n^2} R_i^4 b_L(4,X_i) \frac{x\trsp A^{-1} x}{t^2} \right) \wedge \left( x\trsp A^{-1} x \right) \right] \;.
$$
\end{theorem}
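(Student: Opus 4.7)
\textbf{Proof plan for Theorem \ref{thm:SecondFormIsDeterministic}.} The plan is to apply the classical Efron--Stein inequality (the $k=2$ case of Lemma \ref{lemma:GeneralizationEfronStein}) with the leave-one-out proxy
$$
h_i(\alpha;X) := m_i\trsp M_i^{-1} x,
$$
where $M_i = X_{(i)}\trsp D^2 X_{(i)}/n + A$ and $m_i = X_{(i)}\trsp D\alpha/\sqrt{n}$ are the analogues of $M$ and $m$ when $X_i$ is set to zero, exactly as in the proof of Theorem \ref{thm:ThirdLimitIsSameForEveryone}. Since $h_i$ is measurable with respect to $\{X_j\}_{j\neq i}$, Efron--Stein reduces the problem to bounding each $\Exp{(h(\alpha;X)-h_i(\alpha;X))^2}$ individually and summing.

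The first step is an explicit identity for $h-h_i$. Plugging $\alpha\trsp DX = \sqrt{n}\,m_i\trsp + R_i\alpha_i X_i\trsp$ together with the Sherman--Morrison update
$$
M^{-1} = M_i^{-1} - \tfrac{R_i^2/n}{1+\tfrac{R_i^2}{n} q_i(X_i)}\,M_i^{-1}X_iX_i\trsp M_i^{-1}
$$
into the definition of $h$ makes the cross terms collapse (they share the common factor $X_i\trsp M_i^{-1} x$), yielding
$$
h(\alpha;X)-h_i(\alpha;X) \;=\; \frac{R_i\,(X_i\trsp M_i^{-1} x)}{\sqrt{n}\bigl(1+\tfrac{R_i^2}{n}q_i(X_i)\bigr)}\,\Bigl(\alpha_i - \tfrac{R_i}{\sqrt{n}}\zeta_i(X_i)\Bigr),
$$
with the same $\zeta_i$ and $q_i$ as in Subsection \ref{subsec:quadFormsInverseAndMean}. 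Dropping the denominator (which is $\geq 1$) and using $(\alpha_i - R_i\zeta_i/\sqrt{n})^2\leq 2\alpha_i^2 + 2R_i^2\zeta_i^2/n$, one obtains
$$
(h-h_i)^2 \;\leq\; \tfrac{2R_i^2}{n}(X_i\trsp M_i^{-1} x)^2\,\alpha_i^2 \;+\; \tfrac{2R_i^4}{n^2}(X_i\trsp M_i^{-1} x)^2\,\zeta_i(X_i)^2.
$$

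Conditioning on $\{X_j\}_{j\neq i}$ and applying \eqref{B-L} takes care of both pieces: the first contributes $\Expj{i}{(X_i\trsp M_i^{-1} x)^2}\leq b_L(2;X_i)\,(x\trsp M_i^{-2}x)$, and a single Cauchy--Schwarz gives $\Expj{i}{(X_i\trsp M_i^{-1} x)^2\zeta_i^2}\leq b_L(4;X_i)(x\trsp M_i^{-2}x)(m_i\trsp M_i^{-2}m_i)$. The monotonicity chain $M_i^{-2}\preceq t^{-1}M_i^{-1}\preceq t^{-1}A^{-1}$ from Subsection \ref{subsec:SetupOfStudy} then yields $x\trsp M_i^{-2}x\leq t^{-1}\,x\trsp A^{-1}x$ and, crucially, $m_i\trsp M_i^{-2}m_i \leq t^{-1}\,m_i\trsp M_i^{-1} m_i = t^{-1}\,g_i(\alpha;X)\leq t^{-1}$ by \eqref{eq:alphaMalphaBound}. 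This produces exactly the two terms in the sum displayed in the theorem.

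The truncation by $x\trsp A^{-1}x$ comes for free from the deterministic bound $|h|,\,|h_i|\leq \sqrt{x\trsp A^{-1}x}$ supplied by \eqref{eq:alphaMxBound}, which forces $(h-h_i)^2\leq 4\,x\trsp A^{-1}x$ uniformly in $X_i$; summing over $i$ and absorbing numerical constants into $K$ finishes the argument. I do not expect any serious obstacle; the only delicate point is resisting the naive bound $m_i\trsp M_i^{-2}m_i\leq t^{-1}\|m_i\|^2$, which could grow with the $R_j$'s and with $\|\alpha\|_D$. The right move, as above, is to soak up one factor of $M_i^{-1}$ into the already-controlled quadratic form $g_i(\alpha;X)\leq 1$, which is precisely what keeps the final bound dimension-free.
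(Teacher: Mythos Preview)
Your proof is correct and follows essentially the same approach as the paper: Efron--Stein with the leave-one-out proxy $h_i=m_i\trsp M_i^{-1}x$, the rank-one update identity (your factored form is exactly the paper's $\varphi_i(X_i)/(1+\tfrac{R_i^2}{n}q_i)$), Cauchy--Schwarz plus \eqref{B-L} for the moment bounds, and the uniform estimate \eqref{eq:alphaMxBound} for the truncation. Your closing remark about controlling $m_i\trsp M_i^{-2}m_i$ via $t^{-1}g_i(\alpha;X)\le t^{-1}$ rather than through $\|m_i\|^2$ is precisely the move the paper makes (phrased there as $\|M_i^{-1}m_i\|\le t^{-1/2}$).
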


\begin{proof}
This is an application of the Efron-Stein inequality.
Let $M$ and $M_i$ be defined as in the proof of Theorem \ref{thm:FirstFormIsDeterministic},
and let $m := n^{-1/2} X\trsp D \alpha$ and $m_i := n^{-1/2} X_{(i)}\trsp D \alpha$,
where $X_{(i)}$ is defined as in the proof of Theorem~\ref{thm:ThirdLimitIsSameForEveryone}.
Using the rank-1 perturbation formula once more, we get
$$
h(\alpha;X) := m\trsp M^{-1} x = (m_i\trsp + n^{-1/2} \alpha_i R_i X_i\trsp)\left[M_i^{-1}-\frac{R_i^2}{n}\frac{M_i^{-1}X_iX_i\trsp M_i^{-1}}{1+R_i^2 \frac{X_i\trsp M_i^{-1} X_i}{n}}\right]x\;.
$$
A straightforward calculation shows that,
if $h_i(\alpha;X)=m_i' M_i^{-1} x$,
\begin{align}
\label{eq:alphaMxDecomposition}
h(\alpha;X)=h_i(\alpha;X)+\frac{\varphi_i(X_i)}{1+\tfrac{1}{n}R_i^2 q_i(X_i)} \,,
\end{align}
where
\begin{align}
\label{eq:defPhi}
\varphi_i(X_i) := \left( n^{-1/2} \alpha_i R_i \, X_i\trsp M_i^{-1} x - \tfrac1n R_i^2 \, X_i\trsp M_i^{-1} m_i \, X_i\trsp M_i^{-1} x \right) \,.
\end{align}
Note that $h_i(\alpha;X)$ is independent of $X_i$ here.
Thus, the Efron-Stein inequality yields
$$
\var{h(\alpha;X)} \leq \sum_{i=1}^{n} \var{h(\alpha;X) - h_i(\alpha;X)} \leq \sum_{i=1}^{n} \Exp{\frac{\varphi_i(X_i)}{1+\tfrac{1}{n}R_i^2 q_i(X_i)}}^2 \,.
$$
Now, on the one hand, using \eqref{B-L}, we have
$$
\Exp{X_i\trsp M_i^{-1} x}^2 \leq b_L(2,X_i) \, x\trsp A^{-1} x /t \;,
$$
$$
\Exp{X_i\trsp M_i^{-1} m_i \, X_i\trsp M_i^{-1} x}^2 \leq \sqrt{b_L(4,X_i) \, (x\trsp A^{-1} x /t)^2} \sqrt{b_L(4,X_i)/t^2} \;,
$$
and therefore
\begin{align}
\label{eq:phiBound2}
\Exp{\frac{\varphi_i(X_i)}{1+\tfrac{1}{n}R_i^2 q_i(X_i)}}^2
\leq \Exp{\varphi_i(X_i)}^2
\leq 2 \left( \tfrac{1}{n} \alpha_i^2 R_i^2 b_L(2,X_i) \frac{x\trsp A^{-1} x}{t} + \tfrac{1}{n^2} R_i^4 b_L(4,X_i) \frac{x\trsp A^{-1} x}{t^2} \right) \;.
\end{align}
On the other hand, it follows from \eqref{eq:alphaMxDecomposition} and \eqref{eq:alphaMxBound} that
$$
\Exp{\frac{\varphi_i(X_i)}{1+\tfrac{1}{n}R_i^2 q_i(X_i)}}^2
\leq 2 \left( \Exp{h(\alpha;X)}^2 + \Exp{h_i(\alpha;X)}^2 \right)
\leq 4 x\trsp A^{-1} x \,.
$$
The proof is completed by combining these estimates.
\end{proof}

$\bullet$ \textbf{Lindeberg approach}\\

Our next aim is to show that the limit of $h(\alpha;X)$ does not depend
on the distribution of the $X_i$.

\begin{theorem}
Under our usual assumptions (stated in Subsection \ref{subsec:SetupOfStudy}), we have
\label{thm:SecondLimitIsSameForEveryone}
\begin{multline*}
\left|\Exp{h(\alpha;X)-h(\alpha;Y)}\right| \leq
K \sum_{j=1}^{n} U_j(X_j)+U_j(Y_j)\;, \text{ with }\\
U_j(X_j)\leq
K \left[ \frac{R_i^2}{nt} \sqrt{b_Q(2;X_i)} \wedge 1 \right] \cdot \left[ \left( \tfrac{1}{\sqrt{n}} |\alpha_i| R_i \sqrt{b_L(2,X_i)} \sqrt{\frac{x\trsp A^{-1} x}{t}} + \tfrac{1}{n} R_i^2 \sqrt{b_L(4,X_i)} \sqrt{\frac{x\trsp A^{-1} x}{t^2}} \right) \right] \;.
\end{multline*}
\end{theorem}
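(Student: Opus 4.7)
The plan is to mimic the Lindeberg telescoping arguments used for Theorems \ref{thm:FirstLimitIsSameForEveryone} and \ref{thm:ThirdLimitIsSameForEveryone}, leveraging the rank-one decomposition \eqref{eq:alphaMxDecomposition} already established in the proof of Theorem \ref{thm:SecondFormIsDeterministic}. I would define $Z_j=(Y_1,\ldots,Y_{j-1},X_j,\ldots,X_n)$ and write
$$
\Exp{h(\alpha;X)-h(\alpha;Y)}=\sum_{j=1}^n \Exp{h(\alpha;Z_j)-h(\alpha;Z_{j+1})}\;.
$$
For each $j$, by \eqref{eq:alphaMxDecomposition} applied with $X$ replaced by $Z_j$ (and then by $Z_{j+1}$), the piece $h_j(\alpha;\cdot)$ computed from $M_j,m_j$ is independent of both $X_j$ and $Y_j$ (it only uses rows $\neq j$), and therefore cancels in the difference, leaving
$$
h(\alpha;Z_j)-h(\alpha;Z_{j+1})=\frac{\varphi_j(X_j)}{1+\tfrac{R_j^2}{n} q_j(X_j)}-\frac{\varphi_j(Y_j)}{1+\tfrac{R_j^2}{n} q_j(Y_j)}\;,
$$
with $\varphi_j$ as in \eqref{eq:defPhi}.

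Next I would apply the scalar identity \eqref{eq:fromQtoD} to linearize the denominators: with $d_j=\trace{M_j^{-1}\Sigma_j}$ and $\delta_j$ as in \eqref{eq:defDelta},
$$
\frac{\varphi_j(X_j)}{1+\tfrac{R_j^2}{n} q_j(X_j)}=\frac{\varphi_j(X_j)}{1+\tfrac{R_j^2}{n} d_j}+\tfrac{R_j^2}{n}\varphi_j(X_j)\delta_j(X_j)\;,
$$
and similarly for $Y_j$. The first (``deterministic denominator'') piece has mean zero after subtraction: the linear-in-$X_j$ term in $\varphi_j$ has zero mean because $\Exp{X_j}=\Exp{Y_j}=0$, and the quadratic-in-$X_j$ term $\tfrac1n R_j^2\, X_j\trsp M_j^{-1}m_j\,X_j\trsp M_j^{-1}x$ has the same $\Expj{j}{\cdot}$ as its $Y_j$-analogue because $M_j,m_j$ are independent of $X_j,Y_j$ and $X_j,Y_j$ share the covariance $\Sigma_j$. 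Thus the entire contribution of index $j$ to the Lindeberg sum reduces to $\tfrac{R_j^2}{n}\Exp{\varphi_j(X_j)\delta_j(X_j)-\varphi_j(Y_j)\delta_j(Y_j)}$.

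The remaining step is to bound $\tfrac{R_j^2}{n}\Expj{j}{|\varphi_j(X_j)\delta_j(X_j)|}$ in two different ways and take the minimum. By Cauchy--Schwarz combined with the bound $\Expj{j}{\delta_j(X_j)^2}\le b_{Q_2}(2;X_j)/t^2$ from \eqref{eq:deltaBound2} and the estimate \eqref{eq:phiBound2} on $\Expj{j}{\varphi_j(X_j)^2}$, one gets
$$
\tfrac{R_j^2}{n}\Expj{j}{|\varphi_j(X_j)\delta_j(X_j)|}\le\frac{R_j^2}{nt}\sqrt{b_{Q_2}(2;X_j)}\Bigl(\tfrac{1}{\sqrt{n}}|\alpha_j|R_j\sqrt{b_L(2;X_j)}\sqrt{\tfrac{x\trsp A^{-1}x}{t}}+\tfrac{1}{n}R_j^2\sqrt{b_L(4;X_j)}\sqrt{\tfrac{x\trsp A^{-1}x}{t^2}}\Bigr)\;.
$$
Alternatively, using the crude pointwise bound $\tfrac{R_j^2}{n}|\delta_j(X_j)|\le 1$ from \eqref{eq:deltaBound} followed by Jensen on $\Expj{j}{|\varphi_j(X_j)|}\le\sqrt{\Expj{j}{\varphi_j(X_j)^2}}$ and \eqref{eq:phiBound2} again, one recovers the same bracket without the prefactor $\tfrac{R_j^2}{nt}\sqrt{b_{Q_2}(2;X_j)}$. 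Taking the minimum of the two bounds (and adding the analogous bound for $Y_j$) yields exactly the quantity $U_j(X_j)+U_j(Y_j)$ claimed in the theorem.

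The bulk of the proof is routine bookkeeping once the cancellation of the linear-plus-quadratic ``main piece'' is recognized; the only genuinely delicate point is verifying that the quadratic term in $\varphi_j$ has a mean that only involves $\Sigma_j,M_j,m_j,x$ (and hence is common to $X_j$ and $Y_j$), since $m_j$ itself depends on the other $X_k$'s and $Y_k$'s. This requires observing that the expectation $\Expj{j}{\cdot}$ is taken conditionally on everything except $X_j$ (respectively $Y_j$), which is precisely the structure of the telescoping decomposition. The rest amounts to combining Cauchy--Schwarz with the already-proved moment bounds \eqref{B-L}, \eqref{B-Q}, and \eqref{eq:phiBound2}.
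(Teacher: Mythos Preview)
Your proposal is correct and follows essentially the same approach as the paper's proof: Lindeberg telescoping, cancellation of $h_j$, the denominator linearization via \eqref{eq:fromQtoD}, cancellation of the ``deterministic denominator'' piece using matching first and second moments, and the two-way bound on $\tfrac{R_j^2}{n}\Expj{j}{|\varphi_j\delta_j|}$. The only cosmetic difference is in the second (crude) bound: the paper estimates $\Expj{j}{|\varphi_j(X_j)|}$ term-by-term via $b_L(1;X_j)$ and $b_L(2;X_j)$ and then invokes $b_L(k;X_j)\le\sqrt{b_L(2k;X_j)}$, whereas you use $\Expj{j}{|\varphi_j|}\le\sqrt{\Expj{j}{\varphi_j^2}}$ together with \eqref{eq:phiBound2}; both routes yield the same bracket.
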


\begin{proof}
We use the notation from the proof of Theorem \ref{thm:FirstLimitIsSameForEveryone}.
Using the decomposition \eqref{eq:alphaMxDecomposition} with $X$ replaced by $Z_j,Z_{j+1}$
and observing that $h_j(\alpha;Z_j) = h_j(\alpha;Z_{j+1})$, we get
\begin{align*}
   \Exp{h(\alpha;X)-h(\alpha;Y)}
&= \sum_{j=1}^{n} \Exp{h(\alpha;Z_j)-h(\alpha;Z_{j+1})} \\
&= \sum_{i=1}^{n} \Exp{\frac{\varphi_i(X_i)}{1+\tfrac{1}{n}R_i^2 q_i(X_i)}-\frac{\varphi_i(Y_i)}{1+\tfrac{1}{n}R_i^2 q_i(Y_i)}} \,,
\end{align*}
where $\varphi_i(X_i)$ is defined as in \eqref{eq:defPhi},
but with $X$ replaced by $Z_i$.
Next, using \eqref{eq:fromQtoD}, we have
\begin{align*}
  \frac{\varphi_i(X_i)}{1+\tfrac{1}{n}R_i^2 q_i(X_i)}
- \frac{\varphi_i(Y_i)}{1+\tfrac{1}{n}R_i^2 q_i(Y_i)}
&= \left( \frac{\varphi_i(X_i)}{1+\tfrac{1}{n}R_i^2 d_i}
- \frac{\varphi_i(Y_i)}{1+\tfrac{1}{n}R_i^2 d_i} \right)
+ \frac{R_i^2}{n} \Big( \varphi_i(X_i) \delta_i(X_i) - \varphi_i(Y_i) \delta_i(Y_i) \Big) \,.
\end{align*}
Since $X_i$ and $Y_i$ both have mean $0$ and covariance $\Sigma_i$,
it follows that
\begin{align*}
\Expj{i}{\frac{\varphi_i(X_i)}{1+\tfrac{1}{n}R_i^2 q_i(X_i)}
- \frac{\varphi_i(Y_i)}{1+\tfrac{1}{n}R_i^2 q_i(Y_i)}}
= \Expj{i}{\frac{R_i^2}{n} \Big( \varphi_i(X_i) \delta_i(X_i) - \varphi_i(Y_i) \delta_i(Y_i) \Big)} \,.
\end{align*}
Now, on the one hand, using Cauchy-Schwarz inequality
as well as \eqref{eq:deltaBound2} and \eqref{eq:phiBound2},
we have
\begin{align*}
      \Expj{i}{\left| \varphi_i(X_i) \delta_i(X_i) \right|}
&\leq \left( \Expj{i}{\delta_i(X_i)}^2 \Expj{i}{\varphi_i(X_i)}^2 \right)^{1/2} \\
&\leq \frac{K}{t} \sqrt{b_Q(2;X_i)} \left( \tfrac{1}{n} \alpha_i^2 R_i^2 b_L(2,X_i) \frac{x\trsp A^{-1} x}{t} + \tfrac{1}{n^2} R_i^4 b_L(4,X_i) \frac{x\trsp A^{-1} x}{t^2} \right)^{1/2} \\
&\leq \frac{K}{t} \sqrt{b_Q(2;X_i)} \left( \tfrac{1}{\sqrt{n}} |\alpha_i| R_i \sqrt{b_L(2,X_i)} \sqrt{\frac{x\trsp A^{-1} x}{t}} + \tfrac{1}{n} R_i^2 \sqrt{b_L(4,X_i)} \sqrt{\frac{x\trsp A^{-1} x}{t^2}} \right) \,.
\end{align*}
On the other hand, using \eqref{eq:deltaBound}, we get
\begin{align*}
     \tfrac{1}{n} R_i^2 \Expj{i}{\left| \varphi_i(X_i) \delta_i(X_i) \right|}
\leq \Expj{i}{|\varphi_i(X_i)|}
\leq \tfrac{1}{\sqrt{n}} |\alpha_i| R_i {b_L(1,X_i)} \sqrt{\frac{x\trsp A^{-1} x}{t}} + \tfrac{1}{n} R_i^2 {b_L(2,X_i)} \sqrt{\frac{x\trsp A^{-1} x}{t^2}} \,.
\end{align*}
We now use that $b_L(k,X_i) \leq \sqrt{b_L(2k,X_i)}$.
Combining these estimates, we get
\begin{align*}
      \tfrac{1}{n} R_i^2 \Exp{\left| \varphi_i(X_i) \delta_i(X_i) \right|}
&\leq K \left( \frac{R_i^2}{nt} \sqrt{b_Q(2;X_i)} \wedge 1 \right) \left( \tfrac{1}{\sqrt{n}} |\alpha_i| R_i \sqrt{b_L(2,X_i)} \sqrt{\frac{x\trsp A^{-1} x}{t}} + \tfrac{1}{n} R_i^2 \sqrt{b_L(4,X_i)} \sqrt{\frac{x\trsp A^{-1} x}{t^2}} \right) \,.
\end{align*}
Since similar estimates hold for $\varphi_i(Y_i) \delta_i(Y_i)$,
this completes the proof.
\end{proof}

$\bullet$ \textbf{Gaussian computations}\\

Consider the case where the $X_i$ are independent normal random vectors
with mean zero and covariance $\Sigma_i$.
Then we clearly have $X \equalInLaw -X$ and therefore
$$
h(\alpha,X) \equalInLaw h(\alpha,-X) = - h(\alpha,X) \,.
$$
But this means that we must have $\Exp{h(\alpha,X)} = 0$. (Recall that $|h(\alpha;X)|\leq 1/\sqrt{t}$ by Equation \eqref{eq:alphaMxBound}, so the existence of the equation is not a problem.)

\subsection{Forms in $M^{-1}\Sigma_{\eps}M^{-1}$, $\Sigma_\eps\succeq 0$}
In a variety of situations, we will need to work with quantities of the type
$$
x\trsp M^{-1}\Sigma_{\eps}M^{-1} x\;.
$$
These quantities will occur when we study $\var{x\trsp M^{-1} \eps}$ where $\eps$ has mean 0 and covariance $\Sigma_{\eps}$. So these quantities will appear when we investigate the risk of various estimators (or asset allocations). This is why we restrict ourselves to $\Sigma_{\eps}\succeq 0$, though our proofs would go through with minor adjustments if $\Sigma_{\eps}$ was allowed to be more general.

We will also need to understand
$$
\muHat\trsp M^{-1}\Sigma_{\eps} M^{-1} \muHat\;,
$$
if we want to understand the risk properties of certain portfolio allocations.

Hence our problem is the following: in all the forms where before $M^{-1}$ was involved, we now want to work with $M^{-1} \Sigma_{\eps}M^{-1}$ instead. Our idea - somewhat similar to the one developed in \cite{nekMarkoRealizedRisk} - is the following: consider
$$
M_u=X\trsp D^2 X/n+A+u\Sigma_{\eps}\;, \text{ and } M_0=M\;.
$$
We remark that
$$
\left.\frac{\partial}{\partial u} M_u^{-1}\right|_{u=0}=-M^{-1} \Sigma_{\eps} M^{-1}\;.
$$

Hence, at least formally, our previous proofs will go through; the only thing we have to do is replace $A$ by $A+u\Sigma_{\eps}$ and take a derivative with respect to $u$ so we can get the decompositions that will help us make our methods work.

\subsubsection{Forms in $x\trsp M^{-1} \Sigma_{\eps} M^{-1} x$}\label{subsubsec:FormsInxMinvSigmaepsMinvx}
It is natural to study these forms in a variety of contexts, for instance when $x=\mu$. We have the following theorem, which holds under what we now call our ``usual assumptions", namely $A \succeq t\id$, $X_i$ are i.i.d with mean 0 and covariance $\Sigma_i$, and so are $Y_i$'s, though $X_i$ and $Y_i$ have different distributions.
\begin{theorem}\label{thm:FormsInxMinvSigmaepsMinvx}
Let $M=\frac{1}{n}\sum_{i=1}^n R_i^2 X_iX_i \trsp+A$ and
$$
F(X)=x\trsp M^{-1} \Sigma_{\eps} M^{-1} x\;.
$$
Let us call
$$
b(A,\Sigma_\eps)=\opnorm{A^{-1/2}\Sigma_\eps A^{-1/2}}\;.
$$
Then
$$
\var{F(X)}\leq K (x\trsp A^{-1} x)^{2} b(A,\Sigma_\eps)^{2}\sum_{i=1}^n \left[\left(\frac{R_i^{4}}{n^{2}}\frac{1}{t^2} b_L(4;X_i)\wedge 1\right)\right]\;.
$$
Also,
\begin{align*}
&\left|\Exp{F(X)-F(Y)}\right| \leq \sum_{i=1}^n U_i(X_i)+U_i(Y_i)\\
&U_i(X_i) \leq K\, b(A;\Sigma_\eps)\frac{x\trsp A^{-1} x}{t}\left\{ \left[\frac{R_i^4}{n^{3/2}}\sqrt{\frac{b_{Q_2}(2;X_i)}{n}} \sqrt{b_{L}(4;X_i)} \frac{1}{t}\right]\wedge \frac{R_i^2}{n} b_L(2;X_i) \right\}
\end{align*}
\end{theorem}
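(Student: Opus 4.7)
The proof follows the same two-step template used for Theorems \ref{thm:FirstFormIsDeterministic} and \ref{thm:FirstLimitIsSameForEveryone}: an Efron--Stein argument for the variance bound, followed by a Lindeberg-type interpolation for the difference of expectations. In both steps, the key new ingredient (beyond the $b_L$ and $b_{Q_2}$ estimates already in play) is the scalar inequality $\Sigma_\eps \preceq b(A,\Sigma_\eps)\, A$, which follows from the definition $b(A,\Sigma_\eps)=\opnorm{A^{-1/2}\Sigma_\eps A^{-1/2}}$. Together with the standard comparison $M_i^{-1}\preceq A^{-1}\preceq t^{-1}\id$, this yields the essential sandwich bounds
$$
x\trsp M_i^{-1}\Sigma_\eps M_i^{-1} x \leq b(A,\Sigma_\eps)\, x\trsp A^{-1}x, \qquad
\|M_i^{-1}\Sigma_\eps M_i^{-1} x\|^2 \leq \tfrac{1}{t}\, b(A,\Sigma_\eps)^2\, x\trsp A^{-1}x,
$$
and the analogous bound $X_i\trsp M_i^{-1}\Sigma_\eps M_i^{-1} X_i \leq b(A,\Sigma_\eps)\, X_i\trsp M_i^{-1}X_i$.

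\paragraph{Variance bound.}
Let $F_i(X)=x\trsp M_i^{-1}\Sigma_\eps M_i^{-1} x$, where $M_i=M-\tfrac{R_i^2}{n}X_iX_i\trsp$ is independent of $X_i$. A rank-one update gives $M^{-1}=M_i^{-1}-\beta_i M_i^{-1}X_iX_i\trsp M_i^{-1}$ with $\beta_i=(R_i^2/n)/(1+R_i^2 X_i\trsp M_i^{-1}X_i/n)\in[0,R_i^2/n]$, hence
$$
F(X)-F_i(X) = -2\beta_i\, (X_i\trsp M_i^{-1}x)(X_i\trsp M_i^{-1}\Sigma_\eps M_i^{-1}x) + \beta_i^2\,(X_i\trsp M_i^{-1}x)^2(X_i\trsp M_i^{-1}\Sigma_\eps M_i^{-1}X_i).
$$
Both terms are controlled by \eqref{B-L} applied to the linear forms $X_i\trsp M_i^{-1}x$ and $X_i\trsp(M_i^{-1}\Sigma_\eps M_i^{-1}x)$, with norms estimated by the sandwich bounds above. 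The crucial observation for the quadratic piece is that $\beta_i\, X_i\trsp M_i^{-1}X_i\leq 1$ by construction, so $\beta_i^2(X_i\trsp M_i^{-1}\Sigma_\eps M_i^{-1}X_i)\leq \beta_i\, b(A,\Sigma_\eps)\leq b(A,\Sigma_\eps)R_i^2/n$. Combining these gives $\Exp{(F-F_i)^2}\leq K\, b(A,\Sigma_\eps)^2\,(x\trsp A^{-1}x)^2\,\tfrac{R_i^4}{n^2 t^2}\,b_L(4;X_i)$. The complementary bound $\Exp{(F-F_i)^2}\leq K\, b(A,\Sigma_\eps)^2(x\trsp A^{-1}x)^2$ follows from the global estimate $|F|\leq b(A,\Sigma_\eps)\, x\trsp A^{-1}x$, and summing over $i$ via Efron--Stein gives the claimed $\wedge 1$ form.

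\paragraph{Lindeberg bound.}
Set $Z_j=(Y_1,\ldots,Y_{j-1},X_j,\ldots,X_n)$ and let $M_{(j)}$, $a:=M_{(j)}^{-1}x$, $b:=M_{(j)}^{-1}\Sigma_\eps M_{(j)}^{-1}x$, $c:=M_{(j)}^{-1}\Sigma_\eps M_{(j)}^{-1}$ be constructed from $Z_j$ with the $j$-th row removed (so they are independent of both $X_j$ and $Y_j$). The same rank-one expansion as above, applied to $Z_j$ and $Z_{j+1}$, yields $F(Z_j)-F(Z_{j+1})=G_j(X_j)-G_j(Y_j)$ where
$$
G_j(W) = -2\beta_j^W(W\trsp a)(W\trsp b) + (\beta_j^W)^2(W\trsp a)^2(W\trsp c W).
$$
Mimicking the derivation around \eqref{eq:fromQtoD}, I write $\beta_j^W=\tilde\beta_j+(R_j^2/n)^2\delta_j(W)$ with $\tilde\beta_j$ deterministic and $\delta_j(W)$ as in \eqref{eq:defDelta}. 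The leading $-2\tilde\beta_j(W\trsp a)(W\trsp b)$ piece has expectation $-2\tilde\beta_j\, a\trsp\Sigma_j b$, which matches between $X_j$ and $Y_j$ by the common covariance and therefore cancels. What remains is bounded by Hölder's inequality using $\Exp{\delta_j^2}\leq b_{Q_2}(2;W)/t^2$ and $\Exp{(W\trsp a)^4}\leq \|a\|^4 b_L(4;W)$ (similarly for $b$), together with $\|a\|\|b\|\leq b(A,\Sigma_\eps)\, x\trsp A^{-1}x/t$; this produces the first option of the min. The quadratic term and the alternative bound obtained from $(R_j^2/n)|\delta_j(W)|\leq 1$ (cf. \eqref{eq:deltaBound}) produce the second option $\tfrac{R_j^2}{n}b_L(2;X_j)$. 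Summing over $j$ yields the announced estimate.

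\paragraph{Main obstacle.}
The only genuine difficulty is bookkeeping: tracking $t$, $b(A,\Sigma_\eps)$ and $x\trsp A^{-1}x$ through the several Hölder splittings so that the bound takes the stated form. The conceptual step, namely the recognition that $\Sigma_\eps\preceq b(A,\Sigma_\eps)A$ (equivalently $\Sigma_\eps A^{-1}\Sigma_\eps\preceq b(A,\Sigma_\eps)\Sigma_\eps$) is exactly what is needed to transport the $A$-based estimates of Theorems~\ref{thm:FirstFormIsDeterministic}--\ref{thm:FirstLimitIsSameForEveryone} to the sandwiched form $M^{-1}\Sigma_\eps M^{-1}$, is straightforward once one notices it; the alternative derivative-in-$u$ viewpoint from the start of Subsection 3.3 serves only as a heuristic and is not needed in the rigorous argument.
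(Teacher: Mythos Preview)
Your approach is essentially the paper's: the direct rank-one expansion you write down and the paper's derivative-in-$u$ device produce the identical decomposition $F-F_i=E_1-E_2$ with
\[
E_1=2\beta_i(X_i\trsp a)(X_i\trsp b),\qquad E_2=\beta_i^2(X_i\trsp a)^2\,\tilde q_i,
\]
($a=M_i^{-1}x$, $b=M_i^{-1}\Sigma_\eps M_i^{-1}x$, $\tilde q_i=X_i\trsp M_i^{-1}\Sigma_\eps M_i^{-1}X_i$), and the sandwich bounds you state are exactly the paper's. The variance argument is fine.

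There is, however, a real gap in your Lindeberg treatment of the quadratic piece $E_2$. You propose to bound $E_2$ in its entirety by the ``second option'' $\tfrac{R_j^2}{n}b_L(2;X_j)$. That gives
\[
U_i\;\lesssim\;\big(\text{option 1 for }E_1\text{ remainder}\big)\;+\;\big(\text{option 2 for }E_2\big),
\]
which is \emph{not} bounded by $K(\text{option 1}\wedge\text{option 2})$ as the theorem claims, since the two options are of different orders in $R_i$ and neither dominates. To get the min you must also extract a main part from $E_2$ that cancels between $X_j$ and $Y_j$: replace $\tilde q_j$ by $\tilde d_j=\trace{\Sigma_j M_j^{-1}\Sigma_\eps M_j^{-1}}$ \emph{in addition to} replacing $q_j$ by $d_j$ in the denominators. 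The resulting main piece $\tilde\beta_j^{\,2}(W\trsp a)^2\tilde d_j$ has matching expectation for $X_j$ and $Y_j$, and the two remainders, one containing $\tilde q_j-\tilde d_j$ and one containing $\beta_j^2-\tilde\beta_j^{\,2}$, are both controlled by Cauchy--Schwarz with $b_{Q_2}(2;\cdot)$, yielding option~1 for $E_2$ as well. This is precisely the paper's ${\cal R}_{2,1}+{\cal R}_{2,2}$ step; without it the stated bound does not follow from your outline.
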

As is explained in the proof of the theorem, when $R_i$'s are i.i.d and uniformly square integrable, the upper bound goes to zero, provided $\sqrt{\frac{b_{Q_2}(2;X_i)}{n}}$ and $b_L(4;X_i)$ remain bounded.

It should be noted that when $X_i$ are i.i.d with covariance $\Sigma$, we have found in Heuristic \ref{heuristic:squareRegularizedInverse} and its proof a deterministic equivalent for $F(X)$. Naturally, our theorem shows that doing computations in the Gaussian case is enough to understand $\Exp{F(Y)}$ for $Y$ with a variety of distributions - that is the essence of Lindeberg-style results.

\begin{proof}

Let us call
$$
f(X)=f(X;A)=x\trsp M^{-1} x=x\trsp (X\trsp D^2 X/n+A)^{-1}x\;.
$$
Call $f_i(X)$ the same quantity where $D_{i,i}=R_i$ is replaced by $D_{i,i}=0$ (or equivalently $X_i$ is replaced by $0$).

Our key estimate was
$$
f(X)-f_i(X)=-\frac{R_i^2}{n}\frac{(x\trsp M_i^{-1} X_i)^2}{1+\frac{R_i^2}{n} X_i\trsp M_i^{-1} X_i}\;.
$$
This equality is true if $A$ is replaced by $A+u\Sigma_{\eps}$. Now we can take the derivative of this expression with respect to $u$. Call
$$
F(X)=x\trsp M^{-1} \Sigma_{\eps} M^{-1} x\;,
$$
and $F_i$ the same quantity when $X_i$ is replaced by $0$.
We have
$$
F(X)-F_i(X)=- \left.\frac{\partial}{\partial u}\right|_{u=0}\left[f(X;A+u\Sigma_{\eps})-f_i(X;A+u\Sigma_{\eps})\right]\;.
$$
Recall the notations $q_i(X_i)=X_i\trsp M_i^{-1} X_i$, $d_i=\trace{M_i^{-1}\Sigma_i}$.
After taking the derivative, we get
$$
-(F(X)-F_i(X))=2\frac{R_i^2}{n}\frac{x\trsp M_i^{-1}X_i\, x\trsp M_i^{-1} \Sigma_{\eps} M_i^{-1} X_i}{1+\frac{R_i^2}{n} q_i}-\frac{R_i^4}{n^2}\frac{(x\trsp M_i^{-1}X_i)^2\, X_i\trsp M_i^{-1} \Sigma_{\eps} M_i^{-1} X_i}{(1+\frac{R_i^2}{n} q_i)^2}\;.
$$

Let us call
\begin{align*}
\tilde{q}_i(X_i)&=X_i\trsp M_i^{-1}\Sigma_\eps M_i^{-1} X_i\;,\\
\tilde{d}_i(X_i)&=\trace{\Sigma_i M_i^{-1}\Sigma_\eps M_i^{-1}}\;,\\
E_1&=2\frac{R_i^2}{n}\frac{x\trsp M_i^{-1}X_i\, x\trsp M_i^{-1} \Sigma_{\eps} M_i^{-1} X_i}{1+\frac{R_i^2}{n} q_i}\;,\\
E_2&=\frac{R_i^4}{n^2}\frac{(x\trsp M_i^{-1}X_i)^2\, \tilde{q}_i(X_i)}{(1+\frac{R_i^2}{n}q_i)^2}
\end{align*}

$\bullet$ \textbf{Control of $E_1$}
By using the fact that $|v\trsp M_i^{-1} X_i|\leq \sqrt{v\trsp M_i^{-1} v}\sqrt{X_i\trsp M_i^{-1} X_i}$, we see that
$$
|E_1|\leq 2\sqrt{x\trsp M_i^{-1} x}\sqrt{x\trsp M_i^{-1/2}(M_i^{-1/2}\Sigma_\eps M_i^{-1/2})^2 M_i^{-1/2}x}
$$

\paragraph{On $\mathbf{M_i^{-1}\Sigma_\eps M_i^{-1}}$}
We first note that $M_i^{-1/2}M_i^{-1/2}\Sigma_\eps M_i^{-1/2}M_i^{-1/2}\preceq M_i^{-1} \opnorm{M_i^{-1/2}\Sigma_\eps M_i^{-1/2}}$. Now $\opnorm{M_i^{-1/2}\Sigma_\eps M_i^{-1/2}}=\lambda_{\max}(M_i^{-1/2}\Sigma_\eps M_i^{-1/2})=\lambda_{\max}(\Sigma_\eps^{1/2}M_i^{-1}\Sigma_\eps^{1/2})$ by e.g.\@ similarity. Now $\Sigma_\eps^{1/2}M_i^{-1}\Sigma_\eps^{1/2}\preceq
\Sigma_\eps^{1/2}A^{-1}\Sigma_\eps^{1/2}$, so
$$
\opnorm{M_i^{-1/2}\Sigma_\eps M_i^{-1/2}}\leq \lambda_{\max}(\Sigma_\eps^{1/2}A^{-1}\Sigma_\eps^{1/2})=\opnorm{\Sigma_\eps^{1/2}A^{-1}\Sigma_\eps^{1/2}}=b(A;\Sigma_{\eps})\;.
$$
We therefore also have
$$
\opnorm{M_i^{-1}\Sigma_\eps M_i^{-1}}\leq \frac{b(A;\Sigma_\eps)}{t}\;.
$$

We will also repeatedly need to control $\norm{M_i^{-1}\Sigma_{\eps}M_i^{-1}v}$ for a fixed vector $v$. Call $u=M_i^{-1}\Sigma_{\eps}M_i^{-1}v$. Clearly
$$
u\trsp u=v\trsp M_i^{-1/2} (M_i^{-1/2}\Sigma_\eps M_i^{-1/2}) M_i^{-1} (M_i^{-1/2}\Sigma_\eps M_i^{-1/2}) M_i^{-1/2}v\;.
$$
Now, using our bounds on $\opnorm{(M_i^{-1/2}\Sigma_\eps M_i^{-1/2})}$, $\opnorm{M_i^{-1}}\leq t$ and the fact that $\opnorm{\cdot}$ is submultiplicative, we have
$$
\opnorm{(M_i^{-1/2}\Sigma_\eps M_i^{-1/2}) M_i^{-1} (M_i^{-1/2}\Sigma_\eps M_i^{-1/2})}\leq \frac{b^2(A;\Sigma_{\eps})}{t}\;.
$$
So
$$
\norm{M_i^{-1}\Sigma_{\eps}M_i^{-1}v}^2=u\trsp u \leq \frac{b^2(A;\Sigma_{\eps})}{t} v\trsp M_i^{-1} v\leq \frac{b^2(A;\Sigma_{\eps})}{t} v\trsp A^{-1} v
$$
Finally, we conclude that
$$
\norm{M_i^{-1}\Sigma_{\eps}M_i^{-1}v}\leq \frac{b(A;\Sigma_{\eps})}{\sqrt{t}}\norm{M_i^{-1}v}\leq \frac{b(A;\Sigma_{\eps})}{\sqrt{t}} \sqrt{v \trsp A^{-1} v}\;.
$$

Using the previous bounds, we clearly then have
$$
|E_1|\leq 2x\trsp A^{-1} x \; b(A,\Sigma_\eps)\;.
$$
On the other hand, using the Cauchy-Schwarz inequality, we see that
$$
\Expj{i}{|E_1|^k}\leq
K \frac{R_i^{2k}}{n^{k}}(x\trsp A^{-1} x)^{k} \frac{b(A,\Sigma_\eps)^{k}}{t^k} b_L(2k;X_i)\;.
$$

Hence,
$$
\Exp{|E_1|^k}\leq K (x\trsp A^{-1} x)^{k} b(A,\Sigma_\eps)^{k}\left[\frac{R_i^{2k}}{n^{k}}\frac{1}{t^k} b_L(2k;X_i)\wedge 1\right]
$$

$\bullet$ \textbf{Control of $E_2$}
Writing
$$
-E_2=\frac{R_i^2}{n}\frac{(x\trsp M_i^{-1}X_i)^2}{(1+\frac{R_i^2}{n}q_i)} \frac{R_i^2}{n}\frac{\tilde{q}_i(X_i)}{(1+\frac{R_i^2}{n}q_i)}
$$
we remark that
$$
0\leq \frac{R_i^2}{n}\frac{\tilde{q}_i(X_i)}{(1+\frac{R_i^2}{n}q_i)} \leq b(A;\Sigma_\eps) \; \text{ and } |E_2|\leq \frac{R_i^2}{n}\frac{(x\trsp M_i^{-1}X_i)^2}{(1+\frac{R_i^2}{n}q_i)} b(A;\Sigma_\eps)\;.
$$
Hence, we can conclude that
$$
|E_2|\leq x\trsp A^{-1} x \; \; b(A,\Sigma_\eps)\;.
$$
We also have the inequalities
$$
\Expj{i}{|E_2|^k}\leq \frac{R_i^{2k}}{n^k}b_L(2k;X_i) (x\trsp M_i^{-2}x)^k  b(A;\Sigma_\eps)^k \leq
\frac{R_i^{2k}}{n^k}b_L(2k;X_i) (x\trsp A^{-1}x)^k  \frac{b(A;\Sigma_\eps)^k}{t^k}\;.
$$
Therefore,
$$
\Exp{|E_2|^k}\leq K (x\trsp A^{-1} x)^k \; \; b(A,\Sigma_\eps)^k \left(\frac{R_i^{2k}}{t^k n^k}b_L(2k;X_i)\wedge 1\right)\;.
$$
$\bullet$ \textbf{Efron-Stein aspects}\\
Using the Efron-Stein inequality, we have
$$
\var{F(X)}\leq\sum_{i=1}^n \var{F(X)-F_i(X)}\leq K (x\trsp A^{-1} x)^{2} b(A,\Sigma_\eps)^{2}\sum_{i=1}^n \left[\left(\frac{R_i^{4}}{n^{2}}\frac{1}{t^2} b_L(4;X_i)\wedge 1\right)\right]\;.
$$
Hence, when $R_i$'s have 2 moments, $\var{F(X)}\tendsto 0$ in $R_i$-probability.

$\bullet$ \textbf{Lindeberg aspects}\\
We go a bit fast here. $M_i$ is now computed from data $X_1,\ldots,X_{i-1},0,Y_{i+1},\ldots,Y_n$.
Recall that
$$
E_1(X_i)=2\frac{R_i^2}{n}\frac{x\trsp M_i^{-1}X_i\, x\trsp M_i^{-1} \Sigma_{\eps} M_i^{-1} X_i}{1+\frac{R_i^2}{n} q_i(X_i)}
$$
Let us show that, when $Y_i$ and $X_i$ have the same covariance $\Sigma_i$ and mean 0, we can control
$$
\sum_{i=1}^n \Exp{E_1(X_i)-E_1(Y_i)}\;.
$$
We call $N_i=x\trsp M_i^{-1}X_i\, x\trsp M_i^{-1} \Sigma_{\eps} M_i^{-1} X_i$ and (as before) $d_i=\trace{\Sigma_i M_i^{-1}}$. We have
$$
E_1(X_i)=2\frac{R_i^2}{n}\frac{N_i(X_i)}{1+\frac{R_i^2}{n} d_i}+2\frac{R_i^2}{n} N_i(X_i) \frac{R_i^2/n(d_i-q_i(X_i))}{(1+\frac{R_i^2}{n}d_i)(1+\frac{R_i^2}{n}q_i(X_i))}\;.
$$
Note that $\Expj{i}{N_i(X_i)}=\Expj{i}{N_i(Y_i)}$, so to control $\Exp{E_1(X_i)-E_1(Y_i)}$, we just need to understand the second term, namely
$$
{\cal R}_1(X_i)=2\frac{R_i^2}{n} N_i(X_i) \frac{R_i^2/n(d_i-q_i(X_i))}{(1+\frac{R_i^2}{n}d_i)(1+\frac{R_i^2}{n}q_i(X_i))}\;.
$$
Our studies in Subsubsection \ref{subsec:quadFormsInverse} show that
$$
\delta_i(X_i)=\frac{(d_i-q_i(X_i))}{(1+\frac{R_i^2}{n}d_i)(1+\frac{R_i^2}{n}q_i(X_i))}
$$
is such that
$$
|R_i^2/n\delta_i(X_i)|\leq 1\;.
$$
On the other hand, we have essentially given bounds earlier for $\Expj{i}{|N_i|^k}$ (see the work on $\Expj{i}{|E_i|^k}$), so we have
$$
\Expj{i}{|{\cal R}_1(X_i)|}\leq K \frac{R_i^2}{n} b_L(2;X_i) \frac{x\trsp A^{-1} x}{t} b(A;\Sigma_\eps)\;.
$$
Furthermore,
$$
|{\cal R}_1(X_i)|\leq K \frac{R_i^4}{n^2} |N_i| |d_i-q_i(X_i)|\;.
$$
So
$$
\Expj{i}{|{\cal R}_1(X_i)|}\leq K \frac{R_i^4}{n^2} \sqrt{\Expj{i}{|d_i-q_i(X_i)|^2}}\sqrt{\Expj{i}{N_i^2}}\;.
$$
Using our bounds on $\Expj{i}{N_i^k}$ and those on $\Expj{i}{|d_i-q_i(X_i)|^2}$, we get
$$
\Expj{i}{|{\cal R}_1(X_i)|}\leq K \frac{R_i^4}{n^2} \frac{x\trsp A^{-1} x}{t} b(A;\Sigma_\eps)\frac{\sqrt{b_{Q_2}(2;X_i)}}{t}\sqrt{b_L(4;X_i)}\;.
$$
We conclude that
$$
|\Exp{{\cal R}_1(X_i)}|\leq K \frac{x\trsp A^{-1} x}{t} b(A;\Sigma_\eps) \left[\frac{R_i^4}{n^{3/2}} \sqrt{\frac{b_{Q_2}(2;X_i)}{n}}\frac{\sqrt{b_L(4;X_i)}}{t}\wedge \frac{R_i^2}{n} b_L(2;X_i) \right]\;,
$$
and similarly for $Y_i$. We have shown that
$$
\left|\Exp{E_1(X_i)-E_1(Y_i)}\right|\leq K \frac{x\trsp A^{-1} x}{t} b(A;\Sigma_\eps) \left[\frac{R_i^4}{n^{3/2}} \sqrt{\frac{b_{Q_2}(2;X_i)}{n}}\frac{\sqrt{b_L(4;X_i)}}{t}\wedge \frac{R_i^2}{n} b_L(2;X_i) \right]\;,
$$
and we can therefore control
$$
\left|\sum_{i=1}^n \Exp{E_1(X_i)-E_1(Y_i)}\right|\;.
$$
$\bullet$ \textbf{About $\mathbf{E_2}$}\\
We now turn to the $E_2$ part of the problem. The strategy is to replace
\begin{align*}
\tilde{q}_i(X_i)&=X_i\trsp M_i^{-1} \Sigma_\eps M_i^{-1} X_i
\text{ by the ``equivalent" (and independent of $X_i$)}\\
\tilde{d}_i&=\trace{\Sigma_i M_i^{-1} \Sigma_\eps M_i^{-1}}\;,
\end{align*}
and similarly to replace $q_i(X_i)=X_i\trsp M_i^{-1}X_i$ by $d_i(X_i)=\trace{\Sigma_i M_i^{-1}}$. Hence the first term is going to have the same mean for both $X_i$ and $Y_i$ and we just have to work on the remainders.
Let us call
$$
\Delta_i(X_i)=\frac{1}{(1+\frac{R_i^2}{n}q_i(X_i))^2}-\frac{1}{(1+\frac{R_i^2}{n}d_i(X_i))^2}
$$
and let us remark that, with the $\delta_i(X_i)$ notation we just recalled, we have
$$
\Delta_i(X_i)=\frac{R_i^2}{n} \delta_i(X_i) \left[\frac{1}{1+\frac{R_i^2}{n}q_i(X_i)}+\frac{1}{1+\frac{R_i^2}{n}d_i(X_i)}\right]\;.
$$
With this notation, we have
\begin{align*}
E_2(X_i)&=\frac{R_i^4}{n^2} \frac{(x\trsp M_i^{-1}X_i)^2 \tilde{d}_i}{(1+\frac{R_i^2}{n} d_i)^2}
+\frac{R_i^4}{n^2}(x\trsp M_i^{-1}X_i)^2 \frac{ \tilde{q}_i(X_i)-\tilde{d}_i}{(1+\frac{R_i^2}{n} d_i)^2}
+\Delta_i \frac{R_i^4}{n^2} (x\trsp M_i^{-1} X_i)^2 \tilde{q}_i(X_i)\;\\
&\triangleq {\cal M}_i(X_i)+{\cal R}_{2,1}(X_i)+{\cal R}_{2,2}(X_i)\;.
\end{align*}
Note that by construction $\Expj{i}{{\cal M}_i(X_i)}=\Expj{i}{{\cal M}_i(Y_i)}$, so to bound
$\Exp{E_2(X_i)-E_2(Y_i)}$, all we will have to do is bound $\Exp{|{\cal R}_{2,1}(X_i)|}$ and $\Exp{|{\cal R}_{2,2}(X_i)|}$ .
Before we turn to this task, let us recall that
$$
|E_2(X_i)|\leq \frac{R_i^2}{n}(x\trsp M_i^{-1} X_i)^2\, b(A,\Sigma_\eps)\;.
$$
In other respects, if $A$ and $B$ are positive semi-definite (psd) matrices and $\opnorm{B}\leq C$, then $\trace{AB}\leq C \trace{A}$ (because when $A$ and $B$ are psd, $A^{1/2} B A^{1/2}\preceq \opnorm{B} A$). Therefore,
$$
\tilde{d}_i\leq b(A;\Sigma_\eps) d_i \text{ and } {\cal M}_i(X_i)\leq \frac{R_i^2}{n}(x\trsp M_i^{-1} X_i)^2\, b(A,\Sigma_\eps)\;.
$$
Hence,
$$
|{\cal R}_{2,1}(X_i)+{\cal R}_{2,2}(X_i)|\leq K \frac{R_i^2}{n}(x\trsp M_i^{-1} X_i)^2\, b(A,\Sigma_\eps)\;.
$$
Let us now work more precisely on ${\cal R}_{2,1}(X_i)$ and ${\cal R}_{2,2}(X_i)$\;. \\
$\bullet$ \textbf{On $\mathbf{{\cal R}_{2,1}(X_i)}$.}\\
Note that, using $\opnorm{M_i^{-1}\Sigma_\eps M_i^{-1}}\leq b(A;\Sigma_\eps)/t$, we have
$$
\Expj{i}{|\tilde{q}_i(X_i)-\tilde{d}_i(X_i)|^2}\leq \frac{b^2(A;\Sigma_\eps)}{t^2} b_{Q_2}(2;X_i)\;.
$$
Using the Cauchy-Schwarz inequality in connection with the previous remark, we get
\begin{align*}
\Expj{i}{|{\cal R}_{2,1}(X_i)|}&\leq \frac{R_i^4}{n^{3/2}} (x\trsp M_i^{-2}x)\frac{b(A;\Sigma_\eps)}{t} \sqrt{\frac{b_{Q_2}(2;X_i)}{n}}\sqrt{b_L(4;X_i)}\\
&\leq \frac{R_i^4}{n^{3/2}} \frac{(x\trsp A^{-1}x)}{t^2}b(A;\Sigma_\eps) \sqrt{\frac{b_{Q_2}(2;X_i)}{n}}\sqrt{b_L(4;X_i)}\;.
\end{align*}
$\bullet$ \textbf{On $\mathbf{{\cal R}_{2,2}(X_i)}$.}\\
Let us first note that
$$
\frac{\tilde{q}_i(X_i)}{q_i(X_i)}\leq b(A;\Sigma_\eps)  \text{ and }
\left|\Delta_i\right|\leq K \frac{R_i^2}{n}\frac{|d_i-q_i|}{(1+\frac{R_i^2}{n}d_i)(1+\frac{R_i^2}{n}q_i(X_i))}\;.
$$

Hence,
$$
|\Delta_i \tilde{q}_i(X_i)|\leq K b(A;\Sigma_\eps)\frac{|d_i-q_i|}{(1+\frac{R_i^2}{n}d_i)}\;.
$$
We can therefore conclude that
$$
|{\cal R}_{2,2}(X_i)|\leq K \frac{R_i^4}{n^2} b(A;\Sigma_\eps)\frac{|d_i-q_i|}{(1+\frac{R_i^2}{n}d_i)}(x\trsp M_i^{-1} X_i)^2\;.
$$
Using the Cauchy-Schwarz inequality we also get
\begin{align*}
\Expj{i}{|{\cal R}_{2,2}(X_i)|}&\leq K \frac{R_i^4}{n^{3/2}} \frac{b(A;\Sigma_\eps)}{t} (x\trsp M_i^{-2}x) \sqrt{\frac{b_{Q_2}(2;X_i)}{n}} \sqrt{b_{L}(4;X_i)}\;\\
&\leq K \frac{R_i^4}{n^{3/2}} b(A;\Sigma_\eps)  \frac{x\trsp A^{-1} x}{t^2}\sqrt{\frac{b_{Q_2}(2;X_i)}{n}} \sqrt{b_{L}(4;X_i)}\;.
\end{align*}

We conclude that if $U_i=\Exp{|{\cal R}_{2,1}(X_i)+{\cal R}_{2,2}(X_i)|}$,
$$
\left|\Exp{E_2(X_i)-E_2(Y_i)}\right|\leq U_i(X_i)+U_i(Y_i)\;,
$$
where
$$
U_i(X_i)\leq K\, b(A;\Sigma_\eps)\frac{x\trsp A^{-1} x}{t} \left[\frac{R_i^4}{n^{3/2}}\sqrt{\frac{b_{Q_2}(2;X_i)}{n}} \sqrt{b_{L}(4;X_i)} \frac{1}{t}\right]\wedge \frac{R_i^2}{n} b_L(2;X_i) \;.
$$

Finally, putting everything together we have shown that
$$
\left|\Exp{F(X)-F(Y)}\right|\leq K \sum_{i=1}^n U_i(X_i)+U_i(Y_i)\;.
$$
We conclude that when $R_i$ are independent and have 2 moments, the upper bound goes to zero in $R_i$-probability, provided $b_L(4;X_i)$ and $b_{Q_2}(2;X_i)/n$ remain uniformly bounded (we have already analyzed similar series previously).
\end{proof}

\newcommand{\zetaTilde}{\tilde{\zeta}}
\newcommand{\qTilde}{\tilde{q}}
\newcommand{\dTilde}{\tilde{d}}
\subsubsection{Forms in $\alpha\trsp \frac{DX\trsp}{\sqrt{n}} M^{-1} \Sigma_{\eps} M^{-1}\frac{X\trsp D}{\sqrt{n}} \alpha$}\label{subsubsec:formsinDXMinvSigmaEpsMinvXtrspD}

In the analysis of quantities of the type
$$
\muHat\trsp (\SigmaHat+A)^{-1}\Sigma_\eps(\SigmaHat+A)^{-1} \muHat
$$
we will naturally have to understand quantities of the type, if $M=X\trsp D^2 X/n+A$,
$$
G(\alpha;X)=\alpha\trsp \frac{DX}{\sqrt{n}}M^{-1}\Sigma_\eps M^{-1}\frac{X\trsp D}{\sqrt{n}}\alpha\;.
$$

We work under our usual assumptions, and in particular $A\succeq t\Id$.

We have the following theorem.

\begin{theorem}\label{thm:formsinDXMinvSigmaEpsMinvXtrspD}
Under the usual assumptions of this paper, when $\norm{\alpha}=1$, we have, for $K$ a constant,
\begin{gather*}
\var{G(\alpha;X)} \leq \sum_{i=1}^n V_i, \text{ with }\\
V_i\leq K b^2(A;\Sigma_\eps)\left\{\alpha_i^4 \left[\frac{R_i^4}{n^2} \frac{b_{Q_2}(2;X_i)}{t^2} \wedge 1\right]\right.\\
+\left[
\left(\alpha_i^4 \frac{R_i^4}{n^2}b_{Q_2}(2;X_i) \frac{1}{t^2}+\frac{R_i^4}{n^2} b_L(4;X_i)\frac{1}{t^2}+\alpha_i^2 \frac{R_i^2}{n} \frac{b_L(2;X_i)}{t} \right) \wedge 1 \right. \\
\left.\left.+\frac{R_i^{4}}{n^2}b_L(4;X_i)  \frac{1}{t^2}+\alpha_i^2 \frac{R_i^2}{n} b_L(2;X_i) \frac{1}{t}\right]\right\}\;.
\end{gather*}
Furthermore,
\begin{gather*}
\left|\Exp{G(\alpha;X)-G(\alpha;Y)}\right|\leq \sum_{i=1}^n U_{i,1}(X_i)+U_{i,2}(X_i)+U_{i,1}(Y_i)+U_{i,2}(Y_i)\;, \text{ where }\\
U_{i,1}(X_i)\leq
K \frac{b(A;\Sigma_\eps)}{\sqrt{t}} \left(\frac{R_i^2}{\sqrt{n}} \sqrt{\frac{b_{Q_2}(2;X_i)}{nt}} \wedge 1\right)
\left[\frac{|\alpha_i| R_i}{\sqrt{n}} \sqrt{b_L(2;X_i)}+\frac{R_i^2}{n} \sqrt{b_L(4;X_i)} \frac{1}{\sqrt{t}}\right]\;.\\
U_{i,2}(X_i)\leq K b(A;\Sigma_\eps) \left[(\alpha_i^2+\frac{R_i^2}{n}\frac{b_L(2;X_i)}{t})\right.\\
\wedge \sqrt{\frac{b_{Q_2}(2;X_i)}{n}} \left[\frac{R_i^2}{\sqrt{n}} \left(\frac{\alpha_i^2}{t}+\frac{R_i^2}{n}\frac{1}{t^2} \sqrt{b_L(4;X_i)}\right)
+ \frac{1}{t^{3/2}}\left(\frac{R_i^4}{n^{3/2}} \frac{1}{t^{1/2}} \sqrt{b_L(4;X_i)}+\frac{|\alpha_i| R_i^{3}}{n} \sqrt{b_L(2;X_i)}\right)\right]\;.
\end{gather*}
\end{theorem}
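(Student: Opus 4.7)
The plan is to reduce this to the earlier analysis of $g(\alpha;X)=\alpha\trsp\frac{DX}{\sqrt{n}}M^{-1}\frac{X\trsp D}{\sqrt{n}}\alpha$ from Subsubsection \ref{subsec:quadFormsInverseAndMean} by means of the identity
$$
G(\alpha;X)=-\left.\frac{\partial}{\partial u}\right|_{u=0} g(\alpha;X;A+u\Sigma_\eps),
$$
which follows from $\partial_u (M+u\Sigma_\eps)^{-1}|_{u=0}=-M^{-1}\Sigma_\eps M^{-1}$. This parallels the strategy employed for $x\trsp M^{-1}\Sigma_\eps M^{-1}x$ in Theorem \ref{thm:FormsInxMinvSigmaepsMinvx}. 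Starting from the decomposition \eqref{eq:alphaMalphaDecomposition}, namely
$$
g(\alpha;X)=g_i(\alpha;X)+\alpha_i^2-\frac{(\alpha_i-R_i\zeta_i(X_i)/\sqrt{n})^2}{1+R_i^2 q_i(X_i)/n},
$$
and differentiating in $u$ after the substitution $A\mapsto A+u\Sigma_\eps$ gives an explicit expression for $G(\alpha;X)-G_i(\alpha;X)$ as a finite sum of terms involving $M_i^{-1}$, $M_i^{-1}\Sigma_\eps M_i^{-1}$, $q_i(X_i)=X_i\trsp M_i^{-1}X_i$, the new quadratic form $\tilde q_i(X_i)=X_i\trsp M_i^{-1}\Sigma_\eps M_i^{-1}X_i$, as well as $\zeta_i(X_i)=X_i\trsp M_i^{-1}m_i$ and the modified version $\tilde\zeta_i(X_i)=X_i\trsp M_i^{-1}\Sigma_\eps M_i^{-1}m_i$.

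For the variance bound, I apply Lemma \ref{lemma:GeneralizationEfronStein} with $k=2$ to this decomposition. Each summand is controlled by combining three recurring estimates: (i) the operator-norm control $\opnorm{M_i^{-1}\Sigma_\eps M_i^{-1}}\leq b(A;\Sigma_\eps)/t$ and $\norm{M_i^{-1}\Sigma_\eps M_i^{-1}v}\leq b(A;\Sigma_\eps)t^{-1/2}\sqrt{v\trsp A^{-1}v}$ from the proof of Theorem \ref{thm:FormsInxMinvSigmaepsMinvx}; (ii) the Cauchy--Schwarz moment bounds in $X_i$ via $b_L(\cdot;X_i)$ and $b_{Q_2}(\cdot;X_i)$; and (iii) the crude global bound $|G(\alpha;X)-G_i(\alpha;X)|\leq K\,b(A;\Sigma_\eps)(\alpha_i^2+1)$ obtained from $DXM^{-1}X\trsp D/n\preceq \id_n$ together with $M^{-1}\Sigma_\eps M^{-1}\preceq b(A;\Sigma_\eps)M^{-1}$. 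Taking the minimum of the refined Cauchy--Schwarz estimate and this crude bound produces the various $\wedge\,1$ factors in $V_i$; the three families of summands in $V_i$ track the contributions of, respectively, the quartic-in-$\alpha_i$ term (coming from $\alpha_i^2 \partial_u$ of the denominator), the part already appearing in Theorem \ref{thm:ThirdFormIsDeterministic}, and the additional $\tilde q_i$/$\tilde\zeta_i$ terms produced by differentiation.

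For the Lindeberg bound, I proceed exactly as in Theorem \ref{thm:ThirdLimitIsSameForEveryone}: telescope $\Exp{G(\alpha;X)-G(\alpha;Y)}$ along the hybrid vectors $Z_j=(Y_1,\ldots,Y_{j-1},X_j,\ldots,X_n)$, then within each increment replace the random denominators $1+R_i^2 q_i(X_i)/n$ by $1+R_i^2 d_i/n$ using \eqref{eq:fromQtoD}, and replace $\tilde q_i(X_i)$ by $\tilde d_i=\trace{\Sigma_i M_i^{-1}\Sigma_\eps M_i^{-1}}$ using the analogous identity that already appears in the $E_2$ part of the proof of Theorem \ref{thm:FormsInxMinvSigmaepsMinvx}. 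The ``matched'' pieces, where all quadratic forms have been frozen to their deterministic surrogates, contribute zero to $\Exp{G(\alpha;Z_j)-G(\alpha;Z_{j+1})}$ because $X_j$ and $Y_j$ share mean zero and covariance $\Sigma_j$ and $M_i,m_i$ are independent of the swapped coordinate. What remains is a collection of residuals in which a factor $\delta_i(X_i)=(d_i-q_i(X_i))/[(1+R_i^2 q_i/n)(1+R_i^2 d_i/n)]$ (or its $\tilde{}$ analogue) appears explicitly. Each such residual is then bounded in two ways: via Cauchy--Schwarz together with $\Expj{i}{\delta_i(X_i)^2}\leq b_{Q_2}(2;X_i)/t^2$ from \eqref{eq:deltaBound2}, and via the universal bound $|R_i^2\delta_i(X_i)/n|\leq 1$ from \eqref{eq:deltaBound}. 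Taking the minimum of the two estimates produces the $\wedge$ factors in $U_{i,1}$ and $U_{i,2}$, where $U_{i,1}$ collects contributions involving $\tilde\zeta_i$ (linear-in-$x$ type forms and cross terms) and $U_{i,2}$ collects contributions involving $\tilde q_i$ and the squared numerator $(\alpha_i-R_i\zeta_i/\sqrt{n})^2$.

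The main obstacle is bookkeeping: the differentiation in $u$ produces many cross terms (from differentiating the numerator $(\alpha_i-R_i\zeta_i/\sqrt{n})^2$, from differentiating the denominator $1+R_i^2 q_i/n$, and from the implicit $u$-dependence of $M_i^{-1}m_i$ inside $\zeta_i$), and they must be sorted into those whose expectation is unchanged under the Lindeberg swap $X_j\leftrightarrow Y_j$ and those that genuinely contribute to the remainder, without losing the correct powers of $t$. No conceptually new tool beyond Theorems \ref{thm:FormsInxMinvSigmaepsMinvx} and \ref{thm:ThirdLimitIsSameForEveryone} is required; the $1/t^{3/2}$ and $1/t^2$ factors in $U_{i,2}$ arise precisely from the Cauchy--Schwarz step applied to the term that pairs a factor of $\tilde q_i-\tilde d_i$ (or $\Delta_i\tilde q_i$) with $(x\trsp M_i^{-1}X_i)^2$, exactly as in the $E_2$ analysis in Theorem \ref{thm:FormsInxMinvSigmaepsMinvx}.
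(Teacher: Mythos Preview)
Your proposal is correct and follows essentially the same approach as the paper: the differentiation trick $G=-\partial_u g|_{u=0}$ applied to the key decomposition \eqref{eq:alphaMalphaDecomposition}, yielding two pieces $E_1$ (involving $\tilde\zeta_i$) and $E_2$ (involving $\tilde q_i\psi_i$), which are then bounded for Efron--Stein by subtracting a carefully chosen $X_i$-independent term and for Lindeberg by the standard $q_i\to d_i$, $\tilde q_i\to\tilde d_i$ replacements with Cauchy--Schwarz/crude-bound minima. One small slip: in your last sentence the factor paired with $\tilde q_i-\tilde d_i$ here is $\psi_i(X_i)=(\alpha_i-R_i\zeta_i/\sqrt n)^2$ (or its piece $\Gamma_i$), not $(x\trsp M_i^{-1}X_i)^2$, since no deterministic vector $x$ appears in this theorem.
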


It is shown in the course of the proof that the upper bounds go to zero in probability
when $R_i$'s are i.i.d and uniformly square integrable
and $b_L(4;X_i)$ as well as $\sqrt{b_{Q_2}(2;X_i)/n}$ remain uniformly bounded.

We note that in the Gaussian case (i.e $X_i$ are ${\cal N}(0,\Sigma_i)$), by the symmetry trick we have now used several times, it is clear that the off-diagonal elements of the matrix
$$
\frac{DX}{\sqrt{n}}M^{-1}\Sigma_\eps M^{-1}\frac{X\trsp D}{\sqrt{n}}
$$
have mean 0. Hence, to understand $\Exp{G(\alpha;X)}$, all that is needed is to understand the diagonal entries of
$$
\frac{DX}{\sqrt{n}}M^{-1}\Sigma_\eps M^{-1}\frac{X\trsp D}{\sqrt{n}}\;.
$$
If we further assume that $X_i$ have the same $\Sigma$, computations similar to the ones done in Subsubsection \ref{subsec:quadFormsInverseAndMean} (also using our derivative trick) and fairly
standard random matrix results yield a reasonably simple expression. In the interest of space, and since this is a very simple problem, we do not state in more details the deterministic equivalent.

\begin{proof}

We use the same trick as in the previous subsection, namely calling
$$
g(\alpha;X;A+u\Sigma_\eps)=\alpha\trsp \frac{DX}{\sqrt{n}} (X\trsp D^2 X/n+A+u\Sigma_\eps)^{-1}\frac{X\trsp D}{\sqrt{n}}\alpha\;,
$$
we see that
$$
G(\alpha;X)=-\left.\frac{\partial}{\partial u}\right|_{u=0} g(\alpha;X;A+u\Sigma_\eps)\;.
$$
Hence we can use the refined understanding of $g$ we have developed earlier to study $G$.

In particular, the key equation in the study of $g$ was
$$
g(\alpha;X;A+u\Sigma_\eps)-g_i(\alpha;X;A+u\Sigma_\eps)=\alpha_i^2 -\frac{(\alpha_i-R_i\zeta_i(X_i)/\sqrt{n})^2}{1+\frac{R_i^2}{n} q_i(X_i)}\;.
$$
with, if $D_i=D-R_i e_i e_i\trsp$, (i.e $D_i$ is $D$ where we replace the $(i,i)$ entry by a 0)
\begin{align*}
q_i(X_i;u)&=X_i\trsp \left[M_i(A+u\Sigma_\eps)\right]^{-1} X_i\\
\zeta_i(X_i;u)&=X_i\trsp \left[M_i(A+u\Sigma_\eps)\right]^{-1} m_i, \;\; m_i=\frac{X\trsp D_i \alpha}{\sqrt{n}}\;.
\end{align*}

Hence, if $M_i=X\trsp D_i^2 X/n+A$,
\begin{align*}
\left.\frac{\partial}{\partial u}\right|_{u=0} q_i(X_i)&=-X_i\trsp M_i^{-1} \Sigma_\eps M_i^{-1} X_i\triangleq - \tilde{q}_i(X_i)\;,\\
\left.\frac{\partial}{\partial u}\right|_{u=0} \zeta_i(X_i)&=-X_i\trsp M_i^{-1} \Sigma_\eps M_i^{-1}m_i\triangleq -\tilde{\zeta}_i(X_i)\;.
\end{align*}
Hence, if $G_i(\alpha;X)$ is the same statistic as $G(\alpha;X)$ where $X_i$ is replaced by $0$ (and hence it does not depend on $X_i)$, we have
$$
G(\alpha;X)-G_i(\alpha;X)=2\frac{R_i}{\sqrt{n}} \frac{\zetaTilde_i(R_i\zeta_i/\sqrt{n}-\alpha_i)}{1+\frac{R_i^2}{n} q_i(X_i)}-\frac{R_i^2}{n} \qTilde_i \left(\frac{(\alpha_i-R_i\zeta_i(X_i;u)/\sqrt{n})}{1+\frac{R_i^2}{n} q_i(X_i;u)}\right)^2\;.
$$

In preparation for Lindeberg-style work below, we note that if $Y_i$ and $X_i$ have mean 0 and the same covariance $\Sigma_i$,
\begin{align*}
\Expj{i}{\zeta_i(X_i)}&=\Expj{i}{\zeta_i(Y_i)}\; \; \;
&\Expj{i}{\zeta^2_i(X_i)}&=\Expj{i}{\zeta^2_i(Y_i)}\\
\Expj{i}{\zeta_i(X_i)\zetaTilde_i(X_i)}&=\Expj{i}{\zeta_i(Y_i)\zetaTilde_i(Y_i)}\; \; \;
&\Expj{i}{\zetaTilde^2_i(X_i)}&=\Expj{i}{\zetaTilde^2_i(Y_i)}\;.
\end{align*}
Recall also that $\norm{M_i^{-1/2}m_i}\leq 1$, $\norm{M_i^{-1}m_i}\leq t^{-1/2}$, so $\norm{M_i^{-1}\Sigma_\eps M_i^{-1} m_i}\leq b(A;\Sigma_\eps)/\sqrt{t}$.
We therefore have the estimates
$$
\Exp{|\zetaTilde_i(X_i)|^k}\leq \left(\frac{b(A;\Sigma_\eps)}{\sqrt{t}}\right)^k b_L(k;X_i)\; \text{ and }
\Exp{|\zeta_i(X_i)|^k}\leq b_L(k;X_i)t^{-k/2}\;.
$$

Let us call, if $\psi_i(X_i)=(\alpha_i-\frac{R_i}{\sqrt{n}}\zeta_i(X_i))^2$,
\begin{align*}
E_1(X_i)&=\frac{R_i}{\sqrt{n}}\frac{\zetaTilde_i(X_i)(R_i/\sqrt{n} \zeta_i-\alpha_i)}{1+\frac{R_i^2}{n} q_i(X_i)}\\
E_2(X_i)&=\frac{R_i^2/n \, \,\qTilde_i(X_i)}{1+\frac{R_i^2}{n}q_i(X_i)}\frac{\psi_i(X_i)}{1+\frac{R_i^2}{n} q_i(X_i)}\;.
\end{align*}
Clearly,
$$
G(\alpha;X)-G_i(\alpha;X)=2E_1(X_i)-E_2(X_i)\;.
$$

$\bullet$ \textbf{\large{Efron-Stein aspects}}\\
The aim here is to find $Z_{i,1}$, independent of $X_i$ such that we can control $\Exp{|E_1(X_i)-Z_{i,1}|^2}$ and similarly for $E_2(X_i)$, we will try to find a $Z_{i,2}$ such that we control
$\Exp{|E_2(X_i)-Z_{i,2}|^2}$. This will give us control of $\var{G(\alpha;X)}$.\\
\textbf{1) Controlling $\mathbf{E_1(X_i)}$}
Let us call
$$
T_{1,i}=\frac{R_i}{\sqrt{n}}\frac{\alpha_i\zetaTilde_i(X_i)}{1+\frac{R_i^2}{n} q_i(X_i)}\;.
$$
Clearly, $T_{1,i}^2\leq \alpha_i^2 R_i^2/n \zetaTilde^2_i(X_i)$ and therefore
$$
\Exp{T_{1,i}^2}\leq \alpha_i^2 \frac{R_i^2}{n} b_L(2;X_i) \frac{b^2(A;\Sigma_\eps)}{t}\;.
$$
This term will not cause problem in our analysis as $\sum_{i=1}^n \Exp{T_{1,i}^2}$ will clearly go to zero when $R_i$'s
have two moments and $b(A;\Sigma_\eps)$ as well as $b_L(k;X_i)$ remain bounded. (Recall that $\norm{\alpha}=1$.)

Let us call
$$
T_{2,i}=\frac{R_i^2}{n} \frac{\zeta_i\zetaTilde_i}{1+\frac{R_i^2}{n}q_i(X_i)}\;.
$$
Clearly,
$$
\Expj{i}{|T_{2,i}|^k}\leq \frac{R_i^{2k}}{n^k} \sqrt{\Expj{i}{\zeta_i^{2k}}\Expj{i}{\zetaTilde_i^{2k}}}\leq \frac{R_i^{2k}}{n^k}b_L(2k;X_i)  \frac{b^k(A;\Sigma_\eps)}{t^k}\;.
$$
In particular,
$$
\Exp{|T_{2,i}|^2}\leq  \frac{R_i^{4}}{n^2}b_L(4;X_i)  \frac{b^2(A;\Sigma_\eps)}{t^2}\;.
$$
(Since, when $R_i$'s are i.i.d and have two moments,
$\sum_i R_i^4/n^2\tendsto 0$ a.s, this terms is again not going to create any problems when we try to control the variance of $G$.)

So we have shown that
$$
\Expj{i}{E_1(X_i)^2}\leq 2\left[\frac{R_i^{4}}{n^2}b_L(4;X_i)  \frac{1}{t}+\alpha_i^2 \frac{R_i^2}{n} b_L(2;X_i)\right]\frac{b^2(A;\Sigma_\eps)}{t}\;.
$$

\textbf{2) Controlling $\mathbf{E_2(X_i)}$}
Recall the decomposition from the proof of Theorem \ref{thm:ThirdFormIsDeterministic}
$$
\frac{\psi_i(X_i)}{1+\frac{R_i^2}{n} q_i(X_i)}-\frac{\alpha_i^2}{1+\frac{R_i^2}{n}d_i}=\alpha_i^2 \frac{R_i^2}{n} \frac{q_i(X_i)-d_i}{(1+\frac{R_i^2}{n}q_i)(1+\frac{R_i^2}{n}d_i)}+\frac{1}{1+\frac{R_i^2}{n}q_i}(R_i^2/n\zeta_i^2-2\alpha_iR_i/\sqrt{n} \zeta_i)\;.
$$
Let us call
$$
\Delta_{2,i}(X_i)\triangleq \alpha_i^2 \frac{R_i^2}{n} \frac{q_i(X_i)-d_i}{(1+\frac{R_i^2}{n}q_i)(1+\frac{R_i^2}{n}d_i)}+\frac{1}{1+\frac{R_i^2}{n}q_i}(R_i^2/n\zeta_i^2-2\alpha_iR_i/\sqrt{n} \zeta_i)\;.
$$
Let us note that since $|\frac{R_i^2/n\, \qTilde_i(X_i)}{1+\frac{R_i^2}{n}q_i(X_i)}|\leq b(A;\Sigma_\eps)$, we will have, using the work
we did in the proof of Theorem \ref{thm:ThirdFormIsDeterministic},
$$
\Expj{i}{\left[\frac{R_i^2/n\, \qTilde_i(X_i)}{1+\frac{R_i^2}{n}q_i(X_i)} \Delta_{2,i}(X_i)\right]^2}\leq K b^2(A;\Sigma_\eps)\left[
\left(\alpha_i^4 \frac{R_i^4}{n^2}b_{Q_2}(2;X_i) \frac{1}{t^2}+\frac{R_i^4}{n^2} b_L(4;X_i)\frac{1}{t^2}+\alpha_i^2 \frac{R_i^2}{n} \frac{b_L(2;X_i)}{t} \right) \wedge 1 \right] \;.
$$
As we saw then, these terms will not cause any problem in our eventual control of the variance.

So we just need to focus on understanding
$$
{\cal R}_{2,i}(X_i)=\frac{R_i^2/n\, \qTilde_i(X_i)}{1+\frac{R_i^2}{n} q_i(X_i)}\frac{\alpha_i^2}{1+\frac{R_i^2}{n}d_i}\;.
$$
Now recall that we called $\frac{R_i^2}{n} \delta_i(X_i)=1/(1+\frac{R_i^2}{n}q_i(X_i))-1/(1+\frac{R_i^2}{n}d_i)$; with this notation, we have
$$
\frac{R_i^2/n\, \qTilde(X_i)}{1+\frac{R_i^2}{n}q_i(X_i)}=\frac{R_i^2/n\, \tilde{d}_i}{1+\frac{R_i^2}{n}d_i}+\frac{R_i^2}{n} \frac{\qTilde_i(X_i)-\tilde{d}_i}{1+\frac{R_i^2}{n} q_i(X_i)}+\frac{R_i^2}{n} \tilde{d}_i \frac{R_i^2}{n}\delta_i(X_i)\;.
$$

Hence,
$$
{\cal R}_{2,i}(X_i)-\frac{R_i^2/n\, \tilde{d}_i}{1+\frac{R_i^2}{n}d_i}\frac{\alpha_i^2}{1+\frac{R_i^2}{n}d_i}=
\frac{\alpha_i^2}{1+\frac{R_i^2}{n}d_i}\left[\frac{R_i^2}{n} \frac{\qTilde_i(X_i)-\tilde{d}_i}{1+\frac{R_i^2}{n} q_i(X_i)}+\frac{R_i^2}{n} \tilde{d}_i \frac{R_i^2}{n}\delta_i(X_i)\right]\;.
$$

Since $\tilde{d}_i(X_i)\leq b(A;\Sigma_\eps) d_i$, we have
$$
\frac{R_i^2}{n} \frac{\tilde{d}_i}{1+\frac{R_i^2}{n} d_i}\leq b(A;\Sigma_\eps)\;.
$$

We have seen that
$$
\Expj{i}{|\qTilde_i(X_i)-\tilde{d}_i(X_i)|^2}\leq \opnorm{M_i^{-1}\Sigma_\eps M_i^{-1}}^2 b_{Q_2}(2;X_i)\leq \frac{b^2(A;\Sigma_\eps)}{t^2}b_{Q_2}(2;X_i)\;.
$$
Furthermore, we saw previously that
$$
\Expj{i}{\delta_i^2(X_i)}\leq \frac{1}{t^2} b_{Q_2}(2;X_i)\;.
$$
So we have
$$
\Expj{i}{\left[{\cal R}_{2,i}(X_i)-\frac{R_i^2 \tilde{d}_i}{1+\frac{R_i^2}{n}d_i}\frac{\alpha_i^2}{1+\frac{R_i^2}{n}d_i}\right]^2}\leq
K\alpha_i^4 \frac{R_i^4}{n^2} \frac{b_{Q_2}(2;X_i)}{t^2} b^2(A;\Sigma_\eps)\;.
$$
On the other hand,
$$
\left|{\cal R}_{2,i}(X_i)-\frac{R_i^2/n\, \tilde{d}_i}{1+\frac{R_i^2}{n}d_i}\frac{\alpha_i^2}{1+\frac{R_i^2}{n}d_i}\right|\leq K b(A;\Sigma_\eps) \alpha_i^2 \;.
$$
So we conclude that
$$
\Expj{i}{\left[{\cal R}_{2,i}(X_i)-\frac{R_i^2 \tilde{d}_i}{1+\frac{R_i^2}{n}d_i}\frac{\alpha_i^2}{1+\frac{R_i^2}{n}d_i}\right]^2}\leq
K\alpha_i^4 b^2(A;\Sigma_\eps) \left[\frac{R_i^4}{n^2} \frac{b_{Q_2}(2;X_i)}{t^2}\wedge 1 \right]\;.
$$

Hence,
\begin{gather*}
\Expj{i}{\left[E_2(X_i)-\frac{R_i^2 \tilde{d}_i}{1+\frac{R_i^2}{n}d_i}\frac{\alpha_i^2}{1+\frac{R_i^2}{n}d_i}\right]^2}\leq
K b^2(A;\Sigma_\eps) \left\{\alpha_i^4 \left[\frac{R_i^4}{n^2} \frac{b_{Q_2}(2;X_i)}{t^2} \wedge 1\right]\right.\\
\left.+\left[
\left(\alpha_i^4 \frac{R_i^4}{n^2}b_{Q_2}(2;X_i) \frac{1}{t^2}+\frac{R_i^4}{n^2} b_L(4;X_i)\frac{1}{t^2}+\alpha_i^2 \frac{R_i^2}{n} \frac{b_L(2;X_i)}{t} \right) \wedge 1 \right] \right\}\;.
\end{gather*}

So if
$$
Z_i=G_i(\alpha;X)-\frac{R_i^2 \tilde{d}_i}{1+\frac{R_i^2}{n}d_i}\frac{\alpha_i^2}{1+\frac{R_i^2}{n}d_i}\;,
$$
and $Z=G(\alpha;X)$,
we have shown that
\begin{gather*}
\Expj{i}{|Z-Z_i|^2}\leq K b^2(A;\Sigma_\eps)\left\{\alpha_i^4 \left[\frac{R_i^4}{n^2} \frac{b_{Q_2}(2;X_i)}{t^2} \wedge 1\right]\right.\\
+\left[
\left(\alpha_i^4 \frac{R_i^4}{n^2}b_{Q_2}(2;X_i) \frac{1}{t^2}+\frac{R_i^4}{n^2} b_L(4;X_i)\frac{1}{t^2}+\alpha_i^2 \frac{R_i^2}{n} \frac{b_L(2;X_i)}{t} \right) \wedge 1 \right. \\
\left.\left.+\frac{R_i^{4}}{n^2}b_L(4;X_i)  \frac{1}{t^2}+\alpha_i^2 \frac{R_i^2}{n} b_L(2;X_i) \frac{1}{t}\right]\right\}\;.
\end{gather*}
This bound is sufficient to allow us to apply the Efron-Stein inequality, as we saw earlier: as soon as the
$R_i$'s have two moments and are i.i.d, the (sum over $i$ of the) upper bound goes to zero.

$\bullet$ \textbf{Lindeberg aspects}\\
\textbf{1) Controlling }$\mathbf{\Exp{E_1(X_i)-E_1(Y_i)}}$\\
To alleviate the notation, we make a slight abuse of notation and change the meaning of $M_i$ compared to what was used
in the previous part of the proof: because we are now in the Lindeberg setting,
the matrix $M_i$ is (as usual) computed by using $(X_1,\ldots,X_{i-1},0,Y_{i+1},\ldots,Y_n)$, but it is still independent of $X_i$ and $Y_i$.

Recall that
$$
E_1(X_i)=\frac{R_i}{\sqrt{n}}\frac{\zetaTilde_i(X_i)(R_i/\sqrt{n} \zeta_i-\alpha_i)}{1+\frac{R_i^2}{n} q_i(X_i)}\triangleq \frac{N_i(X_i)}{1+\frac{R_i^2}{n} q_i(X_i)}\;.
$$

Calling as usual
$$
\frac{R_i^2}{n}\delta_i(X_i)=\frac{1}{1+\frac{R_i^2}{n} q_i(X_i)}-\frac{1}{1+\frac{R_i^2}{n} d_i}\;,
$$
we have
$$
E_1(X_i)=\frac{N_i(X_i)}{1+\frac{R_i^2}{n}d_i}+\frac{R_i^2}{n} N_i(X_i)\delta_i(X_i)\;.
$$
Of course $\Exp{N_i(X_i)}=\Exp{N_i(Y_i)}$ when $X_i$ and $Y_i$ have mean 0 and the same covariance, $\Sigma_i$.
Therefore, since $d_i=\trace{\Sigma_iM_i^{-1}}=\Expj{i}{X_i\trsp M_i^{-1} X_i}=\Expj{i}{Y_i\trsp M_i^{-1} Y_i}$, we have
$$
\Exp{\frac{N_i(X_i)}{1+\frac{R_i^2}{n}d_i}}=\Exp{\frac{N_i(Y_i)}{1+\frac{R_i^2}{n}d_i}}\;.
$$
The only question now is to try to control the remainder term
$$
{\cal R}_1(X_i)=\frac{R_i^2}{n} N_i(X_i)\delta_i(X_i)=E_1(X_i)\frac{R_i^2}{n}\frac{(d_i-q_i(X_i))}{1+\frac{R_i^2}{n} d_i(X_i)} \;.
$$

We have, after using Cauchy-Schwarz and our usual bounds,
$$
\Expj{i}{\left|{\cal R}_1(X_i)\right|}\leq \frac{R_i^2}{\sqrt{n}} \sqrt{\frac{b_{Q_2}(2;X_i)}{nt}} \sqrt{\Expj{i}{E_1^2(X_i)}}\;.
$$
Using the results we got earlier on $\sqrt{\Expj{i}{E_1^2(X_i)}}$, we have
$$
\Expj{i}{\left|{\cal R}_1(X_i)\right|}\leq K \frac{R_i^2}{\sqrt{n}} \sqrt{\frac{b_{Q_2}(2;X_i)}{nt}}
\left(\frac{R_i^2}{n} \frac{1}{\sqrt{t}} \sqrt{b_L(4;X_i)}+|\alpha_i| \frac{R_i}{\sqrt{n}} \sqrt{b_L(2;X_i)}\right)
\frac{b(A;\Sigma_\eps)}{\sqrt{t}}\;.
$$
On the other hand,
$$
\left|{\cal R}_1(X_i)\right|\leq |N_i(X_i)|\;.
$$
From its definition, we see that
$$
\Expj{i}{|N_i(X_i)|}\leq  \frac{|\alpha_i| R_i}{\sqrt{n}} \frac{b(A;\Sigma_\eps)}{\sqrt{t}} b_L(1;X_i)
+\frac{R_i^2}{n} b_L(2;X_i) \frac{b(A;\Sigma_\eps)}{t}\;.
$$
Hence,
\begin{align*}
\Expj{i}{\left|{\cal R}_1(X_i)\right|}&\leq
K \frac{R_i^2}{\sqrt{n}} \frac{b(A;\Sigma_\eps)}{\sqrt{t}} \sqrt{\frac{b_{Q_2}(2;X_i)}{nt}} \left(\frac{R_i^2}{n} \frac{\sqrt{b_L(4;X_i)}}{\sqrt{t}}+|\alpha_i| \frac{R_i}{\sqrt{n}} \sqrt{b_L(2;X_i)}\right)\\
&\wedge
\frac{b(A;\Sigma_\eps)}{\sqrt{t}}\left( \frac{|\alpha_i| R_i}{\sqrt{n}}  b_L(1;X_i)+\frac{R_i^2}{n} b_L(2;X_i) \frac{1}{\sqrt{t}}\right)\;,
\end{align*}
and
\begin{align*}
\Expj{i}{\left|{\cal R}_1(X_i)\right|}&\leq K \frac{b(A;\Sigma_\eps)}{\sqrt{t}} \left(\frac{R_i^2}{\sqrt{n}} \sqrt{\frac{b_{Q_2}(2;X_i)}{nt}} \wedge 1\right)
\left[\frac{|\alpha_i| R_i}{\sqrt{n}} \sqrt{b_L(2;X_i)}+\frac{R_i^2}{n} \sqrt{b_L(4;X_i)} \frac{1}{\sqrt{t}}\right]\;.
\end{align*}

At this point, we would like to show that the control we have is sufficient for the Lindeberg method to work when $R_i$'s are i.i.d and
have two moments. For this, it is sufficient to show that
$$
\Exp{\sum_{i=1}^n |\alpha_i| (R_i/\sqrt{n}\wedge R_i^3/n)}\tendsto 0\;.
$$
We will simply show that $\Exp{(R_i/\sqrt{n}\wedge R_i^3/n)}=\lo(n^{-1/2})$. We note that, since $R_i\geq 0$,
$$
\Exp{R_i\wedge n^{-1/2} R_i^3}=\Exp{R_i 1_{R_i\geq n^{1/4}}}+\Exp{R_i^3n^{-1/2}1_{R_i\leq n^{1/4}}}\leq \Exp{R_i 1_{R_i\geq n^{1/4}}}+n^{-1/4}\Exp{R_i^2}\;.
$$
Since $R_i$ has two moments (and hence one), the monotone convergence theorem guarantees  that
$$
\Exp{R_i\wedge n^{-1/2} R_i^3}=\lo(1)\;.
$$
We now remark that since $\norm{\alpha}=1$, $\norm{\alpha}_1\leq \sqrt{n}$. Therefore,
$$
\Exp{\sum_{i=1}^n |\alpha_i| (R_i/\sqrt{n}\wedge R_i^3/n)}=\norm{\alpha}_1 \Exp{(R_i/\sqrt{n}\wedge R_i^3/n)}=\lo(\norm{\alpha}_1/n^{1/2})=\lo(1)\;.
$$
\textbf{2) Controlling $\mathbf{\Exp{E_2(X_i)-E_2(Y_i)}}$}\\
Recall the notation
$$
\Delta_i=\frac{1}{(1+\frac{R_i^2}{n}q_i(X_i))^2}-\frac{1}{(1+\frac{R_i^2}{n}d_i)^2}\;.
$$
Let us write
$$
\psi_i(X_i)=\alpha_i^2 - 2\alpha_i \frac{R_i}{\sqrt{n}} \zeta_i+ \frac{R_i^2}{n} \zeta_i^2=\alpha_i^2 - \Gamma_i\;.
$$
By definition,
$$
E_2(X_i)=\frac{R_i^2\tilde{q}_i(X_i)/n}{(1+\frac{R_i^2}{n}q_i(X_i))^2}  \psi_i(X_i)\;.
$$
Therefore,
\begin{align*}
E_{2}(X_i)&=\frac{\alpha_i^2 \qTilde_i(X_i)-\Gamma_i \tilde{d}_i}{(1+\frac{R_i^2}{n}d_i)^2}\frac{R_i^2}{n}+\frac{R_i^2}{n}\frac{\Gamma_i(\tilde{d}_i-\tilde{q}_i)}{(1+\frac{R_i^2}{n}d_i)^2}
+\Delta_i(X_i) \psi_i(X_i) \qTilde_i(X_i) \frac{R_i^2}{n}\;,\\
&={\cal M}_2(X_i)+{\cal R}_{2,1}(X_i)+{\cal R}_{2,2}(X_i)\;.
\end{align*}
It is clear that when $X_i$ and $Y_i$ have the same covariance $\Sigma_i$ and mean 0, $\Expj{i}{{\cal M}_2(X_i)}=\Expj{i}{{\cal M}_2(Y_i)}$. Hence, in controlling $\Exp{E_2(X_i)-E_2(Y_i)}$, all we will have to do is control
$$
\Expj{i}{|{\cal R}_{2,1}(X_i)+{\cal R}_{2,2}(X_i)|}\;.
$$
\textbf{a) Controlling $\mathbf{\Exp{|{\cal R}_{2,1}(X_i)|}}$}\\
We have, using the Cauchy-Schwarz inequality:
\begin{align*}
\Expj{i}{\zeta_i^2 |\tilde{d}_i-\qTilde_i(X_i)|}\leq \frac{b(A;\Sigma_\eps)}{t^2} \sqrt{b_L(4;X_i)}\sqrt{b_{Q_2}(2;X_i)}\;,\\
\Expj{i}{|\zeta_i| |\tilde{d}_i-\qTilde_i(X_i)|}\leq \frac{b(A;\Sigma_\eps)}{t^{3/2}}\sqrt{b_L(2;X_i)}\sqrt{b_{Q_2}(2;X_i)}\;.
\end{align*}
Therefore,
$$
\frac{R_i^4}{n^2} \Expj{i}{\frac{\zeta_i^2 |\tilde{d}_i-\qTilde_i(X_i)|}{(1+\frac{R_i^2}{n}d_i)^2}}\leq \frac{R_i^4}{n^{3/2}} \frac{b(A;\Sigma_\eps)}{t^2} \sqrt{b_L(4;X_i)}\sqrt{b_{Q_2}(2;X_i)/n}\;.
$$
and
$$
|\alpha_i|\frac{R_i^3}{n^{3/2}} \Expj{i}{\frac{|\zeta_i| |\tilde{d}_i-\qTilde_i(X_i)|}{(1+\frac{R_i^2}{n}d_i)^2}}\leq \frac{|\alpha_i| R_i^{3}}{n} \frac{b(A;\Sigma_\eps)}{t^{3/2}}\sqrt{b_L(2;X_i)}\sqrt{b_{Q_2}(2;X_i)/n}
$$
We conclude that
$$
\Expj{i}{|{\cal R}_{2,1}(X_i)|}\leq K \frac{b(A;\Sigma_\eps)}{t^{3/2}}\sqrt{b_{Q_2}(2;X_i)/n}\left(\frac{R_i^4}{n^{3/2}} \frac{1}{t^{1/2}} \sqrt{b_L(4;X_i)}+\frac{|\alpha_i| R_i^{3}}{n} \sqrt{b_L(2;X_i)}\right)\;.
$$
\textbf{b) Controlling $\mathbf{\Exp{|{\cal R}_{2,2}(X_i)|}}$}\\
Since
$$
\left|\frac{R_i^2}{n}\qTilde_i(X_i)\Delta_i(X_i)\right|\leq K b(A;\Sigma_\eps) \frac{R_i^2}{n} \frac{|q_i-d_i|}{1+\frac{R_i^2}{n}d_i}\;,
$$
we have
$$
\Expj{i}{\left|\Delta_i(X_i)\psi_i(X_i)\frac{R_i^2}{n}\qTilde_i(X_i)\right|}\leq K b(A;\Sigma_\eps) \Expj{i}{(\alpha_i^2+\frac{R_i^2}{n}\zeta_i^2) \frac{R_i^2}{n} \frac{|q_i-d_i|}{1+\frac{R_i^2}{n}d_i}}\;,
$$
Hence,
$$
\Expj{i}{\left|\Delta_i(X_i)\psi_i(X_i)\frac{R_i^2}{n}\qTilde_i(X_i)\right|}\leq K b(A;\Sigma_\eps)  \frac{R_i^2}{\sqrt{n}}\sqrt{b_{Q_2}(2;X_i)/n} \left(\frac{\alpha_i^2}{t}+\frac{R_i^2}{n}\frac{1}{t^2} \sqrt{b_L(4;X_i)}\right)\;.
$$
\textbf{c) Controlling $|\Expj{i}{E_{2}(X_i)-E_2(Y_i)}|$}\\
We note that $|E_2(X_i)|\leq \psi_i(X_i)b(A;\Sigma_\eps)$ and therefore
$$
\Expj{i}{|E_2(X_i)|}\leq K b(A;\Sigma_\eps) (\alpha_i^2+\frac{R_i^2}{n}\frac{b_L(2;X_i)}{t})\;.
$$
We can finally conclude that
$$
|\Expj{i}{E_{2}(X_i)-E_2(Y_i)}|\leq \Phi_1(X_i)+\Phi_1(Y_i)\;,
$$
where
\begin{gather*}
\Phi_1(X_i)\leq K b(A;\Sigma_\eps) \left[(\alpha_i^2+\frac{R_i^2}{n}\frac{b_L(2;X_i)}{t})\right.\\
\wedge \sqrt{\frac{b_{Q_2}(2;X_i)}{n}} \left[\frac{R_i^2}{\sqrt{n}} \left(\frac{\alpha_i^2}{t}+\frac{R_i^2}{n}\frac{1}{t^2} \sqrt{b_L(4;X_i)}\right)\right.\\
\left.+ \frac{1}{t^{3/2}}\left(\frac{R_i^4}{n^{3/2}} \frac{1}{t^{1/2}} \sqrt{b_L(4;X_i)}+\frac{|\alpha_i| R_i^{3}}{n} \sqrt{b_L(2;X_i)}\right)\right]\;.
\end{gather*}
This expression is somewhat unseemly, however, assuming that $b_L$, $b_{Q_2}/n$ and $b(A;\Sigma_\eps)$ stay bounded
we see that it is of the form
$$
\Phi_1(X_i)\leq K \left(\frac{R_i^2}{\sqrt{n}}(\frac{R_i^2}{n}+\alpha_i^2+\frac{|\alpha_i|R_i}{\sqrt{n}})\right)\wedge (\alpha_i^2+\frac{R_i^2}{n})\leq K \left(\alpha_i^2+\frac{R_i^2}{n}\right)\left(\frac{R_i^2}{\sqrt{n}}\wedge 1\right)\;.
$$
We have already seen how to control this expression when $R_i$ are i.i.d and uniformly square integrable
in the proof of Theorem \ref{thm:SecondFormIsDeterministic}.
So we conclude that when this is the case $\sum_{i=1}^n \Phi_i(X_i)$ will tend to 0 (for instance in $R_i$-probability).

\end{proof}

\subsubsection{Forms in $\frac{1}{\sqrt{n}}\alpha\trsp DX\trsp M^{-1} \Sigma_{\eps} M^{-1}x$}\label{subsubsec:formsinAlphaDXMinvSigmaEpsMinvx}
The third and last situation we need to consider are forms of the type
$$
H(\alpha;X)=\frac{1}{\sqrt{n}}\alpha\trsp DX\trsp M^{-1} \Sigma_{\eps} M^{-1}x\;,
$$
where as usual
$$
M=\frac{1}{n}X\trsp D^2 X+A\;.
$$
We work under our usual assumptions, and in particular $A\succeq t\Id$.

We have the following theorem.

\begin{theorem}\label{thm:FormsInformsinAlphaDXMinvSigmaEpsMinvx}
Under the usual assumptions of this paper (see Subsection \ref{subsec:SetupOfStudy}), we have, for $K$ a constant,
\begin{align*}
&\var{H(\alpha;X)} \leq \sum_{i=1}^n V_i, \text{ with }\\
&V_i\leq K b^2(A;\Sigma_\eps)\frac{x\trsp A^{-1} x}{t}\left[\frac{R_i^2}{n}\alpha_i^2 b_L(2;X_i)+\frac{R_i^4}{n^2}b_L(4;X_i)\frac{1}{t}\right]\;.\\
&\left|\Exp{H(\alpha;X)-H(\alpha;Y)}\right|\leq \sum_{i=1}^n U_i(X_i)+U_i(Y_i)\;, \text{ where }\\
&U_i(X_i)\leq K b(A;\Sigma_\eps)\sqrt{\frac{x\trsp A^{-1}x}{t}}\left[\frac{1}{\sqrt{n}}|\alpha_i|R_i\sqrt{b_L(2;X_i)}+\frac{R_i^2}{n}\sqrt{\frac{b_L(4;X_i)}{t}}\right]
\left[\frac{R_i^2}{n} \frac{\sqrt{b_{Q_2}(2;X_i)}}{t}\wedge 1\right]
\end{align*}
\end{theorem}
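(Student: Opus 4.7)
My plan is to exploit the derivative identity that was already used successfully in the proofs of Theorems \ref{thm:FormsInxMinvSigmaepsMinvx} and \ref{thm:formsinDXMinvSigmaEpsMinvXtrspD}: if we set $M_u = X\trsp D^2 X/n + A + u\Sigma_\eps$ and
$$
h(\alpha;X;A+u\Sigma_\eps) = \tfrac{1}{\sqrt{n}} \alpha\trsp D X\trsp M_u^{-1} x\;,
$$
then $H(\alpha;X) = -\partial_u\big|_{u=0} h(\alpha;X;A+u\Sigma_\eps)$, since $\partial_u M_u^{-1}|_{u=0} = -M^{-1}\Sigma_\eps M^{-1}$. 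Accordingly, the key decomposition \eqref{eq:alphaMxDecomposition} for $h$ remains valid when $A$ is replaced by $A+u\Sigma_\eps$, and differentiating in $u$ at $u=0$ produces an analogous decomposition for $H$. Explicitly, with $H_i(\alpha;X)$ denoting the same quantity with $X_i$ replaced by $0$, the product rule gives
$$
H(\alpha;X) - H_i(\alpha;X) = \frac{-\partial_u\varphi_i(X_i)|_{u=0}}{1+\tfrac{R_i^2}{n}q_i(X_i)} - \frac{\varphi_i(X_i) \cdot \tfrac{R_i^2}{n}\tilde q_i(X_i)}{\left(1+\tfrac{R_i^2}{n}q_i(X_i)\right)^2}\;,
$$
where $-\partial_u\varphi_i|_{u=0}$ is a sum of three terms in which exactly one copy of $M_i^{-1}$ is replaced by $M_i^{-1}\Sigma_\eps M_i^{-1}$, and $\tilde q_i(X_i) = X_i\trsp M_i^{-1}\Sigma_\eps M_i^{-1} X_i$.

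For the variance bound, I would apply the Efron-Stein inequality (the $k=2$ case of Lemma \ref{lemma:GeneralizationEfronStein}) to the above decomposition. To control $\Expj{i}{|H-H_i|^2}$ I would use the geometric estimates obtained in the proof of Theorem \ref{thm:FormsInxMinvSigmaepsMinvx}, namely
$$
\opnorm{M_i^{-1}\Sigma_\eps M_i^{-1}} \leq \frac{b(A;\Sigma_\eps)}{t} \quad\text{and}\quad \norm{M_i^{-1}\Sigma_\eps M_i^{-1} v} \leq \frac{b(A;\Sigma_\eps)}{\sqrt{t}}\sqrt{v\trsp A^{-1} v}\;,
$$
together with $\norm{M_i^{-1/2}m_i}\leq 1$ and $\norm{M_i^{-1}x} \leq \sqrt{x\trsp A^{-1} x / t}$. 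Applying assumption \eqref{B-L} to each linear form in $X_i$ and using Cauchy--Schwarz on products of such forms then yields bounds of the type $\tfrac{R_i^2}{n}\alpha_i^2 b_L(2;X_i)$ and $\tfrac{R_i^4}{n^2} b_L(4;X_i)/t$, each multiplied by $b^2(A;\Sigma_\eps)\,(x\trsp A^{-1} x)/t$, exactly matching the stated bound on $V_i$.

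For the Lindeberg part, I would mimic the swap-one-variable-at-a-time strategy used in Theorems \ref{thm:SecondLimitIsSameForEveryone} and \ref{thm:FormsInxMinvSigmaepsMinvx}. Writing $Z_j = (Y_1,\ldots,Y_{j-1},X_j,\ldots,X_n)$ and using the $H$-analogue of the rank-1 decomposition (with $M_i$ now built from $Z_j$ with the $i$th row zeroed), I would apply \eqref{eq:fromQtoD} to linearize $1/(1+\tfrac{R_i^2}{n}q_i(X_i))$ around $1/(1+\tfrac{R_i^2}{n}d_i)$, picking up a remainder proportional to $\tfrac{R_i^2}{n}\delta_i(X_i)$. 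The leading term is bilinear in $X_i$ (through $\varphi_i$ and $\tilde q_i$ expanded around $\tilde d_i := \trace{\Sigma_i M_i^{-1}\Sigma_\eps M_i^{-1}}$) with coefficients independent of $X_i$, so its $X_i$-expectation equals its $Y_i$-expectation by the matched first two moments. The remainder is bounded by Cauchy--Schwarz, using $\Expj{i}{\delta_i(X_i)^2} \leq b_{Q_2}(2;X_i)/t^2$, the $L^2$ estimates on $\varphi_i$ from \eqref{eq:phiBound2}, and the crude bound $\tfrac{R_i^2}{n}|\delta_i(X_i)|\leq 1$ of \eqref{eq:deltaBound}, combined via a minimum, producing the stated $U_i(X_i)$.

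The main obstacle, as in Theorem \ref{thm:formsinDXMinvSigmaEpsMinvXtrspD}, will be the bookkeeping: the derivative produces three separate $\Sigma_\eps$-insertions in $\varphi_i$ plus the $\tilde q_i$ piece, and in the Lindeberg step one must carefully split each of these into a mean-matching leading contribution and a $\delta_i$- or $(\tilde q_i-\tilde d_i)$-type remainder, then bound every remainder using the right combination of $b_L(\cdot;X_i)$, $b_{Q_2}(2;X_i)$ and $b(A;\Sigma_\eps)$. Once organized as above, however, the estimates are all of the same flavor as those in Theorems \ref{thm:SecondLimitIsSameForEveryone} and \ref{thm:FormsInxMinvSigmaepsMinvx}, and the final convergence-to-zero of $\sum_i U_i(X_i)$ under the usual square-integrability assumption on the $R_i$'s follows from the same elementary arguments already used there.
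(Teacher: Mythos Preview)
Your proposal is correct and follows essentially the same route as the paper: the derivative trick $H=-\partial_u h|_{u=0}$ yielding the decomposition $H-H_i = \Upsilon_i/(1+\tfrac{R_i^2}{n}q_i) - \tfrac{R_i^2}{n}\tilde q_i\,\varphi_i/(1+\tfrac{R_i^2}{n}q_i)^2$ (the paper calls these $T_1$ and $T_2$), the Efron--Stein bound via the same geometric estimates on $M_i^{-1}\Sigma_\eps M_i^{-1}$, and the Lindeberg step via the $\delta_i$-linearization of $T_1$ together with the $(\tilde q_i-\tilde d_i)$ and $\Delta_i$ splittings of $T_2$. The bookkeeping you anticipate is exactly what the paper carries out, with the same final bounds.
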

The proof of the theorem uses the same ideas as before and will rely on the work of Subsubsection \ref{subsubsec:quadFormsInverseMeanAndPopMean}.

We also note that by the same symmetry arguments as before, in the Gaussian case, we trivially have $\Exp{H(\alpha;X)}=0$.

\begin{proof}
Naturally, $H(\alpha;X)$ is closely related to
$$
h(\alpha;X)=\frac{1}{\sqrt{n}}\alpha\trsp DX\trsp M^{-1}x
$$
which we studied earlier.
Recall that we got the key decomposition
$$
h(\alpha;X;A)=h(\alpha;X)=h_i(\alpha;X)+\frac{\varphi_i(X_i)}{1+\frac{R_i^2}{n}q_i(X_i)}\;,
$$
where $h_i$ did not involve $X_i$ and
\begin{align*}
\varphi_i(X_i)&=\frac{1}{\sqrt{n}}\alpha_i R_i X_i\trsp M_i^{-1} x-\frac{R_i^2}{n}X_i\trsp M_i^{-1}m_i X_i\trsp M_i^{-1} x\;,\\
q_i(X_i)&=X_i\trsp M_i^{-1} X_i\;.
\end{align*}

As before, we can deduce $H$ from $h(\alpha;X;A+u\Sigma_\eps)$ by taking the derivative of the latter with respect to $u$ and appropriately modifying the sign.

We call
\begin{align*}
H_i&=-\frac{\partial h_i(\alpha;X;A+u\Sigma_\eps)}{\partial u}\\
\Upsilon_i(X_i)&\triangleq -\frac{\partial \varphi_i(X_i;A+u\Sigma_\eps)}{\partial u}\\
&=\frac{R_i}{\sqrt{n}}\alpha_i X_i\trsp M_i^{-1}\Sigma_\eps M_i^{-1} x-\frac{R_i^2}{n}\left[X_i\trsp M_i^{-1} \Sigma_\eps M_i^{-1} m_i X_i\trsp M_i^{-1} x+X_i\trsp M_i^{-1}m_i X_i\trsp M_i^{-1}\Sigma_\eps M_i^{-1} x\right]\\
\qTilde_i(X_i)&=X_i\trsp M_i^{-1}\Sigma_\eps M_i^{-1} X_i \;.
\end{align*}

The new ``key equality" is
$$
H(\alpha;X)=H_i(\alpha;X)-\frac{\Upsilon_i}{1+\frac{R_i^2}{n}q_i(X_i)}+\frac{R_i^2}{n}\qTilde_i(X_i)\frac{\varphi_i(X_i)}{(1+\frac{R_i^2}{n}q_i(X_i))^2}\;.
$$

We are now in a position to do our usual analysis with the Efron-Stein inequality and the Lindeberg approach.

$\bullet$ \textbf{Efron-Stein aspects}
Because $H_i(\alpha;X)$ does not involve $X_i$, we clearly have
$$
\var{H(\alpha;X)}\leq \sum_{i=1}^n \var{H(\alpha;X)-H_i(\alpha;X_i)}\;.
$$
Now, clearly,
$$
V_i=\var{H(\alpha;X)-H_i(\alpha;X_i)}\leq K\left[\Exp{\left(\frac{\Upsilon_i(X_i)}{1+\frac{R_i^2}{n}q_i(X_i)}\right)^2}+\frac{R_i^4}{n^2}\Exp{\frac{\qTilde_i(X_i)^2 \varphi_i^2(X_i)}{(1+\frac{R_i^2}{n}q_i(X_i))^2}}\right]\;.
$$
If $v=M_i^{-1}\Sigma_\eps M_i^{-1} x$, we have seen that $\norm{v}\leq b(A;\Sigma_\eps) \sqrt{x\trsp A^{-1} x/t}$.So we conclude that
$$
\Exp{\left(\frac{R_i}{\sqrt{n}}\alpha_i X_i\trsp M_i^{-1}\Sigma_\eps M_i^{-1} x\right)^2}\leq \frac{R_i^2}{n} \alpha_i^2 b_L(2;X_i) b^2(A;\Sigma_\eps) \frac{x\trsp A^{-1} x}{t}\;.
$$
Recall now that $\norm{M_i^{-1}m_i}\leq \frac{\norm{\alpha}}{\sqrt{t}}\leq \frac{1}{\sqrt{t}}$ and $\norm{M_i^{-1}\Sigma_\eps M_i^{-1} m_i}\leq \frac{b(A;\Sigma_\eps)}{\sqrt{t}}$. Therefore,
$$
\Exp{\left[X_i\trsp M_i^{-1} \Sigma_\eps M_i^{-1} m_i X_i\trsp M_i^{-1} x+X_i\trsp M_i^{-1}m_i X_i\trsp M_i^{-1}\Sigma_\eps M_i^{-1} x\right]^2}\leq K b_L(4;X_i) \frac{b^2(A;\Sigma_\eps)}{t}\frac{x\trsp A^{-1} x}{t}\;.
$$
We finally have
$$
\Exp{\left(\frac{\Upsilon_i(X_i)}{1+\frac{R_i^2}{n}q_i(X_i)}\right)^2}\leq K \frac{b^2(A;\Sigma_\eps) x\trsp A^{-1} x}{t}\left[\frac{R_i^2}{n}\alpha_i^2 b_L(2;X_i)+\frac{R_i^4}{n^2}b_L(4;X_i)\frac{1}{t}\right]\;.
$$

For the second part of this simple variance bounding exercise, we first remind the reader that
$$
\left|\frac{\frac{R_i^2}{n} \qTilde_i(X_i)}{1+\frac{R_i^2}{n}q_i(X_i)}\right|\leq b(A;\Sigma_\eps)\;.
$$
Hence, we simply need to bound
$$
\Exp{\left(\frac{\varphi_i(X_i)}{1+\frac{R_i^2}{n}q_i(X_i)}\right)^2}\;,
$$
something we have essentially already done, and we get easily
$$
\Exp{\left(\frac{\varphi_i(X_i)}{1+\frac{R_i^2}{n}q_i(X_i)}\right)^2}\leq K\left[\alpha_i^2 \frac{R_i^2}{n} b_L(2;X_i) \frac{x\trsp A^{-1} x}{t} +\frac{R_i^4}{n^2}\frac{x\trsp A^{-1}x}{t^2} b_L(4;X_i)\right]\;.
$$

We have therefore shown that
$$
V_i\leq K b^2(A;\Sigma_\eps)\frac{x\trsp A^{-1} x}{t}\left[\frac{R_i^2}{n}\alpha_i^2 b_L(2;X_i)+\frac{R_i^4}{n^2}b_L(4;X_i)\frac{1}{t}\right]\;.
$$

We note that when $R_i$ are i.i.d uniformly square integrable, the Marcienkiewicz-Zygmund law of large numbers guarantees that $\sum_{i=1}^n V_i\tendsto 0$ for instance in probability.

We now turn to Lindeberg-type questions.\\
$\bullet$ \textbf{Lindeberg aspects} As usual, we will go a bit fast here. Essentially the previous decomposition can still be used, but it should now be understood that the $M_i$ matrix we are dealing with involves both $\{X_m\}_{m<i}$ and $\{Y_k\}_{k>i}$, instead of just $\{X_j\}_{j=1}^n$ or $\{Y_j\}_{j=1}^n$. However, the key fact is that $M_i$ is independent of both $X_i$ and $Y_i$. Hence, we have for instance
$$
\Exp{\varphi_i(X_i)}=\Exp{\varphi_i(Y_i)}
$$
and
$$
\Exp{\Upsilon_i(X_i)}=\Exp{\Upsilon_i(Y_i)}\;.
$$

Let us call
$$
T_1(X_i)=\frac{\Upsilon_i(X_i)}{1+\frac{R_i^2}{n}q_i(X_i)}
$$
and
$$
T_2(X_i)=\frac{R_i^2}{n}\frac{\qTilde_i(X_i)\varphi_i(X_i)}{(1+\frac{R_i^2}{n}q_i(X_i))^2}\;.
$$
It is clear that if we can control
$$
\sum_{i=1}^n \left|\Exp{T_1(X_i)-T_1(Y_i)}-\Exp{T_2(X_i)-T_2(Y_i)}\right|
$$
we will have control over $|\Exp{H(\alpha;X)-H(\alpha;Y)}|$.
We recall that we have already showed that
$$
\left|H(\alpha;X)\right|\leq K \frac{\norm{x}b(A;\Sigma_\eps)}{\sqrt{t}}\;.
$$
\\

$\bullet$ \textbf{Control of }$\mathbf{\Exp{T_1(X_i)-T_1(Y_i)}}$\\
As usual, we use the fact that
\begin{align*}
T_1(X_i)&=\frac{\Upsilon_i(X_i)}{1+\frac{R_i^2}{n}d_i}+\Upsilon_i(X_i) \frac{R_i^2}{n}\frac{d_i-q_i(X_i)}{(1+\frac{R_i^2}{n}q_i(X_i))(1+\frac{R_i^2}{n}d_i)}\\
&\triangleq T_{1,1}(X_i)+T_{1,2}(X_i)\;.
\end{align*}
Naturally, since $X_i$ and $Y_i$ have mean 0 and the same covariance,
$$
\Exp{\frac{\Upsilon_i(X_i)}{1+\frac{R_i^2}{n}d_i}}=\Exp{\frac{\Upsilon_i(Y_i)}{1+\frac{R_i^2}{n}d_i}}\;,
$$
so all that is left to do is control $\Exp{|T_{1,2}(X_i)|}$. To do so, we can use Cauchy-Schwarz and recall that
$$
\Exp{\left(\frac{\Upsilon_i(X_i)}{1+\frac{R_i^2}{n}q_i(X_i)}\right)^2}\leq K \frac{b^2(A;\Sigma_\eps) x\trsp A^{-1} x}{t}\left[\frac{R_i^2}{n}\alpha_i^2 b_L(2;X_i)+\frac{R_i^4}{n^2}b_L(4;X_i)\frac{1}{t}\right]\;.
$$
and
$$
\Exp{(d_i-q_i)^2}\leq \frac{b_{Q_2}(2;X_i)}{t^2}\;.
$$
Hence,
\begin{align*}
\Exp{|T_{1,2}(X_i)|}&\leq K \frac{R_i^2}{n}\sqrt{\frac{b^2(A;\Sigma_\eps) x\trsp A^{-1} x}{t}\left[\frac{R_i^2}{n}\alpha_i^2 b_L(2;X_i)+\frac{R_i^4}{n^2}b_L(4;X_i)\frac{1}{t}\right]}\frac{\sqrt{b_{Q_2}(2;X_i)}}{t}\\
&\leq K \frac{R_i^2}{n}\frac{b(A;\Sigma_\eps)\sqrt{x\trsp A^{-1} x}}{t^{3/2}} \left[\frac{R_i}{\sqrt{n}}|\alpha_i|\sqrt{b_L(2;X_i)}+\frac{R_i^2}{n}\sqrt{\frac{b_L(4;X_i)}{t}}\right]\sqrt{b_{Q_2}(2;X_i)}\;.
\end{align*}

In other respects, let us note that
$$
\Exp{|T_1(X_i)|}\leq \Exp{\left|\Upsilon_i(X_i)\right|}\;.
$$
We have
\begin{align*}
\Exp{\left|\Upsilon_i(X_i)\right|}&\leq \frac{R_i}{\sqrt{n}}b(A;\Sigma_\eps) \sqrt{\frac{x\trsp A^{-1} x}{t}}\left[|\alpha_i|b_L(1;X_i)+\frac{R_i}{\sqrt{n}}\frac{2b_L(2;X_i)}{\sqrt{t}}\right]\;,\\
&\leq K \frac{R_i}{\sqrt{n}}b(A;\Sigma_\eps) \sqrt{\frac{x\trsp A^{-1} x}{t}}
\left[|\alpha_i|\sqrt{b_L(2;X_i)}+\frac{R_i}{\sqrt{n}}\frac{\sqrt{b_L(4;X_i)}}{\sqrt{t}}\right]
\end{align*}
Hence,
$$
\left|\Exp{T_1(X_i)-T_1(Y_i)}\right|\leq \Psi_i(X_i)+\Psi_i(Y_i)\;,
$$
where
$$
\Psi_i(X_i)=K \frac{R_i}{\sqrt{n}}b(A;\Sigma_\eps) \sqrt{\frac{x\trsp A^{-1} x}{t}}
\left[|\alpha_i|\sqrt{b_L(2;X_i)}+\frac{R_i}{\sqrt{n}}\frac{\sqrt{b_L(4;X_i)}}{\sqrt{t}}\right]
\left(1\wedge \frac{R_i^2}{\sqrt{n}t}\sqrt{\frac{b_{Q_2}(2;X_i)}{n}}\right)\;.
$$

$\bullet$ \textbf{Control of }$\mathbf{\Exp{T_2(X_i)-T_2(Y_i)}}$\\
Recall that
$$
T_2(X_i)=\frac{R_i^2}{n} \frac{\qTilde_i(X_i)\varphi_i(X_i)}{(1+\frac{R_i^2}{n}q_i(X_i))^2}
$$
Clearly, using the notation $\Delta_i(X_i)=1/(1+R_i^2/n q_i(X_i))^{2}-1/(1+R_i^2/n d_i)^{2}$, we have
$$
\frac{\qTilde_i(X_i)}{(1+\frac{R_i^2}{n}q_i(X_i))^2}=\frac{\dTilde_i}{(1+\frac{R_i^2}{n}d_i)^2}+
\frac{\qTilde_i(X_i)-\dTilde_i}{(1+\frac{R_i^2}{n}q_i(X_i))^2}+\dTilde_i \Delta_i(X_i)\;.
$$
Now $\Exp{\varphi_i(X_i)}=\Exp{\varphi_i(Y_i)}$, so to control $\Exp{T_2(X_i)-T_2(Y_i)}$, all we need to do is control
\begin{align*}
T_{2,1}(X_i)&=\frac{R_i^2}{n} \frac{\qTilde_i(X_i)-\dTilde_i}{(1+\frac{R_i^2}{n}q_i(X_i))^2} \varphi_i(X_i)\\
T_{2,2}(X_i)&=\frac{R_i^2}{n}\dTilde_i \Delta_i(X_i)\varphi_i(X_i)\;.
\end{align*}
Recall that
$$
\frac{R_i^2}{n} \left|\dTilde_i \Delta_i(X_i)\right|\leq K b(A;\Sigma_\eps)\frac{R_i^2}{n}\frac{|q_i(X_i)-d_i|}{1+\frac{R_i^2}{n} q_i(X_i)}
$$
Hence,
$$
\Exp{|T_{2,2}(X_i)|}\leq K b(A;\Sigma_\eps)\frac{R_i^2}{n}\Exp{|q_i(X_i)-d_i||\varphi_i(X_i)|}\;,
$$
and we have already gotten a bound on $\Exp{|q_i(X_i)-d_i||\varphi_i(X_i)|}$, so we get
$$
\Exp{|T_{2,2}(X_i)|}\leq K \frac{R_i^2}{n}b(A;\Sigma_\eps)\sqrt{\frac{x\trsp A^{-1}x}{t}}\frac{\sqrt{b_{Q_2}(2;X_i)}}{t}
\left[\frac{1}{\sqrt{n}}|\alpha_i|R_i\sqrt{b_L(2;X_i)}+\frac{R_i^2}{n}\sqrt{\frac{b_L(4;X_i)}{t}}\right]\;.
$$
Similarly, using the fact that $\sqrt{\Exp{|\qTilde_i(X_i)-\dTilde_i|^2}}\leq \sqrt{b_{Q_2}(2;X_i)}b(A;\Sigma_\eps)/t$, we see that
$$
\Exp{|T_{2,1}(X_i)|}\leq K \frac{R_i^2}{n} \frac{\sqrt{b_{Q_2}(2;X_i)}b(A;\Sigma_\eps)}{t}
\left[\frac{1}{\sqrt{n}}|\alpha_i|R_i\sqrt{b_L(2;X_i)}+\frac{R_i^2}{n}\sqrt{\frac{b_L(4;X_i)}{t}}\right]
\sqrt{\frac{x\trsp A^{-1}x}{t}}\;.
$$

On the other hand,
$$
\left|\Exp{T_2(X_i)}\right|\leq b(A;\Sigma_\eps)\Exp{|\varphi_i(X_i)|}
$$
and we have already seen that
$$
\Exp{|\varphi_i(X_i)|}\leq K \sqrt{\frac{x\trsp A^{-1}x}{t}}\frac{R_i}{\sqrt{n}} \left(|\alpha_i|\sqrt{b_L(2;X_i)}+\frac{R_i}{\sqrt{n}} \frac{\sqrt{b_L(4;X_i)}}{\sqrt{t}}\right)\;.
$$
So we conclude that
$$
\left|\Exp{T_2(X_i)-T_2(Y_i)}\right|\leq U_i(X_i)+U_i(Y_i)
$$
where
$$
U_i(X_i)=K b(A;\Sigma_\eps)\sqrt{\frac{x\trsp A^{-1}x}{t}}\left[\frac{1}{\sqrt{n}}|\alpha_i|R_i\sqrt{b_L(2;X_i)}+\frac{R_i^2}{n}\sqrt{\frac{b_L(4;X_i)}{t}}\right]
\left[\frac{R_i^2}{n} \frac{\sqrt{b_{Q_2}(2;X_i)}}{t}\wedge 1
\right]\;.
$$
$\bullet$\textbf{Putting everything together}
Since $K$ can be chosen so that $\Psi_i(X_i)=U_i(X_i)$, we conclude that
$$
\left|\Exp{H(\alpha;X)-H(\alpha;Y)}\right|\leq 2 \sum_{i=1}^n U_i(X_i)+U_i(Y_i)\;.
$$
\end{proof}

\def\ee{\mathbf{E}}
\def\pp{\mathbf{P}}

\subsection{Checking the heuristics}\label{subsec:CheckHeuristics}
In Subsection \ref{subsec:Heuristics}, we gave some heuristics
to compute an asymptotically deterministic equivalent of forms
like $x\trsp (X\trsp D^2X/n+A)^{-1}x$
and $x\trsp (X\trsp D^2X/n+A)^{-1}\Sigma_{\eps} (X\trsp D^2X/n+A)^{-1} x$
in the case where all the $X$'s have the same covariance.
We now prove them rigorously.

Of course, the centerpiece of our analysis is the fact
that this only need to be done in the Gaussian case.
The proof is somewhat involved, since at the level
of generality at which we operate, we cannot seem to rely
on invariance properties of the Gaussian distribution
which were recently systematically exploited
in \cite{nekMarkoRisk}, \cite{nekMarkoRealizedRisk}
and have been a mainstay of multivariate statistics
(\cite{anderson03}, \cite{EatonBookReprint83}, \cite{ChikuseBook03}).
As is often the case,
computing the limit (or a deterministic equivalent)
of the quantities we are interested in is in fact
at least as difficult as showing that the limit does
not depend on the particulars of the distributions we consider,
or bounding the variance (or higher central moments).

It should be noted that our Lindeberg style results are valid
for families when each $X_i$ has a different $\Sigma_i$.
The limits we are investigating here are for the case
(mostly encountered or assumed in practice)
where all the $X_i$'s have the same $\Sigma$.

Let us begin by clarifying our assumptions and by introducing some notation:
We assume throughout this subsection that the rows $X_j\trsp$ of the matrix $X$ are
independent Gaussian random vectors with mean $0$ and (identical) covariance $\Sigma$.
Due to the concentration properties of the Gaussian distribution
(see e.g \cite{ledoux2001}), or using the properties of normal and weighted-$\chi^2$ random variables, this implies that for any $r \geq 1$,
\begin{align}
\label{eq:moment-vx}
\Exp{|v' X_j|^r} \leq K_r \norm{v}_2^r \opnorm{\Sigma}^{r/2}
\end{align}
for any deterministic vector $v$ and
\begin{align}
\label{eq:moment-xbx}
\Exp{|X_j' B X_j - \trace{\Sigma B}|^r} \leq K_r p^{r/2} \opnorm{B}^r \opnorm{\Sigma}^{r}
\end{align}
for any deterministic matrix $B$, where $K_r$ is a numerical constant.
\pagebreak[1]

Given a matrix $C \succcurlyeq 0$, put
$$
M_C := (A + C) \,,
$$
where $A \succcurlyeq t \Id$ is our regularizing matrix as above.
Note that $\opnorm{M_C^{-1}} \leq t$. In the special case where
$C = \mathcal{S}(j) := \mathcal{S} - \tfrac1n R_j^2 X_j X_j\trsp$,
we simply write $M_j$ instead of $M_{S(j)}$.
We now recall the classic rank-1 update formula which will again be used repeatedly in this part of the paper.
\begin{align}
\label{eq:rank1update}
M_S^{-1} = M_j^{-1} - \frac{\tfrac{1}{n} R_j^2 \, M_j^{-1} X_j X_j\trsp M_j^{-1}}{1 + \tfrac{1}{n} R_j^2 \, X_j\trsp M_j^{-1} X_j} \,.
\end{align}

Unless otherwise mentioned, $B$ is always a deterministic
positive semidefinite matrix in the sequel. \linebreak[2]
For $j=1,\hdots,n$, let
$$
q_j := X_j\trsp M_j^{-1} X_j \,, \quad
d_j := \trace{\Sigma M_j^{-1}} \,, \quad
\tilde{q}_j := X_j\trsp M_j^{-1} B M_j^{-1} X_j \,, \quad
\tilde{d}_j := \trace{\Sigma M_j^{-1} B M_j^{-1}} \,.
$$
In this subsection, we will usually replace $q_j$ and $\tilde{q}_j$
with the fully deterministic quantities $\ee(d_j)$ and $\ee(\tilde{d}_j)$
(instead of $d_j$ and $\tilde{d}_j$).
Using the fact that $B$, $\Sigma$ and $M_j^{-1}$ are positive definite,
it is easy to see that
\begin{align}
\label{eq:ch-bounds-1}
0 \leq \frac{1}{1+\tfrac1n R_j^2 \, q_j} \leq 1 \,, \quad
0 \leq \frac{1}{1+\tfrac1n R_j^2 \, \ee(d_j)} \leq 1 
\end{align}
and
\begin{align}
\label{eq:ch-bounds-2}
0 \leq \frac{\tfrac1n R_j^2 \, \tilde{q}_j}{1+\tfrac1n R_j^2 \, q_j} \leq \frac{\opnorm{B}}{t} \,, \quad
0 \leq \frac{\tfrac1n R_j^2 \, \ee(\tilde{d}_j)}{1+\tfrac1n R_j^2 \, \ee(d_j)} \leq \frac{\opnorm{B}}{t} \,. 
\end{align}

The following lemma provides some additional estimates
which will be used in this subsection:

\begin{lemma}
\label{lemma:replacement}
Suppose that the above-mentioned assumptions are satisfied.
\begin{enumerate}[(a)]
\item
We have
$$
|\trace{\Sigma M_S^{-1} - \Sigma M_j^{-1}}| \leq \opnorm{\Sigma} t^{-1}
$$
and
$$
|\trace{\Sigma M_S^{-1} B M_S^{-1} - \Sigma M_j^{-1} B M_j^{-1}}| \leq 2 \opnorm{B} \opnorm{\Sigma} t^{-2} \,.
$$
\item
For fixed $r \geq 1$, we have
$$
\Exp{\left|\trace{\Sigma M_S^{-1}} - \Exp{\trace{\Sigma M_S^{-1}}}\right|^r} \leq K_r' n^{r/2} \opnorm{\Sigma}^r t^{-r}
$$
and
$$
\Exp{\left|\trace{\Sigma M_S^{-1} B M_S^{-1}} - \Exp{\trace{\Sigma M_S^{-1} B M_S^{-1}}}\right|^r} \leq K_r'' n^{r/2} \opnorm{B}^r \opnorm{\Sigma}^r t^{-2r} \,,
$$
where $K_r'$ and $K_r''$ are constants depending only on $r$.
\item
We have
$$
\Exp{\left|\frac{1}{1+\tfrac1n R_j^2 \, q_j} - \frac{1}{1+\tfrac1n R_j^2 \, \ee(d_j)}\right|} \leq K' \left( 1 \wedge \tfrac{1}{n} R_j^2 (\sqrt{p} + \sqrt{n}) \opnorm{\Sigma} t^{-1} \right)
$$
and
$$
\Exp{\left|\frac{\tfrac{1}{n} R_j^2 \, \tilde{q}_j}{(1+\tfrac1n R_j^2 \, q_j)^2} - \frac{\tfrac{1}{n} R_j^2 \, \ee(\tilde{d}_j)}{(1+\tfrac1n R_j^2 \, \ee(d_j))^2} \right|} \leq K'' \opnorm{B} t^{-1} \left( 1 \wedge \tfrac{1}{n} R_j^2 (\sqrt{p}+\sqrt{n}) \opnorm{\Sigma} t^{-1} \right) \,,
$$
where $K'$ and $K''$ are numerical constants.
\item
For any square-integrable random variables $Z_j$ such that $\Exp{Z_j}^2 \leq L^2$, we~have
$$
\sum_{j=1}^{n} \tfrac1n R_j^2 \, \Exp{\left| \left(\frac{1}{1+\tfrac1n R_j^2 \, q_j} - \frac{1}{1+\tfrac1n R_j^2 \, \ee(d_j)}\right) Z_j \right|}  = \gO(LU)
$$
and
$$
\sum_{j=1}^{n} \tfrac1n R_j^2 \, \Exp{\left| \left(\frac{\tfrac1n R_j^2 \, \tilde{q}_j}{(1+\tfrac1n R_j^2 \, q_j)^2} - \frac{\tfrac1n R_j^2 \, \ee(\tilde{d}_j)}{(1+\tfrac1n R_j^2 \, \ee(\tilde{d}_j))^2}\right) Z_j \right|} =  \gO(LU \opnorm{B} t^{-1}) \,,
$$
where
$$
U := \sum_{j=1}^{n} \left( \tfrac1n R_j^2 \wedge \tfrac{1}{n^2} R_j^4 (\sqrt{p}+\sqrt{n}) \opnorm{\Sigma} t^{-1} \right) \,.
$$
\item
For any bounded random vectors $V_j$ and $W_j$ independent of $X_j$
such that $\|V_j\|_2 \leq L_1$ and $\|W_j\|_2 \leq L_2$, we~have
$$
\sum_{j=1}^{n} \tfrac1n R_j^2 \Exp{\left| V_j\trsp M_S^{-1} W_j - V_j\trsp M_j^{-1} W_j \right|} = \gO(L_1 L_2 \widetilde{U})
$$
and
$$
\sum_{j=1}^{n} \tfrac1n R_j^2 \Exp{\left| V_j\trsp M_S^{-1} B M_S^{-1} W_j - V_j\trsp M_j^{-1} B M_j^{-1} W_j \right|} = \gO(L_1 L_2 \widetilde{U} \opnorm{B} t^{-1}) \,,
$$
where
$$
\widetilde{U} := \sum_{j=1}^{n} \left( \tfrac1n R_j^2 t^{-1} \wedge \tfrac{1}{n^2} R_j^4 \opnorm{\Sigma} t^{-2} \right) \,.
$$
\end{enumerate}
\end{lemma}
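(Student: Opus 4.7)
The plan is to prove the five parts sequentially, with the rank-1 update \eqref{eq:rank1update} and the Gaussian moment bounds \eqref{eq:moment-vx}, \eqref{eq:moment-xbx} as the two core tools. Throughout I will repeatedly exploit the operator inequality $M_j^{-1}\Sigma M_j^{-1}\preceq \opnorm{\Sigma}\, t^{-1} M_j^{-1}$ (which follows from $\Sigma\preceq\opnorm{\Sigma}\id$ and $M_j^{-1}\preceq t^{-1}\id$) to convert $\Sigma$-weighted quadratic forms into unweighted ones at the price of a factor $\opnorm{\Sigma}/t$.

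\textbf{Part (a).} Substitute \eqref{eq:rank1update} into $\trace{\Sigma(M_S^{-1}-M_j^{-1})}$ to obtain the ratio $\bigl(\tfrac1n R_j^2 X_j\trsp M_j^{-1}\Sigma M_j^{-1} X_j\bigr)/\bigl(1+\tfrac1n R_j^2 q_j\bigr)$, which the operator inequality above bounds pointwise by $\opnorm{\Sigma}/t$. For the weighted version, expand $M_S^{-1}BM_S^{-1}-M_j^{-1}BM_j^{-1}=(M_S^{-1}-M_j^{-1})BM_S^{-1}+M_j^{-1}B(M_S^{-1}-M_j^{-1})$ and apply the same argument with the extra factor $\opnorm{B}/t$ absorbed into $\opnorm{BM_S^{-1}}$ or $\opnorm{M_j^{-1}B}$. \textbf{Part (b)} is then immediate from Lemma \ref{lemma:GeneralizationEfronStein}: taking $W_m$ to be the same trace computed from $\mathcal{S}-\tfrac1nR_m^2X_mX_m\trsp$, part (a) gives $|W-W_m|\leq c$ almost surely (with $c=\opnorm{\Sigma}/t$ in the first case, $c=2\opnorm{B}\opnorm{\Sigma}/t^2$ in the second); both contributions in \eqref{eq:GeneralizationEfronStein} are then bounded by $nc^2$ and $nc^r$, respectively, giving the stated $n^{r/2}c^r$ order for $r\geq2$, and Cauchy-Schwarz handles $r=1$.

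\textbf{Part (c).} Write
\[
\frac{1}{1+\tfrac1nR_j^2 q_j}-\frac{1}{1+\tfrac1nR_j^2\Exp{d_j}} = \frac{\tfrac1nR_j^2(\Exp{d_j}-q_j)}{(1+\tfrac1nR_j^2q_j)(1+\tfrac1nR_j^2\Exp{d_j})}\;,
\]
bound the denominator below by $1$, and decompose $\Exp{d_j}-q_j=(\Exp{d_j}-d_j)+(d_j-q_j)$. The second summand is $\gO(\sqrt{p}\,\opnorm{\Sigma}/t)$ in $L^1$ by \eqref{eq:moment-xbx} applied conditionally on $M_j$; the first is $\gO(\sqrt{n}\,\opnorm{\Sigma}/t)$ by (b) (applied to $d_j$ as a function of $\{X_i\}_{i\ne j}$, which is the same argument). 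Combined with the trivial bound by $1$ from \eqref{eq:ch-bounds-1} one obtains the claimed $\wedge$-form. For the second inequality, abbreviate $a,b,c,d$ and split
\[
\frac{c}{(1+a)^2}-\frac{d}{(1+b)^2} = \frac{c-d}{(1+a)^2} + d\left[\frac{1}{(1+a)^2}-\frac{1}{(1+b)^2}\right]\;,
\]
using the factorization $\tfrac{1}{(1+a)^2}-\tfrac{1}{(1+b)^2}=\bigl(\tfrac{1}{1+a}+\tfrac{1}{1+b}\bigr)\cdot\tfrac{b-a}{(1+a)(1+b)}$ to bound the bracket by $2|a-b|$, together with $d\leq\opnorm{B}/t$ from \eqref{eq:ch-bounds-2}; concentration of $\tilde q_j$ around $\Exp{\tilde d_j}$ (again from (b) and \eqref{eq:moment-xbx} with matrix $M_j^{-1}BM_j^{-1}$ of operator norm $\leq\opnorm{B}/t^2$) contributes the additional $\opnorm{B}/t$ factor.

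\textbf{Part (d).} Split the difference inside the expectation as $A_j+B_j$, where $A_j=\tfrac{1}{1+\tfrac1nR_j^2q_j}-\tfrac{1}{1+\tfrac1nR_j^2d_j}$ (depending on $X_j$ via $q_j-d_j$) and $B_j=\tfrac{1}{1+\tfrac1nR_j^2d_j}-\tfrac{1}{1+\tfrac1nR_j^2\Exp{d_j}}$ (independent of $X_j$). For $A_jZ_j$, use $|A_j|\leq 1\wedge\tfrac1nR_j^2|q_j-d_j|$ and Cauchy-Schwarz (together with the $L$-bound on $Z_j$ and \eqref{eq:moment-xbx} for $q_j-d_j$) to produce the $\sqrt{p}$ contribution. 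For $B_jZ_j$, bound $\Exp{|B_jZ_j|}\leq\sqrt{\Exp{B_j^2}}\cdot L$, and control $\Exp{B_j^2}\leq 1\wedge\tfrac{1}{n^2}R_j^4\Exp{(d_j-\Exp{d_j})^2}$ via part (b) with $r=2$, producing the $\sqrt{n}$ contribution. Summing gives the $U$-form; the second identity in (d) is identical modulo the propagation of the $\opnorm{B}/t$ factor already present in the second half of (c).

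\textbf{Part (e).} The rank-1 update yields
\[
V_j\trsp M_S^{-1}W_j-V_j\trsp M_j^{-1}W_j=-\frac{\tfrac1nR_j^2\,(V_j\trsp M_j^{-1}X_j)(X_j\trsp M_j^{-1}W_j)}{1+\tfrac1nR_j^2 q_j}\;.
\]
Two bounds are available for each summand: Cauchy-Schwarz in the $M_j^{-1}$-geometry bounds the numerator by $\sqrt{V_j\trsp M_j^{-1}V_j}\sqrt{W_j\trsp M_j^{-1}W_j}\cdot\tfrac1nR_j^2 q_j$, and dividing by $(1+\tfrac1nR_j^2q_j)$ gives a pointwise bound $L_1L_2/t$; alternatively, taking expectations and using \eqref{eq:moment-vx} twice (conditionally on $X_{\ne j}$) with $\Exp{|V_j\trsp M_j^{-1}X_j|^2\,|\,X_{\ne j}}\leq L_1^2\opnorm{\Sigma}/t^2$ produces $\tfrac{1}{n^2}R_j^4 L_1L_2\opnorm{\Sigma}/t^2$ after Cauchy-Schwarz. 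Taking the minimum and summing yields $L_1L_2\widetilde{U}$, and the weighted version follows by the same two-term expansion as in (a), with the extra $\opnorm{B}/t$ coming from the bound on $\opnorm{BM_S^{-1}}$ and $\opnorm{M_j^{-1}B}$. The main obstacle throughout is the bookkeeping: each estimate must be carried in both its trivial form ($\leq$ constant) and its ``small'' form, so that the $\wedge$-bounds in $U$ and $\widetilde{U}$ are both captured and are tight enough for the downstream applications where these quantities must be shown to vanish.
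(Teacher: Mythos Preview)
Your proposal is correct and largely mirrors the paper's proof. The one substantive difference is in part (b): the paper invokes Azuma's inequality for bounded martingale differences to get sub-Gaussian tails and then integrates for the moment bounds, whereas you apply Lemma~\ref{lemma:GeneralizationEfronStein} directly with the almost-sure bound $|W-W_m|\leq c$ from part (a). Both routes are valid; yours stays entirely within the paper's own toolkit and is slightly more economical, while Azuma yields the stronger intermediate statement of exponential concentration.

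One caution in the second inequality of part (e): the two-term telescoping $(M_S^{-1}-M_j^{-1})BM_S^{-1}+M_j^{-1}B(M_S^{-1}-M_j^{-1})$ leaves, in the first summand, a vector $BM_S^{-1}W_j$ that is \emph{not} independent of $X_j$, so you cannot directly condition and apply \eqref{eq:moment-vx} there. The paper resolves this by expanding into three terms---two that are linear in $(M_S^{-1}-M_j^{-1})$ against an $M_j^{-1}$ factor (where independence is available), plus the quadratic cross term $(M_S^{-1}-M_j^{-1})B(M_S^{-1}-M_j^{-1})$, which is handled pointwise via \eqref{eq:ch-bounds-2}. This is the natural completion of your sketch.
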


\begin{proof}
Throughout this proof, $K$ denotes a numerical constant
which may change from step to step.

(a)
At least the first inequality is well known in random matrix theory
(see e.g. \cite{silversteinbai95}).
We include a proof for the sake of completeness.
Using \eqref{eq:rank1update}, we get
$$
  \left|\trace{\Sigma M_S^{-1} - \Sigma M_j^{-1}}\right|
= \left|\frac{\tfrac1n R_j^2 X_j\trsp M_j^{-1} \Sigma M_j^{-1} X_j}{1 + \tfrac1n R_j^2 X_j\trsp M_j^{-1} X_j}\right|
\leq \opnorm{M_j^{-1/2} \Sigma M_j^{-1/2}}
\leq \opnorm{\Sigma} t^{-1} \,.
$$
In fact, this continues to hold for a general square matrix $\Sigma$.
It therefore follows that
\begin{multline*}
     \left|\trace{\Sigma M_S^{-1} B M_S^{-1} - \Sigma M_j^{-1} B M_j^{-1}}\right| \\
\leq \left|\trace{\Sigma (M_S^{-1}-M_j^{-1}) B M_S^{-1}}\right| + \left|\trace{\Sigma M_j^{-1} B (M_S^{-1}-M_j^{-1}}\right|
\leq 2 \opnorm{B} \opnorm{\Sigma} t^{-2} \,.
\end{multline*}

(b)
This is a simple consequence of Azuma's inequality (see e.g.\@ Lemma~4.1 in Ledoux).
We follow the proof of Lemma~6 in \cite{nekCorrEllipD} :
For $j=0,\hdots,n$, let $\mathcal{F}_j$ denote the $\sigma$-field
generated by $X_1,\hdots,X_j$. \linebreak[2] Then, using part (a),
we have
\begin{multline*}
  \left|\Exp{\trace{\Sigma M_S^{-1}} | \mathcal{F}_j} - \Exp{\trace{\Sigma M_S^{-1}} | \mathcal{F}_{j-1}}\right| \\
= \left|\Exp{\trace{\Sigma (M_S^{-1}-M_j^{-1})} | \mathcal{F}_j} - \Exp{\trace{\Sigma (M_S^{-1}-M_j^{-1})} | \mathcal{F}_{j-1}}\right|
\leq 2 \opnorm{\Sigma} t^{-1} \,,
\end{multline*}
so, by Azuma's inequality, we get
$$
\Pr(|\trace{\Sigma M_S^{-1}} - \Exp{\trace {\Sigma M_S^{-1}}}| > u) \leq 2 \exp (- u^2 / 8n \opnorm{\Sigma}^2 t^{-2})
$$
for all $u > 0$. Since $\Exp{|Z|^r} = \int_0^\infty ru^{r-1} \Pr(|Z| > u) \ du$
for any real random variable $Z$, the first inequality follows easily.
The second inequality is derived similarly.

(c)
Recall that from \eqref{eq:ch-bounds-1}, we have the simple estimate
$$
\left|\frac{1}{1+\tfrac1n R_j^2 \, q_j} - \frac{1}{1+\tfrac1n R_j^2 \, \ee(d_j)}\right| \leq 1 \,.
$$
Using \eqref{eq:moment-xbx} and part (b), we also have the estimate
\begin{multline*}
\Exp{\left|\frac{1}{1+\tfrac1n R_j^2 \, q_j} - \frac{1}{1+\tfrac1n R_j^2 \, \ee(d_j)}\right|} \leq \tfrac{1}{n} R_j^2 \, \Exp{\left| q_j - \Exp{d_j} \right|} \\[+3pt] \leq \tfrac{1}{n} R_j^2 \, \Exp{\left| q_j - d_j \right|} + \tfrac{1}{n} R_j^2 \, \Exp{\left| d_j - \Exp{d_j} \right|} \leq K \, \tfrac{1}{n} R_j^2 (\sqrt{p} + \sqrt{n}) \opnorm{\Sigma} t^{-1} \,.
\end{multline*}
It follows that
\begin{align}
\label{eq:betab1bound}
\Exp{\left|\frac{1}{1+\tfrac1n R_j^2 \, q_j} - \frac{1}{1+\tfrac1n R_j^2 \, \ee(d_j)}\right|} \leq K \left( 1 \wedge \tfrac{1}{n} R_j^2 (\sqrt{p} + \sqrt{n}) \opnorm{\Sigma} t^{-1} \right) \,,
\end{align}
and the first inequality is proved.
For the second inequality, first observe that from
\eqref{eq:ch-bounds-1} and \eqref{eq:ch-bounds-2},
we have the simple estimate
$$
\left|\frac{\tfrac1n R_j^2 \, \tilde{q}_j}{(1+\tfrac1n R_j^2 \, q_j)^2} - \frac{\tfrac1n R_j^2 \, \ee(\tilde{d}_j)}{(1+\tfrac1n R_j^2 \, \ee(d_j))^2}\right| \leq \opnorm{B} t^{-1} \,.
$$
Moreover, writing
\begin{multline*}
  \frac{\tfrac{1}{n} R_j^2 \, \tilde{q}_j}{(1+\tfrac1n R_j^2 \, q_j)^2}
- \frac{\tfrac{1}{n} R_j^2 \, \ee(\tilde{d}_j)}{(1+\tfrac1n R_j^2 \, \ee(d_j))^2}
= \frac{\tfrac{1}{n} R_j^2 \, (\tilde{q}_j - \ee(\tilde{d}_j))}{(1+\tfrac1n R_j^2 \, q_j)^2 \, (1+\tfrac1n R_j^2 \, \ee(d_j))^2} \\[+3pt]
+ \frac{\tfrac{1}{n} R_j^2 \, \tilde{q}_j}{1+\tfrac1n R_j^2 \, q_j} \left(\frac{1}{1+\tfrac1n R_j^2 \, q_j} - \frac{1}{1+\tfrac1n R_j^2 \, \ee(d_j)}\right)
+ \frac{\tfrac{1}{n} R_j^2 \, \ee(\tilde{d}_j)}{1+\tfrac1n R_j^2 \, \ee(d_j)} \left(\frac{1}{1+\tfrac1n R_j^2 \, q_j} - \frac{1}{1+\tfrac1n R_j^2 \, \ee(d_j)}\right)
\end{multline*}
and using \eqref{eq:moment-xbx} and part (b), \eqref{eq:ch-bounds-1}, \eqref{eq:ch-bounds-2}
as well as \eqref{eq:betab1bound}, we get the estimate
\begin{multline*}
     \Exp{\left|\frac{\tfrac{1}{n} R_j^2 \, \tilde{q}_j}{(1+\tfrac1n R_j^2 \, q_j)^2} - \frac{\tfrac{1}{n} R_j^2 \, \ee(\tilde{d}_j)}{(1+\tfrac1n R_j^2 \, \ee(d_j))^2}\right|}
\leq \tfrac{1}{n} R_j^2 \Exp{\left|\tilde{q}_j - \ee(\tilde{d}_j)\right|} \\[+3pt] + 2 \opnorm{B} t^{-1} \Exp{\left|\frac{1}{1+\tfrac1n R_j^2 \, q_j} - \frac{1}{1+\tfrac1n R_j^2 \, \ee(d_j)}\right|}
\leq K \, \tfrac{1}{n} R_j^2 (\sqrt{p} + \sqrt{n}) \opnorm{B} \opnorm{\Sigma} t^{-2} \,.
\end{multline*}
It follows that
\begin{align}
\label{eq:betab2bound}
     \Exp{\left|\frac{\tfrac{1}{n} R_j^2 \, \tilde{q}_j}{(1+\tfrac1n R_j^2 \, q_j)^2} - \frac{\tfrac{1}{n} R_j^2 \, \ee(\tilde{d}_j)}{(1+\tfrac1n R_j^2 \, \ee(d_j))^2}\right|}
\leq K \opnorm{B} t^{-1} \left( 1 \wedge \tfrac{1}{n} R_j^2 (\sqrt{p}+\sqrt{n}) \opnorm{\Sigma} t^{-1} \right) \,.
\end{align}

(d)
Similar arguments as in part (c) show that
\begin{align*}
     \left(\Exp{\left|\frac{1}{1+\tfrac1n R_j^2 \, q_j} - \frac{1}{1+\tfrac1n R_j^2 \, \ee(d_j)}\right|^2}\right)^{1/2}
\leq K \left( 1 \wedge \tfrac{1}{n} R_j^2 (\sqrt{p} + \sqrt{n}) \opnorm{\Sigma} t^{-1} \right)
\end{align*}
and
\begin{align*}
     \left(\Exp{\left|\frac{\tfrac{1}{n} R_j^2 \, \tilde{q}_j}{(1+\tfrac1n R_j^2 \, q_j)^2} - \frac{\tfrac{1}{n} R_j^2 \, \ee(\tilde{d}_j)}{(1+\tfrac1n R_j^2 \, \ee(d_j))^2}\right|^2}\right)^{1/2}
\leq K \opnorm{B} t^{-1} \left( 1 \wedge \tfrac{1}{n} R_j^2 (\sqrt{p}+\sqrt{n}) \opnorm{\Sigma} t^{-1} \right) \,.
\end{align*}
Thus, the claim follows from Cauchy-Schwarz inequality.

(e)
On the one hand, we have the simple estimate
$$
\left| V_j\trsp (M_S^{-1} - M_j^{-1}) W_j \right| \leq 2L_1L_2 t^{-1} \,.
$$
On the other hand, using \eqref{eq:rank1update}, Cauchy-Schwarz inequality and \eqref{eq:moment-vx},
we have the estimate
$$
     \Exp{\left| V_j\trsp (M_S^{-1} - M_j^{-1}) W_j  \right|}
\leq \tfrac1n R_j^2 \Exp{\left| V_j\trsp M_j^{-1} X_j X_j\trsp M_j^{-1} W_j \right|}
\leq K \, \tfrac1n R_j^2 L_1 L_2 \opnorm{\Sigma} t^{-2} \,.
$$
Combining these estimates, it follows that
$$
\sum_{j=1}^{n} \tfrac1n R_j^2 \Exp{\left| V_j\trsp M_S^{-1} W_j -  V_j\trsp M_j^{-1}  W_j \right|} = \gO(L_1 L_2 \widetilde{U}) \,,
$$
which establishes the first part of (e).
For the second part of (e), write
\begin{multline*}
     \Exp{\left|V_j\trsp M_S^{-1} B M_S^{-1} W_j - V_j\trsp M_j^{-1}  B M_j^{-1}  W_j\right|} \\
\leq \Exp{\left|V_j\trsp (M_S^{-1} - M_j^{-1} ) B M_j^{-1} W_j\right|} + \Exp{\left|V_j\trsp M_j^{-1} B (M_S^{-1} - M_j^{-1} ) W_j\right|} \\ + \Exp{\left|V_j\trsp (M_S^{-1} - M_j^{-1} ) B (M_S^{-1} - M_j^{-1} ) W_j\right|} \,.
\end{multline*}
By the preceding estimates, the first two expectations are bounded
by $K \, \tfrac1n R_j^2 L_1 L_2 \opnorm{B} \opnorm{\Sigma} t^{-3}$ here.
For~the third expectation, we can use \eqref{eq:rank1update} and \eqref{eq:ch-bounds-2} to get
\begin{align*}
      |V_j\trsp (M_S^{-1} - M_j^{-1} ) B (M_S^{-1} - M_j^{-1} ) W_j|
&=    \frac{\tfrac{1}{n^2} R_j^4}{(1+\tfrac1n R_j^2 \, q_j)^2} \, | V_j M_j^{-1} X_j X_j' M_j^{-1} B M_j^{-1} X_j X_j' M_j^{-1} W_j| \\
&\leq \tfrac{1}{n} R_j^2 \, |V_j M_j^{-1} X_j X_j' M_j^{-1} W_j| \opnorm{B} t^{-1}
\end{align*}
and therefore, by Cauchy-Schwarz inequality and \eqref{eq:moment-vx},
$$
\Exp{|V_j\trsp (M_S^{-1} - M_j^{-1} ) B (M_S^{-1} - M_j^{-1} ) W_j|} \leq K \, \tfrac{1}{n} R_j^2 L_1 L_2 \opnorm{B} \opnorm{\Sigma} t^{-3} \,.
$$
Combining this with the simple estimate
$$
\left| V_j\trsp M_S^{-1} B M_S^{-1} W_j - V_j\trsp M_j^{-1} B M_j^{-1} W_j \right| \leq 2L_1L_2 \opnorm{B} t^{-2} \,,
$$
it follows that
$$
\sum_{j=1}^{n} \tfrac1n R_j^2 \Exp{\left| V_j\trsp M_S^{-1} B M_S^{-1} W_j - V_j\trsp M_j^{-1}  B M_j^{-1}  W_j \right|} = \gO(L_1 L_2 \widetilde{U} \opnorm{B} t^{-1}) \,.
$$

This completes the proof of the lemma.
\end{proof}

\pagebreak[2]
\medskip

To verify Heuristic \ref{heuristic:regularizedInverse},
we will prove the following result:

\begin{proposition}\label{proposition:regularizedInverse}
Suppose that the assumptions from the beginning of this subsection hold,
the ratio $p/n$ stays bounded, $\norm{v} = 1$ and
\begin{align}
\label{eq:rj-moments-1}
\sum_{j=1}^{n} \frac{R_j^2}{n} \opnorm{\Sigma} = \gO(1)
\qquad\text{and}\qquad
\sum_{j=1}^{n} \left( \frac{R_j^2}{n} \wedge \frac{R_j^4}{n^{3/2}} \opnorm{\Sigma} \right) \opnorm{\Sigma} = \lo(1)
\end{align}
as $n \to \infty$.
Then we have
\begin{align*}
\Exp{v\trsp (S+A)^{-1} v} - \Exp{v\trsp (\gamma(A)\Sigma+A)^{-1} v} \to 0 \,,
\end{align*}
where
$$
\gamma(A) := \frac{1}{n}\sum_{i=1}^n \frac{R_i^2}{1+\tfrac1n R_i^2 \trace{\Sigma M_S^{-1}}} \;.
$$
\end{proposition}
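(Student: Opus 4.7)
My plan is to turn the heuristic argument of Heuristic \ref{heuristic:regularizedInverse} into rigorous estimates, using the replacement bounds gathered in Lemma \ref{lemma:replacement}. The starting point is the rank-1 update formula \eqref{eq:rank1update}, which, for two deterministic test vectors $v,u$, yields
\begin{equation*}
v\trsp A M_S^{-1} u = v\trsp u - \sum_{j=1}^n \frac{\tfrac{R_j^2}{n}\, v\trsp X_j X_j\trsp M_j^{-1} u}{1+\tfrac{R_j^2}{n}q_j}\;.
\end{equation*}
First I would replace the random $q_j$ in each denominator by the deterministic constant $\bar d := \Exp{\trace{\Sigma M_S^{-1}}}$. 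This is done in two substeps: pass from $q_j$ to $\Exp{d_j}$ via Lemma~\ref{lemma:replacement}(d) applied with $Z_j := v\trsp X_j X_j\trsp M_j^{-1} u$, whose second moment is bounded by $3\opnorm{\Sigma}^2\|v\|^2\|u\|^2/t^2$ via Gaussian Wick calculus; then replace $\Exp{d_j}$ by $\bar d$ using part (a) of the same lemma. Under the hypothesis \eqref{eq:rj-moments-1}, both error contributions are $\lo(1)$.

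Next, since $X_j$ is independent of $M_j$, conditioning on $M_j$ gives $\Exp{v\trsp X_jX_j\trsp M_j^{-1} u \mid M_j} = v\trsp \Sigma M_j^{-1} u$, so the numerator in each summand can be replaced by its deterministic conditional expectation. A further application of Lemma~\ref{lemma:replacement}(e) with $V_j = \Sigma v$, $W_j = u$ replaces $M_j^{-1}$ by $M_S^{-1}$ at an aggregate cost of $\gO(\opnorm{\Sigma}\widetilde U/t) = \lo(1)$. Collecting all the $\lo(1)$ remainders yields the approximate bilinear identity
\begin{equation*}
\Exp{v\trsp N M_S^{-1} u} = \Exp{v\trsp u} + \lo(1)\;, \qquad N := A + \bar\gamma\,\Sigma \succeq t\,\id_p\;,
\end{equation*}
where $\bar\gamma := \tfrac1n\sum_{i=1}^n R_i^2/(1+\tfrac{R_i^2}{n}\bar d)$ is fully deterministic and the error is uniform over $v,u$ in a bounded set.

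To invert this into the desired quadratic-form statement, I use that $N$ and $M_S^{-1}$ are symmetric, so the identity transposes to $\Exp{u\trsp M_S^{-1} N v} = \Exp{u\trsp v}+\lo(1)$; applied with the deterministic substitution $v\leftarrow N^{-1}v$ (which is bounded since $\|N^{-1}v\|\leq\|v\|/t$), this gives $\Exp{u\trsp M_S^{-1} v} = \Exp{u\trsp N^{-1}v}+\lo(1)$. Setting $u=v$ produces $\Exp{v\trsp M_S^{-1} v} = v\trsp N^{-1} v+\lo(1)$. It remains to replace the deterministic $\bar\gamma$ by the random $\gamma(A)$; by the resolvent identity,
\begin{equation*}
(A+\gamma(A)\Sigma)^{-1} - N^{-1} = -\bigl(\gamma(A)-\bar\gamma\bigr)\,(A+\gamma(A)\Sigma)^{-1}\,\Sigma\, N^{-1}\;,
\end{equation*}
so the corresponding quadratic form is bounded by $|\gamma(A)-\bar\gamma|\opnorm{\Sigma}\|v\|^2/t^2$. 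A direct computation using $|\tfrac{1}{1+a} - \tfrac{1}{1+b}| \leq |a-b|$ (valid for $a,b \geq 0$) together with the concentration bound of Lemma~\ref{lemma:replacement}(b) shows $\Exp{|\gamma(A)-\bar\gamma|} = \lo(1)$, completing the argument.

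The main obstacle is bookkeeping: the approximation cascades through four successive replacements (denominator $q_j\to\Exp{d_j}\to\bar d$, numerator $X_jX_j\trsp\to\Sigma$, $M_j^{-1}\to M_S^{-1}$, and finally $\bar\gamma\to\gamma(A)$), each producing an error of the schematic form $\opnorm{\Sigma}^a t^{-b}$ times one of the quantities $U$ or $\widetilde U$ appearing in Lemma~\ref{lemma:replacement}. One must verify that under the compact hypothesis \eqref{eq:rj-moments-1} and the assumption $p/n = \gO(1)$, all these error terms are indeed $\lo(1)$ with the correct prefactors of $\opnorm{\Sigma}$ absorbed.
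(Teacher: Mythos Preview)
Your argument is correct and uses the same underlying estimates (Lemma \ref{lemma:replacement}), but the organization differs from the paper's. You follow Heuristic \ref{heuristic:regularizedInverse} literally: you first derive the approximate bilinear identity $\Exp{v\trsp N M_S^{-1} u} = v\trsp u + \lo(1)$ with $N = A+\bar\gamma\Sigma$ deterministic, and then \emph{invert} it by substituting $v\leftarrow N^{-1}v$. The paper instead attacks the difference directly via the resolvent identity $M_S^{-1}-M_T^{-1} = -M_S^{-1}(S-T)M_T^{-1}$: after expanding $S=\sum_j \tfrac{R_j^2}{n}X_jX_j\trsp$ and using the rank-1 update $M_S^{-1}X_j=(1+\tfrac{R_j^2}{n}q_j)^{-1}M_j^{-1}X_j$, the definition of $T=\overline\gamma(A)\Sigma$ makes the two pieces of each summand match up to an error controlled by Lemma \ref{lemma:replacement}(d),(e). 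The paper's route is a bit shorter since it targets $v\trsp(M_S^{-1}-M_T^{-1})v$ in one step and never needs the ``substitute $v\leftarrow N^{-1}v$'' trick or the uniformity-over-test-vectors claim; your route has the virtue of mirroring the heuristic exactly and making transparent where each replacement (denominator, numerator, leave-one-out) enters. A cosmetic difference: the paper's deterministic proxy $\overline\gamma(A)$ keeps the leave-one-out $\Exp{d_i}$ in the $i$th denominator, while you use a single $\bar d=\Exp{\trace{\Sigma M_S^{-1}}}$; these differ by $O(\opnorm{\Sigma}/t)$ via part (a), so the distinction is harmless.
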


\begin{proof}
We first show that we may replace $\gamma(A)$ with the deterministic quantity
$$
\overline\gamma(A) := \sum_{i=1}^n \frac{\tfrac1n R_i^2}{1+\tfrac1n R_i^2 \Exp{\trace{\Sigma M_i^{-1}}}} \;.
$$
To this end, since $|v\trsp (A + \gamma(A)\Sigma)^{-1} v - v\trsp (A + \overline\gamma(A)\Sigma)^{-1} v| \leq t^{-2} |\gamma(A) - \overline\gamma(A)| \opnorm{\Sigma}$,
it suffices to show that
\begin{align}
\label{eq:replacegamma-1}
\Exp{|\gamma(A) - \overline\gamma(A)|} \opnorm{\Sigma} = \lo(1) \,.
\end{align}
But now,
\begin{align*}
      \Exp{|\gamma(A) - \overline\gamma(A)|}
&\leq \sum_{i=1}^n \Exp{\left|\frac{\tfrac{1}{n}R_i^2}{1+\tfrac1n R_i^2 \trace{\Sigma M_S^{-1}}} - \frac{\tfrac{1}{n}R_i^2}{1+\tfrac1n R_i^2 \Exp{\trace{\Sigma M_S^{-1}}}}\right|} \\
&\qquad + \sum_{i=1}^n \Exp{\left|\frac{\tfrac{1}{n}R_i^2}{1+\tfrac1n R_i^2 \Exp{\trace{\Sigma M_S^{-1}}}} - \frac{\tfrac{1}{n}R_i^2}{1+\tfrac1n R_i^2 \Exp{\trace{\Sigma M_i^{-1}}}}\right|} \\
&\leq K \left( \sum_{j=1}^{n} \Big( \tfrac1n R_j^2 \wedge \tfrac{1}{n^{3/2}} R_j^4 \opnorm{\Sigma} t^{-1} \Big)
    +\sum_{j=1}^{n} \Big( \tfrac1n R_j^2 \wedge \tfrac{1}{n^2} R_j^4 \opnorm{\Sigma} t^{-1} \Big) \right) \,,
\end{align*}
where the second step follows from similar arguments
as in the proof of Lemma \ref{lemma:replacement}\,(c)
(using Lemma~\ref{lemma:replacement} (b)~and~(a)).
Thus, \eqref{eq:replacegamma-1} follows from Assumption~\eqref{eq:rj-moments-1}.

We now proceed similarly as in \cite{silverstein95} and \cite{nekCorrEllipD}.
Using \eqref{eq:rank1update}, it is easy to check that
$M_S^{-1} X_j = (1+\tfrac1n R_j^2 \, q_j)^{-1} M_j^{-1} X_j$.
Thus, setting $T := \overline\gamma(A) \Sigma$, so that $M_T=A+\overline\gamma(A) \Sigma$, we~get
\begin{align*}
M_S^{-1} - M_T^{-1} = - M_S^{-1} (S - T) M_T^{-1}
&= - \sum_{i=1}^{n} \tfrac{1}{n} R_i^2 \, M_S^{-1} X_i X_i' M_T^{-1} + M_S^{-1} T M_T^{-1} \\
&= - \sum_{i=1}^{n} \frac{\tfrac{1}{n} R_i^2 \, M_i^{-1} X_i X_i' M_T^{-1}}{1 + \tfrac1n R_i^2 \, q_i} + M_S^{-1} T M_T^{-1} \\
&= - \sum_{i=1}^{n} \left( \frac{\tfrac{1}{n} R_i^2 \, M_i^{-1} X_i X_i' M_T^{-1}}{1 + \tfrac1n R_i^2 \, q_i} - \frac{\tfrac1n R_i^2 \, M_S^{-1} \Sigma M_T^{-1}}{1 + \tfrac1n R_i^2 \, \ee(d_i)} \right)
\end{align*}
and therefore
\begin{align}
\label{eq:msmtdiff}
\Exp{v' M_S^{-1} v} - \Exp{v' M_T^{-1} v}
&= - \sum_{i=1}^{n} \Exp{ \frac{\tfrac{1}{n} R_i^2 \, M_i^{-1} X_i X_i' M_T^{-1}}{1 + \tfrac1n R_i^2 \, q_i} - \frac{\tfrac1n R_i^2 \, M_S^{-1} \Sigma M_T^{-1}}{1 + \tfrac1n R_i^2 \, \ee(d_i)} } \,.
\end{align}
Now, using Lemma \ref{lemma:replacement}\,(d), the independence of $X_j$
and $X_1,\hdots,X_{j-1},X_{j+1},\hdots,X_n$, Lemma \ref{lemma:replacement}\,(e),
and Assumption \eqref{eq:rj-moments-1}, it follows that
\begin{align*}
   \sum_{i=1}^{n} \Exp{ \frac{\tfrac{1}{n} R_i^2 \, v' M_i^{-1} X_i X_i' M_T^{-1} v}{1+\tfrac1n R_i^2 \, q_i}}
&= \sum_{i=1}^{n} \Exp{ \frac{\tfrac{1}{n} R_i^2 \, v' M_i^{-1} X_i X_i' M_T^{-1} v}{1+\tfrac1n R_i^2 \,\ee(d_i)}} + \lo (1) \\
&= \sum_{i=1}^{n} \Exp{ \frac{\tfrac{1}{n} R_i^2 \, v' M_i^{-1} \Sigma M_T^{-1} v}{1+\tfrac1n R_i^2 \,\ee(d_i)}} + \lo (1) \\
&= \sum_{i=1}^{n} \Exp{ \frac{\tfrac{1}{n} R_i^2 \, v' M_S^{-1} \Sigma M_T^{-1} v}{1+\tfrac1n R_i^2 \,\ee(d_i)}} + \lo (1) \,.
\end{align*}
This completes the proof of Proposition \ref{proposition:regularizedInverse}.
\end{proof}

\pagebreak[2]
\medskip

To verify Heuristic \ref{heuristic:squareRegularizedInverse},
we will prove the following result:

\begin{proposition}\label{proposition:squareRegularizedInverse}
Suppose that the assumptions from the beginning of this subsection hold,
the ratio $p/n$ stays bounded, $\norm{v} = 1$ and
\begin{align}
\label{eq:rj-moments-2}
\sum_{j=1}^{n} \frac{R_j^2}{n} \opnorm{\Sigma} = \gO(1)
\qquad\text{and}\qquad
\sum_{j=1}^{n} \left( \frac{R_j^2}{n} \wedge \frac{R_j^4}{n^{3/2}} \opnorm{\Sigma} \right) \opnorm{B} \opnorm{\Sigma} = \lo(1)
\end{align}
as $n \to \infty$.
Then we have
\begin{align*}
\Exp{v\trsp (S+A)^{-1} B (S+A)^{-1} v} - \Exp{v\trsp (A+\gamma(A)\Sigma)^{-1} (B+\xi(A,B) \Sigma) (A+\gamma(A)\Sigma)^{-1} v} \to 0 \,,
\end{align*}
where $\gamma(A)$ is defined in Proposition \ref{proposition:regularizedInverse} and
$$
\xi(A,B) := \left[\frac{1}{n}\sum_{i=1}^n \frac{R_i^4}{\left(1+\tfrac1n R_i^2 \trace{\Sigma M_S^{-1}}\right)^2}\right]\frac{1}{n} \trace{\Sigma(S+A)^{-1}B (S+A)^{-1}}\;.
$$
\end{proposition}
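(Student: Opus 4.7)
The strategy is to reduce Proposition~\ref{proposition:squareRegularizedInverse} to Proposition~\ref{proposition:regularizedInverse} via a differentiation trick. Observe that
\[
v\trsp M_S^{-1} B M_S^{-1} v = - \left.\frac{d}{du}\right|_{u=0} v\trsp (S + A + uB)^{-1} v,
\]
so the quadratic form of interest is (minus) the derivative at $u=0$ of the resolvent form already controlled by Proposition~\ref{proposition:regularizedInverse}. Since $B \succcurlyeq 0$, the matrix $A_u := A + uB$ continues to satisfy $A_u \succcurlyeq t\,\Id_p$ for every $u \geq 0$, so the earlier proposition applies with $A$ replaced by $A_u$, and the heuristic computation in Subsection~\ref{subsec:Heuristics} already shows, at the formal level, that the derivative of the approximant is $-v\trsp \overline M_T^{-1}(B+\overline\xi\,\Sigma)\overline M_T^{-1}v$.

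Before differentiating, I would first replace the random quantities $\gamma(A)$ and $\xi(A,B)$ on the right-hand side of the target by the deterministic analogues
\[
\overline\gamma := \frac{1}{n}\sum_i \frac{R_i^2}{1+\tfrac{1}{n}R_i^2\Exp{\trace{\Sigma M_i^{-1}}}}, \qquad \overline\xi := \left[\frac{1}{n}\sum_i \frac{R_i^4}{(1+\tfrac{1}{n}R_i^2\Exp{\trace{\Sigma M_i^{-1}}})^2}\right] \cdot \frac{1}{n}\Exp{\trace{\Sigma M_S^{-1} B M_S^{-1}}},
\]
using Lemma~\ref{lemma:replacement}(b) (concentration of the random traces around their expectations) together with the bound $\opnorm{\overline M_T^{-1}} \leq 1/t$. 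Under Assumption~\eqref{eq:rj-moments-2}, this substitution introduces only an $\lo(1)$ error in expectation. Writing $\overline M_T := A + \overline\gamma\,\Sigma$, it therefore suffices to prove
\[
\Exp{v\trsp M_S^{-1} B M_S^{-1} v} - v\trsp \overline M_T^{-1}(B + \overline\xi\,\Sigma)\overline M_T^{-1} v \longrightarrow 0.
\]

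Fix a small $u_0 > 0$ and set $f_n(u) := \Exp{v\trsp (S + A_u)^{-1} v}$, $g_n(u) := v\trsp \overline M_T(u)^{-1} v$ with $\overline M_T(u) := A_u + \overline\gamma(A_u)\,\Sigma$. An inspection of the proof of Proposition~\ref{proposition:regularizedInverse} shows that all error bounds coming from Lemma~\ref{lemma:replacement} and Assumption~\eqref{eq:rj-moments-2} depend only on $\opnorm{\Sigma}$, $\opnorm{B}$, the $R_i$'s and $t$, and are therefore uniform in $u \in [0, u_0]$, giving $\sup_{u \in [0, u_0]}|f_n(u) - g_n(u)| = \lo(1)$. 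A direct computation then yields
\[
f_n'(0) = -\Exp{v\trsp M_S^{-1} B M_S^{-1} v}, \qquad g_n'(0) = -v\trsp \overline M_T^{-1}(B + \overline\gamma'(0)\,\Sigma)\overline M_T^{-1} v,
\]
where the chain rule gives $\overline\gamma'(0) = \tfrac{1}{n^2}\sum_i R_i^4\,\Exp{\trace{\Sigma M_i^{-1} B M_i^{-1}}} \big/ (1+\tfrac{1}{n}R_i^2\Exp{\trace{\Sigma M_i^{-1}}})^2$; this matches $\overline\xi$ up to a further $\lo(1)$ error after a last application of Lemma~\ref{lemma:replacement}(a) to exchange $M_i^{-1}$ for $M_S^{-1}$ inside the inner trace.

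The main obstacle will be upgrading the uniform convergence $f_n - g_n \to 0$ on $[0, u_0]$ to convergence of the derivatives at $u=0$. I plan to invoke the following elementary quantitative lemma: if $h_n := f_n - g_n$ satisfies $\sup_{u\in[0,u_0]}|h_n(u)| \leq \epsilon_n \to 0$ and $\sup_n \sup_{u\in [0, u_0]}|h_n''(u)| \leq C$, then the Taylor-remainder bound $|h_n'(0)| \leq 2\epsilon_n/u + Cu/2$, valid for every $u \in (0, u_0]$, yields $|h_n'(0)| = \gO(\sqrt{\epsilon_n}) \to 0$ upon optimizing in $u$. The required uniform bound on second derivatives is routine: $f_n''(u) = 2\Exp{v\trsp (S+A_u)^{-1} B (S+A_u)^{-1} B (S+A_u)^{-1} v}$ is bounded by $2\opnorm{B}^2/t^3$, while $g_n''(u)$ is a rational expression in the deterministic traces $\Exp{\trace{\Sigma M_i(u)^{-1}}}$ and $\Exp{\trace{\Sigma M_i(u)^{-1} B M_i(u)^{-1}}}$ (and their $u$-derivatives) whose denominators $1+\tfrac{1}{n}R_i^2\Exp{d_i(u)}$ stay bounded away from $0$ on $[0, u_0]$, yielding a uniform bound controlled by $\opnorm{B}$, $\opnorm{\Sigma}$ and $t$.
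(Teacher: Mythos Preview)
Your approach is correct but takes a genuinely different route from the paper's. The paper does \emph{not} pass through an interpolation lemma: instead, it goes back to the exact identity \eqref{eq:msmtdiff} from the proof of Proposition~\ref{proposition:regularizedInverse}, replaces $A$ by $A+uB$ \emph{in the identity itself}, and differentiates both sides at $u=0$. This produces a new exact identity equating $-\Exp{v\trsp M_S^{-1}BM_S^{-1}v}+v\trsp \overline M_T^{-1}(B+\overline\xi\,\Sigma)\overline M_T^{-1}v$ to a sum of three explicit remainders $D_1,D_2,D_3$, each of which is then shown to be $\lo(1)$ by direct applications of Lemma~\ref{lemma:replacement}\,(d),(e) exactly as in the proof of Proposition~\ref{proposition:regularizedInverse}. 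Your route---establishing $\sup_{u\in[0,u_0]}|f_n(u)-g_n(u)|\to 0$ and then invoking the Taylor-remainder lemma with a uniform second-derivative bound---is more economical in that it reduces Proposition~\ref{proposition:squareRegularizedInverse} to Proposition~\ref{proposition:regularizedInverse} as a black box, plus routine smoothness estimates. The cost is a square-root loss in the rate (you get $|f_n'(0)-g_n'(0)|=\gO(\sqrt{\epsilon_n})$ rather than the paper's explicit $\lo(1)$ bounds tied directly to Assumption~\eqref{eq:rj-moments-2}), and you must be slightly careful that \eqref{eq:rj-moments-2} implies \eqref{eq:rj-moments-1} only when $\opnorm{B}$ is bounded away from zero---though the complementary case $\opnorm{B}\to 0$ is trivial since both sides of the target then tend to zero.
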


\begin{proof}
Similarly as in the proof of Proposition \ref{proposition:regularizedInverse},
we first show that we may replace $\gamma(A)$ and $\xi(A,B)$
with the deterministic quantities $\overline\gamma(A)$ and $\overline\xi(A,B)$,
where $\overline\gamma(A)$ is defined in the proof of Proposition \ref{proposition:regularizedInverse} and
$$
\overline\xi(A,B) := \sum_{i=1}^n \frac{\tfrac{1}{n^2} R_i^4}{\left(1+\tfrac1n R_i^2 \Exp{\trace{\Sigma M_i^{-1}}}\right)^2} \, \Exp{\trace{\Sigma M_i^{-1}B M_i^{-1}}}\;.
$$
To begin with, similarly as in \eqref{eq:ch-bounds-2},
$\xi(A,B)$ and $\overline\xi(A,B)$ are bounded
by $\sum_{j=1}^{n} \tfrac1n R_j^2 \opnorm{B} t^{-1}$.
It therefore follows from Assumption \eqref{eq:rj-moments-2} that
$\xi(A,B) \Sigma$ and $\overline\xi(A,B) \Sigma$ are bounded
(in operator norm) by $K \opnorm{B}$, where $K$ is a constant.
Thus, using the decomposition
$$
(A_1 A_2 A_3 - B_1 B_2 B_3) = (A_1 - B_1) B_2 B_3 + A_1 (A_2 - B_2) B_3 + A_1 A_2 (A_3 - B_3) \,,
$$
we see that it suffices to check that
\begin{align}
\label{eq:replacegamma-2}
\Exp{|\gamma(A) - \overline\gamma(A)|} \opnorm{B} \opnorm{\Sigma} = o(1)
\quad\text{and}\quad
\Exp{|\xi(A,B) - \overline\xi(A,B)|} \opnorm{\Sigma} = o(1) \,.
\end{align}
The former bound is clear from \eqref{eq:replacegamma-1},
since we are assuming \eqref{eq:rj-moments-2} instead of \eqref{eq:rj-moments-1} now.
For the latter bound, let us note that
\begin{align*}
&\mskip24mu \Exp{|\xi(A,B) - \overline\xi(A,B)|} \\
&\leq \sum_{i=1}^n \Exp{\left|\frac{\tfrac{1}{n^2} R_i^4 \, \trace{\Sigma M_S^{-1}B M_S^{-1}}}{\left(1+\tfrac1n R_i^2 \, \trace{\Sigma M_S^{-1}}\right)^2} - \frac{\tfrac{1}{n^2} R_i^4 \, \Exp{\trace{\Sigma M_S^{-1} B M_S^{-1}}}}{\left(1+\tfrac1n R_i^2 \, \Exp{\trace{\Sigma M_S^{-1}}}\right)^2} \right|} \\
&\qquad + \sum_{i=1}^n \Exp{\left|\frac{\tfrac{1}{n^2} R_i^4 \, \Exp{\trace{\Sigma M_S^{-1}B M_S^{-1}}}}{\left(1+\tfrac1n R_i^2 \, \Exp{\trace{\Sigma M_S^{-1}}}\right)^2} - \frac{\tfrac{1}{n^2} R_i^4 \, \Exp{\trace{\Sigma M_i^{-1} B M_i^{-1}}}}{\left(1+\tfrac1n R_i^2 \, \Exp{\trace{\Sigma M_i^{-1}}}\right)^2} \right|} \\
&\leq K \opnorm{B} t^{-1} \left( \sum_{j=1}^n \Big( \tfrac1n R_j^2 \wedge \tfrac{1}{n^{3/2}} R_j^4 \opnorm{\Sigma} t^{-1} \Big)
    + \sum_{j=1}^n \Big( \tfrac1n R_j^2 \wedge \tfrac{1}{n^{2}} R_j^4 \opnorm{\Sigma} t^{-1} \Big) \right) \,,
\end{align*}
where the second step follows from similar arguments
as in the proof of Lemma \ref{lemma:replacement}\,(c)
(using Lemma~\ref{lemma:replacement} (b)~and~(a)).
In view of Assumption~\eqref{eq:rj-moments-2},
this establishes \eqref{eq:replacegamma-2}.

Put $T := \overline\gamma(A) \Sigma$, $T(u) := \overline\gamma(A + uB) \Sigma$ ($u > 0$) and observe that
$$
\left. \frac{d}{du} ( S+A+uB )^{-1} \right|_{u=0} = - (S+A)^{-1} B (S+A)^{-1}
$$
and
$$
\left. \frac{d}{du} ( T(u)+A+uB )^{-1} \right|_{u=0} = - (T+A)^{-1} \left( B + \overline\xi(A,B) \Sigma \right) (T+A)^{-1} \,.
$$
Thus, replacing $A$ with $A + uB$ in \eqref{eq:msmtdiff} and
calculating the derivative with respect to $u$ at $u = 0$,
we get
\begin{align}
- \ee (v' M_S^{-1} B M_S^{-1} v) + \ee (v' M_T^{-1} (B + \overline\xi(A,B) \Sigma) M_T^{-1} v)
= D_1 + D_2 + D_3 \,,
\end{align}
where
\begin{align*}
D_1 &= - \sum_{i=1}^{n} \Exp{ \frac{\tfrac{1}{n^2} R_i^4 \, \tilde{q}_i}{(1+\tfrac1n R_i^2 \, q_i)^2} \, v' M_i^{-1} X_i X_i' M_T^{-1} v - \frac{\tfrac1n R_i^4 \, \ee(\tilde{d}_i)}{(1+\tfrac1n R_i^2 \,\ee(d_i))^2} \, v' M_S^{-1} \Sigma M_T^{-1} v } \,, \\
D_2 &= + \sum_{i=1}^{n} \Exp{ \frac{\tfrac{1}{n} R_i^2}{1+\tfrac1n R_i^2 \, q_i} \, v' M_i^{-1} B M_i^{-1} X_i X_i' M_T^{-1} v - \frac{\tfrac{1}{n} R_i^2}{1+\tfrac1n R_i^2 \,\ee(d_i)} \, v' M_S^{-1} B M_S^{-1} \Sigma M_T^{-1} v } \,, \\
D_3 &= + \sum_{i=1}^{n} \ee\Bigg( \frac{\tfrac{1}{n} R_i^2}{1+\tfrac1n R_i^2 \, q_i} \, v' M_i^{-1} X_i X_i' M_T^{-1} (B+\overline\xi(A,B)\Sigma) M_T^{-1} v \\&\qquad\qquad\qquad\qquad\qquad\qquad\qquad\qquad - \frac{\tfrac{1}{n} R_i^2}{1+\tfrac1n R_i^2 \,\ee(d_i)} \, v' M_S^{-1} \Sigma M_T^{-1} (B + \overline\xi(A,B) \Sigma) M_T^{-1} v \Bigg) \,.
\end{align*}
Similarly as in the proof of Proposition \ref{proposition:regularizedInverse},
the idea is to show that for $i=1,2,3,$ $D_i \to 0$ as $n \to \infty$.
Making appropriate use of Lemma \ref{lemma:replacement},
this follows by essentially the same calculation
as in the proof of Proposition \ref{proposition:regularizedInverse},
so~that we go fast over the details.

For the first difference, we use Lemma \ref{lemma:replacement}\,(d),
the independence of $X_j$ and $X_1,\hdots,X_{j-1},X_{j+1},\hdots,X_n$,
and Lemma \ref{lemma:replacement}\,(e) to obtain
\begin{align*}
  &\sum_{i=1}^{n} \Exp{ \frac{\tfrac{1}{n^2} R_i^4 \, \tilde{q}_i}{(1+\tfrac1n R_i^2 \, q_i)^2} \, v' M_i^{-1} X_i X_i' M_T^{-1} v } \\
= &\sum_{i=1}^{n} \Exp{ \frac{\tfrac{1}{n^2} R_i^4 \, \ee(\tilde{d}_i)}{(1+\tfrac1n R_i^2 \,\ee(d_i))^2} \, v' M_i^{-1} X_i X_i' M_T^{-1} v } + \lo(1) \\
= &\sum_{i=1}^{n} \Exp{ \frac{\tfrac{1}{n^2} R_i^4 \, \ee(\tilde{d}_i)}{(1+\tfrac1n R_i^2 \,\ee(d_i))^2} \, v' M_i^{-1} \Sigma M_T^{-1} v } + \lo(1) \\
= &\sum_{i=1}^{n} \Exp{ \frac{\tfrac{1}{n^2} R_i^4 \, \ee(\tilde{d}_i)}{(1+\tfrac1n R_i^2 \,\ee(d_i))^2} \, v' M_S^{-1} \Sigma M_T^{-1} v } + \lo(1) \,.
\end{align*}
(In the final step, we have also used
\eqref{eq:ch-bounds-1} and \eqref{eq:ch-bounds-2}
to see that the fraction is bounded by $\tfrac1n R_i^2 \opnorm{B} t^{-1}$.)
\linebreak[2]
For the second and third difference, it follows by similar arguments that
\begin{align*}
  &\sum_{i=1}^{n} \Exp{ \frac{\tfrac{1}{n} R_i^2}{1+\tfrac1n R_i^2 \, q_i} \, v' M_i^{-1} B M_i^{-1} X_i X_i' M_T^{-1} v } \\
= &\sum_{i=1}^{n} \Exp{ \frac{\tfrac{1}{n} R_i^2}{1+\tfrac1n R_i^2 \,\ee(d_i)} \, v' M_i^{-1} B M_i^{-1} X_i X_i' M_T^{-1} v } + \lo(1) \\
= &\sum_{i=1}^{n} \Exp{ \frac{\tfrac{1}{n} R_i^2}{1+\tfrac1n R_i^2 \,\ee(d_i)} \, v' M_i^{-1} B M_i^{-1} \Sigma M_T^{-1} v } + \lo(1) \\
= &\sum_{i=1}^{n} \Exp{ \frac{\tfrac{1}{n} R_i^2}{1+\tfrac1n R_i^2 \,\ee(d_i)} \, v' M_S^{-1} B M_S^{-1} \Sigma M_T^{-1} v } + \lo(1)
\end{align*}
and
\begin{align*}
  &\sum_{i=1}^{n} \Exp{ \frac{\tfrac{1}{n} R_i^2}{1+\tfrac1n R_i^2 \, q_i} \, v' M_i^{-1} X_i X_i' M_T^{-1} (B+\overline\xi(A,B)\Sigma) M_T^{-1} v } \\
= &\sum_{i=1}^{n} \Exp{ \frac{\tfrac{1}{n} R_i^2}{1+\tfrac1n R_i^2 \,\ee(d_i)} \, v' M_i^{-1} X_i X_i' M_T^{-1} (B+\overline\xi(A,B)\Sigma) M_T^{-1} v } + \lo(1) \\
= &\sum_{i=1}^{n} \Exp{ \frac{\tfrac{1}{n} R_i^2}{1+\tfrac1n R_i^2 \,\ee(d_i)} \, v' M_i^{-1} \Sigma M_T^{-1} (B+\overline\xi(A,B)\Sigma) M_T^{-1} v } + \lo(1) \\
= &\sum_{i=1}^{n} \Exp{ \frac{\tfrac{1}{n} R_i^2}{1+\tfrac1n R_i^2 \,\ee(d_i)} \, v' M_S^{-1} \Sigma M_T^{-1} (B + \overline\xi(A,B) \Sigma) M_T^{-1} v } + \lo(1) \,.
\end{align*}
This concludes the proof.
\end{proof}

\subsection{On the Rate of Convergence}

Suppose that the constants $b_L(4;X_i)$ and $b_{Q_2}(2;X_i)/n$
from \eqref{B-L} and \eqref{B-Q} are uniformly bounded.
Then our results show that if the $R_i$ are also uniformly bounded,
we have, for instance,
$$
\Exp{|g(\alpha;X)-\Exp{g(\alpha;X)}|^2} = \gO(n^{-1})
$$
and
$$
|\Exp{g(\alpha;X)-g(\alpha;Y)}| = \gO(n^{-1/2}) \,.
$$
More generally, this still holds
if the $R_i$ are given by i.i.d.\@ random variables
with finite $4$th moments.
For some applications (e.g.\@ to the field of finance),
it may be helpful to have the same results under the weaker assumption
that the $R_i$ are given by i.i.d.\@ random variables
with finite second moments only.
Recall that this is the minimal reasonable assumption,
for if the $R_i$ do not have second moments,
the covariance matrix of the vectors $R_i X_i$ is not defined.
In this section we sketch how to derive results under this minimal assumption.

However, to derive our results, we need somewhat stronger conditions
on the covariance matrices $\Sigma_i$, the regularizing matrix $A$ and
the distributions of the random variables $X_i$. More precisely,
we will work under the following \emph{additional} assumptions:
\begin{itemize}
\setlength{\itemsep}{0.1pt}
\item
We have $p/n \geq c_\varrho$ for some $c_\varrho > 0$.
\item
We have $\tfrac{1}{n} \sum_{i=1}^{n} R_i^2 \leq C_R$ for some $C_R < \infty$.
\item
We have $\tfrac1p \trace A \leq C_A$ for some $C_A < \infty$.
\item
We have $\tfrac1p \trace \Sigma_i \leq C_\Sigma$ for some $C_\Sigma < \infty$.
\item
There exist $c_\Sigma > 0$ and $\varepsilon \in (0,1)$
such that the number of eigenvalues of $\Sigma_i$
which are less than $\leq c_\Sigma$ is less than $p\varepsilon$.
\item
We have $\opnorm{\Sigma_i} \leq C_\Sigma$,
and the constants $b_L(8;X_i)$ and $b_{Q_2}(4;X_i)/n^2$
are uniformly bounded.
\end{itemize}
Let us mention that the very last assumption could be weakened
in exchange for a worse rate of convergence in the following results.
For instance, we could easily allow for a bound of the order
$\gO(\log n)$ or $\gO(n^\delta)$ (with $\delta > 0$ sufficiently small).

As it is our main intention here to give an idea of what is possible,
we concentrate on one particular case
and present results for the quadratic form
$g(\alpha) := n^{-1} \alpha\trsp D X (X\trsp D^2 X/n+A)^{-1} X\trsp D \alpha$
(from Section~\ref{subsubsec:quadFormsInverseMeanAndPopMean}) only.

Our results rely on the observation that
(i) normalized traces of regularized inverses of random matrices
are typically strongly concentrated
and
(ii) under the assumptions stated above,
$\Exp{\trace{\Sigma_i M_i^{-1}}}$ is of the order $n$.
Let us provide precise formulations:

The first observation has already been used several times in this paper
(see also \cite{nekCorrEllipD}),
for instance in the proof of~Lemma \ref{lemma:replacement}\,(b),
where it is stated that 
$$
\Proba{\left|\tfrac{1}{p} \trace {\Sigma M^{-1}} - \Exp{\tfrac{1}{p} \trace {\Sigma M^{-1}}}\right| \geq u} \leq 2 \exp(-u^2 p^2 t^2 / 8 n \opnorm{\Sigma}^2) \,.
$$
for any $u > 0$.
For the second observation, we show the following lemma.

\begin{lemma}
Under the afore-mentioned assumptions, we have
$$
\Exp{\tfrac{1}{n} \trace {\Sigma_i {M}_i^{-1}}} \geq c \,,
$$
where $c = c(c_\varrho,C_R,C_A,C_\Sigma,c_\Sigma,\varepsilon)$.
\end{lemma}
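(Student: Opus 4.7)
The plan is to reduce the problem to a lower bound depending only on $\trace(M_i)$, whose expectation is easy to control, via two applications of the Cauchy–Schwarz / AM–HM inequality. The pleasant observation is that we do not need any bound on $\opnorm{M_i}$ or on $\opnorm{A}$ (only on $\trace(A)$), because working eigenvector-by-eigenvector produces harmonic means that can be compared to arithmetic means, hence to traces.

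First I would spectrally decompose $\Sigma_i = \sum_{k=1}^{p} \lambda_k u_k u_k\trsp$ with $(u_k)$ orthonormal, and partition the indices into $I := \{k : \lambda_k \geq c_\Sigma\}$ and its complement. By hypothesis $|I| \geq p(1-\varepsilon)$. Since $\Sigma_i \succeq 0$ and $M_i^{-1} \succeq 0$, I drop the small-eigenvalue part to get
$$
\trace{\Sigma_i M_i^{-1}} = \sum_{k=1}^{p} \lambda_k \, u_k\trsp M_i^{-1} u_k \;\geq\; c_\Sigma \sum_{k \in I} u_k\trsp M_i^{-1} u_k \,.
$$
Next, the Cauchy–Schwarz inequality applied to $M_i^{1/2} u_k$ and $M_i^{-1/2} u_k$ gives $1 = \|u_k\|^4 \leq (u_k\trsp M_i u_k)(u_k\trsp M_i^{-1} u_k)$, so pointwise in $\omega$,
$$
u_k\trsp M_i^{-1} u_k \;\geq\; \frac{1}{u_k\trsp M_i u_k} \,.
$$
Then by AM–HM (or Cauchy–Schwarz again), together with the fact that $u_k\trsp M_i u_k \geq 0$ and $\sum_{k \in I} u_k\trsp M_i u_k \leq \trace{M_i}$ (orthonormality), I obtain
$$
\sum_{k \in I} u_k\trsp M_i^{-1} u_k \;\geq\; \frac{|I|^2}{\sum_{k \in I} u_k\trsp M_i u_k} \;\geq\; \frac{|I|^2}{\trace{M_i}} \;\geq\; \frac{(1-\varepsilon)^2 \, p^2}{\trace{M_i}} \,.
$$

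Finally I take expectation. By Jensen applied to the convex function $x \mapsto 1/x$ on $(0,\infty)$,
$$
\Exp{\frac{1}{\trace{M_i}}} \;\geq\; \frac{1}{\Exp{\trace{M_i}}} \,,
$$
and the trace assumptions give
$$
\Exp{\trace{M_i}} = \trace{A} + \tfrac{1}{n}\sum_{j \ne i} R_j^2 \trace{\Sigma_j} \;\leq\; p\,C_A + p\,C_\Sigma \cdot \tfrac{1}{n}\sum_{j=1}^{n} R_j^2 \;\leq\; p\,(C_A + C_R C_\Sigma) \,.
$$
Combining everything and using $p/n \geq c_\varrho$,
$$
\Exp{\tfrac{1}{n}\trace{\Sigma_i M_i^{-1}}} \;\geq\; \frac{c_\Sigma (1-\varepsilon)^2 p^2}{n \cdot p\,(C_A + C_R C_\Sigma)} \;\geq\; \frac{c_\Sigma (1-\varepsilon)^2 c_\varrho}{C_A + C_R C_\Sigma} \;=:\; c \,,
$$
which is a positive constant depending only on the advertised parameters. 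There is no real obstacle here: the only mildly subtle ingredient is noticing that the two-step Cauchy–Schwarz/AM–HM estimate replaces the naive bound $\trace{\Sigma_i M_i^{-1}} \geq \trace{\Sigma_i}/\opnorm{M_i}$ (which would require control of $\opnorm{M_i}$, unavailable under the stated assumptions) by one involving $\trace{M_i}$, which is directly controlled.
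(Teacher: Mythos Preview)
Your proof is correct and takes a genuinely different route from the paper's argument. The paper proceeds probabilistically: it bounds $\Exp{\trace{M_i}}$ just as you do, then uses Markov's inequality to find a ``good'' event $G$ (of probability at least $1/2$) on which at most $p(1-\varepsilon)/2$ eigenvalues of $M_i$ exceed a threshold $C$. On $G$ it performs low-rank modifications of both $M_i$ and $\Sigma_i$ and invokes an eigenvalue interlacing theorem (Theorem~A.43 in Bai--Silverstein) to conclude that at least $p(1-\varepsilon)/2$ eigenvalues of $\Sigma_i M_i^{-1}$ exceed $c_\Sigma/C$, yielding the trace bound on $G$; the expectation bound follows since the trace is nonnegative off $G$.

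Your argument bypasses all of this machinery. By working in the eigenbasis of $\Sigma_i$ and chaining the pointwise Cauchy--Schwarz bound $u_k\trsp M_i^{-1} u_k \geq 1/(u_k\trsp M_i u_k)$ with AM--HM, you obtain the deterministic inequality $\trace{\Sigma_i M_i^{-1}} \geq c_\Sigma (1-\varepsilon)^2 p^2 / \trace{M_i}$, after which a single application of Jensen finishes the job. This is more elementary (no interlacing, no good-set restriction) and in fact delivers a constant that is larger by a factor of $16$ than the paper's. The paper's route, on the other hand, is perhaps more in the spirit of the eigenvalue-counting arguments common in random matrix theory and makes the mechanism (``many eigenvalues of $\Sigma_i M_i^{-1}$ are bounded below'') more explicit.
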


In the following proof, if $A,B$ are any matrices and $x \in \mathbb{R}_{+}$,
we call~$B$ (by slight abuse of terminology) a rank $x$ modification of $A$
if $\text{rank}(A-B) \leq x$, and if $M$ is any symmetric matrix,
we let $\lambda_1(M),\hdots,\lambda_p(M)$ be the eigenvalues $M$.

\begin{proof}
We have
$$
  \Exp{\trace{M_i}}
= \Exp{\trace{A + \textstyle\sum_{j \ne i} \tfrac1n R_j^2 X_j X_j\trsp}}
= \trace {A} + \sum_{j \ne i} \tfrac1n R_j^2 \trace{\Sigma_j}
\leq p (C_A + C_R C_\Sigma) \,.
$$
Thus, by Markov's inequality, it follows that
with $C := 4(C_A + C_R C_\Sigma) / (1-\varepsilon)$,
we have
$$
     \Proba{\tfrac{1}{p} \textstyle\sum_{j=1}^{p} \pmb{1}_{\{ \lambda_j(M_i) \geq C \}} \geq \frac{1-\varepsilon}{2}}
\leq \Proba{\tfrac{1}{p} \trace{M_i} \geq 2(C_A+C_RC_\Sigma)}
\leq \dfrac{\Exp{\trace{M_i}}}{2p(C_A+C_RC_\Sigma)}
\leq \tfrac{1}{2} \,.
$$
Consider the set $G$ where
$\tfrac{1}{p} \sum_{j=1}^{p} \pmb{1}_{\{ \lambda_j(M_i) \geq C \}} \leq \frac{1-\varepsilon}{2}$,
so $\Proba{G} \geq \tfrac12$.
Then, by spectral calculus, there exists a positive-definite
rank $p(\frac{1-\varepsilon}{2})$ modification $\tilde{M_i}$ of $M_i$
such~that $\lambda_j(\tilde{M_i}) \leq C$ for all $j=1,\hdots,p$.
Similarly, there exists a positive-definite
rank $p\varepsilon$ modification $\tilde{\Sigma_i}$ of $\Sigma_i$
such~that $\lambda_j(\tilde{\Sigma_i}) \geq c_\Sigma$ for all $j=1,\hdots,p$.
It~follows that
$\lambda_j(\tilde{\Sigma}_i^{1/2} \tilde{M}_i^{-1} \tilde{\Sigma}_i^{1/2}) \geq c_\Sigma/C$
for all $j=1,\hdots,p$.
Indeed, for any vector $x$ of norm $1$,
$$
x\trsp \tilde{\Sigma}_i^{1/2} \tilde{M}_i^{-1} \tilde{\Sigma}_i^{1/2} x
\geq x\trsp \tilde{\Sigma_i} x / C
\geq c_\Sigma / C \,,
$$
and thus $\tilde{\Sigma}_i^{1/2} \tilde{M}_i^{-1} \tilde{\Sigma}_i^{1/2} \succcurlyeq c_\Sigma / C$.
By Theorem A.43 in \cite{baisilverstein10},
it further follows that at~least $p(\frac{1-\varepsilon}{2})$ eigenvalues
of $\Sigma_i M_i^{-1}$ are $\geq c_\Sigma/C$.
Thus, as $\Sigma_i^{1/2} M_i^{-1} \Sigma_i^{1/2} \succcurlyeq 0$,
we have shown that on the~set $G$,
$$
\tfrac1p \trace{\Sigma_i M_i^{-1}} \geq \tfrac{1-\varepsilon}{2} c_\Sigma/C \,,
$$
Since $\Proba{G} \geq \tfrac12$ and $\Sigma_i^{1/2} M_i^{-1} \Sigma_i^{1/2} \succcurlyeq 0$,
we may conclude that
$$
\tfrac{1}{n} \Exp{\trace {\Sigma_i M_i^{-1}}} \geq \tfrac{1-\varepsilon}{4} c_\varrho c_\Sigma / C =: c \,.
$$
\end{proof}

\begin{corollary}
We have $\Proba{\tfrac1n \trace{\Sigma_i M_i^{-1}} \leq \tfrac12 c} \leq C_0 \exp(- c_0 n)$.
\end{corollary}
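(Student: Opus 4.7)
The plan is to combine the lower bound on the expectation $\Exp{\tfrac{1}{n} \trace{\Sigma_i M_i^{-1}}} \geq c$ from the preceding lemma with a concentration inequality for the normalized trace around its expectation. This concentration inequality is already essentially available in the proof of Lemma \ref{lemma:replacement}(b), but stated there for $M_S$; I would simply adapt it to $M_i$, which involves the $n-1$ independent vectors $\{X_j\}_{j \neq i}$ rather than all of $\{X_j\}_{j=1}^n$.

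First I would set $Y := \tfrac{1}{n}\trace{\Sigma_i M_i^{-1}}$ and observe that on the event $\{Y \leq \tfrac12 c\}$, since $\Exp{Y} \geq c$, we have $|Y - \Exp{Y}| \geq \tfrac12 c$. Hence
\[
\Proba{Y \leq \tfrac12 c} \leq \Proba{|Y - \Exp{Y}| \geq \tfrac12 c}.
\]
Next, I would apply the same martingale-increment bound as in Lemma \ref{lemma:replacement}(b). Letting $\mathcal{F}_j^{(i)}$ denote the $\sigma$-field generated by $\{X_k : k \leq j,\ k \neq i\}$, we write $Y - \Exp{Y}$ as a telescoping sum of martingale differences over the $n-1$ indices $j \neq i$. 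For each such $j$, the corresponding increment is bounded by
\[
\left|\Exp{\trace{\Sigma_i (M_i^{-1} - M_{i,j}^{-1})} \mid \mathcal{F}_j^{(i)}} - \Exp{\trace{\Sigma_i (M_i^{-1} - M_{i,j}^{-1})} \mid \mathcal{F}_{j-1}^{(i)}}\right| \leq \tfrac{2}{n} \opnorm{\Sigma_i} t^{-1},
\]
where $M_{i,j}$ is obtained from $M_i$ by removing the $\tfrac1n R_j^2 X_jX_j\trsp$ term; the factor $\tfrac{1}{n}\opnorm{\Sigma_i} t^{-1}$ comes from part (a) of Lemma \ref{lemma:replacement} applied to $M_i$ (the rank-one update argument works verbatim, just with one fewer summand in $M_i$).

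Then Azuma's inequality yields
\[
\Proba{|Y - \Exp{Y}| \geq u} \leq 2\exp\!\left(-\frac{u^2 n t^2}{8 \opnorm{\Sigma_i}^2}\right).
\]
Substituting $u = \tfrac12 c$ and using $\opnorm{\Sigma_i} \leq C_\Sigma$ gives
\[
\Proba{Y \leq \tfrac12 c} \leq 2\exp\!\left(-\frac{c^2 t^2}{32 C_\Sigma^2}\, n\right),
\]
which is the desired exponential bound, with $C_0 = 2$ and $c_0 = c^2 t^2 / (32 C_\Sigma^2)$ (a positive constant depending only on the structural parameters $c_\varrho, C_R, C_A, C_\Sigma, c_\Sigma, \varepsilon$ and $t$).

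There is no real obstacle here: the proof is a direct assembly of Lemma~\ref{lemma:replacement}(a),(b), tailored to $M_i$ rather than $M_S$, together with the lower bound on the expectation. The only point requiring minor care is verifying that the martingale decomposition and rank-one perturbation estimate continue to hold when we delete the $i$th summand from $M_S$ before running the argument, which is immediate since the $X_j$ for $j \neq i$ remain independent and each rank-one removal changes $\trace{\Sigma_i M_i^{-1}}$ by at most $\opnorm{\Sigma_i}/t$.
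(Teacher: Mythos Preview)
Your proof is correct and takes essentially the same approach the paper intends: the corollary is stated without proof precisely because it follows immediately from combining the expectation lower bound of the preceding lemma with the Azuma-based concentration inequality for $\tfrac1n\trace{\Sigma_i M_i^{-1}}$ recalled just before that lemma (from the proof of Lemma~\ref{lemma:replacement}(b)). Your adaptation of the martingale argument to $M_i$ (with $n-1$ increments instead of $n$) and the resulting constants are exactly right.
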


Let us now investigate the implications of these observations
for our results concerning $g(\alpha,X)$:
Recall from the proof of Theorem \ref{thm:ThirdFormIsDeterministic}
that (with the notation there)
$$
\Exp{|g(\alpha;X) - \Exp{g(\alpha;X)}|^2} \leq \sum_{j=1}^{n} \Exp{|T -  T_i|^2} \,,
$$
where
\begin{multline*}
T-T_i=\alpha_i^2 \frac{R_i^2}{n} \frac{q_i(X_i)-d_i}{(1+\frac{R_i^2}{n}q_i)(1+\frac{R_i^2}{n}d_i)}+\frac{1}{1+\frac{R_i^2}{n}d_i}(R_i^2/n\zeta_i^2-2\alpha_iR_i/\sqrt{n} \zeta_i)\\
+\frac{\tfrac1nR_i^2(d_i-q_i(X_i))}{(1+\frac{R_i^2}{n}q_i(X_i))(1+\frac{R_i^2}{n}d_i)}(R_i^2/n\zeta_i^2-2\alpha_iR_i/\sqrt{n} \zeta_i)\;.
\end{multline*}

Now, on the set $G_i := \{ \tfrac1n d_i \geq \tfrac12 c \}$
(which has probability $1-o(1)$ by the preceding corollary),
$$
|T-T_i|\leq \frac{2}{cn} \alpha_i^2 |q_i(X_i)-d_i|+\left(\frac{2}{c}+1\right)\left(1/n\zeta_i^2+2\alpha_i/\sqrt{n}|\zeta_i|\right)\\
+\frac{2}{cn}|d_i-q_i(X_i)|\left(R_i^2/n\zeta_i^2+2\alpha_iR_i/\sqrt{n}|\zeta_i|\right)\;.
$$
Using \eqref{B-L} and \eqref{B-Q} as well as Cauchy-Schwarz inequality,
it follows that
\begin{multline*}
\Expj{i}{|T-T_i|^2 \, \bm{1}_{G_i}}
\leq
K(c) \bigg(\alpha_i^4 \frac{1}{n^2}b_{Q_2}(2;X_i) \frac{1}{t^2}+\frac{1}{n^2} \frac{b_L(4;X_i)}{t^2}+\alpha_i^2 \frac{1}{n} \frac{b_L(2;X_i)}{t}\\
+\frac{1}{n^2}\frac{R_i^4}{n^2} \frac{\sqrt{b_{Q_2}(4;X_i)}}{t^2} \frac{\sqrt{b_L(8;X_i)}}{t^2}+\frac{1}{n^2}\alpha_i^2 \frac{R_i^2}{n} \frac{\sqrt{b_{Q_2}(4;X_i)}}{t^2} \frac{\sqrt{b_L(4;X_i)}}{t}\bigg)\,,
\end{multline*}
where $K(c)$ denotes a numerical constant which depends on $c$.
Since the right-hand side is deterministic,
the same bound holds for the unconditional expectation.

On the complementary set $G_i^C$, we can use
the fact that $|T - T_i| \leq 1 + \alpha_i^2$ to obtain
$$
\Exp{|T-T_i|^2 \, \bm{1}_{G_i^C}}
\leq
K \Proba{G_i^C}
\leq
K C_0 \exp(-c_0n) \,.
$$
Summing over $i=1,\hdots,n$ and recalling our assumptions, we conclude that
$$
\sum_{i=1}^{n} \Exp{g(\alpha;X) - \Exp{g(\alpha;X)}}^2
=
\gO(n^{-1}) \,.
$$
Similar considerations can be made for the Lindeberg approach,
with the result that
$$
|\Exp{g(\alpha;X) - g(\alpha;Y)}|
=
\gO(n^{-1/2}) \,.
$$

\section{Relevance to statistical problems}
As discussed in the introduction, many quantities of statistical interest can be analyzed using our results. We will find deterministic equivalents for them. To keep the presentation readable for readers interested more in the applications than in the theory, we do not repeat the assumptions of our theorems. So all our statements should be understood as being prefaced: ``assuming that the technical conditions led our earlier in the paper are satisfied, we have...".

What the reader should essentially know is that shrinking the sample covariance matrix to a deterministic matrix $A$ has the effect of essentially shrinking a scaled version of the population covariance to the same matrix $A$. The damping factor depends on $A$ and $\Sigma$ and is estimable. When the mean is also estimated, the results of Subsection \ref{subsec:quadFormsInverseAndMean} need to be applied.

Our results show the remarkable robustness of random matrix results - we need very little control over the particulars of the data distributions - though they highlight their sensitivity to geometric assumptions. We now give a few examples where these computations are relevant and shed light on statistical matters.

\subsection{Estimation issues}
\subsubsection{Estimation of $v\trsp (\Sigma+A)^{-1}v$ when $\Sigma$ is not observed directly}
The motivation for this kind of question comes from understanding the population behavior of certain statistical procedures from observed data and hence deriving benchmarks as to how well a procedure could do. This could be used in evaluating a kind of regret, directly from the data.

Recall our general setting, namely we observe
$$
\ellipticalData_i=\mu+R_i X_i\;,
$$
where $R_i$ are possibly random and $X_i$ are random with distributions satisfying ``our usual assumptions" (see Subsection \ref{subsec:SetupOfStudy}). In particular, $X_i$'s have mean 0. Recall also the notation ${\cal S}=\frac{1}{n}\sum_{i=1}^n R_i^2 X_i X_i\trsp$ and $\muTilde=\frac{1}{n}\sum_{i=1}^n R_i X_i$.

We have shown that we can find a deterministic equivalent to $v\trsp ({\cal S}+A)^{-1}v$, namely,
$$
v\trsp({\cal S}+A)^{-1}v\simeq v\trsp (\gamma(A)\Sigma+A)^{-1} v\;.
$$
We first note that since $\SigmaHat={\cal S}-\muTilde\muTilde\trsp$,
$$
v\trsp (\SigmaHat+A)^{-1} v=v\trsp({\cal S}+A)^{-1}v +\frac{(v\trsp ({\cal S}+A)^{-1} \muTilde)^2}{1-\muTilde\trsp({\cal S}+A)^{-1}\muTilde}
\simeq v\trsp({\cal S}+A)^{-1}v\simeq v\trsp (\gamma(A)\Sigma+A)^{-1}v \;,
$$
as we have seen that $v\trsp ({\cal S}+A)^{-1} \muTilde\simeq 0$.

Now in certain situation, for instance to when we want to estimate the optimal risk of certain statistical procedures, we will need to estimate $v\trsp (\Sigma+A)^{-1}v$. We now sketch how to come up with an estimator of this quantity.

Let $t$ be a real in $\mathbb{R}_+$. We clearly have
$$
v\trsp (\SigmaHat+tA)^{-1} v\simeq v\trsp (\gamma(tA)\Sigma+tA)^{-1}v=\frac{v\trsp (\Sigma+tA/\gamma(tA))^{-1}v}{\gamma(tA)}\;.
$$

Now recall that
$$
\gamma(A)=\frac{1}{n}\sum_{i=1}^n \frac{R_i^2}{1+\frac{R_i^2}{n}\trace{\Sigma({\cal S}+A)^{-1}}}=
\frac{1}{n}\sum_{i=1}^n \frac{R_i^2}{1+R_i^2\alpha(A)}\;,
$$
and under our assumptions, $\gamma(A)$ has an asymptotically deterministic equivalent. Note that under concentration assumptions on $X_i$'s,
$$
\frac{X_i\trsp ({\cal S}_i+A)^{-1}X_i}{n}\simeq \frac{\trace{\Sigma ({\cal S}+A)^{-1}}}{n}\;,
$$
and using rank-1 update,
$$
\frac{R_i^2}{n}X_i\trsp ({\cal S}_i+A)^{-1} X_i=\frac{R_i^2}{n}\frac{X_i\trsp ({\cal S}+A)^{-1}X_i}{1-\frac{R_i^2}{n} X_i\trsp ({\cal S}+A)^{-1}X_i}\;,
$$
so we need only invert $({\cal S}+A)^{-1}$ once to compute efficiently all the terms we are interested in. (Of course in practice, we do not have access to $R_i X_i$, so we will use $Y_i-\muHat=R_i X_i-\muTilde$. Because $\muTilde\trsp ({\cal S}+A)^{-1}\muTilde$ is of order 1 and we will be dividing everything by $n$, we can neglect this term in this discussion. The same applies to terms of the form $\muTilde\trsp ({\cal S}+A-\muTilde\muTilde\trsp)^{-1}X_i$).

So we can now estimate $R_i^2 \alpha(A)$, and using the fact that
$$
\gamma(A)=\frac{1}{\alpha(A)}(1-\frac{1}{n}\sum_{i=1}^n \frac{1}{1+R_i^2\alpha(A)})\;,
$$
we can also estimate $\gamma(A)$.

So to estimate $v\trsp (\Sigma+A)^{-1} v$, all we need to do is find $t$ such that
$$
\frac{\gamma(t_0 A)}{t_0}=1\;.
$$
We will now show that $\gamma(tA)/t$ is decreasing; hence a simple dichotomous search will yield a fast algorithm for finding this $t_0$.

We note that
$$
\frac{\gamma(t A)}{t}=\frac{1}{n}\sum_{i=1}^n \frac{R_i^2}{t+\frac{R_i^2}{n}\trace{\Sigma({\cal S}/t+A)^{-1}}}\;.
$$
Now $({\cal S}/t+A)^{-1}$ is clearly increasing in the Loewner order, and hence so is $\trace{\Sigma({\cal S}/t+A)^{-1}}$ since we are dealing with positive semi-definite matrices. Therefore,
$$
\frac{\gamma(tA)}{t} \text{ is decreasing }\;.
$$
We note that its limit is 0 at infinity and infinity at 0. Hence the equation
$$
\frac{\gamma(tA)}{t}=1 \text{ has a unique solution, } t_0\;.
$$

We now have found an estimator of $v\trsp (\Sigma +A)^{-1} v$, since
$$
v\trsp (\Sigma +A)^{-1} v \simeq t_0 v\trsp (\SigmaHat+t_0 A)^{-1} v\;.
$$

\subsection{Classification}
Random matrix techniques offer us insights into the behavior of standard methods in high-dimension. Our work could be helpful in tuning regularization parameters, picking penalties etc... because we are able to predict performance of the methods, provided our assumptions are met. It is nonetheless clear that sometimes (actually many times), some of the quantities we are considering
could be evaluated by leave-one out methods, which can be implemented efficiently because of rank-1 updates. In that case, our analysis has the merit of explaining the behavior of the techniques considered, something that alternative estimators (such as cross-validation) do not do.

A standard technique in classification is linear discriminant analysis. Some analysis in the high-dimensional context has already been done (\cite{bl03}), in a somewhat different direction. Here our aim is to explain what creates problems with LDA in high-dimension, even in the Gaussian case, and discuss briefly the behavior of Regularized discriminant analysis (RDA) proposed in \cite{FriedmanRDA89}.

\subsubsection{A preliminary remark}
\newcommand{\pTilde}{\tilde{p}}
In the classification context (see details below), we will often be faced with a situation where a (regularized) covariance matrix is a pooled estimator of covariance computed from two groups, i.e
$$
\SigmaHat=p_1\SigmaHat_1+p_2\SigmaHat_2\;.
$$

In our context we will assume that the observations in each group have the same mean $\mu_i$, where $\mu_i$ may depend on $i=1,2$. Assuming that the data is of the form
$$
\ellipticalData_k=\mu_i + R_k X_k\;,
$$
where $X_k$ has mean 0, we have for instance
$$
\SigmaHat_1=\frac{1}{N_1} \sum_{k=1}^{N_1} R_k^2 X_k X_k\trsp-\muTilde_1\muTilde_1\trsp \;,
$$
where $X_k$ have mean 0, and
$$
\muTilde_1=\frac{1}{N_1} \sum_{i=1}^{N_1} R_k X_k\;.
$$

We will naturally encounter forms of the type
$$
(\muHat_2-\muHat_1)\trsp (\SigmaHat+A)^{-1}(\muHat_2-\muHat_1)
$$
and we now explain how to find deterministic equivalents for the limiting behavior of these forms. We note that $\muHat_i=\mu_i+\muTilde_i$, so we will have to work out three quantities:
$$
(\mu_2-\mu_1)\trsp (\SigmaHat+A)^{-1}(\mu_2-\mu_1)\;, (\mu_2-\mu_1)\trsp (\SigmaHat+A)^{-1}(\muTilde_2-\muTilde_1) \text{ and } (\muTilde_2-\muTilde_1)\trsp (\SigmaHat+A)^{-1}(\muTilde_2-\muTilde_1)\;.
$$
The first one is simple as it involves a shrunken matrix and deterministic vectors. The other two are a bit more subtle, since $\SigmaHat$ and $\muTilde_i$'s interact (the Gaussian case being an exception for obvious reasons).

We call
$$
{\cal S}_1=\frac{1}{N_1-1} \sum_{k=1}^{N_1} R_k^2 X_k X_k\trsp \text{ and } {\cal S}_2=\frac{1}{N_2-1} \sum_{k=N_1+1}^{N_1+N_2} R_k^2 X_k X_k\trsp\;,
$$
and, if $p_i=(N_i-1)/(N_1+N_2-2)$,
$$
{\cal S}=p_1{\cal S}_1+p_2{\cal S}_2\;.
$$

We note that, more generally we will have, for some $\pTilde_i$ (for instance $\pTilde_i=p_i N_i/(N_i-1)$ if we wish to preserve unbiasedness),
$$
\SigmaHat+A={\cal S}+A-\pTilde_1\muTilde_1 \muTilde_1\trsp -\pTilde_2 \muTilde_2\muTilde_2\trsp\;,
$$
where
$$
{\cal S}=\frac{1}{N_1+N_2-2} \sum_{i=1}^{N_1+N_2} R_i^2 X_iX_i\trsp\;.
$$

$\bullet$ \textbf{On $\bm{(\muTilde_2-\muTilde_1)\trsp (\SigmaHat+A)^{-1}(\muTilde_2-\muTilde_1)}$}\\
Using a rank-1 update formula (we could also use a more general version of the Sherman-Woodbury-Morrison formula), we have
$$
\muTilde_2 \trsp (\SigmaHat+A)^{-1}= \frac{\muTilde_2\trsp ({\cal S}+A-\pTilde_1\muTilde_1\muTilde_1\trsp)^{-1}}{1-\pTilde_2 \muTilde_2\trsp ({\cal S}+A-\pTilde_1\muTilde_1\muTilde_1\trsp)^{-1} \muTilde_2}\;.
$$
Therefore, we have in particular,
$$
\muTilde_2\trsp (\SigmaHat+A)^{-1}\muTilde_2=\frac{1}{\pTilde_2}\left(\frac{1}{1-\pTilde_2 \muTilde_2\trsp ({\cal S}+A-\pTilde_1 \muTilde_1\muTilde_1\trsp)^{-1} \muTilde_2}-1\right)\;.
$$
We also see by the same token that
$$
\muTilde_2\trsp ({\cal S}+A-\pTilde_1\muTilde_1\muTilde_1\trsp)^{-1} \muTilde_2=\muTilde_2\trsp ({\cal S}+A)^{-1} \muTilde_2+\pTilde_1 \frac{(\muTilde_2\trsp ({\cal S}+A)^{-1} \muTilde_1)^2}{1-\pTilde_1 \muTilde_1\trsp ({\cal S}+A)^{-1}\muTilde_1}\;.
$$

Now recall that (see Subsubsection \ref{subsec:quadFormsInverseAndMean})
$$
\muTilde_2\trsp(\SigmaHat+A)^{-1}\muTilde_1\simeq 0\;.
$$

So we conclude that
$$
\boxed{
\muTilde_2\trsp (\SigmaHat+A)^{-1}\muTilde_2\simeq \frac{\muTilde_2\trsp ({\cal S}+A)^{-1}\muTilde_2}{1-\pTilde_2\muTilde_2\trsp ({\cal S}+A)^{-1}\muTilde_2}\;.
}
$$
Naturally, our work in Subsubsection \ref{subsec:quadFormsInverseAndMean} allows us to find a deterministic equivalent to
$$
\muTilde_2\trsp ({\cal S}+A)^{-1}\muTilde_2
$$
and so from then we get a deterministic equivalent to $\muTilde_2\trsp (\SigmaHat+A)^{-1}\muTilde_2$. Of course, a similar analysis carries through for
$\muTilde_1\trsp ({\cal S}+A)^{-1}\muTilde_1$. To be more precise, if we call $1_{G_i}$ the vector that has 1 if $\ellipticalData_k$ is in group $i$ and 0 otherwise, we see that the $\alpha$ that corresponds to $\muTilde_2$ is
$$
\alpha=\frac{\sqrt{n}}{N_2} 1_{G_2}\;,
$$
and we can apply our formulas.

We also need to consider
$$
\muTilde_2\trsp ({\cal S}+A)^{-1}\muTilde_1\;.
$$
Using the rank-1 update formula, we have
$$
\muTilde_2\trsp ({\cal S}+A)^{-1}\muTilde_1=\frac{\muTilde_2\trsp ({\cal S}+A-\pTilde_1\muTilde_1\muTilde_1\trsp)^{-1}\muTilde_1}{1-\pTilde_2 \muTilde_2\trsp ({\cal S}+A-\pTilde_1\muTilde_1\muTilde_1\trsp)^{-1} \muTilde_2}\;.
$$
We have already worked out an approximation to the denominator. Now for the numerator, we have obviously
$$
\muTilde_2\trsp ({\cal S}+A-\pTilde_1\muTilde_1\muTilde_1\trsp)^{-1}\muTilde_1=\frac{\muTilde_1\trsp ({\cal S}+A)^{-1}\muTilde_2}{1-\pTilde_1 \muTilde_1\trsp ({\cal S}+A)^{-1}\muTilde_1}\;.
$$
Hence, again, we see that in the asymptotic limit we consider,
$$
\muTilde_2\trsp ({\cal S}+A-\pTilde_1\muTilde_1\muTilde_1\trsp)^{-1}\muTilde_1\simeq 0\;.
$$

So we conclude that
$$
\boxed{
\muTilde_2\trsp (\SigmaHat+A)^{-1} \muTilde_1\simeq 0\;.
}
$$

$\bullet$ \textbf{On $\bm{(\muTilde_2-\muTilde_1)\trsp (\SigmaHat+A)^{-1}\mu}$}\\
The idea is here again to use our rank-1 update formulas. We have
$$
\muTilde_2 \trsp (\SigmaHat+A)^{-1}\mu=\frac{\muTilde_2\trsp ({\cal S}+A-\pTilde_1\muTilde_1\muTilde_1\trsp)^{-1}\mu}{1-\pTilde_2 \muTilde_2\trsp ({\cal S}+A-\pTilde_1\muTilde_1\muTilde_1\trsp)^{-1} \muTilde_2} \;.
$$
We also have
$$
\muTilde_2\trsp ({\cal S}+A-\pTilde_1\muTilde_1\muTilde_1\trsp)^{-1}\mu=\muTilde_2\trsp ({\cal S}+A)^{-1}\mu +\pTilde_1 \frac{\muTilde_2\trsp ({\cal S}+A)^{-1}\muTilde_1 \muTilde_1\trsp ({\cal S}+A)^{-1}\mu}{1-\pTilde_1\muTilde_1\trsp({\cal S}+A)^{-1}\muTilde_1}\;.
$$
So we conclude that if, for instance $\norm{\mu}$ stays bounded,
$$
\boxed{
\muTilde_2 \trsp (\SigmaHat+A)^{-1}\mu\simeq 0\;.
}
$$

We now have all the elements needed to get an asymptotically deterministic approximation to
$$
(\muHat_2-\muHat_1)\trsp (\SigmaHat+A)^{-1}(\muHat_2-\muHat_1)\;.
$$

\subsubsection{LDA: Gaussian case}

We recall the (optimal) setup. Suppose we have two groups (or classes). The observations can come from group 1 or group 2. In both groups they are ${\cal N}(\mu_{1,2},\Sigma)$. The probability of belonging to group 1 is $\pi_1$. The question is now given an observation, how should it be classified?

It is easy and standard to find the optimal rule in the population. Namely, by doing likelihood computations, one quickly realizes that the optimal classification rule is (\cite{htf09}): classify an observation as belonging to Group 2 if
$$
x\trsp \Sigma^{-1}(\mu_2-\mu_1)\geq \frac{1}{2} (\mu_2-\mu_1)\trsp \Sigma^{-1} (\mu_2+\mu_1)+\log(\pi_1/\pi_2)\;.
$$

Naturally, in practice, $\Sigma$ and $\mu_1$ and $\mu_2$ need to be estimated. A natural solution is to use the training data (which is labeled, i.e we know to which class each observation belongs) to estimate $\mu_1$ and $\mu_2$ and then use a pooled estimate of covariance for $\Sigma$.

In somewhat more details, if we have $N_1$ observations that belong to class 1 in our training set, and $N_2$ that belong to class 2, let us denote by $\muHat_1$ and $\muHat_2$ the sample mean of the observations in group 1 and group 2. If $\SigmaHat_1$ and $\SigmaHat_2$ are the sample covariance in each of these groups, then our estimate of $\Sigma$ is
$$
\SigmaHat=\frac{1}{N_1+N_2-2} \left((N_1-1) \SigmaHat_1+(N_2-1) \SigmaHat_2\right)\;.
$$
(We will assume in the following discussion that $p\leq N_1+N_2-2$ so $\SigmaHat$ is invertible.)
It is now natural to ask the following questions:
\begin{enumerate}
\item how does naive LDA perform?
\item how suboptimal is the naive threshold?
\item is it possible to estimate the minimal misclassification rate, even if we cannot find the optimal direction on which to project a new observation?
\end{enumerate}

Naturally, when a Gaussian vector is projected on a direction $d$, its distribution becomes ${\cal N}(\mu\trsp d, d\trsp \Sigma d)$. If our decision rule is to classify $x$ to Group 2 if $x\trsp d>t$, it is clear that the misclassification rate is, if $\mu_1(d)=\mu_1\trsp d$, $\mu_2(d)=\mu_2\trsp d$ and $\sigma^2(d)=d\trsp \Sigma d$,
$$
\pi_1(1-\Phi(\frac{t-\mu_1(d)}{\sigma}))+\pi_2 \Phi(\frac{t-\mu_2(d)}{\sigma(d)})\;.
$$
A simple computation therefore shows that the optimal threshold is
$$
t^*=\frac{\sigma^2(d)}{\mu_2(d)-\mu_1(d)}\log(\pi_1/\pi_2)+\frac{\mu_2(d)+\mu_1(d)}{2}\;.
$$
Hence we have
$$
\frac{t^*-\mu_{1,2}}{\sigma}=\pm\frac{\mu_2-\mu_1}{2\sigma}+\frac{\sigma}{\mu_2-\mu_1} \log(\frac{\pi_1}{\pi_2})\;.
$$
We can therefore compute the optimal misclassification rate as
$$
\pi_1(1-\Phi(\frac{\mu_2-\mu_1}{2\sigma}+\frac{\sigma}{\mu_2-\mu_1} \log(\frac{\pi_1}{\pi_2}))+\pi_2 \Phi(-\frac{\mu_2-\mu_1}{2\sigma}+\frac{\sigma}{\mu_2-\mu_1} \log(\frac{\pi_1}{\pi_2}))\;.
$$
Note that in LDA in the population, we have $\mu_2(d)-\mu_1(d)=(\mu_2-\mu_1)\trsp \Sigma^{-1}(\mu_2-\mu_1)=\sigma^{2}$. Hence some simplifications ensue; in particular, the optimal misclassification rate is, if $\sigma$ is the Mahalanobis distance between $\mu_2$ and $\mu_1$,
$$
\pi_1-\pi_1\Phi(\frac{\sigma}{2}+\frac{1}{\sigma}\log(\pi_1/\pi_2))+\pi_2\Phi(-\frac{\sigma}{2}+\frac{1}{\sigma}\log(\pi_1/\pi_2))\;.
$$

Hence, our problems reduce to:
\begin{enumerate}
\item Estimate the Mahalanobis distance between $\mu_1$ and $\mu_2$ so we can compute the optimal misclassification rate for the problem
\item Estimate $t^*$ from the data to obtain a procedure that outperforms the naive procedure.
\end{enumerate}
We note that it is good practice to do cross-validation to estimate $t^*$ - and this has been recognized by practitioners, see \cite{htf09}. However, even when the data is Gaussian, as we show below, a correction to the naive empirical threshold is needed in high-dimension.

$\bullet$ \textbf{Estimation of }$\mathbf{t^*}$.
When $d=\SigmaHat^{-1} (\muHat_2-\muHat_1)$, we have
\begin{align*}
\sigma^2(d)&=(\muHat_2-\muHat_1)\trsp \SigmaHat^{-1} \Sigma \SigmaHat^{-1}(\muHat_2-\muHat_1)\;,\\
\mu_i(d)&=\mu_i\trsp \SigmaHat^{-1} (\muHat_2-\muHat_1)\;.
\end{align*}

In the Gaussian case, using properties of Wishart matrices (the interested reader is also refered to \cite{nekMarkoRealizedRisk} for similar computations, but going beyond the Wishart case), we see that, if $\rho=p/N$,
$$
\sigma^2(d)\simeq (\mu_2-\mu_1)\trsp \Sigma^{-1} (\mu_2-\mu_1)\frac{1}{(1-\rho)^3}+\frac{1}{(1-\rho)^3} (\frac{p}{N_1}+\frac{p}{N_2})
$$
On the other hand,
$$
\mu_i(d)\simeq \frac{1}{1-\rho}\mu_i\trsp \Sigma^{-1}(\mu_2-\mu_1)\;.
$$

Now from the data we can get an estimate of $(\muHat_2-\muHat_1)\trsp \SigmaHat^{-1}(\muHat_2-\muHat_1)$. A simple computation, based on properties of Wishart matrices (see e.g \cite{nekMarkoRisk} for full details) gives:
$$
(\muHat_2-\muHat_1)\trsp \SigmaHat^{-1}(\muHat_2-\muHat_1)\simeq \frac{1}{1-\rho}\left[(\mu_2-\mu_1)\trsp \Sigma^{-1} (\mu_2-\mu_1) +\frac{p}{N_1}+\frac{p}{N_2}\right]\simeq (1-\rho)^2 \sigma^2(d)\;.
$$
On the other hand,
\begin{align*}
\muHat_2\trsp \SigmaHat^{-1} (\muHat_2-\muHat_1)& \simeq \frac{1}{1-\rho}\left[\mu_2\trsp \Sigma^{-1}(\mu_2-\mu_1)+\frac{p}{N_2}\right]\;, \\
\muHat_1\trsp \SigmaHat^{-1} (\muHat_2-\muHat_1)& \simeq \frac{1}{1-\rho}\left[\mu_1\trsp \Sigma^{-1}(\mu_2-\mu_1)-\frac{p}{N_1}\right]\;.
\end{align*}
So we can estimate $\mu_i(d)$ by
$$
\mu_i(d)\simeq  \muHat_i\trsp \SigmaHat^{-1} (\muHat_2-\muHat_1)\pm\frac{p}{N_i(1-\rho)}\;,
$$
where $\pm$ is 1 for $i=1$ and $\pm=-1$ for $i=2$.

We can now estimate $t^*$ by putting together all these estimators. (We note that we could also do this by using estimate of $\sigma^{2}(d)$ and $\mu_i(d)$ based on leave-one out procedures. However, the advantage of the procedure proposed here is that the amount of extra computations is extremely small, since the corrections are known in closed form.)

On the other hand, it is clear that the naive threshold value is (in general) suboptimal. As a matter of fact, it is approximately
$$
t_{\text{naive}}\simeq \frac{1}{2}\left[\frac{1}{1-\rho}(\mu_2-\mu_1)\trsp \Sigma^{-1}(\mu_2+\mu_1)+\frac{1}{1-\rho} \left[\frac{p}{N_2}-\frac{p}{N_1}\right]\right]+\log(\pi_1/\pi_2)\;.
$$
On the other hand, if $\text{maha}=(\mu_2-\mu_1)\trsp \Sigma^{-1}(\mu_2-\mu_1)$,
$$
t^*\simeq \frac{1}{2}\frac{1}{1-\rho}(\mu_2-\mu_1)\trsp \Sigma^{-1}(\mu_2+\mu_1)+\log(\pi_1/\pi_2)\frac{1}{(1-\rho)^2}\left[1+\left(\frac{p}{N_1}+\frac{p}{N_2}\right)\frac{1}{
\text{maha}}\right]\;.
$$
Let us further remark that when $N_1=N_2$, because $\log(\pi_1/\pi_2)=0$, our correction returns exactly the naive threshold, and hence will not yield improvements. On the other hand, in this situation, the naive threshold is close to optimal and our analysis shows that further numerical investigation of a good threshold is not needed.

In other respects, it is rather easy to estimate $(\mu_2-\mu_1)\trsp \Sigma^{-1} (\mu_2-\mu_1)$, and hence get the optimal misclassification rate for any classification procedure, in the case where the data is truly Gaussian. Note that this is not available by using cross-validation.

Hence, beside shedding light on the potential (limited) problems of LDA in high-dimension, the computations we showed can be used to establish a benchmark for how well a classification procedure can perform and perhaps helps the user in choosing something better than LDA - or convincing her that LDA (perhaps corrected) in her context is performing quite well and close to the optimum.

\subsubsection{``LDA": elliptical case}
We are now interested in finding a reasonable classification procedure for elliptical data in high-dimension. We will see that the results obtained in this paper are relevant to shed light on their behavior.

We consider the case here where $R_i$'s have a smooth density. The data is modeled as
$$
\ellipticalData_i=\mu_{1,2}+R_i X_i\;.
$$
We will focus on the case $X_i\sim {\cal N}(0,\Sigma)$, though some of the computations could be carried in a more complex situation. Let us call $f$ the density of $R$.
The density of
$$
\ellipticalData=\mu_{1,2}+R_i X_i\;,
$$
is, since it is a continuous scale mixture of normal,
$$
\phi(y;\mu)=\int f(r) r^{-p} \frac{\exp\left(-\frac{(y-\mu)\trsp \Sigma{-1} (y-\mu)}{2r^2}\right)}{\sqrt{\det(2\pi \Sigma)}}dr\;.
$$
Hence, it is difficult to get an exactly optimal classification rule by using a likelihood method. Nonetheless, we can apply Laplace's method to approximate this integral.

We now recall the model from which $\ellipticalData$ is generated and we see that $\frac{(\ellipticalData-\mu)\trsp \Sigma^{-1} (\ellipticalData-\mu)}{p}$ is concentrated around $R^2$ if $(\mu_{1,2}-\mu)\Sigma^{-1} (\mu_{1,2}-\mu)=\gO(1)$.

We are going to make the assumption that $(\mu_2-\mu_1)\trsp \Sigma^{-1} (\mu_2-\mu_1)=\gO(1)$. Calling, for $y$ a dummy variable assumed to take values only where $\ellipticalData$ concentrates,
$$
\alpha_p(i)=\frac{(y-\mu_i)\trsp \Sigma^{-1}(y-\mu_i)}{p}\;,
$$
we see that $\alpha_p(i)=\gO(1)$ (indeed $\alpha_p(i)\simeq R^2$; see the remark on $\ellipticalData$ above) and
$$
\left|\alpha_p(1)-\alpha_p(2)\right|=\gO(1/p)\;.
$$
Hence applying Laplace's method, we see that
$$
\phi(y;\mu_i)\sim f(\sqrt{\alpha_p(i)})\exp(-p/2(\log(\alpha_p(i))+1))\sqrt{\pi \alpha_p(i)/p}\;.
$$
Hence, under our assumptions, if $\Delta=p (\alpha_p(2)-\alpha_p(1))$ (which is of order 1),
$$
\frac{\phi(y;\mu_1)}{\phi(y;\mu_2)}\simeq \exp(\Delta/(2\alpha_p(1)))\;.
$$
Now $-p\Delta=2(\mu_2-\mu_1)\trsp \Sigma^{-1}y+\mu_1\trsp \Sigma^{-1}\mu_1-\mu_2\trsp \Sigma^{-1}\mu_2$. Hence, if the prior probabilities are $\pi_1$ and $\pi_2$, a reasonable rule for classification appears to be: classify in group 2 if, for a new observation $y$,
\begin{equation}\label{eq:LDARuleModifiedElliptical}
y\trsp \Sigma^{-1}(\mu_2-\mu_1)\geq \alpha_p(1) \log\left(\frac{\pi_1}{\pi_2}\right)+\frac{(\mu_2-\mu_1)\trsp \Sigma^{-1}(\mu_2+\mu_1)}{2}\;.
\end{equation}

Here, in what is perhaps a surprise, we see that in high-dimension, in the class of elliptical distribution a procedure similar to LDA seems quite reasonable.

Under our assumptions, it should be noted that in high-dimension, $\alpha_p(1)\simeq R_i^2$ (for new data generated according to our model). Therefore this rule consistent with LDA, since for Gaussian data $R_i^2=1$. Now, if $\norm{\mu_2-\mu_1}^2\ll \trace{\Sigma}$, we see that
$$
\frac{\norm{\ellipticalData-\mu_i}^2}{\trace{\Sigma}}\simeq R_i^2\;,
$$
hence, the rule is approximately implementable - though situations where $R_i$ has very heavy tails  are likely to be very hard on these approximations.

Now in the elliptical case, we know (see \cite{nekMarkoRisk}) that there exists $\limitScaling$ such that if $\ellipticalData$ is independent of $\SigmaHat$, $\muHat_1$ and $\muHat_2$,
$$
\ellipticalData\trsp \SigmaHat^{-1}(\muHat_2-\muHat_1)\simeq \limitScaling \ellipticalData\trsp \Sigma^{-1}(\mu_2-\mu_1)
$$
and we can also find an approximation of
$$
\frac{(\mu_2-\mu_1)\trsp \Sigma^{-1}(\mu_2+\mu_1)}{2}
$$
through appropriate corrections, the key computations having been carried out in \cite{nekMarkoRisk}. 
Hence, we can design a classification rule by using (nearly) unbiased estimators of the quantities on both sides of Equation \eqref{eq:LDARuleModifiedElliptical}. This could naturally also be done using leave-one out procedures, though these procedures would not explain what is happening.

$\bullet$\textbf{On changing estimators of covariance}\\ One advantage of the analyses we have carried out is that they reveal (somewhat explicitly) the role played by the $R_i$'s. Since those are essentially estimable (for instance in the Gaussian, and in general as soon as we have measure concentration), we could also envision different weighting schemes, in particular putting all of them to 1 (which is extremely natural from a convexity standpoint), which amounts to using estimators which are similar in spirit to Tyler's estimator (see \cite{TylerRobustEstimateScatterAoS87} and \cite{nekMarkoRisk} for more details.) Because the paper is already quite long, we will not seek an optimal procedure here, but our various estimates (here and in \cite{nekMarkoRisk}, \cite{nekMarkoRealizedRisk}) can in principle be used to assess difference in performance between these estimators of covariance for the statistical tasks at hand.

$\bullet$ \textbf{Computing the misclassification rate in the elliptical setting}\\
Suppose we now use a simple threshold rule, similar to LDA, to classify. Though this is suboptimal, understanding the behavior of this simple rule is interesting, and helps shed light on various procedures and their robustness.

So suppose we classify an observation $x$ to Group 2 if $x\trsp v>t$. Suppose that $x$ is elliptical and call $f$ the density of the $R$. A computation similar to the ones carried before shows that the misclassification rate is
$$
\pi_1 \int f(r) \Phi\left(\frac{\mu_1-t}{\sqrt{v\trsp \Sigma v}r}\right)dr+\pi_2 \int f(r) \Phi\left(\frac{t-\mu_2}{\sqrt{v\trsp \Sigma v}r}\right) dr\;.
$$
Since we are able to estimate $\mu_i$'s, and $v\trsp \Sigma v$, as well as (at least coarsely) the density $f$ - since we can estimate the $R_i$'s, we can find the optimal threshold $t^*$. This gives a principled alternative to cross-validation in this case (though leave-one-out techniques could also be used).

\subsubsection{RDA}
In \cite{FriedmanRDA89}, partly motivated by questions having to do with the variability of LDA procedures (in particular when $\Sigma$ is ill-conditioned), it was proposed to replace $\SigmaHat$ by
$$
\SigmaTilde=(1-w)\SigmaHat+w A\;,
$$
where $A$ is a matrix towards which $\Sigma$ is shrunken. The computations done in the first part of the paper allow us to measure the performance of RDA in our asymptotic context.

Our results show that when $w$ varies from $0$ to $1$, up to a computable scaling factor, forms of the type $v\trsp \SigmaTilde v$ cover the range of  $v\trsp [(1-\lambda)\Sigma+ \lambda A]^{-1}v$, for $\lambda$ varying from 0 to 1, though of course $\lambda$ is very different from $w$ (and $\lambda$ depends on the ellipticity of the data). This property is something that is not immediately obvious in high-dimension. This is valid much beyond the Gaussian design case, as we have shown.

Let us now illustrate this in the Gaussian case. In this case, we know how to pick the optimal threshold at given $w$ and can compute the misclassification rate of the corresponding procedure. Our results also show that the naive threshold is suboptimal, and suggests corrections, though those can also be found using leave-one out procedures that do not rely on our understanding of the phenomena. (This is fairly similar to our more detailed LDA discussion.)

Our computations also show that one should probably not use 5 or 10 fold cross-validation methods in high-dimension, since it affects that the ratio $p/n$, which is key in determining and getting optimal performance.

Here again, a rigorous study of the impact of $R_i$ on the quality of classification and the potential benefits of using robust estimate of scatter is now feasible but we postpone it to other investigations because of the length of this paper.

\subsection{Optimization problems}
Suppose we consider the optimization problem
$$
\left\{
\begin{array}{c}
\min_w w\trsp \Sigma w\\
\text{ subject to } V'w=U
\end{array}
\right.\;,
$$
where $V$ is a $p\times k$ matrix of constraints, and $U$ is a $k\times 1$ vector of values for those constraints.
This is a canonical problem in portfolio optimization (see \cite{MeucciBook}, \cite{Markowitz1952}).
Under minimal invertibility conditions, the solution is
$$
\wOpt=\Sigma^{-1}V M^{-1} U\;,
$$
where $M=V\trsp\Sigma^{-1}V$.

Suppose that we estimate $\Sigma$ by $\SigmaTilde=\lambda \SigmaHat+A$ and suppose that $V$ contains a constraint involving $\mu$, which is not known and needs to be estimated. Call $\wHat$ the corresponding solution and $\MHat=\VHat\trsp f(\SigmaHat)^{-1} \VHat$. Then our estimates allow us to get a deterministic equivalent to the naive estimate of the risk, namely, $\wHat\trsp \SigmaTilde \wHat=U\trsp \MHat U$ as well as the true risk of our allocation, i.e $\wHat\trsp \Sigma \wHat$, at least when the number of constraints is fixed.

Let us now be a bit more specific. Suppose $\SigmaTilde=\SigmaHat+A$ (scalar constants can easily be dealt with), that the number of constraints is fixed and $V$ contains only fixed constraints (i.e nothing needs to be estimated, and in particular not the mean - this is for instance the case when users in Finance perform minimum variance optimization, without regards for expected returns). Then,
$$
\MHat\simeq V\trsp (\gamma(A)\Sigma+A)^{-1} V=\MTilde_A,
$$
so we get as deterministic equivalent of the naive risk
$$
U\trsp (V\trsp (\gamma(A)\Sigma+A)^{-1} V)^{-1} U\;.
$$
The interpretation of this result is that the shrinkage procedure essentially produces an estimator which is a dampen shrinkage estimator, the damping factor being $\gamma(A)$.

To compute the realized risk, all one needs to do is look at $U\trsp \MHat^{-1} V\trsp (\SigmaHat+A)^{-1} \Sigma (\SigmaHat+A)^{-1} V \MHat^{-1} U$. To understand this, we can just rely on the results of Heuristic \ref{heuristic:squareRegularizedInverse}, with $B=\Sigma$. It should be noted that
$$
V\trsp (\SigmaHat+A)^{-1} \Sigma (\SigmaHat+A)^{-1} V \simeq [1+\xi(A,\Sigma)] V\trsp \Sigma V=[1+\xi(A,\Sigma)]M\;.
$$
Hence,
$$
U\trsp \MHat^{-1} V\trsp (\SigmaHat+A)^{-1} \Sigma (\SigmaHat+A)^{-1} V \MHat^{-1} U\simeq [1+\xi(A,\Sigma)] U\trsp \MTilde_A^{-1} M \MTilde_A^{-1} U\;.
$$

The situation where $V$ involves $\mu$ and is replaced by $\muHat$ in $\VHat$ can be investigated using our results on quadratic forms in $DX (X\trsp D^2 X+A)^{-1} X\trsp D$ and the other results we developed in the paper specifically for this task.

Finally, to the reader who might wonder why the study of  $M^{-1}\Sigma_{\eps}M^{-1}$ is potentially useful, even in the setting where $\ellipticalData_i$ are  i.i.d and hence have the same covariance $\Sigma$, let us give a ``practical" example: it is sometimes the case that in the context of portfolio optimization, one uses log-returns instead of returns to find the portfolio weights. This is found to be natural when the stock prices follow geometric brownian motions, as in the Black-Scholes model. But clearly, in that setting of log-normal prices,  the risk exposure should be computed using the covariance of the returns and not that of the log returns - two matrices that are in general different. (Note that our results (and our work on log-normal distributions) also give risk predictions when using returns instead of log returns when working with log-normal data.)

\subsection{Ridge regression}
Suppose we consider ridge regression with a general quadratic penalty (a.k.a Tikhonov regularization). Then $\betaHat$ is found by solving
$$
\betaRidge=\argmin_{\beta} \norm{Y-\frac{1}{\sqrt{n}}X\beta}_2^2+\lambda \beta\trsp \Gamma \beta\;,
$$
where $Y$ is our response, $X$ is the design matrix and $\Gamma$ is a psd matrix. It is easy to verify that
$$
\betaRidge=\frac{1}{\sqrt{n}}(\frac{1}{n}X\trsp X+\lambda \Gamma)^{-1}X\trsp Y\;.
$$
Suppose that $Y=\frac{1}{\sqrt{n}}[X\beta_0+\eps]$. Then,
$$
\betaRidge=(\frac{X\trsp X}{n}+\lambda \Gamma)^{-1}(\frac{X\trsp X}{n} \beta_0+\frac{X\trsp}{n} \eps)\;.
$$
Hence,
$$
\betaRidge-\beta_0=-\lambda (\frac{X\trsp X}{n}+\lambda \Gamma)^{-1}\Gamma \beta_0+(\frac{X\trsp X}{n}+\lambda \Gamma)^{-1}\frac{X\trsp}{n} \eps\;.
$$
The situation where the design is random can now be studied with our tools, provided the assumptions of our theorems are satisfied.

For instance if $\eps$ has covariance $\Sigma_{\eps}$ and mean 0, we have
$$
\Exp{\norm{\betaRidge-\beta_0}_2^2|X}=\lambda^2 \beta_0\trsp \Gamma\trsp (\frac{X\trsp X}{n}+\lambda \Gamma)^{-2} \Gamma \beta_0+\frac{1}{n}\trace{(\frac{X\trsp X}{n}+\lambda \Gamma)^{-1}\frac{X\trsp \Sigma_{\eps}X}{n} (\frac{X\trsp X}{n}+\lambda \Gamma)^{-1}}\;.
$$
The first quantity can be analyzed using our results in this paper. The second one is comparatively simpler and comes out of random matrix arguments. For instance, when $\Sigma_{\eps}=\id_n$, we see that we are left with
$$
\trace{(\frac{X\trsp X}{n}+\lambda \Gamma)^{-1}\frac{X\trsp X}{n} (\frac{X\trsp X}{n}+\lambda \Gamma)^{-1}}=\trace{(\frac{X\trsp X}{n}+\lambda \Gamma)^{-1}}-\lambda\trace{\Gamma(\frac{X\trsp X}{n}+\lambda \Gamma)^{-2}}\;,
$$
and these quantities can be analyzed using standard results on Stieltjes transforms (as well as the derivation trick we use repeatedly in this paper).

We also note that if $X$ has a symmetric distribution (we could relax of course this assumption with some work done along the lines of what is done in the paper),
$$
\frac{1}{n}\Exp{\trace{(\frac{X\trsp X}{n}+\lambda \Gamma)^{-1}\frac{X\trsp X}{n} (\frac{X\trsp X}{n}+\lambda \Gamma)^{-1}}}=\Exp{\frac{1\trsp}{\sqrt{n}} \frac{X}{\sqrt{n}} (\frac{X\trsp X}{n}+\lambda \Gamma)^{-2}\frac{X}{\sqrt{n}}\trsp \frac{1}{\sqrt{n}}}\;,
$$
and we can therefore use the work done in \ref{subsubsec:formsinDXMinvSigmaEpsMinvXtrspD}. A similar argument would hold if $\Sigma_{\eps}$ were diagonal, with $1$ replaced by $u$, with $u_i^2=\Sigma_{\eps}(i,i)$.

The arguments presented in this paper can also be used to understand the quantities $\norm{\betaRidge-\beta_0}_2^2$ directly, before taking expectation, if for instance we have a bound (with high-probability) on $\norm{\eps}$.

Our concentration arguments also allow us to show that
$$
\frac{1}{p}\Exp{\norm{\betaRidge(X,Y)-\beta_0}_2^2}
$$
has the same limit as the conditional version.

\section{Conclusion}

Our study aimed at showing that the tools of random matrix theory could be used to further our understanding  of various statistical procedures based on shrinkage estimators of covariance. Despite the great recent interest in $l_1$-type regularizations, these more classical methods are still very useful and very much in use, which is why we undertook the task of explaining what they actually did (at least asymptotically) in high-dimension. We also note that our study has moved us now quite far away from ``linear" models for the data and we have obtained results for distributions with genuinely non-linear structures, something that is very much needed to understand various practical applications.

We have both shown what we think is a great distributional robustness of random matrix based results in this context and a great geometric fragility of those models: distributional assumptions are largely irrelevant as long as they have the same geometric implications for the data; when two models yield a different geometry, the limiting approximations can change completely. Hence it seems to us that our study highlights a basic applied fact: namely users of random matrix results should run diagnostic tests before they apply (or rely on) results obtained in Gaussian or Gaussian like situations (which are the only ones covered by the ``classical" random matrix models). For otherwise, if there is e.g correlation between our $n$ observations, or if the geometry of the dataset does not conform to ``i.i.d Gaussian" geometry, naive random matrix predictions will prove unhelpful and uninformative at best.

On a technical note, our results are quite general, thanks in large part to the approach we used, which does not require us to compute the limit (or deterministic equivalent) of various quantities  to show it is the same when our data come from a wide class of possible distributions.  It should be noted that our results encompass many distributions for which natural questions in random matrix theory (such as behavior of largest and smallest eigenvalues) have not yet been settled or even investigated. In the future, it might also be of interest to look into more general estimates of covariance, namely matrix functions of the (shrunken) covariance matrix, i.e estimates that apply a certain fixed function to the eigenvalues of the shrunken matrix and leave the eigenvectors as is. This seems very approachable by our methods, using Cauchy's formula for instance, but because this might be considered a bit less central to multivariate statistics  we postpone a rigorous study of these questions to a possible future paper.

\appendix
\begin{center}
\textbf{\textsc{APPENDIX}}
\end{center}
\vspace{1cm}
\renewcommand{\thesection}{\Alph{section}}
\renewcommand{\theequation}{\Alph{section}-\arabic{equation}}
\renewcommand{\thelemma}{\Alph{section}-\arabic{lemma}}
\renewcommand{\thecorollary}{\Alph{section}-\arabic{corollary}}
\renewcommand{\thesubsection}{\Alph{section}-\arabic{subsection}}
\renewcommand{\thesubsubsection}{\Alph{section}-\arabic{subsection}.\arabic{subsubsection}}
\setcounter{equation}{0}  
\setcounter{lemma}{0}
\setcounter{corollary}{0}
\setcounter{section}{0}
\section{A remark on robustness of spectral distributions}
This technical appendix is not directly related to the rest of the paper but shows how the methods we used can be utilized to analyze the robustness of another quantity of interest in random matrix theory, namely the spectral distribution of the matrix. (We put the result here because it fits our theme of robustness and is interesting but of course does not warrant its own paper.)

In \cite{nekCorrEllipD}, we investigated the robustness properties of generalizations of the \mapa equation and showed that it held under mild concentration requirements on the data.

As a first step we showed that we could use Azuma's inequality to control the fluctuations of the Stieltjes transform for a very broad class of distributions. Now to show robustness, all we have to do is show that the expectation of the Stieltjes transform is the same for all the models we consider. In \cite{nekCorrEllipD}, we limited ourselves to models for which the data $\ellipticalData_i$ had the same covariance for all $i$. We can now use similar ideas to the ones we have developed in this paper to do it in a more general case. We call
$$
{\cal S}_X=\frac{1}{n}\sum_{i=1}^n R_i^2 X_i X_i\trsp
$$
and the corresponding Stieltjes transform (for ${\cal S}_X+A$)
$$
m_{p,X}(z)=\frac{1}{p}\trace{({\cal S}_X+A-z\id)^{-1}}\;,
$$
where $A$ is a (deterministic) psd matrix, $z\in \mathbb{C}^+$ and $\imag{z}=v>0$. We call $u=\myreal{z}$.

We have the following theorem.

\begin{theorem}
Under the usual assumptions of this paper (see Subsection \ref{subsec:SetupOfStudy}), assuming that the $R_i$'s are deterministic, and $\{X_i\}_{i=1}^n$ and $\{Y_i\}_{i=1}^n$ have mean 0 and are such that $\scov{X_i}=\scov{Y_i}$, we have, for any fixed $z \in \mathbb{C}^+$,
$$
|\Exp{m_{p,X}(z)-m_{p,Y}(z)}|\leq \frac{1}{p} \sum_{i=1}^n \left(\frac{K|z|}{v^3} \frac{R_i^2}{n} \left[b_{Q_2}(1;X_i)+b_{Q_2}(1;Y_i)\right]\right) \wedge \frac{2}{v}\;.
$$
\end{theorem}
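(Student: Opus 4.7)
My plan is to mimic the Lindeberg-plus-rank-one-update scheme used repeatedly in the paper (as in the proofs of Theorems~\ref{thm:FirstLimitIsSameForEveryone} and~\ref{thm:ThirdLimitIsSameForEveryone}). Interpolate between $X$ and $Y$ via the hybrid samples $Z_j=(Y_1,\ldots,Y_{j-1},X_j,\ldots,X_n)$, so that $Z_1=X$ and $Z_{n+1}=Y$, and telescope
\[
\Exp{m_{p,X}(z)-m_{p,Y}(z)}=\sum_{j=1}^n\Exp{m_{p,Z_j}(z)-m_{p,Z_{j+1}}(z)}.
\]
Define $M_j := A + \tfrac{1}{n}\sum_{i\neq j}R_i^2(Z_j)_i(Z_j)_i\trsp - z\,\id$; this matrix is independent of $X_j,Y_j$ and common to $Z_j$ and $Z_{j+1}$. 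Sherman--Morrison applied to the trace then yields
\[
p\bigl[m_{p,Z_j}(z)-m_{p,Z_{j+1}}(z)\bigr]=\frac{\tfrac{R_j^2}{n}Y_j\trsp M_j^{-2}Y_j}{1+\tfrac{R_j^2}{n}Y_j\trsp M_j^{-1}Y_j}-\frac{\tfrac{R_j^2}{n}X_j\trsp M_j^{-2}X_j}{1+\tfrac{R_j^2}{n}X_j\trsp M_j^{-1}X_j}.
\]

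Writing $q_j(W):=W\trsp M_j^{-1}W$, $\tilde q_j(W):=W\trsp M_j^{-2}W$ and their deterministic (given $M_j$) counterparts $d_j:=\trace{\Sigma_j M_j^{-1}}$, $\tilde d_j:=\trace{\Sigma_j M_j^{-2}}$ (which agree for $W=X_j$ and $W=Y_j$ since these share the covariance $\Sigma_j$), I use the algebraic decomposition
\[
\frac{\tfrac{R_j^2}{n}\tilde q_j}{1+\tfrac{R_j^2}{n}q_j}=\frac{\tfrac{R_j^2}{n}\tilde d_j}{1+\tfrac{R_j^2}{n}d_j}+\frac{\tfrac{R_j^2}{n}(\tilde q_j-\tilde d_j)}{1+\tfrac{R_j^2}{n}q_j}+\frac{\tfrac{R_j^4}{n^2}\tilde d_j(d_j-q_j)}{(1+\tfrac{R_j^2}{n}q_j)(1+\tfrac{R_j^2}{n}d_j)},
\]
so that the deterministic main term cancels in the $X_j$--$Y_j$ difference and the problem reduces to controlling the two remainders in conditional expectation.

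The crucial analytic input is the classical random-matrix bound
\[
\left|\frac{1}{1+\tfrac{R_j^2}{n}W\trsp M_j^{-1}W}\right|\leq\frac{|z|}{v},
\]
which I prove by writing $zM_j^{-1}=(A+{\cal S})M_j^{-1}-\id$ with ${\cal S}:=\tfrac{1}{n}\sum_{i\neq j}R_i^2(Z_j)_i(Z_j)_i\trsp\succeq 0$ and examining the imaginary part of $z(1+\tfrac{R_j^2}{n}W\trsp M_j^{-1}W)$: the psd-ness of $A+{\cal S}$ together with $v>0$ forces this imaginary part to be $\geq v$. An identical computation bounds $|1+\tfrac{R_j^2}{n}d_j|^{-1}\leq|z|/v$. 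Combining these with $\opnorm{M_j^{-1}}\leq v^{-1}$, $\opnorm{M_j^{-2}}\leq v^{-2}$, and assumption \eqref{B-Q}---applied after splitting the complex resolvents $M_j^{-k}$ into real and imaginary parts (each of operator norm $\leq v^{-k}$) and writing the non-psd real parts as differences of two psd matrices by the spectral theorem---yields $\Exp{|q_j-d_j|\mid M_j}\leq K b_{Q_2}(1;W)/v$ and $\Exp{|\tilde q_j-\tilde d_j|\mid M_j}\leq K b_{Q_2}(1;W)/v^2$. The first remainder is then bounded in expectation by $K|z|v^{-3}R_j^2 b_{Q_2}(1;W)/n$, and after summing over $j$ and dividing by $p$ this produces the first expression of the claim; the $2/v$ alternative comes from the trivial estimate $|m_{p,Z_j}(z)-m_{p,Z_{j+1}}(z)|\leq 2/v$, itself a consequence of $|m_{p,\cdot}(z)|\leq v^{-1}$.

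The main obstacle is the denominator bound $|z|/v$, whose proof depends on the multiply-by-$z$ trick that turns the uncontrolled real part of $W\trsp M_j^{-1}W$ into a manifestly nonnegative imaginary part by exploiting the psd-ness of $A+{\cal S}$. A secondary delicacy is the second remainder involving $\tilde d_j$: the a priori bound $|\tilde d_j|\leq p\opnorm{\Sigma_j}/v^2$ combined with the two $|z|/v$ denominator factors produces an extra multiplicative $R_j^2 p\opnorm{\Sigma_j}|z|/(nv^2)$ relative to the first remainder, which is absorbed into the numerical constant $K$ only under the paper's standard regularity assumptions (bounded $p/n$, $\opnorm{\Sigma_j}$, $R_j^2$); verifying this absorption cleanly is the most careful bookkeeping step.
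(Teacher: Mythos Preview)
Your Lindeberg-plus-rank-one framework matches the paper's, but two concrete points separate your argument from a complete proof of the stated bound.

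\textbf{The decomposition.} You linearize both the numerator ($\tilde q_j\to\tilde d_j$) and the denominator ($q_j\to d_j$). The paper linearizes \emph{only the denominator}: it writes
\[
\frac{\tfrac{R_j^2}{n}\tilde q_j(X_j)}{1+\tfrac{R_j^2}{n}q_j(X_j)}
=\frac{\tfrac{R_j^2}{n}\tilde q_j(X_j)}{1+\tfrac{R_j^2}{n}d_j}
-\Omega_j,\qquad
\Omega_j=\frac{R_j^2}{n}\tilde q_j(X_j)\cdot\frac{R_j^2}{n}\frac{q_j-d_j}{(1+\tfrac{R_j^2}{n}d_j)(1+\tfrac{R_j^2}{n}q_j)}.
\]
The first piece has the same $\Expj{j}$ for $X_j$ and $Y_j$ (they share $\Sigma_j$), so only $\Omega_j$ survives. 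The point of keeping the random $\tilde q_j$ attached to the random $q_j$-denominator is that Lemma~2.6 of Silverstein--Bai gives the \emph{uniform} bound
\[
\left|\frac{R_j^2}{n}\frac{\tilde q_j(X_j)}{1+\tfrac{R_j^2}{n}q_j(X_j)}\right|\le\frac{1}{v},
\]
independent of $R_j$, $p$, $\opnorm{\Sigma_j}$. Combined with your $|z|/v$ bound on the deterministic denominator and $\Exp{|q_j-d_j|}\le K\,b_{Q_2}(1;X_j)/v$, this yields exactly $K|z|R_j^2 b_{Q_2}/(nv^3)$. Your second remainder, by contrast, carries a bare $\tilde d_j$ in the numerator; bounding it by $p\opnorm{\Sigma_j}/v^2$ produces the extra factor $R_j^2 p\opnorm{\Sigma_j}|z|/(nv^2)$ you flagged. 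The theorem does \emph{not} assume the $R_j$ are bounded, so this factor cannot be absorbed into a numerical $K$, and the bound you obtain is strictly weaker than the one stated.

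\textbf{The $2/v$ alternative.} Your justification via $|m_{p,\cdot}(z)|\le v^{-1}$ gives $|p(m_{p,Z_j}-m_{p,Z_{j+1}})|\le 2p/v$, which is off by a factor of $p$. The correct per-term bound $|\Delta_j|\le 2/v$ (with $\Delta_j$ the \emph{unnormalized} trace difference) again comes from applying the Silverstein--Bai estimate to each of the two rank-one terms in $\Delta_j$ separately.

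Both issues are repaired by the same missing ingredient: the bundled bound $\bigl|\tfrac{R_j^2}{n}\tilde q_j/(1+\tfrac{R_j^2}{n}q_j)\bigr|\le 1/v$, which lets you avoid linearizing the numerator and gives the coarse $2/v$ bound on $|\Delta_j|$ directly.
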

This extends some of the results of \cite{nekCorrEllipD}, since under various concentration assumptions we will be able to control $b_{Q_2}(1;Y_i)$ and $b_{Q_2}(1;X_i)$ (recall that when $X_i$ are Gaussian with covariance bounded in operator norm, $b_{Q_2}(1;X_i)$ is of order $\sqrt{p}$). Note once again that the models considered here are richer than the ones considered in \cite{nekCorrEllipD}. The main difference with the results of \cite{nekCorrEllipD} is that this new theorem covers cases where we cannot describe the limit, whereas in \cite{nekCorrEllipD} we described the limit ``explicitly".

We refer the reader to \cite{nekCorrEllipD} (or \cite{baisilverstein10}) for details explaining why showing a.s convergence of the Stieltjes transform at each $z$ (and a mass preservation condition) gives a.s weak convergence of the spectral distribution. Essentially our theorem says that the existence of a limit needs to be checked only in the Gaussian case and that such a result would transfer over to more general distributions for which we control $b_{Q_2}(1;Y_i)$.

\begin{proof}
We go quick on the details of the proof because we have done many similar ones in the paper. We take a Lindeberg approach, naturally. It is clear that if $B_j=\frac{1}{n}\sum_{k=1}^{j-1} R^2_k X_k X_k\trsp+
\frac{1}{n}\sum_{k=j+1}^{n} R^2_k Y_k Y_k\trsp+A$ (with obvious adjustments mentioned in the paper for $j=1$ and $j=n$), all we have to do is understand
$$
\frac{1}{p}\Exp{\trace{(B_j+\frac{R_j^2}{n}X_jX_j\trsp-z\id)^{-1}}-\trace{(B_j+\frac{R_j^2}{n}Y_iY_j\trsp-z\id)^{-1}}}\;.
$$
Let us call $B_j(z)=B_j-z\id$. Note that $B_j$ is psd.
By standard rank-1 updates arguments, we have
\begin{align*}
\Delta_j&=\trace{(B_j(z)+\frac{R_j^2}{n}X_jX_j\trsp)^{-1}}-\trace{(B_j(z)+\frac{R_j^2}{n}Y_jY_j\trsp)^{-1}}\\
&=
\frac{R_j^2}{n}\left[-\frac{X_j\trsp B_j^{-2}(z)X_j}{1+\frac{R_j^2}{n}X_j\trsp B_j^{-1}(z)X_j}+\frac{Y_j\trsp B_j^{-2}(z)Y_j}{1+\frac{R_j^2}{n}Y_j\trsp B_j^{-1}(z)Y_j}\right]
\end{align*}
Let us call $d_j(z)=\trace{B_j(z)^{-1}\Sigma_j}$, where $\Sigma_j$ is the covariance of $X_j$ and $Y_j$. Clearly, since $d_j(z)$ is independent of $X_j$ and $Y_j$,
$$
\Exp{\frac{X_j\trsp B_j^{-2}(z)X_j}{1+\frac{R_j^2}{n}d_j(z)}}=\Exp{\frac{Y_j\trsp B_j^{-2}(z)Y_j}{1+\frac{R_j^2}{n}d_j(z)}}=\Exp{\frac{\trace{B_j^{-2}(z)\Sigma_j}}{1+\frac{R_j^2}{n}d_j(z)}}\;.
$$
So to control $|\Exp{\Delta_j}|$, all we have to do is control, if we call $q_j(z)=X_j\trsp B_j^{-1}(z)X_j$,
$$
\left|\Exp{\frac{R_j^2}{n}\frac{X_j\trsp B_j^{-2}(z)X_j}{1+\frac{R_j^2}{n}d_j(z)}-\frac{R_j^2}{n}\frac{X_j\trsp B_j^{-2}(z)X_j}{1+\frac{R_j^2}{n}q_j(z)}}\right|\;.
$$
The quantity inside the expectation can be rewritten
$$
\Omega_j=\frac{R_j^2}{n}X_j\trsp B_j^{-2}(z)X_j\frac{R_j^2}{n}\frac{q_j(z)-d_j(z)}{(1+\frac{R_j^2}{n}d_j(z))(1+\frac{R_j^2}{n}q_j(z))}\;.
$$
Lemma 2.6 in \cite{silversteinbai95} shows that
$$
\left|\frac{R_j^2}{n}\frac{X_j\trsp B_j^{-2}(z)X_j}{1+\frac{R_j^2}{n}q_j(z)}\right|\leq \frac{1}{v}
$$
Hence, $|\Delta_j|\leq 2/v$ and
$$
|\Exp{\Omega_j}|\leq \frac{1}{v} \frac{R_j^2}{n} \Exp{\frac{|d_j(z)-q_j(z)|}{|1+\frac{R_j^2}{n} d_j(z)|}}\;.
$$
By writing $B_j^{-1}(z)$ in terms of its eigenvalues and eigenvectors, we note that $\imag{z\trace{B_j^{-1}(z)\Sigma_j}}\geq 0$ because $B_j$ and $\Sigma_j$ are psd and $z\in \mathbb{C}^+$ (alternatively, $\imag{zB_j^{-1}(z)}$ is psd). Therefore $\imag{zd_j(z)}\geq 0$. Hence,
$$
\frac{1}{|z(1+\frac{R_j^2}{n}d_j(z))|}\leq \frac{1}{v}
$$
So finally,
$$
|\Exp{\Omega_j}|\leq \frac{|z|}{v^2} \frac{R_j^2}{n} \Exp{|d_j(z)-q_j(z)|}\;.
$$
We now have to analyze $d_j(z)-q_j(z)$. We notice that
$$
q_j(z)-d_j(z)=X_j\trsp M_1 X_j + i X_j\trsp M_2 X_j-\Expj{j}{X_j\trsp M_1 X_j + i X_j\trsp M_2 X_j}\;,
$$
where if $\alpha_k$'s are the eigenvectors of $B_j$ and $\lambda_k$ its eigenvalues, we have
$$
\myreal{B_j^{-1}(z)}=M_1=\sum_{k=1}^p \frac{\lambda_k-u}{(\lambda_k-u)^2+v^2} \alpha_k \alpha_k\trsp
$$
and
$$
\imag{B_j^{-1}(z)}=M_2=\sum_{k=1}^p \frac{v}{(\lambda_k-u)^2+v^2} \alpha_k \alpha_k\trsp \;.
$$
$M_1$ can be written as $M_1=M_{1,+}-M_{1,-}$, where $M_{1,+}$ is formed by keeping the non-negative eigenvalues of $M_1$ and replacing the negative ones by 0. Of course, $M_{1,+}$ and $M_{1,-}$ are psd (technically we should index them by $u$, but we do not do it to alleviate the notation).
We now remark that $M_{1,+}$, $M_{1,-}$ and $M_2$ are psd with $\norm{M_{1,\pm}}\leq 1/v$ and $\norm{M_2}\leq 1/v$.
We can therefore conclude, using the fact that $|z|\leq |\myreal{z}|+|\imag{z}|$ as well as the fact that $M_1$ and $M_2$ are independent of $X_j$ that
$$
\Exp{|d_j(z)-q_j(z)|}\leq \frac{K}{v} b_{Q_2}(1;X_j)\;.
$$
Putting everything together we obtain the result announced in the theorem.
\end{proof}

\bibliographystyle{C:/NekTexAuxiliaries/Bibliography/annstats}
\bibliography{C:/NekTexAuxiliaries/Bibliography/research,C:/NekTexAuxiliaries/Bibliography/Bioresearch}

\end{document}